\theoremstyle{plain}
\newtheorem{Thm}{Theorem}[section]
\newtheorem{Lem}[Thm]{Lemma}
\newtheorem{Prop}[Thm]{Proposition}
\theoremstyle{definition}
\newtheorem{Assum}[Thm]{Assumption}
\newtheorem{Rem}[Thm]{Remark}
\theoremstyle{remark}
\theoremstyle{plain}
\newtheorem*{Thm*}{Theorem}
\newtheorem*{Lem*}{Lemma}
\newtheorem*{Prop*}{Proposition}
\newtheorem*{Cor*}{Corollary}
\theoremstyle{definition}
\newtheorem*{Def*}{Definition}
\newtheorem*{Rem*}{Remark}
\theoremstyle{remark}
\newtheorem*{Eg*}{Example}
\numberwithin{equation}{section}
\DeclareMathOperator\supp{supp}
\title{Central limit theorems for nonlinear stochastic wave equations in dimension three}
\author{Masahisa Ebina} 
\affil{Department of Mathematics, Graduate School of Science, Kyoto University, Kitashirakawa-Oiwakecho, Sakyo-ku, Kyoto 606-8502, Japan}
\affil{E-mail address: \texttt{ebina.masahisa.83m@st.kyoto-u.ac.jp}}
\date{}
\begin{document}

\maketitle
\begin{abstract}
    In this paper, we consider three-dimensional nonlinear stochastic wave equations driven by the Gaussian noise which is white in time and has some spatial correlations. 
    Using the Malliavin-Stein's method, we prove the Gaussian fluctuation for the spatial average of the solution under the Wasserstein distance in the cases where the spatial correlation is given by an integrable function and by the Riesz kernel. In both cases we also establish functional central limit theorems.
\end{abstract}

\noindent
\textbf{Keywords:} Stochastic wave equation, Central limit theorem, Malliavin calculus, Stein's method.\\
\textbf{2020 Mathematics Subject Classification:} 60F05, 60G15, 60H07, 60H15.
\vskip\baselineskip




\section{Introduction}
In this paper we consider the following nonlinear stochastic wave equation
\begin{align}
\label{SPDE}
\begin{cases}
    \left(\frac{\partial^2 }{\partial t^2} - \Delta \right)u(t,x) =  \sigma(u(t,x))\dot{W}(t,x),\\
    u(0,x) = 1,\\
    \frac{\partial u}{\partial t}(0,x) = 0,
\end{cases}
\end{align}
on $[0,T] \times \mathbb{R}^3$, where $T > 0$ is fixed, $\Delta$ is Laplacian on $\mathbb{R}^3$, $\sigma:\mathbb{R} \to \mathbb{R}$, and $\dot{W}(t,x)$ is the formal notation of centered Gaussian noise defined on a complete probability space $(\Omega, \mathscr{F}, P)$. The covariance of $\dot{W}(t,x)$ is given by
\begin{align}
\label{dot W covariance}
\mathbb{E}[\dot{W}(t,x)\dot{W}(s,y)] = \delta_0(t-s)\gamma(x-y).
\end{align}
In \eqref{dot W covariance}, $\delta_0$ denotes the Dirac delta function and  $\gamma$ is a spatial correlation function such that $\gamma(x)dx$ is a nonnegative, nonnegative definite tempered measure on $\mathbb{R}^3$. 
Then, $\gamma$ has to be the Fourier transform of nonnegative tempered measure $\mu$, \textit{i.e.} $\gamma = \mathcal{F}\mu \;\; \text{in  $\mathcal{S}^{\prime}(\mathbb{R}^3)$}$, where $\mathcal{S}^{\prime}(\mathbb{R}^3)$ denotes the space of tempered distributions. 
The measure $\mu$ is called the spectral measure of $\gamma$.
See Section \ref{subsection Stochastic integrals} for details.

The equations \eqref{SPDE} are interpreted in the sense of Dalang-Walsh (see \cite{Walsh,Dal99}).
That is, a real-valued jointly measurable stochastic process $\{u(t,x) \mid (t,x) \in [0,T]\times \mathbb{R}^3 \}$ is a random field solution to \eqref{SPDE} if it is adapted to the filtration generated by the noise $W$ and satisfies 
\begin{align}
\label{random field solution}
    u(t,x) = 1 + \int_0^t\int_{\mathbb{R}^3}G(t-s,x-y)\sigma(u(s,y))W(ds,dy), \quad \text{a.s.}, 
\end{align}
for all $(t,x) \in [0,T]\times \mathbb{R}^3$. 
Here $G$ denotes the fundamental solution of the three-dimensional wave equation. It is well-known (see \textit{e.g.} \cite[Chapter 4]{Mizohata}) that 
\begin{align}
\label{fundamental solution}
    G(t, dx) = \frac{1}{4\pi t}\sigma_t(dx) \quad &t>0,
\end{align}
where $\sigma_t$ denotes the uniform surface measure on $\partial B_t \coloneqq \{x \in \mathbb{R}^3 \mid |x|=t \}$ with total measure $4\pi t^2$. 
The stochastic integral on the right-hand side of \eqref{random field solution} will be defined in Section \ref{subsection Stochastic integrals}.

Throughout the paper, we assume the following assumptions.
\begin{Assum}
\label{assumption}
\begin{enumerate}[(1)]
    \item $\sigma:\mathbb{R} \to \mathbb{R}$ is Lipschitz continuous function with Lipschitz constant $L \in (0,\infty)$. 
    \item $\sigma$ is continuously differentiable function (\textit{i.e.} $\sigma \in C^1(\mathbb{R})$) and $\sigma(1) \neq 0$.
    \item The spectral measure $\mu$ of $\gamma$ satisfies the so-called Dalang's condition:
    \begin{align}
    \label{Dalang's condition}
        \int_{\mathbb{R}^3}\langle x \rangle^{-2}\mu(dx) < \infty,
    \end{align}
    where $\langle x \rangle \coloneqq \sqrt{1+|x|^2}$ and $|x|$ denotes the usual Euclidean norm of $x$ on $\mathbb{R}^3$.
\end{enumerate}
\end{Assum}

Under the assumptions (1) and (3), it is known (see \cite{Dal99, DalangQuel}) 
that there exists a unique random field solution $\{u(t,x) \mid (t,x) \in [0,T]\times \mathbb{R}^3 \}$ of \eqref{SPDE} and that the solution is $L^2(\Omega)$-continuous and satisfies for all $p \geqslant 1$, 
\begin{equation}
\label{solution uniform bound}
    \sup_{(t,x) \in [0,T]\times\mathbb{R}^3}\mathbb{E}[|u(t,x)|^p] < \infty.
\end{equation}
Note that we need the assumption $\sigma \in C^1(\mathbb{R})$ to use the tools of the Malliavin calculus, and the assumption $\sigma(1) \neq 0$ excludes the trivial case: $u(t,x) = 1$, for all $(t,x) \in [0,T]\times \mathbb{R}^3$.

We are interested in the asymptotic behavior of the centered spatial integral of the form 
\begin{align}
\label{spatial integral}
    F_R(t)\coloneqq\int_{B_R}(u(t,x) - 1)dx
\end{align}
as $R \to \infty$, where $t>0$, $B_R\coloneqq \{x \in \mathbb{R}^3 \mid |x| \leqslant R \}$, and $u(t,x)$ is the solution to \eqref{SPDE}.
Our goal in the present paper is to show that $F_R(t)$ with some normalization has the Gaussian fluctuation as $R \to \infty$.

Recently, there has been a lot of research studying asymptotic behavior of spatial averages of stochastic partial differential equations. In order to explain our motivation, let us  briefly recall some previous works.
In \cite{CLTforSHE}, Huang, Nualart, and Viitasaari study one-dimensional stochastic heat equations driven by a space-time white noise.
Using the Malliavin-Stein approach, which is a combination of the Malliavin calculus and Stein's method (see \cite{nourdin_peccati_2012}), they prove the quantitative and functional central limit theorems (CLTs) for the spatial average of the solution.
After that, by similar arguments in  \cite{CLTforSHE}, the authors of \cite{SHEriesznoise} consider the same equation with the Gaussian noise which is white in time and which has spatial correlation $\gamma(x) = |x|^{-\beta}$, and show that similar results also hold in arbitrary spatial dimensions $d \geqslant 1$. 
For more related results concerned with stochastic heat equations, see \cite{SFHECLT, CLTforSHEvia, MR4421618, spatialergodicitydeltainitialcondition,  spatialstationarity,timedependclt, PAMroughnoise,nualart2021quantitative, averaginggaussianfunctionals, SHEboundary}.

As for stochastic wave equations, fractional Gaussian noise with Hurst parameter $H\in [1/2,1)$ in spatial dimension $d=1$ is considered in \cite{SWE1dfracnoise}, and the cases that $d=2$,  $\gamma(x) = |x|^{-\beta} \: (0<\beta < 2)$ and that $d\in \{1,2\}$, $\gamma \in L^1(\mathbb{R})$ if $d=1$ and $\gamma \in L^1(\mathbb{R}^2) \cap L^s(\mathbb{R}^2)$ for some $s >1$ if $d=2$ are studied in \cite{Averaging2dSWE} and \cite{CLTforSWEindimension1and2}, respectively. 
See also \cite{balan2021hyperbolic} for the case where Gaussian noise is colored in time and space.
However, unlike the case of stochastic heat equations, the results about CLTs for the spatial average of stochastic wave equations have only been known for $d \leqslant 2$.

In spatial dimension $d = 3$, the recent work \cite{SWEstationarity} establishes that the solution $u(t,x)$ of \eqref{SPDE} is spatially ergodic if the spectral measure $\mu$ of $\gamma$ satisfies $\mu(\{0\}) =0$. 
Under this condition, the mean ergodic theorem implies that
\begin{align*}
    \frac{F_R(t)}{|B_R|} \xrightarrow{R\to \infty} 0 
\end{align*}
in $L^2(\Omega)$, where $|B_R|$ is the volume of $B_R$.
Taking into account the cases $d \leqslant 2$, it is natural to ask whether $F_R(t)$ with some normalization also has the Gaussian fluctuation as $R \to \infty$.
Motivated by \cite{SWEstationarity}, our aim is to give an affirmative answer to this question.

In order to establish the Gaussian fluctuation, we have to impose some additional conditions for the spatial correlation function $\gamma$. 
In this paper we prove the CLTs for $F_R(t)$ under the two different conditions:
\begin{enumerate}
    \item[(i)] $\gamma \in L^1(\mathbb{R}^3)$ such that $\gamma(x) > 0$ for all $x \in \mathbb{R}^3$.
    \item[(ii)] $\gamma(x) = |x|^{-\beta}$ for some $\beta \in (0,2)$. 
\end{enumerate}

\begin{Rem}
\begin{enumerate}[(1)]
    \item Under the Dalang's condition, it is known (\textit{cf.} \cite{spatialergodicityforSPDEsvia, SWEstationarity}) that the sufficient condition for spatial ergodicity $\mu(\{0\})=0$ is equivalent to 
    \begin{align*}
        \lim_{R\to \infty}\frac{1}{R^3}\int_{B_R}\gamma(x)dx = 0.
    \end{align*}
    Hence spatial ergodicity for the solution holds for the both cases (i) and (ii).
    \item The spatial correlation $\gamma$ in the case (ii) is called the Riesz kernel. In this case, it is known (\textit{cf.} \cite[Chapter V]{singularintegrals}) that $\gamma(x)dx = |x|^{-\beta}dx$ is nonnegative definite tempered measure on $\mathbb{R}^3$ and its spectral measure is $\mu(dx) = c_{\beta}|x|^{\beta -3}dx$, where $c_{\beta}$ is a constant depending on $\beta$. Notice that $\mu(dx)$ satisfies the Dalang's condition \eqref{Dalang's condition} if and only if $\beta \in (0,2)$.
\end{enumerate}
\end{Rem}

To state the main results, let us now fix some notations.
$F_R(t)$ denotes the spatial integral defined by \eqref{spatial integral}, and set $\sigma_{R}(t)\coloneqq \sqrt{\mathrm{Var}(F_R(t))}$.
Set $\mathscr{H} \coloneqq \{h:\mathbb{R} \to \mathbb{R} \; | \; \lVert h\rVert_{\textrm{Lip}} \leqslant 1\}$, where 
\begin{align*}
    \lVert h \rVert_{\mathrm{Lip}} = \sup_{\substack{x \neq y \\ x,y \in \mathbb{R}}} \frac{|h(x)-h(y)|}{|x-y|}.
\end{align*}
Recall that the Wasserstein distance between the laws of two integrable real-valued random variables $X$ and $Y$ is defined by 
\begin{align}
\label{Wasserstein distance def}
    d_{\mathrm{W}}(X,Y) = \sup_{h \in \mathscr{H}}|\mathbb{E}[h(X)]-\mathbb{E}[h(Y)]|.
\end{align}
We also write $d_{\mathrm{W}}(X,\mathcal{N}(0,1))$ for the Wasserstein distance between the law of $X$ and the standard normal law.

We are now ready to state the first main result of this paper.
\begin{Thm}
\label{main result1}
Assume that the spatial correlation function $\gamma$ satisfies one of the two conditions below:
\begin{enumerate}[\normalfont(i)]
    \item $\gamma \in L^1(\mathbb{R}^3)$ and $\gamma(x) > 0$ for all $x \in \mathbb{R}^3$.
    \item $\gamma(x) = |x|^{-\beta}$ for some $\beta \in (0,2)$. 
\end{enumerate}
Then, for any fixed $t \in (0,T]$, we have $\sigma_R(t) > 0$ for every $R >0$ and 
\begin{align}
    \lim_{R \to \infty}d_{\mathrm{W}}\left( \frac{F_R(t)}{\sigma_R(t)}, \mathcal{N}(0,1) \right) = 0. \label{distance convergence}
\end{align}
\end{Thm}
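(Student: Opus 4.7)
The overall strategy is the Malliavin--Stein method, following the scheme used in \cite{CLTforSHE,SHEriesznoise,CLTforSWEindimension1and2}. Writing $F_R(t)\in\mathbb{D}^{1,2}$, the standard bound
\[
d_{\mathrm W}\!\left(\frac{F_R(t)}{\sigma_R(t)},\mathcal N(0,1)\right)
\;\leqslant\; \frac{\sqrt{2/\pi}}{\sigma_R(t)^2}\sqrt{\mathrm{Var}\bigl(\langle DF_R(t),-DL^{-1}F_R(t)\rangle_{\mathscr H}\bigr)}
\]
reduces the theorem to two separate tasks: a \emph{lower} bound on $\sigma_R(t)^2$ and an \emph{upper} bound on the variance of the Malliavin inner product. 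So I would first run the ritual computation for the Malliavin derivative. Stochastic Fubini applied to \eqref{random field solution} gives
\[
F_R(t)=\int_0^t\!\!\int_{\mathbb R^3} g_R(s,y)\,\sigma(u(s,y))\,W(ds,dy),\qquad
g_R(s,y):=\int_{B_R}G(t-s,x-y)dx,
\]
so that $D_{s,y}F_R(t)=\int_{B_R}D_{s,y}u(t,x)dx$, where $D_{s,y}u(t,x)$ satisfies the linearised wave equation driven off $(s,y)$. The a priori estimate I would establish (by Picard iteration plus Gronwall in the spirit of \cite{Dal99}) is
\[
\lVert D_{s,y}u(t,x)\rVert_p\;\leqslant\; C_{p,T}\,G(t-s,x-y)\quad\text{as measures,}
\]
and similarly a second-order bound $\lVert D_{s,y}D_{r,z}u(t,x)\rVert_p\leqslant C_{p,T}\bigl(G(t-s,x-y)G(t-r,z-y)+\text{symmetric}\bigr)$ convolved with the solution kernel.

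\textbf{Variance lower bound.} The key observation is that $g_R(s,y)$ is \emph{exactly} $(t-s)$ on the set $\{|y|\leqslant R-(t-s)\}$, since the whole sphere $\partial B_{t-s}(y)$ then lies in $B_R$ and $G(t-s,\cdot)(\mathbb R^3)=t-s$. Using the Itô isometry together with continuity of $\sigma$ and $\lVert u(s,y)-1\rVert_2\to 0$ as $s\downarrow 0$, together with $\sigma(1)\neq 0$, I would show $\mathbb E[\sigma(u(s,y))\sigma(u(s,y'))]\geqslant c_0>0$ uniformly for $s$ in a small subinterval, and hence
\[
\sigma_R(t)^2\;\geqslant\; c_0\!\!\int_0^{\tau}\!\!(t-s)^2\!\!\int_{|y|,|y'|\leqslant R-t}\!\gamma(y-y')\,dy\,dy'\,ds.
\]
In case (i) this gives $\sigma_R(t)^2\geqslant c\,R^3$ using $\gamma>0$ and $\gamma\in L^1$; in case (ii) substitution $y=Ry'$ yields $\sigma_R(t)^2\geqslant c\,R^{6-\beta}$. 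Strict positivity $\sigma_R(t)>0$ for every finite $R$ follows from the same display combined with the fact that $\gamma>0$ (case (i)) or $\gamma=|\cdot|^{-\beta}>0$ (case (ii)).

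\textbf{Upper bound on the Stein term.} Using the Clark--Ocone / Poincaré-type inequality
\[
\mathrm{Var}\bigl(\langle DF_R,-DL^{-1}F_R\rangle_{\mathscr H}\bigr)
\;\leqslant\; C\!\int_{[0,t]^2}\!\!\int_{(\mathbb R^3)^4}
\lVert D_{s,y}D_{r,z}F_R\rVert_4\,\lVert D_{s,y'}D_{r,z'}F_R\rVert_4\,
\lVert D_{s,y}F_R\rVert_4\,\lVert D_{s,y'}F_R\rVert_4\,\times
\]
(times the four-fold $\gamma$-product integrated over the appropriate variables), I would insert the kernel estimates above. After substitutions the integrals reduce to bounded convolutions of $G$ against itself against $\gamma$, and a careful counting of the spatial spheres yields in case (i) a bound of order $R^3$ (one factor $R^3$ lost relative to the two squared copies of $\sigma_R^2\sim R^6$ produced by $D F_R$) and in case (ii) of order $R^{12-3\beta}$. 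Dividing by $\sigma_R(t)^4$ thus gives rates $O(R^{-3/2})$ and $O(R^{-\beta/2})$ respectively, both of which tend to $0$.

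\textbf{Main obstacle.} The trickiest point will be to carry the measure-valued fundamental solution $G$ through all Malliavin computations: the kernel $G(t,\cdot)$ is a surface measure, so "obvious" manipulations like interchanging $\int G\cdot G$ with Fourier multipliers must be routed through the spectral representation $\mu$ and Dalang's condition. Extracting the \emph{sharp} lower bound for $\sigma_R(t)^2$ in case (ii) requires homogeneity of the Riesz kernel plus a careful localisation to suppress the region where $\sigma(u(s,y))$ deviates from $\sigma(1)$; this, combined with matching the Stein upper bound's rate to the variance rate, is where most of the work will go.
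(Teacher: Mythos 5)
Your variance lower bounds are essentially the paper's Lemmas \ref{Lemma L1 nondegeneracy} and \ref{Lemma Riesz nondegeneracy} and are fine. The problem is the upper bound, which rests entirely on the pointwise estimate $\lVert D_{s,y}u(t,x)\rVert_p\leqslant C_{p,T}\,G(t-s,x-y)$ ``as measures''. In dimension three $G(t,\cdot)=\frac{1}{4\pi t}\sigma_t$ is the uniform surface measure on $\partial B_t$, not a function, so this inequality does not make sense as stated; and even if one were to interpret $Du(t,x)$ as a measure-valued kernel in $(s,y)$, the Stein/Poincar\'e computation you propose requires forming products such as $\lVert D_{s,y}F_R\rVert_4\,\lVert D_{s,y'}F_R\rVert_4\,\gamma(y-y')$ and integrating in $y,y'$ --- a product of two surface measures against the Riesz kernel (or against a merely integrable $\gamma$) is not defined. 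The second-order bound you invoke compounds this. This is exactly the obstruction the paper singles out as the main difficulty in $d=3$, and its whole architecture exists to route around it: one works with Picard iterates $u_n$ driven by mollified kernels $G_n$, proves the \emph{function-valued} bound $\lVert D_{s,y}u_n(t,x)\rVert_p\lesssim \Theta(T,n)\mathbf 1_{\{|x-y|\leqslant T+1\}}$ (Theorem \ref{Theorem moment estimate}, with a constant that blows up in $n$), applies Proposition \ref{Proposition Wasserstein bound} to $F_{n,R}=\delta(V_{n,t,R})$ for each fixed $n$, and then closes the argument with the uniform-in-$R$ approximation $\sup_{R\geqslant 2(T+1)}\lVert F_{n,R}/\sigma_{n,R}-F_R/\sigma_R\rVert_2\to 0$ (Lemma \ref{Lemma lim sup norm zero}), which controls $d_{\mathrm W}$ (but not, say, total variation) --- this is precisely why the Wasserstein distance is used.

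A consequence of that detour is that your claimed rates $O(R^{-3/2})$ and $O(R^{-\beta/2})$ are not obtained: the constants $\Theta(T,n)$ are not uniform in $n$, so only the qualitative limit \eqref{distance convergence} survives, and the paper explicitly leaves the rates as an open problem. If your direct approach worked it would immediately yield those rates, which is a further sign that the key kernel estimate is doing work it cannot do. Unless you can rigorously construct a function (or otherwise tractable) version of $D_{s,y}u(t,x)$ dominated in a usable sense by the wave kernel, and give meaning to the bilinear expressions in $\gamma$ that your variance bound generates, the proposal does not go through.
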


\begin{Rem}
\begin{enumerate}[(1)]
    \item Theorem \ref{main result1} also holds under the Kolmogorov distance $d_{\mathrm{Kol}}$ and the Fortet-Mourier distance $d_{\mathrm{FM}}$ because
    \begin{align*}
        d_{\mathrm{Kol}}(X,\mathcal{N}(0,1)) \leqslant 2\sqrt{d_{\mathrm{W}}(X,\mathcal{N}(0,1))}, \quad d_{\mathrm{FM}}(X,Y) \leqslant d_{\mathrm{W}}(X,Y),
    \end{align*}
    for any integrable real-valued random variables $X$ and $Y$. See \cite[Appendix C]{nourdin_peccati_2012}.
    \item Taking into account the results in \cite{SWE1dfracnoise, Averaging2dSWE, CLTforSWEindimension1and2}, we expect that the convergence rate of \eqref{distance convergence} is $R^{-\frac{3}{2}}$ under the condition (i) in Theorem \ref{main result1} and $R^{-\frac{\beta}{2}}$ under the condition (ii). However, it seems difficult to obtain such convergence rates by our method and we leave it as a future problem. 
\end{enumerate}
\end{Rem}

Let $C([0,T])$ denote the space of continuous functions on $[0,T]$.
Set
\begin{align}
\label{notation a}
    \tau_{\beta} = \int_{B^2_1} |x-y|^{-\beta} dxdy, \quad \eta(r) = \mathbb{E}[\sigma(u(r,0))].
\end{align}
Here is the second main result.
\begin{Thm}
\label{main result2}
\begin{enumerate}[\normalfont(1)]
    \item Let $\gamma \in L^1(\mathbb{R}^3)$. Then, as $R \to \infty$, the process $\{ R^{-\frac{3}{2}}F_R(t) \mid t \in [0,T] \}$ converges weakly in $C([0,T])$, and the limiting process is a centered Gaussian process $\{\mathcal{G}_1(t) \mid t \in [0,T] \}$ with covariance function 
    \begin{align*}
        \mathbb{E}[\mathcal{G}_1(t) \mathcal{G}_1(s)] = |B_1|\int_{\mathbb{R}^3}\mathrm{Cov}(u(t,x),u(s,0))dx.
    \end{align*}
    \item Let $\gamma(x) = |x|^{-\beta}$ for some $0< \beta <2$. Then , as $R \to \infty$, the process $\{ R^{\frac{\beta}{2}-3}F_R(t) \mid t \in [0,T] \}$ converges weakly in $C([0,T])$, and the limiting process is a centered Gaussian process $\{\mathcal{G}_2(t) \mid t \in [0,T] \}$ with covariance function 
    \begin{align*}
        \mathbb{E}[\mathcal{G}_2(t) \mathcal{G}_2(s)] = \tau_{\beta}\int_0^{t\land s}(t-r)(s-r)\eta^2(r)dr,
    \end{align*}
    where $t \land s \coloneqq \min\{t,s\}$.
\end{enumerate}
\end{Thm}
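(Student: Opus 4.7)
The plan is to prove Theorem \ref{main result2} via the standard two-step approach: (a) convergence of finite-dimensional distributions, and (b) tightness in $C([0,T])$. Write $c_R := R^{3/2}$ in part (1), $c_R := R^{3-\beta/2}$ in part (2), and $\widetilde F_R(t) := c_R^{-1} F_R(t)$. For (a), by the Cram\'er-Wold device it suffices to prove, for every $n \geqslant 1$, times $t_1, \ldots, t_n \in [0,T]$, and coefficients $a_1, \ldots, a_n \in \mathbb{R}$, a one-dimensional CLT for $c_R^{-1}\sum_i a_i F_R(t_i)$. A stochastic Fubini argument gives
\begin{align*}
    \sum_{i=1}^n a_i F_R(t_i) = \int_0^T \int_{\mathbb{R}^3} \Psi_R(r,y) \sigma(u(r,y)) W(dr, dy),
\end{align*}
where $\Psi_R(r,y) := \sum_i a_i \mathbf{1}_{\{r \leqslant t_i\}} \int_{B_R} G(t_i - r, dx - y)$. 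This has the same structure as the representation of $F_R(t)$ used in Theorem \ref{main result1}, so the same Malliavin-Stein estimate adapted to $\Psi_R$ should deliver a quantitative normal approximation once the limiting variance is identified.

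The analytic heart of (a) is the identification of the covariance $\lim_{R \to \infty} \mathrm{Cov}(\widetilde F_R(t), \widetilde F_R(s))$. In part (1), spatial stationarity of $u$ (from the constant initial data) gives $\mathrm{Cov}(u(t,x), u(s,y)) = K(t,s,x-y)$, so a change of variables yields
\begin{align*}
    R^{-3}\mathrm{Cov}(F_R(t), F_R(s)) = \int_{\mathbb{R}^3} R^{-3} |B_R \cap (B_R - z)| K(t,s,z) \, dz;
\end{align*}
I would first verify $K(t,s,\cdot) \in L^1(\mathbb{R}^3)$ using $\gamma \in L^1(\mathbb{R}^3)$ and the It\^o-type isometry, then conclude by dominated convergence with $R^{-3}|B_R \cap (B_R-z)| \to |B_1|$. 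In part (2), I would start from
\begin{align*}
    \mathrm{Cov}(F_R(t), F_R(s)) = \int_0^{t \wedge s}\!\! \int_{\mathbb{R}^6} \Phi_R(t-r, y) \Phi_R(s-r, y') \mathbb{E}[\sigma(u(r,y))\sigma(u(r,y'))] |y-y'|^{-\beta} \, dy \, dy' \, dr,
\end{align*}
where $\Phi_R(\tau, y) := \int_{B_R} G(\tau, dx - y)$, and split $\mathbb{E}[\sigma(u(r,y))\sigma(u(r,y'))] = \eta(r)^2 + C_\sigma(r, y-y')$, with $C_\sigma(r, \cdot)$ the stationary spatial covariance of $\sigma(u(r,\cdot))$. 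Rescaling $y = R x$, $y' = R x'$ and using that $\Phi_R(\tau, Rx) \to \tau \, \mathbf{1}_{B_1}(x)$ as $R \to \infty$ for $x \notin \partial B_1$, the $\eta^2$ piece produces the expected main contribution $R^{6-\beta} \tau_\beta \int_0^{t \wedge s}(t-r)(s-r) \eta^2(r) \, dr$, while the $C_\sigma$ remainder should be of order $o(R^{6-\beta})$ by spatial decorrelation of $\sigma(u(r,\cdot))$.

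For tightness (b), I would use Kolmogorov's continuity criterion and prove, uniformly in $R$, that
\begin{align*}
    \mathbb{E}[|\widetilde F_R(t) - \widetilde F_R(s)|^2] \leqslant C |t-s|^{1+\delta}
\end{align*}
for some $\delta > 0$. Writing $F_R(t) - F_R(s)$ as a Dalang-Walsh integral with integrand $\Delta \Phi_R(r,y) := \mathbf{1}_{\{r \leqslant t\}} \Phi_R(t-r,y) - \mathbf{1}_{\{r \leqslant s\}} \Phi_R(s-r,y)$, the It\^o isometry and \eqref{solution uniform bound} reduce the task to controlling $\int_0^T \int_{\mathbb{R}^6} |\Delta \Phi_R(r,y)||\Delta \Phi_R(r,y')| \gamma(y-y') \, dy \, dy' \, dr$. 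Since $\Phi_R(\tau, y) = \tau$ in the bulk $|y| + \tau \leqslant R$, vanishes for $|y| > R+\tau$, and $\Phi_R(t-r,y) - \Phi_R(s-r,y) = t-s$ in the bulk for $r \leqslant s$, splitting the integration into $r \in [s,t]$ and $r \in [0,s]$ and applying Young's inequality (case (1)) or the scaling of the Riesz kernel (case (2)) should yield the required bound of the form $C c_R^2 |t-s|^{1+\delta}$ after a careful treatment of the boundary annulus.

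The principal obstacle I foresee is control of the $C_\sigma$-remainder in part (2): showing it is $o(R^{6-\beta})$ cannot rely solely on the ergodicity implied by $\mu(\{0\}) = 0$; rather, a direct Malliavin calculus bound on $C_\sigma(r,z)$ with quantitative decay in $|z|$, strong enough to beat the singularity $|z|^{-\beta}$ after integration over $B_1 \times B_1$, will be needed. By comparison, the multivariate CLT and the tightness estimates appear to be more or less direct adaptations of the techniques underlying Theorem \ref{main result1}.
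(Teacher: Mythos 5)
Your overall architecture (finite-dimensional distributions via a Malliavin--Stein bound plus a Kolmogorov-type tightness estimate, with the covariance identification as the analytic core) matches the paper, and your tightness step and your identification of the limiting covariances are essentially sound. But there is a genuine gap at exactly the point you flag as "the principal obstacle," and also, less visibly, in your FDD step: both require pointwise moment bounds on Malliavin derivatives attached to the \emph{true} solution $u$, and in $d=3$ these are not available, because the fundamental solution \eqref{fundamental solution} is a measure and an estimate of the type \eqref{derivative moment pointwise estimate} does not even make sense. Concretely: (i) your plan to apply "the same Malliavin--Stein estimate adapted to $\Psi_R$" to $c_R^{-1}\sum_i a_iF_R(t_i)$ requires controlling $\mathrm{Var}\langle D(\sum_i a_iF_R(t_i)), \sum_j a_jV_{t_j,R}\rangle_{\mathcal{H}_T}$, which needs a bound like \eqref{derivative estimate introduction} for $D_{s,y}u(t,x)$ itself; (ii) your $C_\sigma$-remainder in the Riesz case needs quantitative spatial decorrelation of $\sigma(u(r,\cdot))$, which again would come from a Poincar\'e-inequality argument \eqref{Poincare inequality} fed by such a pointwise derivative bound. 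The paper's resolution — and its main technical device — is to run the whole argument on the Picard scheme \eqref{Picard iteration at introduction} with the \emph{regularized} kernels $G_n$, for which the pointwise bound $\lVert D_{s,y}u_n(t,x)\rVert_p\lesssim \Theta(T,n)\mathbf{1}_{B_{T+1}}(x-y)$ holds (Theorem \ref{Theorem moment estimate} together with Proposition \ref{Proposition M_nDu_n}); this yields the variance bound of Lemma \ref{key} and the covariance-decay bound of Lemma \ref{Lemma Cov n estimate} for $u_n$, and only then is everything transferred to $u$ via the uniform convergence \eqref{iteration uniform convergence} and uniform-in-$R$ comparisons of $F_R$ with $F_{n,R}$ (as in Lemma \ref{Lemma lim sup norm zero} and estimate \eqref{d1}). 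Without this approximation layer — or some substitute for the missing derivative estimate on $u$ — neither your Stein step nor your decorrelation step can be completed, and your proposal does not supply it.

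The remaining differences are routes, not gaps. You use Cram\'er--Wold plus a one-dimensional bound where the paper applies the multivariate Stein bound of Proposition \ref{Proposition multivariate stein bound} to $(F_{n,R}(t_1),\dots,F_{n,R}(t_m))$ with $C^2$ test functions; either works once the Picard machinery is in place. For the Riesz covariance limit you rescale in physical space using $\Phi_R(\tau,Rx)\to\tau\mathbf{1}_{B_1}(x)$, whereas the paper passes to Fourier variables (Lemma \ref{Lemma riesz limit}); both give $\tau_\beta(t-r)(s-r)$. For tightness, your real-space bulk/annulus decomposition can be made to yield the needed $(t-s)^2$ bound (the cap-area computation shows the annulus contribution is indeed $O((t-s)^2R^{2})$), but the paper's Fourier argument via \eqref{FG bound} and \eqref{FG minus FG bound} in Proposition \ref{Proposition tightness} reaches the same bound with no boundary bookkeeping, which is why only second moments and exponent $2$ are ever needed.
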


Let us now briefly sketch the strategy of the proof of Theorem \ref{main result1}.
To follow the strategy of \cite{SWE1dfracnoise, Averaging2dSWE,CLTforSWEindimension1and2}, we may need some following-type pointwise estimate:
\begin{align}
\label{derivative moment pointwise estimate}
    \lVert D_{s,y}u(t,x) \rVert_p \lesssim G(t-s,x-y),
\end{align}
where $D$ denotes the Malliavin derivative operator which will be defined in Section \ref{subsection Malliavin calculus and Malliavin-Stein bound}, $G$ is the fundamental solution, and $\lVert \cdot \rVert_p$ denotes the $L^p(\Omega)$-norm.
This type of inequality works well when we consider stochastic heat equations in arbitrary spatial dimensions $d \geqslant 1$ and stochastic wave equations in spatial dimensions $d\leqslant 2$, because the corresponding fundamental solution $G$ is a function.
However, in $d=3$, the fundamental solution of the wave equation (see \eqref{fundamental solution}) is not a function but the measure, and the above type of pointwise estimate does not make sense. 
This is one of the main difficulties in proving CLTs for three-dimensional stochastic wave equations.

To avoid this problem, we consider the following Picard iteration scheme: Set $u_0(t,x)=1$, and 
\begin{align}
\label{Picard iteration at introduction}
    u_{n+1}(t,x) \coloneqq 1 + \int_0^t\int_{\mathbb{R}^3} G_{n+1}(t-s,x-y)\sigma(u_n(s,y))W(ds,dy)
\end{align}
for all $n \geqslant 0$,
where $G_{n+1}$ is the regularization of $G$ which will be defined in Section \ref{subsection some basic estimates}.  
Note that $G_{n+1}$ is a function and is much easier to handle than $G$.
Then, instead of \eqref{derivative moment pointwise estimate} we make use of the following estimate
\begin{align}
\label{derivative estimate introduction}
    \lVert D_{s,y}u_n(t,x) \rVert_p \lesssim C_n \mathbf{1}_{\{|x-y| \leqslant T+1\}},
\end{align}
where $C_n$ is a constant depending on $n$, and $\mathbf{1}$ denotes the indicator function.
This estimate as well as the standard proof strategy of \cite{SWE1dfracnoise, Averaging2dSWE,CLTforSWEindimension1and2} allows us to prove that for any fixed $n \geqslant 1$,
\begin{align*}
    d\left( \frac{F_{n,R}(t)}{\sigma_{n,R}(t)}, \mathcal{N}(0,1) \right) \xrightarrow{R \to \infty} 0,
\end{align*}
where $d$ is some distance between probability measures and
\begin{align*}
    F_{n,R}(t) \coloneqq \int_{B_R}(u_n(t,x) - 1)dx, \quad \sigma_{n,R}^2(t) \coloneqq \mathrm{Var}(F_{n,R}(t)).
\end{align*}
Since we have by the triangle inequality that
\begin{align*}
    d\left( \frac{F_{R}(t)}{\sigma_{R}(t)}, \mathcal{N}(0,1) \right) \leqslant d\left( \frac{F_{R}(t)}{\sigma_{R}(t)},  \frac{F_{n,R}(t)}{\sigma_{n,R}(t)}\right) + d\left( \frac{F_{n,R}(t)}{\sigma_{n,R}(t)}, \mathcal{N}(0,1) \right),
\end{align*}
our task is now to derive the uniform convergence 
\begin{align*}
    \sup_{R} d\left( \frac{F_{R}(t)}{\sigma_{R}(t)},  \frac{F_{n,R}(t)}{\sigma_{n,R}(t)}\right) \xrightarrow{n \to \infty} 0.
\end{align*}
For this purpose, the Wasserstein distance is more suitable than the total variation distance, and this is why we use $d_{\mathrm{W}}$ in our analysis.
Using the Wasserstein distance $d_{\mathrm{W}}$ and combining various estimates, we can show the above uniform convergence, and Theorem \ref{main result1} follows.

The rest of the paper is organized as follows: In Section \ref{Preliminaries} we recall some basic facts of stochastic integral and the Malliavin calculus. Then we introduce some preliminary results and estimates which are required for other sections.
In Section \ref{The Picard Approximation and its Malliavin derivative} we show some properties of the Picard approximation sequence and prove the estimate \eqref{derivative estimate introduction} for the Malliavin derivative of the sequence.
Section \ref{CLT} is devoted to the proof of Theorem \ref{main result1}.
In Section \ref{FCLT} we prove the second main result, Theorem \ref{main result2}.
Finally, Section \ref{Appendix} is intended to collect some technical estimates used in the paper.  
\vskip\baselineskip

\noindent
\textbf{Notation.}
We introduce some notations which are used throughout the paper. 
\begin{itemize}
    \item For any $x,y \in \mathbb{R}^3$, $|x|$ denotes the usual Euclidean norm of $x$ on $\mathbb{R}^3$ and $x\cdot y$ denotes the standard inner product such that $|x|^2 = x\cdot x$. We set $\langle x \rangle \coloneqq \sqrt{1+  |x|^2}$. For any $a,b \in \mathbb{R}$, let $a \land b \coloneqq \min\{a,b\}$ and $a \lor b \coloneqq \max\{a,b\}$.
    \item The closed ball of center $x$ and radius $r >0$ is denoted by $B_r(x) \coloneqq \{y \in \mathbb{R}^3 \,:\, | x-y | \leqslant r \}$, and we write $B_r$ instead of $B_r(0)$. The Lebesgue measure of a measurable set $A$ is denoted by $|A|$. 
    \item $C_0^{\infty}(\mathbb{R}^3)$, $\mathcal{S}(\mathbb{R}^3)$, and $\mathcal{S}^{\prime}(\mathbb{R}^3)$ denote the space of smooth functions with compact support, the Schwartz space of rapidly decreasing functions, and the space of tempered distributions on $\mathbb{R}^3$, respectively. Let $\mathcal{F}$ denote the Fourier transform operator on $\mathbb{R}^3$. For an integrable function $f$, its Fourier transform is defined by $\mathcal{F}f(\xi) \coloneqq \int_{\mathbb{R}^3}e^{-2\pi \sqrt{-1} \xi \cdot x} f(x)dx$. 
    \item $\lVert X \rVert_p \coloneqq \mathbb{E}[|X|^p]^{\frac{1}{p}}$ denotes the $L^p(\Omega, \mathscr{F}, P)$-norm of a real-valued random variable $X$.
    \item We use the notation $X \lesssim Y$ or $Y \gtrsim X$ to denote the estimate $X \leqslant CY$ for some constant $C>0$ which does not depend on X and Y. 
    In certain cases, we write $X \lesssim_{\alpha} Y$ to emphasize the dependence of the constant $C$ on a parameter $\alpha$.
\end{itemize}

\noindent
\textbf{Acknowledgment.} The author would like to express his deep gratitude to Professor Seiichiro Kusuoka for his helpful advice and encouragement. This work was supported by JSPS KAKENHI Grant Number JP22J21604.

\section{Preliminaries}
\label{Preliminaries}
\subsection{Stochastic integrals}
\label{subsection Stochastic integrals}
Following \cite{Dal99, DalangQuel}, we formulate the stochastic integral which is used in this paper.
Recall that the spatial correlation function $\gamma$ is a nonnegative function on $\mathbb{R}^3$ such that $\gamma(x)dx$ is a nonnegative definite tempered measure on $\mathbb{R}^3$. That is, $\gamma(x) \geqslant 0$,
\begin{align*}
    \int_{\mathbb{R}^3}(\varphi*\tilde{\varphi})(x)\gamma(x)dx \geqslant 0 
\end{align*}
for all $\varphi \in \mathcal{S}(\mathbb{R}^3)$, and there exists $k > 0$ such that
\begin{align*}
    \int_{\mathbb{R}^3}\langle x \rangle^{-k} \gamma(x)dx < \infty.
\end{align*}
Here $*$ denotes the convolution in space and $\tilde{\varphi}(x) \coloneqq \varphi(-x)$.
Then, Bochner-Schwartz theorem (see \cite[Chapitre VII, Th\'{e}or\`{e}me XVIII]{Schwartz}) implies that $\gamma$ is the Fourier transform of a nonnegative tempered measure $\mu$ \textit{i.e.} $\gamma = \mathcal{F}\mu \;\; \text{in  $\mathcal{S}^{\prime}(\mathbb{R}^3)$}$. This measure $\mu$ is called the spectral measure of $\gamma$.  By the definition of the Fourier transform 
in $\mathcal{S}^{\prime}(\mathbb{R}^3)$, this means that 
\begin{align*}
    \int_{\mathbb{R}^3}\varphi(x)\gamma(x)dx = \int_{\mathbb{R}^3}\mathcal{F}\varphi(\xi)\mu(d\xi)
\end{align*}
for all $\varphi \in \mathcal{S}(\mathbb{R}^3)$.

Let $(\Omega,\mathscr{F}, P)$ be a complete probability space. We consider an $L^2(\Omega,\mathscr{F},P)$-valued mean zero Gaussian process $W = \{W(\varphi) \mid \varphi \in C_0^{\infty}([0,T]\times \mathbb{R}^3)\}$ with covariance 
\begin{align*}
    \mathbb{E}[W(\varphi)W(\psi)] = \int_{0}^T dt\int_{\mathbb{R}^3}(\varphi(t)*\tilde{\psi}(t))(x)\gamma(x)dx = \int_{0}^T dt\int_{\mathbb{R}^3}\mathcal{F}\varphi(t)(\xi)\overline{\mathcal{F}\psi(t)(\xi)}\mu(d\xi),
\end{align*}
where $\tilde{\psi}(t,x)\coloneqq\psi(t,-x)$. We assume that $\mathscr{F}$ is generated by $W$.

Let $\mathcal{H}$ denote the completion of the space $\mathcal{S}(\mathbb{R}^3)$ with respect to the norm $\lVert \cdot \rVert_{\mathcal{H}}$ induced by the inner product 
\begin{align*}
    \langle \varphi,\psi \rangle_{\mathcal{H}} \coloneqq \int_{\mathbb{R}^3}\mu(d\xi)\mathcal{F}\varphi(t)(\xi)\overline{\mathcal{F}\psi(t)(\xi)} = \int_{\mathbb{R}^3}(\varphi *\tilde{\psi})(x)\gamma(x)dx , \quad \varphi, \psi \in \mathcal{S}(\mathbb{R}^3).
\end{align*}
Here we identify two functions $\varphi$ and $\psi$ if $\lVert \varphi - \psi \rVert_{\mathcal{H}} = 0$.
Then $\mathcal{H}$ is a separable real Hilbert space.
Note that generally $\mathcal{H}$ contains some distributions. 
In fact, under the Dalang's condition \eqref{Dalang's condition}, the fundamental solution of three-dimensional wave equation \eqref{fundamental solution} belongs to $\mathcal{H}$ (see also \cite[Remark 2.3]{DalangQuel}).
Define $\mathcal{H}_T = L^2([0,T];\mathcal{H})$. The norm $\lVert \cdot \rVert _{\mathcal{H}_T}$ is given by 
\begin{align*}
    \lVert \varphi \rVert _{\mathcal{H}_T}^2 = \int_{0}^T\lVert \varphi(t) \rVert_{\mathcal{H}}^2dt.
\end{align*}
We need the following lemma. See \cite[Lemma 2.4]{DalangQuel} for the proof.
\begin{Lem}
\label{dense lemma}
$C_0^{\infty}([0,T]\times \mathbb{R}^3)$ is dense in $(\mathcal{H}_T, \lVert \cdot \rVert _{\mathcal{H}_T})$.
\end{Lem}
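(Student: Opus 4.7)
The plan is to decouple the two variables and reduce the problem to the density of $C_0^{\infty}(\mathbb{R}^3)$ in $(\mathcal{S}(\mathbb{R}^3),\lVert\cdot\rVert_{\mathcal{H}})$, together with a standard Bochner-space/mollification argument in the time variable. Concretely, since $\mathcal{H}_T=L^2([0,T];\mathcal{H})$ is a separable Hilbert space and $\mathcal{S}(\mathbb{R}^3)$ is dense in $\mathcal{H}$ by definition, functions of the form
\[
\varphi(t,x)=\sum_{i=1}^{N}\mathbf{1}_{A_i}(t)\varphi_i(x),\qquad A_i\subset[0,T]\ \text{Borel},\ \varphi_i\in\mathcal{S}(\mathbb{R}^3),
\]
are dense in $\mathcal{H}_T$. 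It therefore suffices to approximate each such $\varphi$ by an element of $C_0^\infty([0,T]\times\mathbb{R}^3)$.

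For the time variable, I would pick $f_{i,\varepsilon}\in C_0^\infty((0,T))$ with $\lVert\mathbf{1}_{A_i}-f_{i,\varepsilon}\rVert_{L^2([0,T])}\to 0$ via standard mollification and restriction, which gives
\[
\Bigl\lVert \sum_i(\mathbf{1}_{A_i}-f_{i,\varepsilon})\varphi_i\Bigr\rVert_{\mathcal{H}_T}^2\le N\sum_i\lVert\mathbf{1}_{A_i}-f_{i,\varepsilon}\rVert_{L^2([0,T])}^2\lVert\varphi_i\rVert_{\mathcal{H}}^2\xrightarrow{\varepsilon\to 0}0.
\]
Hence the problem reduces to approximating each $\varphi_i\in\mathcal{S}(\mathbb{R}^3)$ by a sequence in $C_0^\infty(\mathbb{R}^3)$ in the $\mathcal{H}$-norm; the tensor product $C_0^\infty((0,T))\otimes C_0^\infty(\mathbb{R}^3)$ lies in $C_0^\infty([0,T]\times\mathbb{R}^3)$, so combining the two approximations finishes the proof.

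For the spatial step, fix $\chi\in C_0^\infty(\mathbb{R}^3)$ with $\chi\equiv 1$ on $B_1$ and $0\le\chi\le 1$, and set $\chi_n(x):=\chi(x/n)$, so that $\chi_n\varphi_i\in C_0^\infty(\mathbb{R}^3)$. Writing $\psi_n:=(1-\chi_n)\varphi_i\in\mathcal{S}(\mathbb{R}^3)$, I would first verify that $\psi_n\to 0$ in the Schwartz topology: for any multi-indices $\alpha,\beta$, decompose $D^\beta\psi_n$ via Leibniz, use that $D^{\beta-\gamma}\chi_n=n^{-|\beta-\gamma|}(D^{\beta-\gamma}\chi)(\cdot/n)$ is uniformly bounded and vanishes as $n\to\infty$ whenever $\gamma\neq\beta$, and use that the remaining term $(1-\chi_n)D^\beta\varphi_i$ is supported in $\{|x|\ge n\}$ where $x^\alpha D^\beta\varphi_i$ decays rapidly. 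Continuity of $\mathcal{F}$ on $\mathcal{S}(\mathbb{R}^3)$ then gives $\mathcal{F}\psi_n\to 0$ in $\mathcal{S}(\mathbb{R}^3)$; in particular, $\mathcal{F}\psi_n(\xi)\to 0$ pointwise and there is $C<\infty$ (depending on $\varphi_i,\chi$ but not on $n$) with $|\mathcal{F}\psi_n(\xi)|\le C\langle\xi\rangle^{-1}$. Invoking Dalang's condition \eqref{Dalang's condition}, the dominant $C^2\langle\xi\rangle^{-2}$ is $\mu$-integrable, so dominated convergence yields
\[
\lVert\varphi_i-\chi_n\varphi_i\rVert_{\mathcal{H}}^2=\int_{\mathbb{R}^3}|\mathcal{F}\psi_n(\xi)|^2\mu(d\xi)\xrightarrow{n\to\infty}0.
\]

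The main obstacle is this last step: because $\mathcal{H}$ is defined through the spectral measure $\mu$, which is only assumed to satisfy Dalang's condition and need not be absolutely continuous or finite, one cannot reduce the convergence to a purely $L^p$-type statement on $\gamma$. The essential work is to produce a uniform-in-$n$ envelope for $|\mathcal{F}\psi_n|$ that is square-integrable against $\mu$; the argument above exploits precisely that every Schwartz seminorm of $\psi_n$ is uniformly bounded, which converts the one-power-of-$\langle\xi\rangle^{-1}$ decay of $\mathcal{F}\psi_n$ into integrability via \eqref{Dalang's condition}.
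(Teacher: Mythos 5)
Your argument is correct, but note that the paper does not prove this lemma at all: it simply cites \cite[Lemma 2.4]{DalangQuel}, so your proposal supplies a self-contained proof where the paper defers to a reference. Your route is the natural one and all steps check out: (a) density in $L^2([0,T];\mathcal{H})$ of simple functions with values in the dense subspace $\mathcal{S}(\mathbb{R}^3)$ is standard Bochner-space theory; (b) the time mollification estimate factorizes correctly because $\lVert (\mathbf{1}_{A_i}-f_{i,\varepsilon})\varphi_i\rVert_{\mathcal{H}_T}^2=\lVert \mathbf{1}_{A_i}-f_{i,\varepsilon}\rVert_{L^2([0,T])}^2\lVert\varphi_i\rVert_{\mathcal{H}}^2$; and (c) the spatial cutoff step is sound, since $\psi_n=(1-\chi_n)\varphi_i\to 0$ in $\mathcal{S}(\mathbb{R}^3)$, hence $\mathcal{F}\psi_n\to 0$ in $\mathcal{S}(\mathbb{R}^3)$, and Dalang's condition \eqref{Dalang's condition} makes $\langle\xi\rangle^{-2}$ a $\mu$-integrable envelope — this is exactly the point where the mere temperedness of $\mu$ would not suffice. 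A small streamlining: you do not even need dominated convergence in (c), since $\sup_{\xi}\langle\xi\rangle|\mathcal{F}\psi_n(\xi)|\to 0$ gives directly
\begin{align*}
\int_{\mathbb{R}^3}|\mathcal{F}\psi_n(\xi)|^2\mu(d\xi)\leqslant \Bigl(\sup_{\xi}\langle\xi\rangle|\mathcal{F}\psi_n(\xi)|\Bigr)^2\int_{\mathbb{R}^3}\langle\xi\rangle^{-2}\mu(d\xi)\xrightarrow{n\to\infty}0.
\end{align*}
What your approach buys is independence from the cited reference; what the paper's citation buys is brevity, since the statement is a known technical fact in the Dalang--Quer-Sardanyons framework.
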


Because $\varphi \mapsto W(\varphi)$ is a linear isometry from $(C_0^{\infty}([0,T]\times \mathbb{R}^3), \lVert \cdot \rVert _{\mathcal{H}_T})$ to $L^2(\Omega,\mathscr{F},P)$, it follows from Lemma \ref{dense lemma} that we can define $W(\varphi)$ for all $\varphi \in \mathcal{H}_T$ by extending the isometry. 
By an approximation argument, the space $\mathcal{H}$ contains indicator functions of bounded Borel sets (see Lemma \ref{Lemma fourier transform}).
Set $W_t(A)\coloneqq W(\mathbf{1}_{[0,t]}\mathbf{1}_{A})$ for all $t \geqslant 0$ and $A \in \mathcal{B}_b(\mathbb{R}^3)$, where $\mathcal{B}_b(\mathbb{R}^3)$ denotes the bounded Borel sets of $\mathbb{R}^3$.
Let $\mathscr{F}_t^0$ denote $\sigma$-field generated by the family of random variables $\{W_s(A) \mid A \in \mathcal{B}_b(\mathbb{R}^3), 0 \leqslant s \leqslant t\}$ and the $P$-null sets, and define $\mathscr{F}_t \coloneqq \cap_{s > t} \mathscr{F}_s^0$ for $t \in [0,T)$ and $\mathscr{F}_T \coloneqq \mathscr{F}_T^0$.
Then, the process $\{W_t(A), \mathscr{F}_t, t \in [0,T], A \in \mathcal{B}_b(\mathbb{R}^3)\}$ is a worthy martingale measure (see \cite{Walsh}), and its covariance measure and dominating measure are given by 
\begin{align*}
    \langle W(A), W(B) \rangle_t = t\int_{\mathbb{R}^6}\mathbf{1}_A(x)\mathbf{1}_B(y)\gamma(x-y)dxdy.
\end{align*}

Let $X = \{ X(t,x) \mid (t,x) \in [0,T] \times \mathbb{R}^3 \}$ be a random field on the probability space $(\Omega,\mathscr{F},P)$.
We say that $X$ is $(\mathscr{F}_t)$-adapted if $X(t,x)$ is $\mathscr{F}_t$-measurable for every $(t,x) \in [0,T] \times \mathbb{R}^3$. $X$ is stochastically continuous if it is continuous in probability at any point $(t,x) \in [0,T] \times \mathbb{R}^3$.
Let $\mathcal{P}$ denote the predictable $\sigma$-field on $[0,T] \times \mathbb{R}^3 \times \Omega$ with respect to the filtration $(\mathscr{F}_t)$. (\textit{cf.} \cite{Walsh}.) $X$ is predictable if it is measurable with respect to $\mathcal{P}$. 

It is known (see \cite[Proposition B.1]{MR4017124}) that any stochastically continuous and $(\mathscr{F}_t)$-adapted random field $X(t,x)$ has a predictable modification.
Then, for such a random field $X(t,x)$ satisfying
\begin{align}
\label{X c1}
    \mathbb{E}\left[ \int_0^T\int_{\mathbb{R}^6}|X(t,x)||X(t,y)|\gamma(x-y)dxdydt \right] < \infty,
\end{align}
the stochastic integral 
\begin{align*}
    Y_t = \int_0^t\int_{\mathbb{R}^3}X(s,x)W(ds,dx)
\end{align*}
is well-defined Walsh integral and its quadratic variation is given by 
\begin{align*}
    \langle Y \rangle_t = \int_0^t\int_{\mathbb{R}^6}X(s,x)X(s,y)\gamma(x-y)dxdyds.
\end{align*}
Note that jointly measurable deterministic functions are predictable.

If a random field $\{Z(t,x) \mid (t,x) \in [0,T]\times \mathbb{R}^3\}$ is a jointly measurable with respect to $\mathcal{B}([0,T]\times \mathbb{R}^3) \times \mathscr{F}$ such that for any fixed $t \geqslant 0$,
\begin{align}
\label{Z c1}
    \mathbb{E}[Z(t,x)Z(t,y)] = \mathbb{E}[Z(t,x-y)Z(t,0)], \quad x ,y \in \mathbb{R}^3, 
\end{align}
and 
\begin{align}
\label{Z c2}
    \sup_{(t,x)\in [0,T] \times \mathbb{R}^3} \mathbb{E}[|Z(t,x)|^2] < \infty,
\end{align}
then, owing to \cite{Dal99}, there is a nonnegative tempered measure $\mu_t^{Z}$ on $\mathbb{R}^3$ such that 
\begin{align*}
    \mathbb{E}[Z(t,\cdot)Z(t,0)]\gamma(\cdot) = \mathcal{F}\mu_t^{Z}
\end{align*}
in $\mathcal{S}^{\prime}(\mathbb{R}^3)$.
Under Assumption \ref{assumption}, it is known (see \textit{e.g.} \cite{Dal99,SWEstationarity}) that a unique random field solution $u(t,x)$ to \eqref{SPDE} is strictly stationary in space variable. That is, the finite-dimensional distributions of the process $\{u(t,x+y) \mid x \in \mathbb{R}^3 \}$ are independent of $y \in \mathbb{R}^3$. Therefore, by \eqref{solution uniform bound}, $Z(t,x) \coloneqq \sigma(u(t,x))$ satisfies \eqref{Z c1} and \eqref{Z c2}, and there is a nonnegative tempered measure $\mu_t^{\sigma(u)}$ such that 
\begin{align}
\label{mu^sigma(u)}
    \mathbb{E}[\sigma(u(t,\cdot))\sigma(u(t,0))]\gamma(\cdot) = \mathcal{F}\mu_t^{\sigma(u)}
\end{align}
in $\mathcal{S}^{\prime}(\mathbb{R}^3)$ for given $t$.

In order for the stochastic integral on the right-hand side of \eqref{random field solution} to have exact meaning, we need the following result due to Dalang. See \cite[Theorem 2]{Dal99} for the proof. 
Note that the fundamental solution of the three-dimensional wave equation $G(t)$ is the nonnegative distribution with rapid decrease on $\mathbb{R}^3$ for all $t >0$.

\begin{Prop}
\label{dalang prop}
Let $t \mapsto S(t)$ be a deterministic function with values in the space of nonnegative distributions on $\mathbb{R}^3$ with rapid decrease, such that 
\begin{align*}
    \int_0^T dt\int_{\mathbb{R}^3}\mu(d\xi)|\mathcal{F}S(t)(\xi)|^2 < \infty.
\end{align*}
Let $\{Z(t,x) \mid (t,x) \in [0,T]\times \mathbb{R}^3\}$ be a predictable random field which satisfies \eqref{Z c1} and \eqref{Z c2}.
Then the stochastic integral 
\begin{align*}
    \int_0^t\int_{\mathbb{R}^3}S(s,x)Z(s,x)W(ds,dx)
\end{align*}
is well-defined in the sense of Dalang and 
\begin{align}
    \mathbb{E}\left[\left( \int_0^t\int_{\mathbb{R}^3}S(s,x)Z(s,x)W(ds,dx)\right)^2 \right]
    &= \int_0^t ds \int_{\mathbb{R}^3} |\mathcal{F}S(s)(\xi)|^2\mu_s^Z(d\xi) \label{isometry fourier}\\
    &\leqslant \int_0^t ds \left( \sup_{x\in \mathbb{R}^3} \mathbb{E}[|Z(s,x)|^2] \right)\int_{\mathbb{R}^3}\mu(d\xi)|\mathcal{F}S(s)(\xi)|^2. \label{isometry fourier bound}
\end{align}
\end{Prop}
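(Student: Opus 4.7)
The plan is to follow Dalang's original argument by first constructing the integral for smooth approximations of $S$ via Walsh's theory and then passing to the limit, using the hypothesis and Plancherel-type identities to identify the second moment.

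First I would approximate $S$ by mollification: choose $\rho_n \in C_0^\infty(\mathbb{R}^3)$ a standard mollifier and set $S_n(t,x) := (S(t)*\rho_n)(x)\chi_n(x)$, where $\chi_n$ is a smooth cutoff. Each $S_n(t,\cdot)$ is then a smooth, compactly supported (or rapidly decreasing) nonnegative function, and by properties of the Fourier transform on $\mathcal{S}'$, $\mathcal{F}S_n(t)(\xi) \to \mathcal{F}S(t)(\xi)$ for $\mu$-a.e.\ $\xi$, with $|\mathcal{F}S_n(t)(\xi)| \leqslant |\mathcal{F}S(t)(\xi)|$ (up to an inessential factor from the mollifier, which has $|\mathcal{F}\rho_n| \leqslant 1$). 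For each $n$, the product $S_n(s,x)Z(s,x)$ is predictable and satisfies the Walsh integrability condition \eqref{X c1} because $S_n$ is bounded with compact support and $Z$ has uniformly bounded second moment. Hence $I_n(t) := \int_0^t\int_{\mathbb{R}^3} S_n(s,x)Z(s,x)\,W(ds,dx)$ is a well-defined Walsh integral.

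Next I would compute $\mathbb{E}[I_n(t)I_m(t)]$ using the Walsh isometry, which gives
\begin{align*}
\mathbb{E}[I_n(t)I_m(t)] = \int_0^t ds \int_{\mathbb{R}^6} S_n(s,x)S_m(s,y)\,\mathbb{E}[Z(s,x)Z(s,y)]\gamma(x-y)\,dxdy.
\end{align*}
By the stationarity condition \eqref{Z c1}, setting $g_s(z):=\mathbb{E}[Z(s,z)Z(s,0)]$, the inner integral equals $\int_{\mathbb{R}^3}(S_n(s)*\widetilde{S_m(s)})(z)\,g_s(z)\gamma(z)\,dz$. Since $g_s(\cdot)\gamma(\cdot) = \mathcal{F}\mu_s^Z$ in $\mathcal{S}'(\mathbb{R}^3)$ and the convolution $S_n(s)*\widetilde{S_m(s)}$ lies in $\mathcal{S}(\mathbb{R}^3)$, duality yields
\begin{align*}
\mathbb{E}[I_n(t)I_m(t)] = \int_0^t ds \int_{\mathbb{R}^3}\mathcal{F}S_n(s)(\xi)\overline{\mathcal{F}S_m(s)(\xi)}\,\mu_s^Z(d\xi).
\end{align*}

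The key comparison step is to bound $\mu_s^Z$ by $\mu$ tested against nonnegative functions. Applied to $\varphi_{n,m}(s,\cdot) := S_n(s)*\widetilde{S_m(s)}$ when $n=m$ (so $\mathcal{F}\varphi_{n,n} = |\mathcal{F}S_n(s)|^2 \geqslant 0$), and using $|g_s(z)| \leqslant g_s(0) \leqslant \sup_{x}\mathbb{E}[|Z(s,x)|^2]$ by Cauchy-Schwarz, I get
\begin{align*}
\int_{\mathbb{R}^3}|\mathcal{F}S_n(s)(\xi)|^2\mu_s^Z(d\xi) \leqslant \sup_{x\in\mathbb{R}^3}\mathbb{E}[|Z(s,x)|^2]\int_{\mathbb{R}^3}|\mathcal{F}S_n(s)(\xi)|^2\mu(d\xi).
\end{align*}
Polarizing gives an analogous bound for the difference $\mathcal{F}S_n - \mathcal{F}S_m$, so dominated convergence combined with the hypothesis $\int_0^T\int|\mathcal{F}S(t)|^2 d\mu\, dt < \infty$ shows $\{I_n(t)\}$ is Cauchy in $L^2(\Omega)$. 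Define the stochastic integral as its $L^2$-limit; this limit is independent of the approximating sequence by the same bound.

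Finally, passing to the limit in the identity for $\mathbb{E}[I_n(t)^2]$ yields \eqref{isometry fourier}, and the bound \eqref{isometry fourier bound} follows from the comparison estimate above. The main obstacle I anticipate is the comparison $\mu_s^Z \leqslant (\sup_x \mathbb{E}[|Z(s,x)|^2])\,\mu$ against nonnegative test functions: it relies essentially on the nonnegativity of $|\mathcal{F}S_n(s)|^2$ together with the $\mathcal{S}'$-identity defining $\mu_s^Z$, and one must be careful that the duality pairing is justified (i.e.\ that $S_n(s)*\widetilde{S_n(s)}$ and its Fourier transform actually lie in the Schwartz space against which $\mu_s^Z$ and the distribution $g_s\gamma$ pair). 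Once this step is in place, the rest is standard functional-analytic extension.
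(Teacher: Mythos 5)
The paper does not prove this proposition; it is quoted from Dalang (\cite[Theorem 2]{Dal99}), and your proposal is a faithful reconstruction of Dalang's original argument: mollify $S$, integrate the smooth approximations in Walsh's sense, pass to the Fourier side via $\mu_s^Z$, and use the comparison with $\mu$ to get the Cauchy property and the isometry in the limit. Two points deserve tightening. First, the cutoff $\chi_n$ is both unnecessary and harmful: since $S(t)$ is a distribution with rapid decrease, $S(t)*\rho_n$ already lies in $\mathcal{S}(\mathbb{R}^3)$, whereas multiplying by $\chi_n$ turns $\mathcal{F}S_n$ into a convolution with $\mathcal{F}\chi_n$ and destroys both the identity $\mathcal{F}S_n=\mathcal{F}S\cdot\mathcal{F}\rho_n$ and the domination $|\mathcal{F}S_n|\leqslant|\mathcal{F}S|$ on which your dominated-convergence step relies; drop the cutoff. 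Second, your framing of the key step as "bound $\mu_s^Z$ by $\mu$ tested against nonnegative functions" (and the closing remark attributing it to the nonnegativity of $|\mathcal{F}S_n(s)|^2$) is misleading: no domination $\mu_s^Z\leqslant C\mu$ holds against general nonnegative test functions. The inequality is obtained entirely on the physical side, $\int(S_n*\widetilde{S_n})\,g_s\gamma\,dz\leqslant g_s(0)\int(S_n*\widetilde{S_n})\,\gamma\,dz$, and it is the nonnegativity of the distribution $S$ itself (hence of $S_n*\widetilde{S_n}$), together with $\gamma\geqslant0$ and $|g_s(z)|\leqslant g_s(0)$, that makes it work — this is exactly where the hypothesis that $S(t)$ is a \emph{nonnegative} distribution enters, and it is why the comparison cannot be applied directly to the signed differences $S_n-S_m$; for the Cauchy step you must instead dominate $|\mathcal{F}S_n-\mathcal{F}S_m|^2$ by $4|\mathcal{F}S|^2$ and use the comparison for $S$ itself (via Fatou from the bounds for $S_n$) to see that $|\mathcal{F}S|^2$ is $ds\,\mu_s^Z(d\xi)$-integrable. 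With these repairs your argument is complete and matches the cited source.
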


To estimate the $L^p(\Omega)$-norm of stochastic integrals, we introduce the following version of the Burkholder-Davis-Gundy inequality (BDG inequality). See \textit{e.g.} \cite[Appendix B]{AnalysisofSPDEs} for more details.
\begin{Lem}[Burkholder-Davis-Gundy inequality]
\label{BDG inequality}
Let $\{X(t,x) \mid (t,x) \in [0,T]\times \mathbb{R}^3\}$ be a predictable random field which satisfies \eqref{X c1}.
Then, for every $p \in [2,\infty)$ and $t \in [0,T]$, 
\begin{align*}
    \left \lVert \int_0^t\int_{\mathbb{R}^3}X(s,x)W(ds,dx) \right \rVert_p^2 \leqslant 4p \left\lVert \int_0^t\int_{\mathbb{R}^6}X(s,x)X(s,y)\gamma(x-y)dxdyds \right\rVert_{\frac{p}{2}}.
\end{align*}
\end{Lem}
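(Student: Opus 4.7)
My plan is to identify the Walsh stochastic integral as a continuous $L^p$-bounded martingale with an explicit quadratic variation and then apply the continuous-martingale Burkholder-Davis-Gundy inequality. Under hypothesis \eqref{X c1}, setting
\[
M_t := \int_0^t\int_{\mathbb{R}^3}X(s,x)W(ds,dx), \quad t \in [0,T],
\]
the Walsh theory reviewed in Subsection \ref{subsection Stochastic integrals} gives that $\{M_t\}_{t\in[0,T]}$ is a continuous square-integrable $(\mathscr{F}_t)$-martingale with
\[
\langle M\rangle_t = \int_0^t\int_{\mathbb{R}^6}X(s,x)X(s,y)\gamma(x-y)dxdyds,
\]
which is nonnegative because $\gamma(x-y)dxdy$ is a nonnegative definite measure.

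I would then invoke the classical BDG inequality for continuous martingales: for every $p \geqslant 2$, $\mathbb{E}[|M_t|^p] \leqslant (4p)^{p/2}\mathbb{E}[\langle M\rangle_t^{p/2}]$, so that raising both sides to the power $2/p$ yields precisely the stated estimate $\lVert M_t\rVert_p^2 \leqslant 4p\,\lVert \langle M\rangle_t\rVert_{p/2}$. To make the argument self-contained with this explicit constant, I would apply It\^o's formula to $x \mapsto |x|^p$ (slightly regularized near the origin when $p \in [2,3)$) and, using $M_0 = 0$, obtain
\[
\mathbb{E}[|M_t|^p] = \frac{p(p-1)}{2}\,\mathbb{E}\!\left[\int_0^t |M_s|^{p-2}\,d\langle M\rangle_s\right],
\]
then bound $|M_s|^{p-2} \leqslant (M^*_t)^{p-2}$ with $M^*_t := \sup_{s\leqslant t}|M_s|$, apply H\"older's inequality with conjugate exponents $p/(p-2)$ and $p/2$ to the right-hand side, and close the loop via Doob's $L^p$-maximal inequality $\mathbb{E}[(M^*_t)^p] \leqslant (p/(p-1))^p \mathbb{E}[|M_t|^p]$. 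A short computation shows that the resulting coefficient is dominated by $4p$ for all $p \geqslant 2$.

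The main obstacle is purely bookkeeping, namely tracking the constants carefully through It\^o and Doob so that the final factor is literally $4p$ rather than some nondescript $C_p$; if one is willing to cite the continuous-martingale $L^p$-BDG as a black box, the entire argument reduces to reading off $\langle M\rangle_t$. No new idea is required beyond this classical estimate, whose applicability here is guaranteed by \eqref{X c1}, which ensures that $M_t$ is an honest $L^2$-martingale with continuous paths and absolutely continuous quadratic variation.
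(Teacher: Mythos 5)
Your overall route is the intended one: the paper gives no proof of this lemma, citing \cite{AnalysisofSPDEs} (Appendix B), where the estimate is obtained exactly as you describe, by regarding $M_t=\int_0^t\int_{\mathbb{R}^3}X(s,x)W(ds,dx)$ as a continuous square-integrable $(\mathscr{F}_t)$-martingale with quadratic variation $\langle M\rangle_t=\int_0^t\int_{\mathbb{R}^6}X(s,x)X(s,y)\gamma(x-y)\,dxdyds$ and then invoking the Burkholder--Davis--Gundy inequality for continuous martingales. Up to that point your proposal is correct, modulo the routine localization/integrability step needed to justify the It\^o identity.

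The genuine gap is your claim that the It\^o--H\"older--Doob loop produces the explicit factor $4p$. Carrying out exactly the computation you describe gives
\begin{align*}
\lVert M_t\rVert_p^2 \;\leqslant\; \frac{p(p-1)}{2}\left(\frac{p}{p-1}\right)^{p-2}\lVert \langle M\rangle_t\rVert_{p/2},
\end{align*}
and the constant $\frac{p(p-1)}{2}\bigl(\tfrac{p}{p-1}\bigr)^{p-2}$ grows like $\tfrac{e}{2}p^2$; already at $p=6$ it equals about $31.1>24=4p$, so the asserted "short computation" fails for all large $p$. The factor $4p$ in the lemma encodes the bound $\lVert M_t\rVert_p\leqslant 2\sqrt{p}\,\lVert\langle M\rangle_t^{1/2}\rVert_p$, i.e.\ a BDG constant of order $\sqrt{p}$ rather than order $p$; this sharp growth rate is the Davis/Carlen--Kree result (it is precisely what the cited Appendix B of \cite{AnalysisofSPDEs} supplies) and cannot be extracted from the naive It\^o plus Doob argument. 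So either quote the sharp-constant continuous-martingale BDG as a black box --- which is in effect what the paper does --- or settle for an unspecified constant $C_p$, which would in fact be enough for every application of the lemma in this paper since $p$ is always fixed, but does not prove the statement with the constant $4p$ as written.
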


\subsection{The Malliavan calculus and Malliavin-Stein bound}
\label{subsection Malliavin calculus and Malliavin-Stein bound}
In this section, we first recall some basic facts of the Malliavin calculus based on the Gaussian process $\{W(\varphi) \mid \varphi \in \mathcal{H}_T\}$ defined in Section 2.1.
Then we introduce some preliminary results of Malliavin-Stein's method needed for our proofs.
For details, the reader is referred to \cite{nourdin_peccati_2012,Nualartbook}.

Let $C_{\mathrm{p}}^{\infty}(\mathbb{R}^m)$ denote the space of smooth functions $f:\mathbb{R}^m \to \mathbb{R}$ such that all their partial derivatives have at most polynomial growth. The space of all smooth random variables of the form $F=f(W(\varphi_1),\ldots W(\varphi_m))$, where $m\geqslant 1$, $f \in C_{\mathrm{p}}^{\infty}(\mathbb{R}^m)$, and $\varphi_i \in \mathcal{H}_T, \; i = 1,2,\ldots,m$
is denoted by $\mathcal{S}$.
For a smooth random variable $F$ of the form above, its Malliavin derivative is given by
\begin{align*}
    DF = \sum_{i=1}^m\frac{\partial f}{\partial x_i}(W(\varphi_1),\ldots, W(\varphi_m))\varphi_i.
\end{align*}
Clearly, $DF$ is the $\mathcal{H}_T$-valued random variable. 

For any $p\in[1,\infty)$, let $\mathbb{D}^{1,p}$ denote the closure of $\mathcal{S}$ with respect to the norm 
\begin{align*}
    \lVert F \rVert_{1,p} = ( \mathbb{E}[|F|^p] + \mathbb{E}[\lVert DF \rVert_{\mathcal{H}_T}^p] )^{\frac{1}{p}}.
\end{align*}
The Malliavin derivative operator $D:L^p(\Omega) \to L^p(\Omega;\mathcal{H}_T)$, initially defined on $\mathcal{S}$, is closable and can be extended to $\mathbb{D}^{1,p}$. The closure of $D$ is again denoted by $D$. 
When $F \in \mathbb{D}^{1,p}$ and $DF$ is a random function valued in $\mathcal{H}_T$, we write this function as $D_{t,x}F, \ \  (t,x) \in [0,T] \times \mathbb{R}^3$.
For instance, if $F = f(W(\varphi_1),...,W(\varphi_m))$ for some $f \in C_{\mathrm{p}}^{\infty}(\mathbb{R}^m)$ and $\varphi_i \in C_0^{\infty}([0,T] \times \mathbb{R}^3), \; i = 1,2,\ldots,m$, then $DF$ is a random function and 
\begin{align*}
    D_{t,x}F = \sum_{i=1}^m\frac{\partial f}{\partial x_i}(W(\varphi_1),\ldots, W(\varphi_m))\varphi_i(t,x).
\end{align*}

The operator $D$ satisfies the following chain rule: Let $p \geqslant 1$. Suppose that $F \in \mathbb{D}^{1,p}$ and $\psi:\mathbb{R}\to \mathbb{R}$ is a continuously differentiable function with bounded derivative.
Then $\psi(F) \in \mathbb{D}^{1,p}$ with 
\begin{align*}
    D(\psi(F)) = \psi^{\prime}(F)DF.
\end{align*}

Let $\delta$ and $\mathrm{Dom}(\delta)$ denote the adjoint operator of $D:\mathbb{D}^{1,2} \to L^2(\Omega; \mathcal{H}_T)$ and its domain, respectively. The relationship between $D$ and $\delta$ is characterized by the duality formula 
\begin{align}
\label{duality formula}
    \mathbb{E}[\delta(u)F] = \mathbb{E}[\langle u, DF \rangle_{\mathcal{H}_T}],
\end{align}
where $u \in \mathrm{Dom}(\delta) \subset L^2(\Omega;\mathcal{H}_T)$ and $F \in \mathbb{D}^{1,2}$. In particular, $\mathbb{E}[\delta(u)] = 0$ for all $u \in \mathrm{Dom}(\delta)$ because $D1 = 0$. 
In our setting, it is known that the operator $\delta$ coincides with the stochastic integral defined in Section 2.1. 
That is, for any predictable random field $X = \{ X(t,x) \mid (t,x) \in [0,T] \times \mathbb{R}^3 \}$ which satisfies \eqref{X c1}, we have $X \in \mathrm{Dom}(\delta)$ and 
\begin{align}
\label{skorokhod integral}
    \delta(X) = \int_0^T\int_{\mathbb{R}^3}X(t,x)W(dt,dx), \quad \text{a.s.}.
\end{align}

One of the important results in the Malliavin calculus is the following proposition, which is known as the Clark-Ocone formula. See, for instance, \cite[Proposition 6.3]{spatialergodicityforSPDEsvia} for the proof.
\begin{Prop}
Suppose $F \in \mathbb{D}^{1,2}$. Then, $F$ can be represented as a stochastic integral
\begin{align*}
    F = \mathbb{E}[F] + \int_0^T\int_{\mathbb{R}^3}\mathbb{E}[D_{t,x}F | \mathscr{F}_t]W(dt,dx), \quad \text{a.s.}.
\end{align*}
\end{Prop}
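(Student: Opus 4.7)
The plan is to pair both sides of the identity against a dense family of test random variables in $L^2(\Omega,\mathscr{F}_T,P)$ and deduce the equality from the duality formula \eqref{duality formula} combined with the It\^o--Walsh isometry for predictable integrands. First I would set $u(t,x):=\mathbb{E}[D_{t,x}F\mid \mathscr{F}_t]$ and show that $u$ admits a predictable modification lying in $L^2(\Omega;\mathcal{H}_T)$. Since $D:\mathbb{D}^{1,2}\to L^2(\Omega;\mathcal{H}_T)$ is bounded and $\mathbb{E}[\,\cdot\mid\mathscr{F}_t\,]$ is a contraction on $L^2$, this can be obtained by first treating $F\in \mathcal{S}$, for which $DF$ is a smooth $\mathcal{H}_T$-valued random variable, and then extending by density in $\mathbb{D}^{1,2}$. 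Once the predictable version is in hand, property \eqref{skorokhod integral} legitimately identifies
\[
\delta(u)=\int_0^T\!\!\int_{\mathbb{R}^3}u(t,x)\,W(dt,dx)\quad\text{a.s.}
\]

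Next I would set $G:=F-\mathbb{E}[F]-\delta(u)$, which satisfies $\mathbb{E}[G]=0$, and prove that $\mathbb{E}[G\,\delta(v)]=0$ for every predictable $v\in L^2(\Omega;\mathcal{H}_T)$ obeying the integrability condition \eqref{X c1}. Applying the duality formula \eqref{duality formula} to $F$ and $v$ and slicing in time,
\[
\mathbb{E}[F\,\delta(v)]=\mathbb{E}[\langle DF,v\rangle_{\mathcal{H}_T}]=\int_0^T\mathbb{E}[\langle D_{t,\cdot}F,v(t,\cdot)\rangle_{\mathcal{H}}]\,dt.
\]
Since $v(t,\cdot)$ is $\mathscr{F}_t$-measurable, conditioning inside the inner product yields $\int_0^T\mathbb{E}[\langle u(t,\cdot),v(t,\cdot)\rangle_{\mathcal{H}}]\,dt=\mathbb{E}[\langle u,v\rangle_{\mathcal{H}_T}]$, and the It\^o--Walsh isometry for predictable integrands (obtained by polarizing the quadratic variation formula recalled in Section \ref{subsection Stochastic integrals}) in turn gives $\mathbb{E}[\langle u,v\rangle_{\mathcal{H}_T}]=\mathbb{E}[\delta(u)\delta(v)]$. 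Combining these equalities produces $\mathbb{E}[G\,\delta(v)]=0$. By the predictable (It\^o) representation theorem for the martingale measure $W$, every element of $L^2(\Omega,\mathscr{F}_T,P)$ is its mean plus such a $\delta(v)$, so $G$ is orthogonal in $L^2$ to all mean-zero elements; combined with $\mathbb{E}[G]=0$, this forces $G=0$ a.s.

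The main obstacle is the rigorous handling of the $\mathcal{H}$-valued conditional expectation in the first step: because $\mathcal{H}$ can contain genuine distributions, $D_{t,x}F$ need not be a pointwise object, and one must work throughout with inner products against predictable elements. I would circumvent this by first verifying the key identity for $F\in\mathcal{S}$ and simple predictable integrands of the form $v(t,x)=\mathbf{1}_{(t_0,t_1]}(t)Y\varphi(x)$ with $Y\in L^2(\Omega,\mathscr{F}_{t_0})$ and $\varphi\in\mathcal{S}(\mathbb{R}^3)$, where every quantity is classical, and then extending to general $F$ and $v$ by the density statement of Lemma \ref{dense lemma} together with the isometries above. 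The other nontrivial ingredient, the predictable representation theorem for $W$, is the direct analogue of the Brownian case and can be proved by a standard exponential-martingale or chaos-expansion argument in this Gaussian setting.
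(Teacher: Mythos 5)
Your argument is correct: the paper itself does not prove this proposition but only cites \cite[Proposition 6.3]{spatialergodicityforSPDEsvia}, and the proof given there proceeds exactly along your lines, identifying the integrand through the duality formula \eqref{duality formula}, the It\^o--Walsh isometry for predictable integrands, and the martingale (predictable) representation theorem for $W$, with the smooth-random-variable approximation handling the fact that $\mathbb{E}[D_{t,x}F\mid\mathscr{F}_t]$ need not be function-valued for general $F\in\mathbb{D}^{1,2}$. So your proposal is essentially the same approach as the source the paper relies on, and the two ingredients you leave as ``standard'' (the predictable projection of an $\mathcal{H}_T$-valued variable and the representation theorem for the worthy martingale measure generated by the noise filtration) are indeed standard in this setting.
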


If $DF$ and $DG$ are random functions, we obtain by applying the Clark-Ocone formula and the isometry property of stochastic integral that
\begin{align}
\label{Poincare inequality}
    |\mathrm{Cov}(F,G)| \leqslant \int_0^T\int_{\mathbb{R}^6}\lVert D_{t,x}F \rVert_2 \lVert D_{t,y}G \rVert_2 \gamma (x-y)dxdydt.
\end{align}
This inequality is usually called the Poincar\'{e} inequality.

Let us now introduce some preliminary results of Malliavin-Stein's method.
Stein's method is a probabilistic technique to allow one to get some bounds for the distance between two probability measures. 
Combining Stein's method and the Malliavin calculus, we can obtain very useful estimate to prove quantitative central limit theorems (see \cite{nourdin_peccati_2012}).
The next proposition is needed in the proof of Theorem \ref{main result1}.
\begin{Prop}
\label{Proposition Wasserstein bound}
Let $F = \delta(v)$ for some $v \in \mathrm{Dom}(\delta)$. Suppose that $\mathbb{E}[F^2] = 1$ and $F \in \mathbb{D}^{1,2}$. Then we have
\begin{align*}
     d_{W}(F, \mathcal{N}(0,1)) &\leqslant \sqrt{\frac{2}{\pi}} \sqrt{\mathrm{Var} \langle DF,v \rangle_{\mathcal{H}_T}}.
\end{align*}
\end{Prop}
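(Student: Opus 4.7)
The plan is to combine Stein's method for the Wasserstein distance with the Malliavin duality formula, which is the standard route in the Nourdin-Peccati theory.

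First I would recall the key input from Stein's method: for any $1$-Lipschitz function $h:\mathbb{R}\to\mathbb{R}$, the Stein equation
\begin{equation*}
f'(x) - xf(x) = h(x) - \mathbb{E}[h(N)], \qquad N\sim\mathcal{N}(0,1),
\end{equation*}
admits a solution $f_h\in C^1(\mathbb{R})$ whose derivative satisfies $\|f_h'\|_\infty \leqslant \sqrt{2/\pi}$. Substituting $F$ in place of $x$ and taking expectations gives
\begin{equation*}
\mathbb{E}[h(F)] - \mathbb{E}[h(N)] = \mathbb{E}[f_h'(F) - F f_h(F)].
\end{equation*}

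Next I would rewrite the term $\mathbb{E}[F f_h(F)]$ using $F=\delta(v)$ and the duality formula \eqref{duality formula}. Since $f_h$ is $C^1$ with bounded derivative and $F\in\mathbb{D}^{1,2}$, the chain rule gives $D(f_h(F)) = f_h'(F)\,DF$, so
\begin{equation*}
\mathbb{E}[F f_h(F)] = \mathbb{E}[\delta(v) f_h(F)] = \mathbb{E}\bigl[\langle v, D(f_h(F))\rangle_{\mathcal{H}_T}\bigr] = \mathbb{E}\bigl[f_h'(F)\,\langle v, DF\rangle_{\mathcal{H}_T}\bigr].
\end{equation*}
Inserting this yields
\begin{equation*}
\mathbb{E}[h(F)]-\mathbb{E}[h(N)] = \mathbb{E}\bigl[f_h'(F)\bigl(1-\langle DF, v\rangle_{\mathcal{H}_T}\bigr)\bigr],
\end{equation*}
and hence $|\mathbb{E}[h(F)]-\mathbb{E}[h(N)]|\leqslant \sqrt{2/\pi}\,\mathbb{E}|1-\langle DF, v\rangle_{\mathcal{H}_T}|$.

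Finally I would identify the constant $1$ as the mean of $\langle DF, v\rangle_{\mathcal{H}_T}$: applying the same duality formula to $F$ itself,
\begin{equation*}
\mathbb{E}\bigl[\langle DF, v\rangle_{\mathcal{H}_T}\bigr] = \mathbb{E}[\delta(v) F] = \mathbb{E}[F^2] = 1.
\end{equation*}
Therefore by Cauchy-Schwarz, $\mathbb{E}|1-\langle DF, v\rangle_{\mathcal{H}_T}|\leqslant \sqrt{\mathrm{Var}\,\langle DF, v\rangle_{\mathcal{H}_T}}$, and taking the supremum over $h\in\mathscr{H}$ in the definition \eqref{Wasserstein distance def} gives the desired bound.

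There is no real obstacle here, since the result is a now-classical consequence of Stein's method once the duality formula is available; the only care needed is to verify that the chain rule is applicable to $f_h(F)$ (which follows from $f_h'$ being bounded and $F\in\mathbb{D}^{1,2}$) and that the integrability conditions to apply \eqref{duality formula} are satisfied, both of which are routine.
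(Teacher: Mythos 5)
Your proof is correct and is exactly the standard Malliavin--Stein argument that the paper invokes by citation (it refers to \cite[Proposition 2.2]{CLTforSHE} and \cite[Theorem 8.2.1]{nualart_nualart_2018} rather than giving a proof): Stein equation with $\lVert f_h'\rVert_\infty \leqslant \sqrt{2/\pi}$, duality plus chain rule to replace $\mathbb{E}[Ff_h(F)]$ by $\mathbb{E}[f_h'(F)\langle DF,v\rangle_{\mathcal{H}_T}]$, and Cauchy--Schwarz after identifying $\mathbb{E}[\langle DF,v\rangle_{\mathcal{H}_T}]=\mathbb{E}[F^2]=1$. No gaps.
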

For the proof of this proposition, see \textit{e.g.} \cite[Proposition 2.2]{CLTforSHE} and \cite[Theorem 8.2.1]{nualart_nualart_2018}. See also \cite[Theorem 5.1.3]{nourdin_peccati_2012} for an analogous result.

In order to establish the functional CLT in Theorem \ref{main result2}, we also need the following multivariate counterpart of Proposition \ref{Proposition Wasserstein bound}, which is a version of \cite[Theorem 6.1.2]{nourdin_peccati_2012}. See \cite[Proposition 2.3]{CLTforSHE} for the proof.

\begin{Prop}
\label{Proposition multivariate stein bound}
Fix $m \geqslant 2$, and let $F = (F_1, \ldots , F_m)$ be a random vector such that for every $i = 1,2,\ldots m$, $F_i = \delta(v_i)$ for some $v_i \in \mathrm{Dom}(\delta)$ and $F_i \in \mathbb{D}^{1,2}$. Let $Z$ be an m-dimensional centered Gaussian vector with covariance matrix $(C_{i,j})_{1 \leqslant i,j \leqslant m}$.
Then, for any twice continuously differentiable function $h: \mathbb{R}^m \to \mathbb{R}$ with bounded second partial derivatives, we have
\begin{align*}
    |\mathbb{E}[h(F)] - \mathbb{E}[h(Z)]| \leqslant \frac{m}{2}\lVert \nabla^2 h \rVert _{\infty}\sqrt{\sum_{i,j=1}^{m}\mathbb{E}[(C_{i,j}-\langle DF_i, v_j \rangle_{\mathcal{H}_T})^2]},
\end{align*}
where 
\begin{align*}
    \lVert \nabla^2 h \rVert _{\infty} = \max_{1 \leqslant i,j \leqslant m} \sup_{x \in \mathbb{R}^m} \left| \frac{\partial^2h}{\partial x_i \partial x_j}(x) \right|.
\end{align*}
\end{Prop}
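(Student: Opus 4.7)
The plan is to follow the standard Nourdin--Peccati interpolation template that produces multivariate Stein bounds for smooth test functions. I take $Z$ to denote the $m$-dimensional Gaussian target chosen independent of $F$, and introduce the smart path
\[
\psi(t) = \mathbb{E}[h(\sqrt{1-t}\,F + \sqrt{t}\,Z)], \quad t \in [0,1],
\]
so that $|\mathbb{E}[h(F)] - \mathbb{E}[h(Z)]| = |\psi(0) - \psi(1)| \leqslant \int_0^1 |\psi'(t)|\,dt$. Differentiating under the expectation and splitting into the $F$- and $Z$-contributions reduces the problem to estimating
\[
\psi'(t) = \frac{1}{2\sqrt{t}}\sum_{i=1}^m \mathbb{E}\Bigl[\frac{\partial h}{\partial x_i}(\cdot)\, Z_i\Bigr] - \frac{1}{2\sqrt{1-t}}\sum_{i=1}^m \mathbb{E}\Bigl[\frac{\partial h}{\partial x_i}(\cdot)\, F_i\Bigr],
\]
where $(\cdot)$ abbreviates $\sqrt{1-t}\,F + \sqrt{t}\,Z$.

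For the $Z$-sum I would condition on $F$ and apply the ordinary Gaussian integration by parts to obtain $\mathbb{E}[\partial_i h(\cdot)\, Z_i] = \sqrt{t}\sum_j C_{ij}\,\mathbb{E}[\partial^2_{ij} h(\cdot)]$. For the $F$-sum I would use $F_i = \delta(v_i)$ together with the duality formula \eqref{duality formula} applied to $G = \partial_i h(\cdot)$. The crucial input here is the chain rule for $D$: since $\nabla^2 h$ is bounded, each $\partial_i h$ is Lipschitz, so the Lipschitz version of the chain rule for Malliavin-differentiable vectors gives $DG = \sqrt{1-t}\sum_j \partial^2_{ij} h(\cdot)\,DF_j$. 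Substituting these two identities cancels the $\sqrt{t}$ and $\sqrt{1-t}$ singularities; after renaming the summation indices and using the symmetry $\partial^2_{ij} h = \partial^2_{ji} h$ to rewrite $\langle v_i, DF_j\rangle_{\mathcal{H}_T}$ as $\langle DF_i, v_j\rangle_{\mathcal{H}_T}$, the computation collapses to
\[
\psi'(t) = \frac{1}{2}\sum_{i,j=1}^m \mathbb{E}\bigl[(C_{ij} - \langle DF_i, v_j\rangle_{\mathcal{H}_T})\,\partial^2_{ij} h(\cdot)\bigr].
\]

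To conclude, I would bound $|\psi'(t)|$ by pulling out $\lVert \nabla^2 h \rVert_\infty$, invoking Jensen's inequality to replace each $\mathbb{E}|C_{ij} - \langle DF_i, v_j\rangle_{\mathcal{H}_T}|$ by $\sqrt{\mathbb{E}[(C_{ij} - \langle DF_i, v_j\rangle_{\mathcal{H}_T})^2]}$, and then applying Cauchy--Schwarz to the double sum of $m^2$ terms to extract the factor $m = \sqrt{m^2}$. The resulting estimate is independent of $t$, and integrating over $[0,1]$ produces exactly the stated bound with the prefactor $m/2$.

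The step I expect to require the most care is the Malliavin chain rule for $\partial_i h$, which is merely Lipschitz rather than in $C^\infty_{\mathrm{p}}$, as well as the exchange of differentiation and expectation in computing $\psi'(t)$. Both difficulties are handled by a standard approximation of $h$ by smooth functions via convolution with a Gaussian mollifier, followed by passage to the limit using the uniform control afforded by $\lVert \nabla^2 h \rVert_\infty$ and the hypotheses $F_i \in \mathbb{D}^{1,2}$.
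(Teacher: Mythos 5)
Your interpolation argument is correct and is essentially the proof the paper relies on: the paper does not prove this proposition itself but cites \cite[Proposition 2.3]{CLTforSHE}, whose proof is exactly this smart-path computation with Gaussian integration by parts on the $Z$-term, the duality formula plus the $C^1$ chain rule on the $\delta(v_i)$-term, and the Cauchy--Schwarz step yielding the factor $m/2$. No gaps worth flagging; the regularity issues you mention (chain rule for $\partial_i h$ and differentiation under the expectation) are handled as you describe.
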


\subsection{Some other basic results}
\label{subsection some basic estimates}
Recall that the fundamental solution of the three-dimensional wave equation is denoted by $G$.
It follows from \eqref{fundamental solution} that 
\begin{align}
\label{G total measure}
    G(t, \mathbb{R}^3) \coloneqq \int_{\mathbb{R}^3}G(t,dx) = t, \quad t > 0.
\end{align}
For convenience, we set $G(t, dx) = 0$ for $t \leqslant 0$.

It is well-known (see \textit{e.g.} \cite[Chapter 4]{Mizohata}) that the Fourier transform of $G(t)$ is given by
\begin{align*}
    \mathcal{F}G(t)(\xi) = \frac{\sin(2\pi t|\xi|)}{2\pi |\xi|}, \quad t \geqslant 0.
\end{align*}
From this we obtain, for any $t,s \in [0,T]$,  
\begin{align}
    |\mathcal{F}G(t)(\xi)| \leqslant t \leqslant T \label{FG bound}
\end{align}
and
\begin{align}
    |\mathcal{F}G(t)(\xi)-\mathcal{F}G(s)(\xi)| 
    &= \frac{1}{\pi |\xi|}|\cos(\pi(t+s)|\xi|)\sin(\pi(t-s)|\xi|)| \nonumber\\
    &\leqslant \frac{1}{\pi |\xi|}|\sin(\pi(t-s)|\xi|)| \nonumber \\ 
    &= 2\left|\mathcal{F}G\left(\frac{|t-s|}{2}\right)(\xi)\right| \label{FG minus FG bound by FG} \\ 
    &\leqslant |t-s|. \label{FG minus FG bound}
\end{align}
Moreover, simple estimates show that for $t \in [0,T]$, 
\begin{align}
    |\mathcal{F} G(t)(\xi)|^2 &= \left|\frac{\sin{(2\pi t|\xi|)}}{2\pi |\xi|}\right|^2 
    \leqslant \frac{1}{4{\pi}^2|\xi|^2}\mathbf{1}_{\{|\xi|>1\}} + t^2\mathbf{1}_{\{|\xi|\leqslant 1\}} \nonumber\\
    &\leqslant \frac{1}{2{\pi}^2}\frac{1}{1+|\xi|^2}\mathbf{1}_{\{|\xi|>1\}} + \frac{2t^2}{1+|\xi|^2}\mathbf{1}_{\{|\xi|\leqslant 1\}} \nonumber\\
    &\leqslant (1+2T^2)\langle \xi \rangle^{-2}. \label{fourier G bound}
\end{align}
Consequently, it follows from Dalang's condition \eqref{Dalang's condition} that  
\begin{align}
\label{muFG bound}
    \int_{\mathbb{R}^3}\mu(d\xi)|\mathcal{F} G(t)(\xi)|^2 \leqslant (1+2T^2)\int_{\mathbb{R}^3} \langle \xi \rangle^{-2} \mu(d\xi) < \infty.
\end{align}

We next introduce the regularization of $G(t)$.
Set 
\begin{align*}
    \rho(x)\coloneqq 
    \begin{cases}
        c\exp{(-\frac{1}{1-|x|^2})} &(|x|<1),\\
        0 &(|x| \geqslant 1),
    \end{cases}
\end{align*}
where $c>0$ is a normalization constant such that $\int_{\mathbb{R}^3} \rho(x)dx = 1$.
Let $\{a_n\}_{n=1}^{\infty}$ be a fixed monotone increasing positive sequence such that $\sum_{n = 1}^{\infty}\frac{1}{a_n} =1$. For $n \geqslant 1$, we define 
$\rho_n(x) = (a_n)^3\rho(a_nx)$ for all $x \in \mathbb{R}^3$ and 
\begin{align*}
    G_n(t,x) = \int_{\mathbb{R}^3}\rho_n(x-y)G(t,dy).
\end{align*}
Here are some elementary properties of $G_n$.
\begin{Lem}
\label{G_n property}
We have $G_n(t,\cdot) \in C_0^{\infty}(\mathbb{R}^3) \subset  \mathcal{S}(\mathbb{R}^3)$ for all $t \in [0,T]$ and $G_n(\cdot,x) \in C^{\infty}((0,T))\cap C([0,T])$ for all $x \in \mathbb{R}^3$. In particular, $[0,T] \times \mathbb{R}^3 \ni (t,x) \mapsto G_n(t,x) \in \mathbb{R}$ is continuous.  Moreover, $G_n$ satisfies the following properties:
\begin{enumerate}[\normalfont(i)]
    \item $\supp G_n(t,\cdot) \subset B_{t+\frac{1}{a_n}}$.
    \item $\sup_{(t,x) \in [0,T]\times\mathbb{R}^3}|G_n(t,x)| \leqslant T\lVert \rho_n\rVert_{\infty}$. 
    \item $|\mathcal{F} G_n(t)(\xi)| = |\mathcal{F}\rho_n(\xi)||\mathcal{F} G(t)(\xi)| \leqslant |\mathcal{F} G(t)(\xi)|$.
\end{enumerate}
\end{Lem}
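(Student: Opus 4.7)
The plan is to work from two explicit representations of $G_n$. First, since $G(t,\cdot)$ is the scaled surface measure on $\partial B_t$, parametrizing $y = t\omega$ with $\omega \in S^2$ gives $\sigma_t(dy) = t^2\,d\omega$ and hence
\begin{align*}
G_n(t,x) = \frac{t}{4\pi}\int_{S^2} \rho_n(x - t\omega)\,d\omega.
\end{align*}
Second, $G_n(t,x) = (\rho_n * G(t))(x)$ by the definition of $G_n$. Each representation is tailored to different claims in the statement.

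For the smoothness and joint continuity assertions I would use the spherical representation. Since $\rho_n \in C_0^{\infty}(\mathbb{R}^3)$ and $S^2$ is compact, the integrand $(t,x,\omega) \mapsto \rho_n(x-t\omega)$ together with all of its $(t,x)$-partial derivatives is bounded uniformly in $\omega$ on compacts of $[0,T]\times \mathbb{R}^3$. Dominated convergence then justifies differentiation under the integral sign, giving $G_n(t,\cdot) \in C^{\infty}(\mathbb{R}^3)$ for each $t$, and $G_n(\cdot,x) \in C^{\infty}((0,T))$ for each $x$; joint continuity on $[0,T]\times\mathbb{R}^3$ follows similarly, with the endpoint value $G_n(0,x) = 0$ produced automatically by the prefactor $t$.

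The three listed properties are then short. For (i), the convolution representation together with $\supp \rho_n \subset B_{1/a_n}$ and $\supp G(t,\cdot) \subset \partial B_t$ yields
\begin{align*}
\supp G_n(t,\cdot) \subset B_t + B_{1/a_n} = B_{t+\frac{1}{a_n}},
\end{align*}
which combined with the smoothness above also confirms $G_n(t,\cdot) \in C_0^{\infty}(\mathbb{R}^3) \subset \mathcal{S}(\mathbb{R}^3)$. For (ii), bounding $\rho_n$ by $\lVert \rho_n\rVert_{\infty}$ and using the total mass identity $G(t,\mathbb{R}^3) = t$ from \eqref{G total measure} gives $|G_n(t,x)| \leqslant \lVert \rho_n\rVert_{\infty}\cdot t \leqslant T\lVert \rho_n\rVert_{\infty}$. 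For (iii), the convolution-multiplication rule for the Fourier transform gives $\mathcal{F}G_n(t)(\xi) = \mathcal{F}\rho_n(\xi)\,\mathcal{F}G(t)(\xi)$, and $|\mathcal{F}\rho_n(\xi)| \leqslant \int_{\mathbb{R}^3}\rho_n(x)\,dx = 1$ because $\rho_n \geqslant 0$.

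The proof is largely bookkeeping, so there is no genuine obstacle. The one mildly subtle point is continuity up to $t=0$; I sidestep any worry by restricting smoothness in $t$ to the open interval $(0,T)$ and handling the endpoint by direct inspection of the spherical representation.
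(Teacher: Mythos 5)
Your proof is correct and follows essentially the same route as the paper: the paper also derives the three listed properties directly from the convolution definition and uses the spherical representation $G_n(t,x)=\frac{t}{4\pi}\int_{\partial B_1}\rho_n(x-ty)\,\sigma_1(dy)$ (citing Stein--Shakarchi) for the smoothness and joint continuity, which you simply spell out via differentiation under the integral sign. No gaps; the handling of $t=0$ through the prefactor $t$ is exactly the right observation.
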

\begin{proof}
The three properties easily follow from the definition of $G_n$.
Because
\begin{align*}
    G_n(t,x) = \frac{t}{4\pi}\int_{\partial B_1}\rho_n(x-ty)\sigma_1(dy) \qquad t>0,
\end{align*}
it is also easy to check that the rest of statements hold true (see, \textit{e.g.} Lemma 3.4 of Chapter 6 in \cite{SteinShakarchi}).
\end{proof}

Lemma \ref{G_n property} together with \eqref{muFG bound} implies the following uniform bound:
\begin{align}
\label{muFG_n uniform bound}
    \sup_{n} \int_{\mathbb{R}^3}G_n(t,x)G_n(t,y)\gamma(x-y)dxdy 
    \leqslant (1+2T^2)\int_{\mathbb{R}^3}\langle \xi \rangle^{-2} \mu(d\xi) < \infty.
\end{align}

Following the notation of \cite{Averaging2dSWE, CLTforSWEindimension1and2}, we define 
\begin{align*}
    \varphi_{t,R}(s,x) = \int_{\mathbb{R}^3}\mathbf{1}_{B_R}(x-y)G(t-s,dy), \quad
    \varphi_{n,t,R}(s,x) = \int_{\mathbb{R}^3}\mathbf{1}_{B_R}(x-y)G_n(t-s,y)dy.
\end{align*}

\begin{Lem}
\label{varphi property}
For any $s \leqslant t \leqslant T$, we have $\varphi_{n,t,R}(s,\cdot) \in C_0^{\infty}(\mathbb{R}^3)$, $\varphi_{t,R}(s,\cdot) \in L^1(\mathbb{R}^3) \cap L^{\infty}(\mathbb{R}^3)$ and $\varphi_{n,t,R}(s,x) \leqslant \varphi_{t,R+1}(s,x)$ for all $x \in \mathbb{R}^3$.
\end{Lem}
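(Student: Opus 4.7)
The three assertions are essentially facts about the convolution structure of $\varphi_{n,t,R}$ and $\varphi_{t,R}$, so the plan is to rewrite each quantity as a convolution and then invoke standard Fubini/support arguments.

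\textbf{Part 1.} First I would observe that $\varphi_{n,t,R}(s,\cdot) = \mathbf{1}_{B_R} \ast G_n(t-s,\cdot)$. By Lemma \ref{G_n property}, $G_n(t-s,\cdot) \in C_0^{\infty}(\mathbb{R}^3)$ with support in $B_{(t-s)+1/a_n}$, and $\mathbf{1}_{B_R}$ has compact support in $B_R$. Since the convolution of a compactly supported $L^1$ function with a $C_0^{\infty}$ function is again $C_0^{\infty}$, with support contained in the Minkowski sum of the two supports, we conclude $\varphi_{n,t,R}(s,\cdot) \in C_0^{\infty}(\mathbb{R}^3)$.

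\textbf{Part 2.} For the $L^{\infty}$ bound, I would use that $G(t-s,\cdot)$ is a nonnegative measure of total mass $t-s$ (see \eqref{G total measure}), so $\varphi_{t,R}(s,x) \leqslant G(t-s,\mathbb{R}^3) = t-s \leqslant T$. For the $L^1$ bound I would apply Fubini--Tonelli (all integrands are nonnegative):
\begin{align*}
    \int_{\mathbb{R}^3}\varphi_{t,R}(s,x)\,dx
    = \int_{\mathbb{R}^3}\left(\int_{\mathbb{R}^3}\mathbf{1}_{B_R}(x-y)\,dx\right)G(t-s,dy)
    = |B_R|(t-s) < \infty.
\end{align*}

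\textbf{Part 3.} This is the only step with any content. I would unfold $G_n$ as $\rho_n \ast G(t-s,\cdot)$ and apply Fubini to write
\begin{align*}
    \varphi_{n,t,R}(s,x)
    &= \int_{\mathbb{R}^3}\int_{\mathbb{R}^3}\mathbf{1}_{B_R}(x-y)\rho_n(y-z)\,dy\, G(t-s,dz) \\
    &= \int_{\mathbb{R}^3}(\mathbf{1}_{B_R} \ast \rho_n)(x-z)\, G(t-s,dz).
\end{align*}
The pointwise inequality I need is $(\mathbf{1}_{B_R} \ast \rho_n)(w) \leqslant \mathbf{1}_{B_{R+1}}(w)$ for all $w$. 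Since $\int \rho_n = 1$ and $\rho_n \geqslant 0$, the left-hand side is at most $1$; moreover $\supp \rho_n \subseteq B_{1/a_n} \subseteq B_1$ (here I use that $\sum_n 1/a_n = 1$ forces $1/a_n \leqslant 1$), so if $|w|>R+1$ then the integrand in the convolution vanishes identically. Combining these two observations gives the desired pointwise bound, and plugging it back under the $G(t-s,dz)$ integral yields $\varphi_{n,t,R}(s,x) \leqslant \varphi_{t,R+1}(s,x)$.

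The main (only) obstacle is the Part 3 calculation; the precise input needed is the support bound $\supp \rho_n \subseteq B_1$, which follows from the definition of the sequence $\{a_n\}$. Parts 1 and 2 are routine convolution/Fubini arguments given the properties of $G_n$ already established in Lemma \ref{G_n property}.
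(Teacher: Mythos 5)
Your proposal is correct and follows essentially the same route as the paper: the smoothness of $\varphi_{n,t,R}(s,\cdot)$ from $G_n(t-s,\cdot)\in C_0^\infty(\mathbb{R}^3)$, the $L^1$ and $L^\infty$ bounds by Fubini--Tonelli and the total mass $G(t-s,\mathbb{R}^3)=t-s\leqslant T$, and the last assertion via Fubini--Tonelli together with the pointwise bound $(\mathbf{1}_{B_R}*\rho_n)(x)\leqslant \mathbf{1}_{B_{R+1}}(x)$. Your explicit justification that $\supp\rho_n\subseteq B_{1/a_n}\subseteq B_1$ (using $1/a_n\leqslant\sum_k 1/a_k=1$) correctly fills in the detail the paper leaves implicit.
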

\begin{proof}
The fact $G_n(t-s, \cdot) \in C_0^{\infty}(\mathbb{R}^3)$ implies $\varphi_{n,t,R}(s,\cdot) \in C_0^{\infty}(\mathbb{R}^3)$, and 
it is easy to check that 
\begin{align*}
    \lVert \varphi_{t,R}(s, \cdot)\rVert_{L^1(\mathbb{R}^3)} \leqslant T|B_1|R^3 \quad \text{and} \quad
    \lVert \varphi_{t,R}(s, \cdot)\rVert_{L^{\infty}(\mathbb{R}^3)} \leqslant T.
\end{align*}
The last assertion follows from the Fubini-Tonelli theorem and $(\mathbf{1}_{B_R}*\rho_n)(x)\leqslant \mathbf{1}_{B_{R+1}}(x)$.
This completes the proof.
\end{proof}

Recall that since $\gamma = \mathcal{F}\mu$ in $\mathcal{S}^{\prime}(\mathbb{R}^3)$, we have
\begin{align*}
    \int_{\mathbb{R}^3}(\varphi*\tilde{\psi})(x)\gamma(x)dx =
    \int_{\mathbb{R}^6}\varphi(x)\psi(y)\gamma(x-y)dxdy = \int_{\mathbb{R}^3}\mathcal{F}\varphi(\xi)\overline{\mathcal{F}\psi(\xi)}\mu(d\xi)
\end{align*}
for any $\varphi, \psi \in \mathcal{S}(\mathbb{R}^3)$.
However, in some cases we want to use the Fourier transform for functions which do not belong to $\mathcal{S}(\mathbb{R}^3)$. 
For this purpose, the next lemma is often used in the sections below.

\begin{Lem}
\label{Lemma fourier transform}
Let $f,g$ be bounded measurable functions with compact supports on $\mathbb{R}^3$. Then, $f,g \in \mathcal{H}$ and it holds that 
\begin{align}
\label{alpha3}
    \langle f,g \rangle_{\mathcal{H}} = \int_{\mathbb{R}^6}f(x)g(y)\gamma(x-y)dxdy = \int_{\mathbb{R}^3}\mathcal{F}f(\xi)\overline{\mathcal{F}g(\xi)}\mu(d\xi).
\end{align}
\end{Lem}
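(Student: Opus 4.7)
The plan is to approximate $f$ and $g$ by Schwartz functions via mollification, verify that these approximations form a Cauchy sequence in $\mathcal{H}$, and pass to the limit in each of the two representations of the inner product. Concretely, I would fix $R>0$ with $\supp f \cup \supp g \subset B_R$ and set $f_n := \rho_n * f$, $g_n := \rho_n * g$, where $\rho_n$ is the mollifier introduced just before Lemma~\ref{G_n property}. By standard mollifier theory, $f_n, g_n \in C_0^{\infty}(\mathbb{R}^3) \subset \mathcal{S}(\mathbb{R}^3)$, their supports are contained in a single compact ball $B_{R'}$ uniformly in $n$, $\|f_n\|_{\infty}\leqslant \|f\|_{\infty}$ and $\|g_n\|_{\infty}\leqslant \|g\|_{\infty}$, and $f_n\to f$, $g_n\to g$ almost everywhere.

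Next I would show that $\{f_n\}$ (and similarly $\{g_n\}$) is Cauchy in $\mathcal{H}$. Using the $\gamma$-representation of the norm,
\[
\|f_n - f_m\|_{\mathcal{H}}^2 = \int_{\mathbb{R}^6}(f_n-f_m)(x)(f_n-f_m)(y)\,\gamma(x-y)\,dx\,dy,
\]
the integrand vanishes outside $B_{R'}\times B_{R'}$ and is dominated there by $4\|f\|_{\infty}^2\gamma(x-y)$, which is integrable because $\gamma$ is locally integrable (being the density of a tempered Radon measure). Combined with the a.e.\ convergence $f_n\to f$, dominated convergence gives $\|f_n-f_m\|_{\mathcal{H}}\to 0$. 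Denoting the $\mathcal{H}$-limits again by $f$ and $g$ establishes $f,g\in \mathcal{H}$, and also yields the first equality in \eqref{alpha3} by the same DCT applied to $\langle f_n,g_n\rangle_{\mathcal{H}} = \int f_n(x)g_n(y)\gamma(x-y)\,dx\,dy$.

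For the Fourier representation, I would use the convolution identity $\mathcal{F}f_n = \mathcal{F}\rho_n\cdot \mathcal{F}f$ together with $|\mathcal{F}\rho_n|\leqslant 1$ and $\mathcal{F}\rho_n(\xi)\to 1$ pointwise (since $\mathcal{F}\rho_n(\xi) = \mathcal{F}\rho(\xi/a_n)$ and $a_n\to\infty$). This gives $\mathcal{F}f_n\to \mathcal{F}f$ pointwise with $|\mathcal{F}f_n|\leqslant |\mathcal{F}f|$, and analogously for $g$. To run dominated convergence on $(\mathbb{R}^3,\mu)$ I need $\mathcal{F}f,\mathcal{F}g\in L^2(\mu)$, and this is where the only mildly delicate point lies: the hypotheses on $f,g$ give $\mathcal{F}f$ only as a bounded continuous function a priori, so $\mu$-integrability is not manifest. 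I would resolve it via Fatou applied to the approximating sequence,
\[
\int|\mathcal{F}f|^2\,\mu(d\xi) \leqslant \liminf_n\int|\mathcal{F}f_n|^2\,\mu(d\xi) = \liminf_n \|f_n\|_{\mathcal{H}}^2 < \infty,
\]
and similarly for $g$, so that $\mathcal{F}f\cdot\overline{\mathcal{F}g}\in L^1(\mu)$ by Cauchy-Schwarz. Dominated convergence then produces $\int\mathcal{F}f_n\overline{\mathcal{F}g_n}\,d\mu\to \int\mathcal{F}f\,\overline{\mathcal{F}g}\,d\mu$, completing the second equality in \eqref{alpha3}.
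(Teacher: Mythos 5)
Your proposal is correct and follows essentially the same route as the paper: mollify $f,g$ to get smooth compactly supported approximations, use the uniform bound $\|f_n\|_{\mathcal H}^2\leqslant \|f\|_\infty^2\int_{K\times K}\gamma(x-y)\,dx\,dy<\infty$ together with Fatou to get $\mathcal F f\in L^2(\mu)$, and pass to the limit in both representations by dominated convergence. The only (immaterial) difference is that you verify the Cauchy property of $\{f_n\}$ in $\mathcal H$ via the spatial $\gamma$-integral, whereas the paper does it on the Fourier side by showing $\int|\mathcal F f-\mathcal F f_n|^2\,d\mu\to 0$; both are valid.
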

\begin{proof}
Fix $\alpha \in C_0^{\infty}(\mathbb{R}^3)$ such that $\alpha \geqslant 0$, $\supp \alpha \subset B_1$, and $\int_{\mathbb{R}^3}\alpha(x)dx=1$. Set for all $n \geqslant 1$, $\alpha_n(x)=n^3\alpha(nx)$ for all $x\in \mathbb{R}^3$.
We define $f_n = f*\alpha_n$ and $g_n = g*\alpha_n$.
Then $f_n$ and $g_n$ belong to $C_0^{\infty}(\mathbb{R}^3)$ and thus to $\mathcal{H}$. Hence 
\begin{align}
\label{alpha2}
    \langle f_n, g_n \rangle_{\mathcal{H}} = \int_{\mathbb{R}^6}f_n(x)g_n(y)\gamma(x-y)dxdy &= \int_{\mathbb{R}^3}\mathcal{F}f_n(\xi)\overline{\mathcal{F}g_n(\xi)}\mu(d\xi).
\end{align}
Furthermore, by the construction of $f_n$, there is a compact set $K \subset \mathbb{R}^3$ such that $\supp f_n \subset K$ for all $n\geqslant1$.
Then, it follows from Fatou's lemma that
\begin{align}
    \int_{\mathbb{R}^3}|\mathcal{F}f(\xi)|^2\mu(d\xi) &\leqslant \liminf_{n\to \infty}\int_{\mathbb{R}^3}|\mathcal{F}f_n(\xi)|^2\mu(d\xi)
    = \liminf_{n\to \infty}\int_{\mathbb{R}^6}f_n(x)f_n(y)\gamma(x-y)dxdy\nonumber\\
    &\leqslant \lVert f \rVert_{L^{\infty}(\mathbb{R}^3)}^2\int_{K \times K}\gamma(x-y)dxdy < \infty, \label{alpha1}
\end{align}
where the last inequality follows because $\gamma(x)dx$ is tempered measure.
Because \eqref{alpha1} holds and $|1 - \mathcal{F}\alpha_n(\xi)|$ is bounded and converges to 0 as $n\to \infty$, we conclude from the Lebesgue dominated convergence theorem that
\begin{align*}
    \lim_{n \to \infty}\int_{\mathbb{R}^3}\mu(d\xi)|\mathcal{F}f(\xi) - \mathcal{F}f_n(\xi)|^2
    =\lim_{n \to \infty}\int_{\mathbb{R}^3}\mu(d\xi)|\mathcal{F}f(\xi)|^2|1 - \mathcal{F}\alpha_n(\xi)|^2 = 0.
\end{align*}
This implies that $f \in \mathcal{H}$. 
The same arguments also work for $g$. Finally, \eqref{alpha3} is checked by \eqref{alpha2} and a standard limiting argument.
\end{proof}

For any jointly measurable function $f(t,x,\omega)$, define 
\begin{align*}
    \lVert f \rVert_{+}^2 &= \mathbb{E}\left[\int_0^T\int_{\mathbb{R}^6}|f(t,x)||f(t,y)|\gamma(x-y)dxdydt \right],\\
    \lVert f \rVert_{0}^2 &= \mathbb{E}\left[\int_0^T\int_{\mathbb{R}^6}f(t,x)f(t,y)\gamma(x-y)dxdydt \right],
\end{align*}
and let $\mathcal{J}$ denote the set of all jointly measurable function $f$ such that $\lVert f \rVert_{+} < \infty$. 
Here we do not require the predictability of $f$.
We identify two functions $f,g \in \mathcal{J}$ if $\lVert f-g \rVert_{0} = 0$. Then $\lVert \cdot \rVert_{0}$ indeed defines the norm on $\mathcal{J}$.

\begin{Lem}
\label{Lemma L2(Omega,H_T)}
There is a linear isometry $\iota : \mathcal{J} \to L^2(\Omega; \mathcal{H}_T)$ such that for any $f,g \in \mathcal{J}$, 
\begin{align*}
    \langle \iota f, \iota g \rangle_{\mathcal{H}_T} =  \int_0^T\int_{\mathbb{R}^6}f(t,x)g(t,y)\gamma(x-y)dxdydt, \quad \text{a.s.}.
\end{align*}
\end{Lem}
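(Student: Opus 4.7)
The strategy is to define $\iota f$ first on a subclass where Lemma \ref{Lemma fourier transform} applies $\omega$-by-$\omega$, and then extend by density using the fact that $\mathcal{J}$ carries the positive semi-definite form $\lVert \cdot \rVert_0^2$.

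\textbf{Step 1 (Bounded, compactly supported case).} Consider first $f \in \mathcal{J}$ such that for some $N$, $|f(t,x,\omega)|\leqslant N$ and $f(t,x,\omega)=0$ whenever $|x|>N$, for all $(t,x,\omega)$. For each fixed $(t,\omega)$, Lemma \ref{Lemma fourier transform} gives $f(t,\cdot,\omega)\in\mathcal{H}$ and
\[
    \lVert f(t,\cdot,\omega)\rVert_{\mathcal{H}}^2 = \int_{\mathbb{R}^6} f(t,x,\omega)f(t,y,\omega)\gamma(x-y)\,dx\,dy.
\]
Integrating in $t\in[0,T]$ and taking expectation shows $\omega\mapsto f(\cdot,\cdot,\omega)$ lies in $\mathcal{H}_T$ a.s.\ and
\[
    \mathbb{E}\bigl[\lVert f(\cdot,\cdot,\omega)\rVert_{\mathcal{H}_T}^2\bigr] = \lVert f\rVert_0^2 .
\]
To see that $\omega\mapsto f(\cdot,\cdot,\omega)\in\mathcal{H}_T$ is Bochner measurable, I use Pettis' theorem: by Lemma \ref{dense lemma}, $C_0^\infty([0,T]\times\mathbb{R}^3)$ is dense in $\mathcal{H}_T$, so it suffices to check that $\omega\mapsto \langle f(\cdot,\cdot,\omega),\varphi\rangle_{\mathcal{H}_T}$ is measurable for each such $\varphi$; by Lemma \ref{Lemma fourier transform} this inner product equals $\int_0^T\!\!\int_{\mathbb{R}^6} f(t,x,\omega)\varphi(t,y)\gamma(x-y)\,dx\,dy\,dt$, which is measurable in $\omega$ by Fubini since $\gamma(x-y)$ is locally integrable on $\operatorname{supp}f(\cdot,\cdot,\omega)\times\operatorname{supp}\varphi$. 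Set $\iota f(\omega):=f(\cdot,\cdot,\omega)$.

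\textbf{Step 2 (Truncation and completion).} For general $f\in\mathcal{J}$ put
\[
    f_N(t,x,\omega):= f(t,x,\omega)\mathbf{1}_{\{|x|\leqslant N\}}\mathbf{1}_{\{|f(t,x,\omega)|\leqslant N\}} ,
\]
which falls into the class of Step~1. By polarization of the identity in Step~1,
\[
    \lVert \iota f_N - \iota f_M \rVert_{L^2(\Omega;\mathcal{H}_T)}^2 = \lVert f_N - f_M\rVert_0^2 .
\]
Pointwise $(f_N-f_M)(t,x,\omega)(f_N-f_M)(t,y,\omega)\gamma(x-y)\to 0$ as $N,M\to\infty$, and it is dominated by $4|f(t,x,\omega)||f(t,y,\omega)|\gamma(x-y)$, whose expectation-integral is $4\lVert f\rVert_+^2<\infty$. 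Dominated convergence gives that $(\iota f_N)$ is Cauchy in $L^2(\Omega;\mathcal{H}_T)$; define $\iota f$ as its limit. The same dominated-convergence argument shows $\lVert f_N\rVert_0^2\to\lVert f\rVert_0^2$, hence the isometry identity extends to $\iota f$.

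\textbf{Step 3 (Linearity and inner product formula).} Linearity of $\iota$ on the truncated class is immediate from the pointwise definition in Step~1; it passes to the limit because the truncation $f\mapsto f_N$ is not linear but, given $f,g\in\mathcal{J}$ and $a,b\in\mathbb{R}$, one checks $\lVert (af+bg)_N - (af_N+bg_N)\rVert_0 \to 0$ by the same dominated-convergence argument, whence $\iota(af+bg)=a\iota f+b\iota g$. For the inner product formula, Step~1 yields, a.s.,
\[
    \langle \iota f_N,\iota g_N\rangle_{\mathcal{H}_T} = \int_0^T\!\!\int_{\mathbb{R}^6} f_N(t,x)g_N(t,y)\gamma(x-y)\,dx\,dy\,dt .
\]
Passing to an a.s.\ convergent subsequence on the left and applying dominated convergence on the right (with the integrable dominant above) produces the claimed identity a.s.\ for $f,g$. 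The isometry property is the specialization $f=g$ after taking expectations.

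\textbf{Main obstacle.} The only genuinely non-routine point is the strong measurability of $\omega\mapsto \iota f(\omega)\in\mathcal{H}_T$ in Step~1, since $\mathcal{H}$ typically contains distributions and is not a space of pointwise-defined functions. This is exactly where the density statement of Lemma \ref{dense lemma}, the separability of $\mathcal{H}_T$, and Pettis' theorem are needed; everything else is a standard approximation-by-truncation argument.
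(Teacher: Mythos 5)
Your proposal is correct and follows essentially the same route as the paper: the same truncation $f_N = f\mathbf{1}_{\{|f|\leqslant N\}}\mathbf{1}_{B_N}$, Lemma \ref{Lemma fourier transform} applied pointwise, the Pettis measurability theorem combined with a density lemma to get strong measurability, and dominated convergence to pass to the limit in $L^2(\Omega;\mathcal{H}_T)$ and to obtain the inner product identity. The only (cosmetic) difference is that the paper verifies Pettis measurability for the map $[0,T]\times\Omega\to\mathcal{H}$ against $\mathcal{S}(\mathbb{R}^3)$ and then identifies $L^2([0,T]\times\Omega;\mathcal{H})\simeq L^2(\Omega;\mathcal{H}_T)$, whereas you test $\Omega\to\mathcal{H}_T$ directly against $C_0^{\infty}([0,T]\times\mathbb{R}^3)$.
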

\begin{proof}
For any $f \in \mathcal{J}$ and integer $n \geqslant 1$, define $f_n(t,x,\omega) = f(t,x,\omega)\mathbf{1}_{\{|f| \leqslant n\}}(t,x,\omega)\mathbf{1}_{B_n}(x)$. 
Then $f_n$ is jointly measurable (with respect to $\mathcal{B}([0,T] \times \mathbb{R}^3) \times \mathscr{F}$), and for every $t \in [0,T]$ and $\omega \in \Omega$, $f_n(t,\cdot, \omega) \in \mathcal{H}$ by Lemma \ref{Lemma fourier transform}.
Because $\mathcal{S}(\mathbb{R}^3)$ is dense in $\mathcal{H}$, and for every $\varphi \in \mathcal{S}(\mathbb{R}^3)$,
\begin{align*}
    \langle f_n(t, \cdot, \omega), \varphi \rangle_{\mathcal{H}} = \int_{\mathbb{R}^6}f_n(t,x,\omega)\varphi(y)\gamma(x-y)dxdy
\end{align*}
is measurable with respect to $\mathcal{B}([0,T]) \times \mathscr{F}$, the function $f_n : [0,T] \times \Omega \to \mathcal{H}$ is measurable by the Pettis measurability theorem (\textit{cf.} \cite[Theorem 1.1.6]{MR3617205}).
Moreover, $|f_n| \leqslant |f|$ implies 
\begin{align*}
    \lVert f_n \rVert_{L^2([0,T] \times \Omega; \mathcal{H})} = \lVert f_n \rVert_{0} \leqslant \lVert f \rVert_{+} < \infty,
\end{align*}
and thus $f_n \in L^2([0,T] \times \Omega; \mathcal{H})$. 
Since $\lim_{n \to \infty} f_n(t,x,\omega) = f(t,x,\omega)$ for every $(t,x,\omega)$, we deduce from the Lebesgue dominated convergence theorem that $\lim _{n \to \infty}\lVert f_n - f \rVert_{+} = 0$, and it follows that $\{f_n\}$ is Cauchy sequence in $L^2([0,T] \times \Omega; \mathcal{H})$.  
For simplicity of notation, we identify two spaces $L^2([0,T] \times \Omega; \mathcal{H}) \simeq L^2(\Omega;\mathcal{H}_T)$, and a version of $f_n$ which belongs to $L^2(\Omega;\mathcal{H}_T)$ is again denoted by $f_n$.
With this identification, $\{f_n\}$ is Cauchy sequence in $L^2(\Omega;\mathcal{H}_T)$ and there exists $\tilde{f} \in L^2(\Omega;\mathcal{H}_T)$ such that 
\begin{align*}
    \lim_{n \to \infty}\lVert f_n - \tilde{f} \rVert_{L^2(\Omega;\mathcal{H}_T)} = 0.
\end{align*}
Let us define a map $\iota : \mathcal{J} \to L^2(\Omega; \mathcal{H}_T)$ by $\iota f = \tilde{f}$.
Then it is easy to check that $\iota$ is the linear isometry. 
For any $f,g \in \mathcal{J}$, applying Lemma \ref{Lemma fourier transform} and Cauchy–Schwarz inequality, we see that 
\begin{align*}
    &\mathbb{E}\left[ \int_0^T\int_{\mathbb{R}^6}|f(t,x,\omega)||g(t,y,\omega)|\gamma(x-y)dxdydt \right]\\
    &\leqslant \liminf_{n \to \infty}\mathbb{E}\left[ \int_0^T\langle |f_n(t,\cdot,\omega)|, |g_n(t,\cdot,\omega)|\rangle_{\mathcal{H}} dt \right]\\
    &\leqslant \liminf_{n \to \infty}\left(\mathbb{E} \int_0^T\lVert |f_n(t,\cdot,\omega)| \rVert_{\mathcal{H}}^2 dt \right)^{\frac{1}{2}}\left(\mathbb{E} \int_0^T\lVert |g_n(t,\cdot,\omega)| \rVert_{\mathcal{H}}^2 dt \right)^{\frac{1}{2}}\\
    &\leqslant \lVert f \rVert_{+}\lVert g \rVert_{+} < \infty,
\end{align*}
where the third inequality follows from $|f_n| \leqslant |f|$ and $|g_n| \leqslant |g|$.
Therefore, we obtain from the Lebesgue dominated convergence theorem that for almost surely
\begin{align*}
    \langle \iota f, \iota g \rangle_{\mathcal{H}_T} 
    &= \lim_{n \to \infty} \langle f_n, g_n \rangle_{\mathcal{H}_T}\\
    &= \int_0^T\int_{\mathbb{R}^6}f(t,x,\omega)g(t,y,\omega)\gamma(x-y)dxdydt.
\end{align*}
This completes the proof.
\end{proof}
From now on, in order to simplify the notation, we identify $\iota f$ with $f$. With this identification, we will write $\mathcal{J} \subset L^2(\Omega;\mathcal{H}_T)$.

\section{The Picard Approximation and its Malliavin derivative}
\label{The Picard Approximation and its Malliavin derivative}
In this section, we consider the Picard approximation sequence defined in Section 1. 
We first collect some properties of the sequence in Section \ref{section The Picard Approximation}, and then prove the moment estimates for its Malliavin derivative in Section \ref{section The Malliavin derivative of the Picard approxximation}.
\subsection{The Picard approximation}
\label{section The Picard Approximation}
Recall that we set the Picard iteration scheme in \eqref{Picard iteration at introduction} as follows:
\begin{align}
    u_0(t,x) &= 1, \nonumber\\
    u_{n+1}(t,x) &= 1 + \int_0^t\int_{\mathbb{R}^3} G_{n+1}(t-s,x-y)\sigma(u_n(s,y))W(ds,dy). \label{u_n+1}
\end{align}

\begin{Prop}
\label{Proposition u_n property}
For every integer $n \geqslant 0$, we have the following properties:
\begin{enumerate}[\normalfont(i)]
    \item $u_n(t,x)$ has a predictable modification.
    \item $\sup_{(t,x) \in [0,T]\times\mathbb{R}^3}\lVert u_n(t,x)\rVert_2 < \infty$.
    \item $u_n(t,x)$ is strictly stationary in space variable: The finite-dimensional distributions of the process $\{u_n(t,x+y) \mid x \in \mathbb{R}^3 \}$ are independent of $y \in \mathbb{R}^3$. 
    \item $(t,x) \mapsto u_n(t,x)$ is $L^2(\Omega)$-continuous.
    \item For every $t \in [0,T]$, 
    \begin{align*}
        \lim_{h \to 0}\sup_{x \in \mathbb{R}^3} \lVert u_n(t+h,x) - u_n(t,x)\rVert_2 = 0.
    \end{align*}
\end{enumerate}
\end{Prop}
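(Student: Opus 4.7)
The plan is to verify all five properties simultaneously by induction on $n$. The base case $n=0$ is trivial since $u_0\equiv 1$, so I assume (i)--(v) hold for $u_n$. Because $\sigma$ is Lipschitz, property (ii) gives $M_n:=\sup_{(s,y)}\lVert\sigma(u_n(s,y))\rVert_2<\infty$; choosing the predictable version of $u_n$ afforded by (i) and composing with the continuous function $\sigma$ produces a predictable version of $\sigma(u_n)$; and spatial stationarity (iii) combined with the uniform $L^2$ bound ensures that $\sigma(u_n)$ fulfils the conditions \eqref{Z c1}--\eqref{Z c2}. By Lemma \ref{G_n property} and \eqref{muFG bound}, $\int_0^T\int|\mathcal{F}G_{n+1}(t-s)(\xi)|^2\mu(d\xi)\,ds<\infty$, so Proposition \ref{dalang prop} shows that the stochastic integral in \eqref{u_n+1} is well-defined.

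Four of the five properties for $u_{n+1}$ now reduce to routine arguments. Property (ii) is immediate from the isometry bound \eqref{isometry fourier bound}, which yields $\lVert u_{n+1}(t,x)-1\rVert_2^2\leqslant M_n^2\int_0^T\int|\mathcal{F}G(s)(\xi)|^2\mu(d\xi)\,ds$, finite and independent of $(t,x)$. Property (iii) follows from the translation invariance of $W$ under spatial shifts (because $\gamma$ depends only on the increment) together with the induction hypothesis, by first approximating $\sigma(u_n)$ in $L^2(\Omega)$ by simple integrands for which the substitution $z\mapsto z+y$ is transparent and then passing to the limit. Property (v) is immediate from (iii) and (iv): stationarity forces $\lVert u_{n+1}(t+h,x)-u_{n+1}(t,x)\rVert_2$ to be independent of $x$, so (iv) at $x=0$ supplies the uniform convergence. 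Finally, property (i) follows from (iv) (which yields stochastic continuity) combined with the $(\mathscr{F}_t)$-adaptedness built into the Walsh integral and the existence result for predictable modifications cited in Section \ref{subsection Stochastic integrals}.

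The essential new step is property (iv). For $(s,y),(t,x)\in[0,T]\times\mathbb{R}^3$ with $s\leqslant t$, split
\begin{align*}
u_{n+1}(t,x)-u_{n+1}(s,y) &= \int_0^s\!\int_{\mathbb{R}^3}\bigl[G_{n+1}(t-r,x-z)-G_{n+1}(s-r,y-z)\bigr]\sigma(u_n(r,z))\,W(dr,dz)\\
&\quad + \int_s^t\!\int_{\mathbb{R}^3}G_{n+1}(t-r,x-z)\sigma(u_n(r,z))\,W(dr,dz).
\end{align*}
Because $G_{n+1}(t-r,\cdot)$ is even (as $G$ is radially symmetric and $\rho$ is chosen even), $\mathcal{F}[G_{n+1}(t-r,x-\cdot)](\xi)=e^{-2\pi\sqrt{-1}\,\xi\cdot x}\mathcal{F}G_{n+1}(t-r)(\xi)$. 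Applying \eqref{isometry fourier bound} bounds the squared $L^2(\Omega)$-norm of the first piece by $M_n^2$ times the $\mu(d\xi)\,dr$-integral of $|e^{-2\pi\sqrt{-1}\,\xi\cdot x}\mathcal{F}G_{n+1}(t-r)(\xi)-e^{-2\pi\sqrt{-1}\,\xi\cdot y}\mathcal{F}G_{n+1}(s-r)(\xi)|^2$, and the second piece by $M_n^2\int_s^t\int|\mathcal{F}G_{n+1}(t-r)(\xi)|^2\mu(d\xi)\,dr$. The estimates \eqref{FG bound}--\eqref{fourier G bound} together with Lemma \ref{G_n property}(iii) supply a single $\mu$-integrable dominant of the form $C\langle\xi\rangle^{-2}$, finite by Dalang's condition \eqref{Dalang's condition}, while the integrand converges pointwise to $0$ as $(s,y)\to(t,x)$, so dominated convergence closes the argument. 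The main obstacle is precisely assembling this $(s,y,t,x)$-uniform Fourier-side dominant, for which the bound $|\mathcal{F}G_{n+1}|\leqslant|\mathcal{F}G|$ from Lemma \ref{G_n property}(iii) is indispensable; once (iv) is in hand, the remaining properties fall out as indicated above.
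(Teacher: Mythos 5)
Your proposal is correct and follows the same overall induction scheme as the paper, but it handles the analytic core differently: where the paper disposes of (iv) and (v) by observing that $G_{n+1}$ inherits Dalang's Hypothesis C from $G$ (via $|\mathcal{F}G_{n+1}(t+h)-\mathcal{F}G_{n+1}(t)|\leqslant|\mathcal{F}G(t+h)-\mathcal{F}G(t)|$) and then citing the proof of Lemma 19 in the Corrections to Dalang's paper, you carry out that argument by hand, splitting the increment into two stochastic integrals and running dominated convergence on the Fourier side with the dominant $C\langle\xi\rangle^{-2}$ supplied by \eqref{fourier G bound} and Lemma \ref{G_n property}(iii). This buys a self-contained proof at the cost of two small points you should make explicit. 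First, Proposition \ref{dalang prop} is stated for nonnegative distributions $S$, and a difference of translates of $G_{n+1}$ is not one; since these translates are bounded compactly supported functions, the isometry (with the measures $\mu_r^{\sigma(u_n)}$, exactly as in the paper's proof of Proposition \ref{Proposition u_n uniform property}) still applies, but you should say so rather than invoke \eqref{isometry fourier bound} verbatim. Second, your derivation of (v) from (iii) and (iv) needs the spatial stationarity to hold \emph{jointly} at the two times $t$ and $t+h$ (so that the law of the pair $(u_{n+1}(t+h,x),u_{n+1}(t,x))$ is independent of $x$); property (iii) as stated is a fixed-time assertion. Your shift-of-noise construction of (iii) does in fact yield the joint statement, so the argument can be repaired by stating that stronger form, or more simply by noting — as the paper does in Proposition \ref{Proposition u_n uniform property} — that the isometry representation of $\lVert u_{n+1}(t+h,x)-u_{n+1}(t,x)\rVert_2^2$ through $\mu_s^{\sigma(u_n)}$ is manifestly independent of $x$, which only uses the single-time stationarity of $\sigma(u_n(s,\cdot))$ from the induction hypothesis. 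With those two clarifications your route is sound and equivalent in strength to the paper's.
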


\begin{proof}
It is clear that $u_0(t,x) = 1$ satisfies all five properties above. Assume by induction that $u_n(t,x)$ satisfies the first three properties in the proposition: (i), (ii), and (iii).
The Lipschitz continuity of $\sigma$ and the property (ii) for $u_n(t,x)$ yield 
\begin{align}
\label{sigmau_n bound}
    \sup_{(t,x) \in [0,T]\times\mathbb{R}^3}\lVert \sigma(u_n(t,x))\rVert_2 < \infty.
\end{align}
By taking a predictable modification of $u_n(t,x)$, we see from \eqref{sigmau_n bound} and \eqref{muFG_n uniform bound} that $G_{n+1}(t-s,x-y)\sigma(u_n(s,y))$ is also predictable and the stochastic integral on the right-hand side of \eqref{u_n+1} is well-defined.
Hence $u_{n+1}(t,x)$ is also well-defined and $(\mathscr{F}_t)$-adapted.
Moreover, from \eqref{sigmau_n bound} we have 
\begin{align*}
    &\lVert u_{n+1}(t,x) \rVert_2^2\\
    &\leqslant 2 + 2\int_0^t\int_{\mathbb{R}^6}G_{n+1}(t-s,x-y)G_{n+1}(t-s,x-z)\gamma(y-z)\mathbb{E}[\sigma(u_n(s,y))\sigma(u_n(s,z))]dydzds\\
    &\leqslant 2 + 2\sup_{(t,x) \in [0,T]\times\mathbb{R}^3}\lVert \sigma(u_n(t,x))\rVert_2^2\int_0^t\int_{\mathbb{R}^6}G_{n+1}(t-s,x-y)G_{n+1}(t-s,x-z)\gamma(y-z)dydzds\\
    &\leqslant 2+2T(1+2T^2)\sup_{(t,x) \in [0,T]\times\mathbb{R}^3}\lVert \sigma(u_n(t,x))\rVert_2^2\int_{\mathbb{R}^3}\langle \xi \rangle^{-2}\mu(d\xi),
\end{align*}
where the third inequality follows from \eqref{muFG_n uniform bound}.
From this inequality, it follows that $u_{n+1}(t,x)$ satisfies the property (ii).

Strict stationarity of $u_{n+1}(t,x)$ follows from the same arguments in \cite[Lemma 18]{Dal99}. See also \cite[Lemma 7.1]{spatialergodicityforSPDEsvia} for similar arguments. 

Now observe that the Fourier transform of $G_{n+1}$ satisfies
\begin{gather*}
    \mathcal{F} G_{n+1}(t)(\xi) = \mathcal{F} G(t)(\xi) \mathcal{F} \rho_{n+1}(\xi),\\
    |\mathcal{F} G_{n+1}(t+h)(\xi) - \mathcal{F} G_{n+1}(t)(\xi)| \leqslant |\mathcal{F} G(t+h)(\xi) - \mathcal{F} G(t)(\xi)|,
\end{gather*}
for all $\xi \in \mathbb{R}^3$, $t \in [0,T]$, and $h > 0$.
Because $G$ satisfies the Hypothesis C in \cite{Dalang1999Corrections}, $G_{n+1}$ also satisfies it. Owing to the proof of Lemma 19 in \cite{Dalang1999Corrections}, we obtain that $x \mapsto u_{n+1}(t,x)$ is $L^2(\Omega)$-continuous for fixed $t \in [0,T]$, and $t \mapsto u_{n+1}(t,x)$ is $L^2(\Omega)$-equicontinuous for $x \in \mathbb{R}^3$. 
From these it follows that $(t,x) \mapsto u_{n+1}(t,x)$ is $L^2(\Omega)$-continuous. Hence $u_{n+1}(t,x)$ satisfies the properties (iv) and (v). 

Moreover, adaptedness and $L^2(\Omega)$-continuity imply that $u_{n+1}(t,x)$ has a predictable modification.
Therefore, $u_{n+1}(t,x)$ satisfies all five properties in Proposition \ref{Proposition u_n property} and the proof is completed.
\end{proof}

The properties (ii) and (v) in Proposition \ref{Proposition u_n property} also hold uniformly in $n$.

\begin{Prop}
\label{Proposition u_n uniform property}
For all $p \geqslant 1$, we have
\begin{align}
    \sup_{n}\sup_{(t,x) \in [0,T]\times\mathbb{R}^3}\lVert u_n(t,x)\rVert_p < \infty, \label{iteration uniform bound}\\
    \lim_{h \to 0}\sup_n \sup_{x \in \mathbb{R}^3} \lVert u_n(t+h,x) - u_n(t,x)\rVert_p = 0. \label{iteration uniform equicontinuous}
\end{align}
\end{Prop}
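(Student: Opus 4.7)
The plan is to exploit the pointwise Fourier domination $|\mathcal{F}G_n(t)(\xi)| \leqslant |\mathcal{F}G(t)(\xi)|$ from Lemma \ref{G_n property}(iii) so that every stochastic-integral moment estimate can be reduced to the unregularized kernel $G$, after which a standard Picard-Gronwall induction yields the uniform moment bound and a dominated-convergence argument yields the equicontinuity. All constants below will depend only on $p$, $T$, $\sigma$, $\gamma$, not on $n$ or $h$.

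For \eqref{iteration uniform bound} it suffices to treat $p \geqslant 2$ (the case $p \in [1,2)$ follows from Jensen). Set $\Phi_n(t) := \sup_{(s,x) \in [0,t] \times \mathbb{R}^3} \lVert u_n(s,x)\rVert_p^2$. First I would apply Lemma \ref{BDG inequality} (BDG) to \eqref{u_n+1}, then Minkowski's integral inequality in $L^{p/2}(\Omega)$, Cauchy-Schwarz $\lVert XY \rVert_{p/2} \leqslant \lVert X \rVert_p \lVert Y \rVert_p$, and the Lipschitz bound $\lVert \sigma(v) \rVert_p \leqslant |\sigma(0)| + L\lVert v\rVert_p$, to obtain
\begin{align*}
\lVert u_{n+1}(t,x) \rVert_p^2 \lesssim 1 + \int_0^t \bigl(1 + \Phi_n(s)\bigr) \left( \int_{\mathbb{R}^6} G_{n+1}(t-s,y) G_{n+1}(t-s,z) \gamma(y-z)\, dy\, dz \right) ds.
\end{align*}
By Lemma \ref{Lemma fourier transform} the inner spatial integral equals $\int_{\mathbb{R}^3} |\mathcal{F}G_{n+1}(t-s)(\xi)|^2 \mu(d\xi)$, which by Lemma \ref{G_n property}(iii) and \eqref{muFG bound} is bounded by the $n$-free constant $K := (1+2T^2) \int_{\mathbb{R}^3} \langle \xi \rangle^{-2} \mu(d\xi) < \infty$. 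Taking $\sup_{(t,x)}$ produces a recursion $\Phi_{n+1}(t) \leqslant A + B \int_0^t \Phi_n(s)\, ds$ with $\Phi_0 \equiv 1 \leqslant A$, and a direct induction gives $\Phi_n(t) \leqslant A \sum_{k=0}^{n} (Bt)^k/k! \leqslant A e^{BT}$ uniformly in $n$.

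For \eqref{iteration uniform equicontinuous}, I would decompose, for $h > 0$ small,
\begin{align*}
u_n(t+h,x) - u_n(t,x) = I_1(n,h,t,x) + I_2(n,h,t,x),
\end{align*}
where $I_1$ is the Walsh integral of $G_n(t+h-s,x-y)\sigma(u_{n-1}(s,y))$ over $s \in [t,t+h]$ and $I_2$ is the integral of $[G_n(t+h-s,x-y) - G_n(t-s,x-y)]\sigma(u_{n-1}(s,y))$ over $s \in [0,t]$. Applying BDG and the same reduction as above, and using the just-established uniform moment bound on $\sigma(u_{n-1})$, the task reduces to showing, uniformly in $n$,
\begin{align*}
\int_t^{t+h} \int_{\mathbb{R}^3} |\mathcal{F}G_n(t+h-s)(\xi)|^2 \mu(d\xi)\, ds + \int_0^t \int_{\mathbb{R}^3} |\mathcal{F}G_n(t+h-s)(\xi) - \mathcal{F}G_n(t-s)(\xi)|^2 \mu(d\xi)\, ds \xrightarrow{h \to 0} 0.
\end{align*}
The first term is bounded by $hK$ via \eqref{fourier G bound} and Lemma \ref{G_n property}(iii). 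For the second, the identity $\mathcal{F}G_n = \mathcal{F}\rho_n \cdot \mathcal{F}G$ together with \eqref{FG minus FG bound by FG} gives the $n$-uniform pointwise bound $|\mathcal{F}G_n(t+h-s)(\xi) - \mathcal{F}G_n(t-s)(\xi)| \leqslant 2|\mathcal{F}G(h/2)(\xi)|$, whose square is $\mu$-dominated by the integrable majorant $4(1+2T^2)\langle \xi \rangle^{-2}$; since $|\mathcal{F}G(h/2)(\xi)| \to 0$ pointwise as $h \to 0$, dominated convergence finishes the second term, and taking $\sup_{x \in \mathbb{R}^3} \sup_n$ (both bounds being $x$- and $n$-independent) closes the argument.

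The main obstacle is securing genuine uniformity in the mollification index $n$, which plays a dual role as both the iterate index and the kernel index. The decisive device is the pointwise spectral domination in Lemma \ref{G_n property}(iii): once every spatial integral is rewritten via Lemma \ref{Lemma fourier transform} on the Fourier side and majorized by its $G$-analogue, the $n$-dependence is essentially removed and the remaining Picard-Gronwall and dominated-convergence steps are routine. A minor technical point is to verify predictability and integrability of all integrands so BDG and Lemma \ref{Lemma fourier transform} apply legitimately, but this is guaranteed by Proposition \ref{Proposition u_n property}.
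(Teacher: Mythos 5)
Your proof of \eqref{iteration uniform bound} is essentially the paper's argument (BDG, Minkowski, the spectral domination $|\mathcal{F}G_{n+1}|\leqslant|\mathcal{F}G|$ through Lemma \ref{Lemma fourier transform} and \eqref{muFG bound}, then a Picard--Gronwall iteration), and that part is fine. The gap is in \eqref{iteration uniform equicontinuous}. After BDG you must apply Minkowski's integral inequality to pull the $L^{p/2}(\Omega)$-norm inside, and Minkowski forces absolute values on the integrand; for your term $I_2$ the kernel is the \emph{signed} difference $\Delta G_n(s,\cdot):=G_n(t+h-s,\cdot)-G_n(t-s,\cdot)$, so what you actually control is $\int_0^t\int_{\mathbb{R}^6}|\Delta G_n(s,x-y)|\,|\Delta G_n(s,x-z)|\gamma(y-z)\,dydzds$. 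Lemma \ref{Lemma fourier transform} applied to this quantity produces $\int|\mathcal{F}(|\Delta G_n|)(\xi)|^2\mu(d\xi)$, \emph{not} $\int|\mathcal{F}\Delta G_n(\xi)|^2\mu(d\xi)$, and the crucial bound \eqref{FG minus FG bound by FG} only controls the Fourier transform of the signed difference. Worse, the absolute-value quantity genuinely fails to be small uniformly in $n$: once $1/a_n\ll h$ the two mollified spherical shells are essentially disjoint, so $|\Delta G_n|\approx G_n(t+h-s,\cdot)+G_n(t-s,\cdot)$ and the physical-side integral stays of order $\int|\mathcal{F}G(t-s)(\xi)|^2\mu(d\xi)$, bounded away from $0$ as $h\to0$. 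So the claimed reduction ``BDG and the same reduction as above'' does not deliver the displayed Fourier-side quantity, and the argument as written does not close.

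The device you are missing is the one the paper uses: work first at $p=2$, where no Minkowski step (hence no absolute value) is needed. By the strict stationarity of $\sigma(u_n)$ (Proposition \ref{Proposition u_n property}(iii)) there is a nonnegative tempered spectral measure $\mu_s^{\sigma(u_n)}$ with $\mathbb{E}[\sigma(u_n(s,\cdot))\sigma(u_n(s,0))]\gamma(\cdot)=\mathcal{F}\mu_s^{\sigma(u_n)}$, and the exact Dalang isometry \eqref{isometry fourier} gives
\begin{align*}
\lVert u_{n+1}(t+h,x)-u_{n+1}(t,x)\rVert_2^2=\int_0^T\!ds\int_{\mathbb{R}^3}\mu_s^{\sigma(u_n)}(d\xi)\,|\mathcal{F}G_{n+1}(t+h-s)(\xi)-\mathcal{F}G_{n+1}(t-s)(\xi)|^2,
\end{align*}
with the \emph{signed} Fourier difference appearing against a nonnegative measure. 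Only then can one invoke $\mathcal{F}G_{n+1}=\mathcal{F}\rho_{n+1}\mathcal{F}G$, \eqref{FG minus FG bound by FG}, the comparison \eqref{isometry fourier bound} to replace $\mu_s^{\sigma(u_n)}$ by $\mu$ up to the uniform constant $\sup_n\sup_{t,x}\lVert\sigma(u_n(t,x))\rVert_2^2$, and dominated convergence via \eqref{fourier G bound}--\eqref{muFG bound}. The general case $p\geqslant1$ is then deduced from $p=2$ by H\"older interpolation together with the uniform moment bound \eqref{iteration uniform bound}, not by a direct BDG estimate. Your dominated-convergence endgame is the right idea, but it becomes available only after this $L^2$/spectral-measure step; as stated, your reduction is invalid.
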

\begin{proof}
We first prove \eqref{iteration uniform bound}. Let $p \geqslant 2$. By the BDG inequality (Lemma \ref{BDG inequality}) and Minkowski's inequality, we obtain
\begin{align*}
    &\lVert u_{n+1}(t,x)\rVert_p^2\\
    &\leqslant 2 + 8p\int_0^tdr\int_{\mathbb{R}^6}dzdz'G_{n+1}(t-r,x-z)G_{n+1}(t-r,x-z')\gamma(z-z')\lVert \sigma(u_{n}(r,z))\sigma(u_{n}(r,z'))\rVert_{\frac{p}{2}}\\
    &\leqslant 2 + 8p\int_0^tdr\int_{\mathbb{R}^6}dzdz'G_{n+1}(t-r,x-z)G_{n+1}(t-r,x-z')\gamma(z-z')\lVert \sigma(u_{n}(r,z))\rVert_p^2\\
    &\leqslant 2 + 8p\int_0^tdr\int_{\mathbb{R}^6}dzdz'G_{n+1}(t-r,x-z)G_{n+1}(t-r,x-z')\gamma(z-z')\left(2\sigma(0)^2 + 2L^2\lVert u_{n}(r,z)\rVert_p^2\right)\\
    &\leqslant 2 + 16p\sigma(0)^2\int_0^tdr\int_{\mathbb{R}^3}\mu(d\xi)|\mathcal{F} G_{n+1}(t-r)(\xi)|^2\\
    &\qquad + 16pL^2\int_0^tdr \sup_{\eta \in \mathbb{R}^3}\lVert u_{n}(r,\eta)\rVert_p^2 \int_{\mathbb{R}^3}\mu(d\xi)|\mathcal{F} G_{n+1}(t-r)(\xi)|^2,
\end{align*}
where the second inequality follows from an elementary inequality:
\begin{align*}
    \lVert \sigma(u_{n}(r,z))\sigma(u_{n}(r,z'))\rVert_{\frac{p}{2}} \leqslant \frac{1}{2}(\lVert\sigma(u_{n}(r,z)) \rVert_p^2 + \lVert \sigma(u_{n}(r,z'))\rVert_p^2 ).
\end{align*}
It follows from \eqref{muFG_n uniform bound} that
\begin{align}
\label{H_n inequality}
    H_{n+1}(t) \leqslant c_1 + c_2\int_0^t H_n(r)dr,
\end{align}
where 
\begin{gather*}
    H_n(t) = \sup_{(\theta, \eta) \in [0,t] \times \mathbb{R}^3}\lVert u_{n}(\theta, \eta)\rVert_p^2,\\
    c_1 = 2 + 16p\sigma(0)^2T(1+2T^2)\int_{\mathbb{R}^3}\langle \xi \rangle^{-2}\mu(d\xi), \quad  c_2 = 16pL^2(1+2T^2)\int_{\mathbb{R}^3}\langle \xi \rangle^{-2} \mu(d\xi).
\end{gather*}
Then, iterating the inequality \eqref{H_n inequality}, we have
\begin{align*}
    H_n(t) \leqslant c_1e^{c_2t} \leqslant c_1e^{c_2T},
\end{align*}
which proves \eqref{iteration uniform bound}.

Next we prove \eqref{iteration uniform equicontinuous}. 
Let $t \in [0,T]$, $x \in \mathbb{R}^3$, and $h \in \mathbb{R}$. 
Since $\sigma(u_{n}(t,x))$ is strictly stationary by Proposition \ref{Proposition u_n property} and satisfies 
\begin{align}
    \sup_{n}\sup_{(t,x) \in [0,T]\times\mathbb{R}^3}\lVert \sigma(u_n(t,x))\rVert_2^2 \leqslant (2\sigma(0)^2 + 2L^2\sup_{n}\sup_{(t,x) \in [0,T]\times\mathbb{R}^3}\lVert u_n(t,x)\rVert_2^2)  < \infty \label{sigma u_n uniform bound}
\end{align}
by \eqref{iteration uniform bound}, there is a nonnegative tempered measure $\mu_t^{\sigma(u_n)}$ such that for each $t$,
\begin{align*}
    \mathbb{E}[\sigma(u_n(t,\cdot))\sigma(u_n(t,0))]\gamma(\cdot) = \mathcal{F}\mu_t^{\sigma(u_n)}
\end{align*}
in $\mathcal{S}^{\prime}(\mathbb{R}^3)$.
From \eqref{u_n+1} and (iii) in Lemma \ref{G_n property}, we have 
\begin{align*}
    &\lVert u_{n+1}(t+h,x)-u_{n+1}(t,x)\rVert_2^2\\
    &= \mathbb{E}\left[\left|\int_0^T\int_{\mathbb{R}^3}(G_{n+1}(t+h-s,x-y) - G_{n+1}(t-s,x-y) )\sigma(u_n(s,y))W(ds,dy) \right|^2\right]\\
    &= \int_0^Tds\int_{\mathbb{R}^3}\mu_s^{\sigma(u_n)}(d\xi)\left|e^{-2\pi\sqrt{-1}\xi\cdot x }\right|^2|\mathcal{F}G_{n+1}(t+h-s)(\xi)-\mathcal{F}G_{n+1}(t-s)(\xi)|^2\\
    &\leqslant \int_0^{(t+h) \land t}ds\int_{\mathbb{R}^3}\mu_s^{\sigma(u_n)}(d\xi)|\mathcal{F}G(t+h-s)(\xi)-\mathcal{F}G(t-s)(\xi)|^2\\
    &\quad + \int_{(t+h)\land t}^{(t+h)\lor t}ds\int_{\mathbb{R}^3}\mu_s^{\sigma(u_n)}(d\xi)|\mathcal{F}G((t+h-s) \lor (t-s))(\xi)|^2,
\end{align*}
and the integrals on the right-hand side of the inequality do not depend on $x$.
We deduce from \eqref{isometry fourier bound}, \eqref{sigma u_n uniform bound}, and \eqref{muFG bound} that the second integral has limit zero as $h \to 0$ uniformly in $n$.
Moreover, using \eqref{FG minus FG bound by FG}, \eqref{isometry fourier bound}, and \eqref{sigma u_n uniform bound}, we have
\begin{align*}
    &\int_0^{(t+h)\land t}ds\int_{\mathbb{R}^3}\mu_s^{\sigma(u_n)}(d\xi)|\mathcal{F}G(t+h-s)(\xi)-\mathcal{F}G(t-s)(\xi)|^2\\
    &\leqslant 2\int_0^{(t+h)\land t}ds\int_{\mathbb{R}^3}\mu_s^{\sigma(u_n)}(d\xi)\left|\mathcal{F}G\left( \frac{|h|}{2} \right)(\xi)\right|^2\\
    &\leqslant 2T\sup_{n}\sup_{(t,x) \in [0,T]\times\mathbb{R}^3}\lVert \sigma(u_n(t,x))\rVert_2^2 \int_{\mathbb{R}^3}\mu(d\xi)\left|\mathcal{F}G\left( \frac{|h|}{2} \right)(\xi)\right|^2.
\end{align*}
Because $\lim_{h \to 0}|\mathcal{F}G(|h|)(\xi)| = 0$ and \eqref{fourier G bound} and \eqref{muFG bound} hold, we conclude from the Lebesgue dominated convergence theorem that
\begin{align*}
    \lim_{h \to 0} \sup_{n}\int_0^{(t+h) \land t}ds\int_{\mathbb{R}^3}\mu_s^{\sigma(u_n)}(d\xi)|\mathcal{F}G(t+h-s)(\xi)-\mathcal{F}G(t-s)(\xi)|^2=0.
\end{align*}
Therefore, \eqref{iteration uniform equicontinuous} holds for $p=2$. Using H\"{o}lder's inequality and \eqref{iteration uniform bound}, we can deduce \eqref{iteration uniform equicontinuous} for all $p \geqslant 1$ from the case $p=2$. This completes the proof.
\end{proof}

Finally, we show that the sequence $u_n(t,x)$ converges to the solution $u(t,x)$ of \eqref{SPDE} in $L^p(\Omega)$.

\begin{Prop}
For all $p \geqslant 1$, we have
\begin{align}
    \lim_{n \to \infty}\sup_{(t,x) \in [0,T]\times\mathbb{R}^3} \lVert u_n(t,x)-u(t,x) \rVert_p = 0. \label{iteration uniform convergence}
\end{align}
\end{Prop}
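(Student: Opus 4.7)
The plan is to first establish convergence in $L^2(\Omega)$ uniformly in $(t,x)$ by a Gronwall-type argument based on the Dalang-type isometry in Proposition \ref{dalang prop}, and then bootstrap to arbitrary $L^p$ by interpolation against the uniform moment bound of Proposition \ref{Proposition u_n uniform property}.

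Split
\begin{align*}
u(t,x)-u_{n+1}(t,x) = I_n^{(1)}(t,x)+I_n^{(2)}(t,x),
\end{align*}
where $I_n^{(1)}$ has kernel $G(t-s,x-y)-G_{n+1}(t-s,x-y)$ acting on $\sigma(u(s,y))$, and $I_n^{(2)}$ has kernel $G_{n+1}(t-s,x-y)$ acting on $\sigma(u(s,y))-\sigma(u_n(s,y))$. For $I_n^{(1)}$, writing it as the difference of two Walsh integrals with nonnegative kernels $G$ and $G_{n+1}$ and applying \eqref{isometry fourier} together with polarization, and using the factorization $\mathcal{F}G_{n+1}(t)=\mathcal{F}G(t)\mathcal{F}\rho_{n+1}$, yields
\begin{align*}
\lVert I_n^{(1)}(t,x)\rVert_2^2 = \int_0^t\int_{\mathbb{R}^3}|\mathcal{F}G(t-s)(\xi)|^2|1-\mathcal{F}\rho_{n+1}(\xi)|^2\mu_s^{\sigma(u)}(d\xi)\,ds.
\end{align*}
Since $|\mathcal{F}\rho_{n+1}|\leqslant 1$ with $\mathcal{F}\rho_{n+1}(\xi)\to 1$ pointwise, and the integrand is dominated by $4(1+2T^2)\langle\xi\rangle^{-2}\sup_{(s,y)}\lVert\sigma(u(s,y))\rVert_2^2$, which is $\mu\otimes dr$-integrable on $[0,T]\times\mathbb{R}^3$ by \eqref{fourier G bound} and Dalang's condition, the dominated convergence theorem gives $\alpha_n:=\sup_{(t,x)}\lVert I_n^{(1)}(t,x)\rVert_2^2\to 0$ as $n\to\infty$.

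For $I_n^{(2)}$, the Lipschitz bound on $\sigma$ combined with \eqref{isometry fourier bound} and \eqref{muFG_n uniform bound} gives
\begin{align*}
\lVert I_n^{(2)}(t,x)\rVert_2^2 \leqslant C\int_0^t K_n(s)\,ds,\qquad K_n(s):=\sup_{y\in\mathbb{R}^3}\lVert u(s,y)-u_n(s,y)\rVert_2^2,
\end{align*}
with $C=L^2(1+2T^2)\int\langle\xi\rangle^{-2}\mu(d\xi)$. Combining, $K_{n+1}(t)\leqslant 2\alpha_n+2C\int_0^t K_n(s)\,ds$. Iterating $n+1$ times and noting that $\sup_s K_0(s)<\infty$ by \eqref{solution uniform bound},
\begin{align*}
K_{n+1}(t)\leqslant 2\sum_{j=0}^{n}\frac{(2Ct)^j}{j!}\alpha_{n-j}+\frac{(2Ct)^{n+1}}{(n+1)!}\sup_s K_0(s).
\end{align*}
The last term vanishes, and since $\{\alpha_k\}$ is bounded with $\alpha_{n-j}\to 0$ for each fixed $j$ while $\sum_j (2CT)^j/j!<\infty$, dominated convergence on the counting measure gives that the sum also tends to $0$. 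Hence $\sup_{(t,x)\in[0,T]\times\mathbb{R}^3}\lVert u_{n+1}(t,x)-u(t,x)\rVert_2\to 0$.

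To upgrade to arbitrary $p\geqslant 1$: the case $p\leqslant 2$ follows immediately from H\"older's inequality. For $p>2$, Proposition \ref{Proposition u_n uniform property} together with \eqref{solution uniform bound} yields $M_p:=\sup_n\sup_{(t,x)}\lVert u_n(t,x)-u(t,x)\rVert_{2p}<\infty$, and log-convexity of $L^q(\Omega)$-norms at $q=p$ with endpoints $2$ and $2p$ gives $\lVert X\rVert_p\leqslant\lVert X\rVert_2^{1/(p-1)}\lVert X\rVert_{2p}^{(p-2)/(p-1)}$, so
\begin{align*}
\sup_{(t,x)}\lVert u_n(t,x)-u(t,x)\rVert_p \leqslant \left(\sup_{(t,x)}\lVert u_n-u\rVert_2\right)^{1/(p-1)} M_p^{(p-2)/(p-1)}\longrightarrow 0.
\end{align*}
The main obstacle is that $G(t,\cdot)$ is only a measure, which rules out a direct $L^p$ estimate of $I_n^{(1)}$ via the BDG inequality of Lemma \ref{BDG inequality}; this is precisely what forces the detour through $L^2$ followed by interpolation.
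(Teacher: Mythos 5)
Your proposal is correct and follows essentially the same route as the paper's proof: the same splitting of $u-u_{n+1}$ into the regularization error (treated via the Fourier isometry, the factorization $\mathcal{F}G_{n+1}=\mathcal{F}G\,\mathcal{F}\rho_{n+1}$, and dominated convergence, yielding a vanishing constant $\alpha_n$) and the Lipschitz term (yielding the Gronwall-type recursion, which you iterate explicitly where the paper only says ``iterating''), followed by the upgrade from $p=2$ to all $p\geqslant 1$ via the uniform moment bounds and H\"older/interpolation. The only cosmetic point is that the domination for $I_n^{(1)}$ should be stated with respect to the measure $\mu_s^{\sigma(u)}(d\xi)\,ds$ rather than $\mu(d\xi)\,ds$ (dominate the integrand by $4|\mathcal{F}G(t-s)(\xi)|^2\leqslant 4(1+2T^2)\langle\xi\rangle^{-2}$ and invoke \eqref{isometry fourier bound} with $S=G$ for integrability), exactly as the paper does.
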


\begin{proof}
From \eqref{u_n+1}, we have 
\begin{align*}
    \lVert u_{n+1}(t,x)-u(t,x)\rVert_2^2
    &\leqslant 2 \left \lVert \int_0^t\int_{\mathbb{R}^3}G_{n+1}(t-r,x-z)(\sigma(u_n(r,z)) - \sigma(u(r,z)))W(dr,dz) \right \rVert _2^2\\
    & \quad + 2 \left \lVert \int_0^t\int_{\mathbb{R}^3}(G_{n+1}(t-r,x-z)-G(t-r,x-z))\sigma(u(r,z))W(dr,dz) \right \rVert _2^2\\
    &\eqqcolon \mathbf{P_1} + \mathbf{P_2}.
\end{align*}
Using the isometry property of stochastic integral, \eqref{muFG_n uniform bound}, and Lipschitz continuity of $\sigma$, we obtain
\begin{align*}
    \mathbf{P_1} &\lesssim \int_0^tdr\int_{\mathbb{R}^6}dzdz'G_{n+1}(t-r,x-z)G_{n+1}(t-r,x-z')\gamma(z-z')\\
    &\qquad \times \lVert \sigma(u_n(r,z)) - \sigma(u(r,z')\rVert_2\lVert \sigma(u_n(r,z)) - \sigma(u(r,z')\rVert_2\\
    &\lesssim_T \int_0^tdr \sup_{(\theta, \eta) \in [0,r] \times \mathbb{R}^3} \lVert u_n(\theta,\eta) - u(\theta, \eta)\rVert_2^2.
\end{align*}
For the term $\mathbf{P_2}$, \eqref{isometry fourier} and \eqref{mu^sigma(u)} imply that
\begin{align*}
    \mathbf{P_2} &= 2\int_0^tdr\int_{\mathbb{R}^3}\mu_r^{\sigma(u)}(d\xi)|\mathcal{F} G_{n+1}(t-r)(\xi) - \mathcal{F} G(t-r)(\xi)|^2\\
    &= 2\int_0^tdr\int_{\mathbb{R}^3}\mu_r^{\sigma(u)}(d\xi)|\mathcal{F} \rho_{n+1}(\xi)-1|^2|\mathcal{F} G(t-r)(\xi)|^2.
\end{align*}
Note that $G_{n+1}(t) - G(t)$ is in general not a nonnegative distribution as in Proposition \ref{dalang prop}. However, both $G_{n+1}(t)$ and $G(t)$ are indeed nonnegative distributions with rapid decrease and the isometry property \eqref{isometry fourier} still holds. 
Because $|\mathcal{F} \rho_{n+1}(\xi)-1|^2$ converges pointwise to 0 as $n \to \infty$ and
\begin{gather*}
    |\mathcal{F} \rho_{n+1}(\xi)-1|^2|\mathcal{F} G(t-r)(\xi)|^2 \leqslant 4|\mathcal{F} G(t-r)(\xi)|^2,\\
    \int_0^t dr\int_{\mathbb{R}^3}\mu_r^{\sigma(u)}(d\xi)|\mathcal{F} G(t-r)(\xi)|^2
    \lesssim \int_0^tdr\int_{\mathbb{R}^3}\mu(d\xi)|\mathcal{F} G(t-r)(\xi)|^2 < \infty
\end{gather*}
by \eqref{solution uniform bound}, \eqref{isometry fourier bound}, and \eqref{muFG bound}, we conclude from the Lebesgue dominated convergence theorem that \\ $\lim_{n \to \infty} \mathbf{P_2} = 0$. 
Hence we obtain 
\begin{gather*}
    \sup_{(\theta, \eta) \in [0,t] \times \mathbb{R}^3} \lVert u_{n+1}(\theta,\eta) - u(\theta, \eta)\rVert_2^2 \leqslant C_{n+1} + C\int_0^tdr \sup_{(\theta, \eta) \in [0,r] \times \mathbb{R}^3} \lVert u_n(\theta,\eta) - u(\theta, \eta)\rVert_2^2,
\end{gather*}
where $C_n$ is a constant such that $\lim_{n \to \infty}C_n = 0$, and $C$ is a constant independent of $n$.
By iterating this inequality, we can derive \eqref{iteration uniform convergence} for the case $p=2$. 
Using H\"{o}lder's inequality, \eqref{solution uniform bound}, and \eqref{iteration uniform bound}, we can deduce \eqref{iteration uniform convergence} for all $p \geqslant 1$ from the case $p=2$.
\end{proof}

\subsection{The Malliavin derivative of the Picard approximation}
\label{section The Malliavin derivative of the Picard approxximation}
By a standard induction argument (see \textit{e.g.} \cite{millet}), we can show that for any $n \geqslant 0$, and for any $(t,x) \in [0,T] \times \mathbb{R}^3$, we have $u_n(t,x) \in \mathbb{D}^{1,p}$ for any $p \in [1,\infty)$ and $Du_n(t,x)$ satisfies 
\begin{align}
    Du_{n+1}(t,x) &= G_{n+1}(t-\cdot,x-\star)\sigma(u_n(\cdot,\star)) \nonumber\\ 
    &\quad + \int_0^t\int_{\mathbb{R}^3}G_{n+1}(t-r,x-z)\sigma^{\prime}(u_n(r,z))Du_n(r,z)W(dr,dz), \label{Du_n iteration}
\end{align}
where the stochastic integral on the right-hand side of \eqref{Du_n iteration} is the $\mathcal{H}_T$-valued stochastic integral (\textit{cf.} \cite[Section 2.6]{DalangQuel}).
For instance, we have $Du_0(t,x) = 0$ and $ Du_1(t,x) = G_1(t-\cdot, x- \star)\sigma(1)$ in $L^p(\Omega;\mathcal{H}_T)$.
Recall that $Du_n(t,x)$ is defined as the $\mathcal{H}_T$-valued random variable. 
Since $\mathcal{H}_T$ generally contains some distributions, it is not obvious at first glance that $Du_n(t,x)$ has a version $D_{s,y}u_n(t,x)$, which is a random function on $[0,T] \times \mathbb{R}^3$.
In order to prove the pointwise moment estimate \eqref{derivative estimate introduction} for $Du_n(t,x)$, we first have to show that, for any fixed $(t,x) \in [0,T] \times \mathbb{R}^3$ and $n$, there is a sufficiently nice version of $Du_n(t,x)$.

To do this, we introduce stochastic processes $\{M_n(t,x,s,y) \mid (t,x,s,y) \in ([0,T] \times \mathbb{R}^3)^2\}_{n\geqslant 1}$ as follows:
\begin{align}
    M_1(t,x,s,y) &= G_1(t-s, x-y)\sigma(1), \nonumber\\
    M_{n+1}(t,x,s,y) &= G_{n+1}(t-s,x-y)\sigma(u_n(s,y)) \nonumber\\
    &\quad + \int_0^t\int_{\mathbb{R}^3}G_{n+1}(t-r,x-z)\sigma^{\prime}(u_n(r,z))M_n(r,z,s,y)W(dr,dz), \label{M_(n+1)}
\end{align}
where we use the convention $G(t,dx) = 0$ for $t \leqslant 0$. Moreover, we introduce the following notations for convenience:
\begin{align*}
    \Lambda(T,p) &\coloneqq \sup_{n}\sup_{(t,x) \in [0,T]\times\mathbb{\mathbb{R}}^3}\lVert \sigma(u_n(t,x))\rVert_p, \qquad 
    \Theta(T,n) \coloneqq T\lVert\rho_n\rVert_{L^{\infty}(\mathbb{R}^3)},\\
    J_T^2 &\coloneqq (1+2T^2)\int_{\mathbb{R}^3}\langle \xi \rangle^{-2}\mu(d\xi).
\end{align*}
Clearly, $\Theta(T,n)< \infty$.
Lipschitz continuity of $\sigma$, \eqref{iteration uniform bound}, and \eqref{Dalang's condition} imply that $\Lambda(T,p) < \infty$ and $J_T^2 < \infty$.

\begin{Prop}
\label{Proposition M_n property}
For every integer $n \geqslant 1$, we have the following properties:
\begin{enumerate}[\normalfont(i)]
    \item $(t,x,s,y;\omega) \mapsto M_n(t,x,s,y;\omega)$ has a jointly measurable modification and $M_n(t,x,s,y)$ is $\mathscr{F}_t$-measurable  for any fixed $(t,x,s,y) \in ([0,T] \times \mathbb{R}^3)^2$.
    \item $\sup_{(t,x,s,y) \in ([0,T] \times \mathbb{R}^3)^2}\lVert M_n(t,x,s,y) \rVert_2 < \infty$.
    \item $M_n(t,x,s,y) = 0$ if $s \geqslant t$.
    \item $(t,x,s,y) \mapsto M_n(t,x,s,y)$ is $L^2(\Omega)$-continuous.
\end{enumerate}
\end{Prop}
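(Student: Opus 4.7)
My plan is to prove the four properties simultaneously by induction on $n \geqslant 1$. The base case $n=1$ is immediate: $M_1(t,x,s,y) = G_1(t-s,x-y)\sigma(1)$ is deterministic and jointly continuous in $(t,x,s,y)$ by Lemma \ref{G_n property}, so (i)--(iv) all follow from the basic properties of $G_1$ and the convention $G(\tau,dx)=0$ for $\tau \leqslant 0$.

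For the inductive step, assuming $M_n$ satisfies (i)--(iv), I first need to confirm that the stochastic integral in \eqref{M_(n+1)} is well defined. This requires a predictable modification of $(r,z) \mapsto \sigma'(u_n(r,z))M_n(r,z,s,y)$ for each fixed $(s,y)$; its existence follows from the inductive hypothesis (i) combined with the $L^2$-continuity in (iv), and the integrability condition \eqref{X c1} follows from $|\sigma'|\leqslant L$, the inductive hypothesis (ii), and \eqref{muFG_n uniform bound}. Next I dispatch (iii): for $s \geqslant t$ the first term vanishes because $t-s\leqslant 0$, and $M_n(r,z,s,y)=0$ for every $r \in [0,t]$ by the inductive hypothesis, so the stochastic integral vanishes as well. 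For (ii), I apply Minkowski's inequality to the two summands, use Lemma \ref{BDG inequality} together with $|\sigma'|\leqslant L$ and the inductive bound (ii) for the stochastic integral, and control the result on the Fourier side via \eqref{muFG_n uniform bound}; this yields a bound that depends on $n$ but is uniform in $(t,x,s,y)$.

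The most delicate parts are (iv) and then (i). For the $L^2$-continuity in (iv), the deterministic term $G_{n+1}(t-s,x-y)\sigma(u_n(s,y))$ is handled by joint continuity of $G_{n+1}$ (Lemma \ref{G_n property}), Lipschitz continuity of $\sigma$, and Proposition \ref{Proposition u_n property}(iv). For the stochastic integral, I would use Lemma \ref{BDG inequality} to reduce to estimating the $L^1(\Omega)$-norm of the absolute difference of quadratic-variation integrands against $\gamma(z-z')\,dz\,dz'\,dr$, then split convergence along the $(t,x)$ direction (where I exploit continuity of $G_{n+1}(t-r,x-z)$ in both arguments together with the uniform $L^2$-bound on $M_n$) and along the $(s,y)$ direction (where I invoke the inductive hypothesis (iv) applied to $(s,y) \mapsto M_n(r,z,s,y)$). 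A dominated-convergence argument, using the uniform bound from (ii) combined with \eqref{muFG_n uniform bound} as majorant, then closes the estimate. Finally, for (i), the $\mathscr{F}_t$-measurability at fixed $(t,x,s,y)$ is immediate from the structure of the stochastic integral and Proposition \ref{Proposition u_n property}(i); a jointly measurable modification can then be constructed from the $L^2$-continuity in (iv) by a standard separability and diagonal approximation argument over dyadic rationals.

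The main obstacle will be property (iv) in the $(s,y)$ variables, since $(s,y)$ enter the stochastic integrand through the random field $M_n(r,z,s,y)$ rather than through a deterministic kernel. Unlike $(t,x)$, which interact only with $G_{n+1}$ and with $u_n$ evaluated at the outer time $s$, the dependence on $(s,y)$ inside the stochastic integral is genuinely stochastic and does not factor nicely. The inductive hypothesis (iv) combined with the uniform bound (ii) should make dominated convergence applicable after applying Lemma \ref{BDG inequality}, but identifying a suitable integrable majorant and justifying the interchange of the limit with $\int_0^t dr\int_{\mathbb{R}^6}\cdots\gamma(z-z')dzdz'$ requires careful bookkeeping.
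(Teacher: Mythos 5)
Your proposal is correct and follows essentially the same route as the paper: induction on $n$, with (iii) read off from the recursion, (ii) via the BDG/isometry bound and \eqref{muFG_n uniform bound}, (iv) by splitting the stochastic-integral increment into a part where only the kernel $G_{n+1}$ varies (handled by continuity and compact support of $G_{n+1}$) and a part where only $M_n(r,z,\cdot,\star)$ varies (handled by the inductive hypothesis), each closed by dominated convergence with the majorant $\mathbf{1}_{B_{T+1}}(x-z)$ coming from the compact support of $G_{n+1}$ together with the uniform bound in (ii), and finally (i) from $L^2(\Omega)$-continuity. The "careful bookkeeping" you flag for the $(s,y)$-dependence is resolved exactly as you anticipate, so there is no gap.
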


\begin{proof}
The proof is by induction on $n$. Because $(t,x) \mapsto G_1(t,x)$ is continuous bounded function by Lemma \ref{G_n property} and $M_1(t,x,s,y)$ is deterministic, it is easy to check that $M_1(t,x,s,y)$ satisfies all four properties above.

Now we assume that $M_n(t,x,s,y)$ satisfies all properties in the proposition.
By Proposition \ref{Proposition u_n property} and the induction assumption,  $(t,x;\omega) \mapsto \sigma^{\prime}(u_n(t,x;\omega))M_n(t,x,s,y;\omega)$ is stochastically continuous and jointly measurable for any fixed $(s,y)$, and is $\mathscr{F}_t$-measurable for every $(t,x)$. Moreover, 
\begin{align*}
    \sup_{(t,x) \in [0,T] \times \mathbb{R}^3}\lVert \sigma^{\prime}(u_n(t,x))M_n(t,x,s,y) \rVert_2 \leqslant L \sup_{(\theta,\eta,\lambda,\kappa) \in ([0,T] \times \mathbb{R}^3)^2}\lVert M_n(\theta,\eta,\lambda,\kappa) \rVert_2 < \infty.
\end{align*}
Therefore, the stochastic integral in \eqref{M_(n+1)} is well-defined and $M_{n+1}(t,x,s,y)$ is $\mathscr{F}_t$-measurable for every $(t,x,s,y)$.
Then, we have from (ii) of Lemma \ref{G_n property} that
\begin{align*}
    &\lVert M_{n+1}(t,x,s,y) \rVert_2^2\\ 
    &\leqslant 2\lVert G_{n+1}(t-s, x-y)\sigma(u_n(s,y)) \rVert_2^2\\
    &\quad + 2 \left \lVert \int_0^t\int_{\mathbb{R}^3}G_{n+1}(t-r,x-z)\sigma^{\prime}(u_n(r,z))M_n(r,z,s,y)W(dr,dz) \right \rVert _2^2\\
    &\leqslant 2\Theta(T,n)^2\Lambda(T,2)^2\\
    &\quad + 2L^2\int_0^tdr\int_{\mathbb{R}^6}dzdz'G_{n+1}(t-r,x-z)G_{n+1}(t-r,x-z')\gamma(z-z')\\
    &\qquad \qquad \qquad \times \lVert M_n(r,z,s,y) \rVert_2\lVert M_n(r,z',s,y) \rVert_2\\
    &\leqslant 2\Theta(T,n)^2\Lambda(T,2)^2 + 2L^2T J_T^2\sup_{(\theta,\eta,\lambda,\kappa) \in ([0,T] \times \mathbb{R}^3)^2}\lVert M_n(\theta,\eta,\lambda,\kappa) \rVert_2^2,
\end{align*}
where the third inequality follows from \eqref{muFG_n uniform bound}. From this inequality, it follows that $M_{n+1}(t,x,s,y)$ satisfies the property (ii). The property (iii) for $M_{n+1}$ directly follows from \eqref{M_(n+1)}.

Next we show $L^2(\Omega)$-continuity. 
Because $G_{n+1}(t,x)$ is continuous in $(t,x)$ by Lemma \ref{G_n property} and $\sigma(u_n(s,y))$ is $L^2(\Omega)$-continuous in $(s,y)$ by Proposition \ref{Proposition u_n property}, we conclude that $(t,x,s,y) \mapsto G_{n+1}(t-s,x-y)\sigma(u_n(s,y))$ is $L^2(\Omega)$-continuous. 
To simplify notation, we write $N_{n+1}(t,x,s,y)$ instead of the stochastic integral in \eqref{M_(n+1)}.
Then, we have
\begin{align*}
    &\lVert N_{n+1}(t',x',s',y') - N_{n+1}(t,x,s,y) \rVert_2^2\\
    &\leqslant 2\left\lVert \int_0^T\int_{\mathbb{R}^3}( G_{n+1}(t'-r,x'-z) - G_{n+1}(t-r,x-z) )\sigma^{\prime}(u_n(r,z))M_n(r,z,s'y')W(dr,dz) \right\rVert_2^2\\
    &\quad + 2\left\lVert \int_0^T\int_{\mathbb{R}^3} G_{n+1}(t-r,x-z)\sigma^{\prime}(u_n(r,z))(M_n(r,z,s'y')-M_n(r,z,s,y))W(dr,dz) \right\rVert_2^2\\
    &\eqqcolon \mathbf{S_1} + \mathbf{S_2}.
\end{align*}
Using the isometry property, we have 
\begin{align*}
    \mathbf{S_1} 
    &\leqslant 2L^2\sup_{(\theta,\eta,\lambda,\kappa) \in ([0,T] \times \mathbb{R}^3)^2}\lVert M_n(\theta,\eta,\lambda,\kappa) \rVert_2^2\\
    &\quad \times \int_0^Tdr\int_{\mathbb{R}^6}dzdz'\gamma(z-z')|G_{n+1}(t'-r,x'-z)-G_{n+1}(t-r,x-z)|\\
    &\qquad \qquad \qquad \times |G_{n+1}(t'-r,x'-z')-G_{n+1}(t-r,x-z')|.
\end{align*}
We are going to prove that 
\begin{align}
\label{lim S_1}
    \lim_{(t',x',s',y') \to (t,x,s,y)}\mathbf{S_1} = 0,
\end{align}
and it is harmless to assume that $|x'-x| < 1$. Continuity and compact support property of $G_{n+1}$ imply that for every $(r,z) \in [0,T] \times \mathbb{R}^3$,
\begin{align*}
    \lim_{(t',x') \to (t,x)} |G_{n+1}(t'-r,x'-z) - G_{n+1}(t-r,x-z)| = 0,\\
    |G_{n+1}(t'-r,x'-z) - G_{n+1}(t-r,x-z)| \leqslant 2\Theta(T,n+1)\mathbf{1}_{B_{T+2}}(x-z).
\end{align*}
Because 
\begin{align*}
    \int_0^Tdr\int_{\mathbb{R}^6}dzdz'\gamma(z-z')\mathbf{1}_{B_{T+2}}(x-z)\mathbf{1}_{B_{T+2}}(x-z') < \infty,
\end{align*}
\eqref{lim S_1} follows from the Lebesgue dominated convergence theorem.
For the term $\mathbf{S_2}$, we have
\begin{align*}
    \mathbf{S_2} 
    &\leqslant 2L^2\int_0^T dr\int_{\mathbb{R}^6}dzdz'G_{n+1}(t-r,x-z)G_{n+1}(t-r,x-z')\gamma(z-z')\\
    &\qquad \qquad \times \lVert M_n(r,z,s',y') - M_n(r,z,s,y) \rVert_2\lVert M_n(r,z',s',y') - M_n(r,z',s,y) \rVert_2.
\end{align*}
Since, by the properties (ii) and (iv) for $M_n$, we have for every $(r,z) \in [0,T] \times \mathbb{R}^3$,
\begin{align*}
    \lim_{(s',y') \to (s,y)}\lVert M_n(r,z,s',y') - M_n(r,z,s,y) \rVert_2 = 0
\end{align*}
and
\begin{align*}
    &G_{n+1}(t-r, x-z)\lVert M_n(r,z,s',y') - M_n(r,z,s,y) \rVert_2\\
    &\leqslant 2\sup_{(\theta,\eta,\lambda,\kappa) \in ([0,T] \times \mathbb{R}^3)^2}\lVert M_n(\theta,\eta,\lambda,\kappa) \rVert_2\Theta(T,n+1)\mathbf{1}_{B_{T+1}}(x-z),
\end{align*}
we can apply the Lebesgue dominated convergence theorem again and obtain
\begin{align}
\label{lim S_2}
    \lim_{(t',x',s',y') \to (t,x,s,y)}\mathbf{S_2} = 0.
\end{align}
Combining \eqref{lim S_1} and \eqref{lim S_2}, we have
\begin{align*}
    \lim_{(t',x',s',y') \to (t,x,s,y)}\lVert N_{n+1}(t',x',s',y') - N_{n+1}(t,x,s,y) \rVert_2 = 0,
\end{align*}
and it follows that $(t,x,s,y) \mapsto M_{n+1}(t,x,s,y)$ is $L^2(\Omega)$-continuous. 

Finally, $L^2(\Omega)$-continuity implies that $M_{n+1}(t,x,s,y)$ has a jointly measurable modification. Therefore, $M_{n+1}$ satisfies all four properties in Proposition \ref{Proposition M_n property} and the proof is completed.
\end{proof}

Using the fact that $G_{n}$ has compact support on $[0,T] \times \mathbb{R}^3$, we can derive the moment estimate for $M_n(t,x,s,y)$.
Let us define $I_k$ by
\begin{align*}
    I_0(t,x,s,y,n) &= G_{n+1}(t-s,x-y)\sigma(u_n(s,y)),\\
    I_k(t,x,s,y,n) &= \int_s^t\int_{\mathbb{R}^3}G_{n+1}(t-r,x-z)\sigma^{\prime}(u_n(r,z))I_{k-1}(r,z,s,y,n-1)W(dr,dz), \qquad (k \geqslant 1).
\end{align*}
Then, it follows from \eqref{M_(n+1)} that
\begin{align*}
    M_{n+1}(t,x,s,y) = \sum_{k=0}^n I_k(t,x,s,y,n),
\end{align*}
and, by the triangle inequality, we have
\begin{align*}
    \lVert M_{n+1}(t,x,s,y)\rVert_p \leqslant \sum_{k=0}^n \lVert I_k(t,x,s,y,n)\rVert_p.
\end{align*}

Now we are in position to state the main result in this section.
\begin{Thm}
\label{Theorem moment estimate}
Let $p \geqslant 2$. Then, we have for any $n \geqslant 1$ and $(t,x,s,y) \in ([0,T] \times \mathbb{R}^3)^2$,
\begin{align*}
    \lVert M_{n}(t,x,s,y)\rVert_p &\leqslant C\Theta(T,n)\mathbf{1}_{B_{t-s+1}}(x-y)\\
    &\leqslant C\Theta(T,n)\mathbf{1}_{B_{T+1}}(x-y).
\end{align*}
Here, the above constant $C$ is given by
\begin{align*}
    C\coloneqq \Lambda(T,p)\sum_{k=0}^{\infty}\frac{(2\sqrt{pT}LJ_T)^k}{\sqrt{k!}} < \infty.
\end{align*}
\end{Thm}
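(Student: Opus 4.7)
The plan is to use the expansion $M_n(t,x,s,y) = \sum_{k=0}^{n-1} I_k(t,x,s,y,n-1)$ given just before the theorem together with the triangle inequality, and prove by induction on $k \geq 0$ the pointwise estimate
\begin{equation*}
    \|I_k(t,x,s,y,n-1)\|_p \leq \Theta(T,n)\Lambda(T,p)\,\frac{(2\sqrt{p}LJ_T)^k(t-s)^{k/2}}{\sqrt{k!}}\,\mathbf{1}_{B_{t-s+1}}(x-y).
\end{equation*}
Summing over $k$, bounding $(t-s)^{k/2}\leq T^{k/2}$, and extending the range $0 \leq k \leq n-1$ to $k \geq 0$ then produces the stated inequality with the constant $C$.

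I would handle the support indicator and the $L^p$-norm separately. For the support, note that $I_k(\cdot,n-1)$ is a $k$-fold iterated Walsh integral whose integrand contains the kernels $G_n, G_{n-1}, \dots, G_{n-k}$, with $\supp G_j(\tau,\cdot) \subset B_{\tau+1/a_j}$ by Lemma \ref{G_n property}(i). Telescoping the triangle inequality through the $k$ intermediate spatial variables, the integrand vanishes unless
\begin{equation*}
    |x-y| \leq (t-s) + \sum_{j=n-k}^{n} \frac{1}{a_j} \leq (t-s) + \sum_{j=1}^{\infty}\frac{1}{a_j} = (t-s)+1,
\end{equation*}
so $I_k(t,x,s,y,n-1)=0$ a.s.\ on $\{|x-y|>t-s+1\}$. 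This step relies crucially on the normalization $\sum_j 1/a_j=1$.

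For the moment bound, the base case $k=0$ is immediate from $\|G_n\|_{\infty}\leq \Theta(T,n)$ and $\|\sigma(u_{n-1}(s,y))\|_p \leq \Lambda(T,p)$. For the inductive step, I would apply the BDG inequality (Lemma \ref{BDG inequality}) to the stochastic integral defining $I_k$, followed by Minkowski's inequality to bring $\|\cdot\|_{p/2}$ inside the deterministic integral, the Lipschitz bound $|\sigma'|\leq L$, and Cauchy--Schwarz in the form $\|AB\|_{p/2}\leq \|A\|_p\|B\|_p$. This reduces the problem to a deterministic integral with $\|I_{k-1}(r,z,s,y,n-2)\|_p \|I_{k-1}(r,z',s,y,n-2)\|_p$ in place of the stochastic factor. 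Inserting the inductive hypothesis (which yields a factor $(r-s)^{k-1}/(k-1)!$ after squaring), bounding the $(z,z')$-integral by $J_T^2$ via \eqref{muFG_n uniform bound}, and evaluating $\int_s^t (r-s)^{k-1}\,dr=(t-s)^k/k$ upgrades $1/(k-1)!$ to $1/k!$ and yields exactly the desired bound. Throughout, monotonicity of $j \mapsto \|\rho_j\|_{L^\infty}$ (hence of $\Theta(T,\cdot)$) lets me absorb all the $\Theta(T,m)$ with $m\leq n$ into a single $\Theta(T,n)$.

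The main subtlety is the careful bookkeeping of the $(t-s)^k/k!$ factor at each induction step: this factorial decay is precisely what makes the series $\sum_{k\geq 0}(2\sqrt{pT}LJ_T)^k/\sqrt{k!}$ defining $C$ converge, and it would be irretrievably lost if one crudely estimated the inner time integral by $T$ at any stage. A secondary point is that the support statement and the $L^p$-bound must be handled in tandem; fortunately the support argument is purely deterministic (driven by the kernel supports and the convergence $\sum 1/a_j =1$), so the two inductions decouple cleanly and the proof reduces to the routine algebra above.
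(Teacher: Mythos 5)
Your proposal is correct and follows essentially the same route as the paper: expand $M_n=\sum_{k=0}^{n-1}I_k(\cdot,n-1)$, prove the $k$-th term bound by induction via BDG, Minkowski, Cauchy--Schwarz and the uniform bound $J_T^2$ on $\int G_nG_n\gamma$, track the $(t-s)^k/k!$ factor from the time integral, and telescope the kernel supports using $\sum_j 1/a_j=1$. The only cosmetic differences are that you absorb $\Theta(T,n-k)$ into $\Theta(T,n)$ inside the induction and run the support argument separately (correctly keeping the precise radii $\sum_{j=n-k}^{n}1/a_j$ rather than the crude bound $1$, which would not close the induction), whereas the paper carries both the indicator with exact radius and $\Theta(T,n-k)$ through a single induction.
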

\begin{proof}
When $k = 0$, it is clear that
\begin{align}
\label{k=0}
    \lVert I_0(t,x,s,y,n-1)\rVert_p &= G_{n}(t-s,x-y)\lVert \sigma(u_{n-1}(s,y))\rVert_p \nonumber\\
    &\leqslant \Lambda(T,p)\Theta(T,n)\mathbf{1}_{B_{t-s+\frac{1}{a_{n}}}}(x-y).
\end{align}
When $k=1$, using the BDG inequality, Minkowski's inequality, and the Cauchy–Schwarz inequality, we obtain
\begin{align*}
    \lVert I_1(t,x,s,y,n-1)\rVert_p^2 &\leqslant 4p \biggl \lVert \int_s^tdr\int_{\mathbb{R}^6}dzdz'G_{n}(t-r,x-z)G_{n}(t-r,x-z')\gamma(z-z')\\
    &\qquad \quad \;\; \times \sigma^{\prime}(u_{n-1}(r,z))I_{0}(r,z,s,y,n-2)\sigma^{\prime}(u_{n-1}(r,z'))I_{0}(r,z',s,y,n-2) \biggr \rVert_{\frac{p}{2}}\\
    &\leqslant 4pL^2 \int_s^tdr\int_{\mathbb{R}^6}dzdz'G_{n}(t-r,x-z)G_{n}(t-r,x-z')\gamma(z-z')\\
    &\qquad \qquad \times \lVert I_{0}(r,z,s,y,n-2) \rVert_p \lVert I_{0}(r,z',s,y,n-2) \rVert_{p}.
\end{align*}
Then applying \eqref{k=0}, (i) of Lemma \ref{G_n property}, and \eqref{muFG_n uniform bound}, we have
\begin{align*}
    &\lVert I_1(t,x,s,y,n-1)\rVert_p^2\\
    &\leqslant 4pL^2\Lambda(T,p)^2\Theta(T,n-1)^2\\
    &\quad \times \int_s^tdr\int_{\mathbb{R}^6}dzdz'G_{n}(t-r,x-z)G_{n}(t-r,x-z')\gamma(z-z')\mathbf{1}_{B_{r-s+\frac{1}{a_{n-1}}}}(z-y)\mathbf{1}_{B_{r-s+\frac{1}{a_{n-1}}}}(z'-y)\\
    &\leqslant 4pL^2\Lambda(T,p)^2\Theta(T,n-1)^2\mathbf{1}_{B_{t-s+\frac{1}{a_{n-1}}+\frac{1}{a_{n}}}}(x-y)\\
    &\qquad \times \int_s^tdr\int_{\mathbb{R}^6}G_{n}(t-r,x-z)G_{n}(t-r,x-z')\gamma(z-z')dzdz'\\
    &\leqslant 4pL^2\Lambda(T,p)^2\Theta(T,n-1)^2J_T^2(t-s)\mathbf{1}_{B_{t-s+\frac{1}{a_{n-1}}+\frac{1}{a_{n}}}}(x-y),
\end{align*}
and therefore we get
\begin{align*}
    \lVert I_1(t,x,s,y,n-1)\rVert_p \leqslant (2\sqrt{p}LJ_T)\Lambda(T,p) \Theta(T,n-1)(t-s)^{\frac{1}{2}}\mathbf{1}_{B_{t-s+\frac{1}{a_{n-1}}+\frac{1}{a_{n}}}}(x-y).
\end{align*}
Next we assume for $k \geqslant 1$ that 
\begin{align}
\label{indhyp}
    \lVert I_k(t,x,s,y,n-1)\rVert_p \leqslant (2\sqrt{p}LJ_T)^k\Lambda(T,p) \Theta(T,n-k)(t-s)^{\frac{k}{2}}(k!)^{-\frac{1}{2}}\mathbf{1}_{B_{t-s+\sum_{i=n-k}^{n}\frac{1}{a_i}}}(x-y)
\end{align}
for any $n \geqslant 1$ and $(t,x,s,y) \in ([0,T] \times \mathbb{R}^3)^2$.
Then we have
\begin{align*}
    &\lVert I_{k+1}(t,x,s,y,n-1)\rVert_p^2\\ 
    &\leqslant 4pL^2\int_s^tdr\int_{\mathbb{R}^6}dzdz'G_{n}(t-r,x-z)G_{n}(t-r,x-z')\gamma(z-z')\\
    &\qquad \qquad \times \lVert I_k(r,z,s,y,n-2)\rVert_p\lVert I_k(r,z,s,y,n-2)\rVert_{p}\\
    &\leqslant 4pL^2 (2\sqrt{p}LJ_T)^{2k}\Lambda(T,p)^2\Theta(T,n-k-1)^2(k!)^{-1}\\
    &\qquad \times \int_s^tdr(r-s)^k\int_{\mathbb{R}^6}dzdz'G_{n}(t-r,x-z)G_{n}(t-r,x-z')\gamma(z-z')\\
    &\qquad \qquad \qquad \qquad \qquad \times \mathbf{1}_{B_{r-s+\sum_{i=n-k-1}^{n-1}\frac{1}{a_i}}}(z-y)\mathbf{1}_{B_{r-s+\sum_{i=n-k-1}^{n-1}\frac{1}{a_i}}}(z'-y)\\
    &\leqslant 4pL^2(2\sqrt{p}LJ_T)^{2k}\Lambda(T,p)^2\Theta(T,n-k-1)^2(k!)^{-1}\mathbf{1}_{B_{t-s+\sum_{i=n-k-1}^{n}\frac{1}{a_i}}}(x-y)\\
    &\qquad \times \int_s^tdr(r-s)^k\int_{\mathbb{R}^6}dzdz'G_{n}(t-r,x-z)G_{n}(t-r,x-z')\gamma(z-z')\\
    &\leqslant (2\sqrt{p}LJ_T)^{2(k+1)}\Lambda(T,p)^2\Theta(T,n-k-1)^2((k+1)!)^{-1}(t-s)^{k+1}\mathbf{1}_{B_{t-s+\sum_{i=n-k-1}^{n}\frac{1}{a_i}}}(x-y).
\end{align*}
Hence we obtain
\begin{align*}
    &\lVert I_{k+1}(t,x,s,y,n-1)\rVert_p\\
    &\leqslant (2\sqrt{p}LJ_T)^{k+1}\Lambda(T,p) \Theta(T,n-k-1)(t-s)^{\frac{k+1}{2}}((k+1)!)^{-\frac{1}{2}}\mathbf{1}_{B_{t-s+\sum_{i=n-k-1}^{n}\frac{1}{a_i}}}(x-y),
\end{align*}
and \eqref{indhyp} holds for $k+1$.
Therefore, by induction, \eqref{indhyp} holds for all $k \geqslant 0$.
Since $\Theta(T,n) = T\lVert\rho_n\rVert_{\infty}$ is monotone increasing in $n$, 
we finally have
\begin{align*}
    \lVert M_{n}(t,x,s,y)\rVert_p &\leqslant \sum_{k=0}^{n-1} \lVert I_k(t,x,s,y,n-1)\rVert_p\\
    &\leqslant \mathbf{1}_{B_{t-s+\sum_{i=1}^{n}\frac{1}{a_i}}}(x-y)\Lambda(T,p)\sum_{k=0}^{n-1}\frac{(2\sqrt{pT}LJ_T)^k\Theta(T,n-k)}{\sqrt{k!}}\\
    &\leqslant C\Theta(T,n)\mathbf{1}_{B_{t-s+1}}(x-y),
\end{align*}
and the proof is completed.
\end{proof}

Finally, we show that $M_n(t,x, \cdot, \star)$ is indeed a version of $Du_n(t,x)$.
\begin{Prop}
\label{Proposition M_nDu_n}
Let $(t,x) \in [0,T] \times \mathbb{R}^3$ and $p \geqslant 1$. For every $n \geqslant 1$,  we have
\begin{align}
\label{M_nDu_n}
    M_n(t,x,\cdot, \star) = Du_n(t,x)
\end{align}
in $L^p(\Omega; \mathcal{H}_T)$.
\end{Prop}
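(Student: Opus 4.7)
The proof will go by induction on $n$. The base case $n=1$ is immediate: since $u_1(t,x) = 1 + \int_0^t\int G_1(t-s,x-y)\sigma(1)\,W(ds,dy)$, direct application of the Malliavin derivative gives $Du_1(t,x) = G_1(t-\cdot,x-\star)\sigma(1)$, which coincides with $M_1(t,x,\cdot,\star)$ as a deterministic element of $\mathcal{H}_T$.

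For the inductive step, assume the identity at level $n$. Substituting the hypothesis $Du_n(r,z) = M_n(r,z,\cdot,\star)$ into \eqref{Du_n iteration} rewrites it as
\begin{align*}
    Du_{n+1}(t,x) &= G_{n+1}(t-\cdot,x-\star)\sigma(u_n(\cdot,\star))\\
    &\quad + \int_0^t\int_{\mathbb{R}^3} G_{n+1}(t-r,x-z)\sigma'(u_n(r,z))M_n(r,z,\cdot,\star)\,W(dr,dz),
\end{align*}
where the second term is understood as an $\mathcal{H}_T$-valued Walsh integral. Comparing term by term with \eqref{M_(n+1)}, the proposition reduces to showing that this $\mathcal{H}_T$-valued integral coincides, as an element of $L^p(\Omega;\mathcal{H}_T)$, with the random field $N(s,y):=\int_0^t\int G_{n+1}(t-r,x-z)\sigma'(u_n(r,z))M_n(r,z,s,y)\,W(dr,dz)$, once the latter is embedded via the isometry $\iota$ of Lemma \ref{Lemma L2(Omega,H_T)}.

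To establish this identification, I would pair both sides with an arbitrary test function $h \in C_0^\infty([0,T]\times\mathbb{R}^3)$, dense in $\mathcal{H}_T$ by Lemma \ref{dense lemma}. For the $\mathcal{H}_T$-valued integral, the standard commutation property of vector-valued Walsh integration moves the inner product inside, producing the scalar integral $\int_0^t\int G_{n+1}(t-r,x-z)\sigma'(u_n(r,z))\langle M_n(r,z,\cdot,\star),h\rangle_{\mathcal{H}_T}\,W(dr,dz)$. For the random-function side, Lemma \ref{Lemma L2(Omega,H_T)} expresses $\langle \iota N,h\rangle_{\mathcal{H}_T}$ as the triple Lebesgue integral of $N(s,y)\,h(s,y')\,\gamma(y-y')$ in $(s,y,y')$; a stochastic Fubini argument interchanges this deterministic integration with the Walsh integral defining $N$, producing exactly the same scalar Walsh integral. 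Equality for $h$ in a dense subset of $\mathcal{H}_T$ forces equality in $L^p(\Omega;\mathcal{H}_T)$.

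The main obstacle will be justifying the stochastic Fubini step rigorously. Here the bound $\lVert M_n(r,z,s,y)\rVert_p \leqslant C\Theta(T,n)\mathbf{1}_{B_{T+1}}(z-y)$ from Theorem \ref{Theorem moment estimate}, together with the uniform pointwise bound $|\sigma'|\leqslant L$ (implied by $\sigma\in C^1$ and Lipschitz continuity of $\sigma$), the boundedness and compact support of $G_{n+1}$ from Lemma \ref{G_n property}, the compact support of $h$, and the uniform estimate \eqref{muFG_n uniform bound}, will supply the measurability and $L^2$-integrability hypotheses required by a classical stochastic Fubini theorem for Walsh integrals. These same bounds also verify $M_{n+1}(t,x,\cdot,\star)\in\mathcal{J}$, so that via Lemma \ref{Lemma L2(Omega,H_T)} it is a legitimate member of $L^p(\Omega;\mathcal{H}_T)$, making the claimed equality meaningful.
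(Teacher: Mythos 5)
Your proposal is correct and follows essentially the same route as the paper: induction on $n$, reduction of the identity to pairings against a dense family of $C_0^{\infty}$ test functions in $\mathcal{H}_T$ (the paper uses a complete orthonormal system from Lemma \ref{dense lemma}), commuting the inner product through the $\mathcal{H}_T$-valued stochastic integral, rewriting the random-function side via Lemma \ref{Lemma L2(Omega,H_T)}, and interchanging the Walsh and Lebesgue integrals by a stochastic Fubini argument justified with the pointwise moment bound of Theorem \ref{Theorem moment estimate} and the compact-support properties of $G_{n+1}$. The paper carries out this interchange via the duality formula \eqref{duality formula} and \eqref{skorokhod integral} following the cited argument, but this is the same justification you sketch, so no substantive difference remains.
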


\begin{proof}
Since $Du_n(t,x) \in L^p(\Omega; \mathcal{H}_T)$ for every $(t,x) \in [0,T] \times \mathbb{R}^3$ and $p \geqslant 1$, we only need to show that \eqref{M_nDu_n} holds when $p = 2$.
It is clear that 
\begin{align*}
    M_1(t,x,\cdot, \star) = G_1(t-\cdot, x-\star)\sigma(1) = Du_1(t,x).
\end{align*}
Assume by induction that \eqref{M_nDu_n} holds for $n$ when $p = 2$. 
We first observe that $M_{n+1}(t,x,\cdot,\star) \in L^2(\Omega;\mathcal{H}_T)$. Indeed, by Proposition \ref{Proposition M_n property} and Theorem \ref{Theorem moment estimate}, $(s,y;\omega) \mapsto M_{n+1}(t,x,s,y;\omega)$ is jointly measurable and 
\begin{align*}
    &\mathbb{E}\left[\int_0^T \int_{\mathbb{R}^6}|M_{n+1}(t,x,s,y)||M_{n+1}(t,x,s,z)|\gamma(y-z)dydzds\right]\\
    &\leqslant \int_0^T\int_{\mathbb{R}^6}\lVert M_{n+1}(t,x,s,y) \rVert_2 \lVert M_{n+1}(t,x,s,z) \rVert_2 \gamma(y-z)dydzds\\
    &\lesssim_{n} \int_0^T\int_{B_{T+1}^2}\gamma(y-z)dydzds < \infty.
\end{align*}
Therefore, $M_{n+1}(t,x,\cdot,\star) \in \mathcal{J} \subset L^2(\Omega;\mathcal{H}_T)$.

Let $\{f_k\}_k \subset C_0^{\infty}([0,T] \times \mathbb{R}^3)$ be a complete orthonormal system of $\mathcal{H}_T$ (\textit{cf.} Lemma \ref{dense lemma}). Then, the construction of the $\mathcal{H}_T$-valued stochastic integral implies that
\begin{align*}
    \langle Du_{n+1}(t,x), f_k \rangle_{\mathcal{H}_T} 
    &= \langle G_{n+1}(t-\cdot,x-\star)\sigma(u_n(\cdot,\star)), f_k \rangle_{\mathcal{H}_T}\\
    &\quad + \int_0^t\int_{\mathbb{R}^3}G_{n+1}(t-r,x-z)\sigma^{\prime}(u_n(r,z))\langle Du_n(r,z), f_k \rangle_{\mathcal{H}_T}W(dr,dz).
\end{align*}
In order to prove $M_{n+1}(t,x,\cdot, \star) = Du_{n+1}(t,x)$ in $L^2(\Omega; \mathcal{H}_T)$, it is sufficient to show that 
\begin{align}
\label{MnDn c1}
    \langle M_{n+1}(t,x,\cdot, \star), f_k \rangle_{\mathcal{H}_T} = \langle Du_{n+1}(t,x), f_k \rangle_{\mathcal{H}_T}, \quad \text{a.s.}, 
\end{align}
for every $k$. Applying Lemma \ref{Lemma L2(Omega,H_T)}, we have almost surely 
\begin{align*}
    &\langle M_{n+1}(t,x,\cdot, \star), f_k \rangle_{\mathcal{H}_T} 
    = \int_0^T\int_{\mathbb{R}^6}M_{n+1}(t,x,s,y)f_k(s,z)\gamma(y-z)dydzds\\
    &= \langle G_{n+1}(t-\cdot, x-\star)\sigma(u_n(\cdot, \star)), f_k \rangle_{\mathcal{H}_T}\\
    &\quad + \int_0^T\int_{\mathbb{R}^3}\left( \int_0^t\int_{\mathbb{R}^3}G_{n+1}(t-r,x-z)\sigma^{\prime}(u_n(r,z))M_n(r,z,s,y)W(dr,dz) \right)(f_k(s)*\gamma)(y)dyds\\
    &= \langle G_{n+1}(t-\cdot, x-\star)\sigma(u_n(\cdot, \star)), f_k \rangle_{\mathcal{H}_T}\\
    &\quad +\int_0^t\int_{\mathbb{R}^3}G_{n+1}(t-r,x-z)\sigma^{\prime}(u_n(r,z))\left(\int_0^T\int_{\mathbb{R}^3}M_n(r,z,s,y)(f_k(s)*\gamma)(y)dyds\right)W(dr,dz).
\end{align*}
Here, in the last line above, we use Fubini's theorem, \eqref{skorokhod integral}, and the duality formula \eqref{duality formula} to interchange the stochastic integral and the Lebesgue integral, which follows from the similar argument as in \cite[p.10]{Averaging2dSWE}.
Note that the signed measure $(f_k(s)*\gamma)(y)dyds$ on $[0,T] \times \mathbb{R}^3$ is in general not finite as in the setting in \cite[p.10]{Averaging2dSWE}. However, using Theorem \ref{Theorem moment estimate}, we can indeed apply Fubini's theorem.
By induction assumption and Lemma \ref{Lemma L2(Omega,H_T)}, we have for every $(t,x) \in [0,T] \times \mathbb{R}^3$,
\begin{align*}
    \langle Du_n(t,x), f_k \rangle_{\mathcal{H}_T} 
    &= \langle M_n(t,x,\cdot, \star), f_k \rangle_{\mathcal{H}_T}\\
    &= \int_0^T\int_{\mathbb{R}^3}M_n(r,z,s,y)(f_k(s)*\gamma)(y)dyds, \quad \text{a.s.}.
\end{align*}
Therefore, \eqref{MnDn c1} holds and the proof is completed.
\end{proof}

By Theorem \ref{Theorem moment estimate} and Proposition \ref{Proposition M_nDu_n}, we have proved the pointwise moment estimate \eqref{derivative estimate introduction} for $Du_n(t,x)$.

\section{Central Limit Theorems}
\label{CLT}
We prove Theorem \ref{main result1} in this section. 
Section \ref{section Nondegeneracy of the variance} is devoted to proving the nondegeneracy of the variance in Theorem \ref{main result1}. 
We show the rest of statement \eqref{distance convergence} in Section \ref{section Proof of Theorem}.
\subsection{Nondegeneracy of the variance}
\label{section Nondegeneracy of the variance}
Let us first recall some notations. We set in Section 1 that
\begin{align*}
    F_{n,R}(t) = \int_{B_R} (u_n(t,x) - 1)dx, \quad F_{R}(t) = \int_{B_R} (u(t,x) - 1)dx, 
\end{align*}
and 
\begin{align*}
    \sigma_{n,R}^2(t) = \mathrm{Var}(F_{n,R}(t)), \quad \sigma_{R}^2(t) = \mathrm{Var}(F_{R}(t)).
\end{align*}
Notice that $\mathbb{E}[F_{n,R}(t)] = \mathbb{E}[F_{R}(t)] = 0$.
Set
\begin{align*}
    V_{n,t,R}(s,y)= \varphi_{n,t,R}(s,y)\sigma(u_{n-1}(s,y)), \quad V_{t,R}(s,y)= \varphi_{t,R}(s,y)\sigma(u(s,y)).
\end{align*}
Then, we have
\begin{align}
     F_{n,R}(t) &= \int_0^t\int_{\mathbb{R}^3}\varphi_{n,t,R}(s,y)\sigma(u_{n-1}(s,y))W(ds,dy) = \delta(V_{n,t,R}),  \label{fn delta}\\
    F_{R}(t) &= \int_0^t\int_{\mathbb{R}^3}\varphi_{t,R}(s,y)\sigma(u(s,y))W(ds,dy) = \delta(V_{t,R}), \label{f delta}
\end{align}
almost surely, where $\delta$ is the adjoint operator of the Malliavin derivative operator $D$ (see Section \ref{subsection Malliavin calculus and Malliavin-Stein bound}).
Indeed, \eqref{fn delta} and \eqref{f delta} follow from the similar argument in p.10 of \cite{Averaging2dSWE}.

To prove the nondegeneracy, we first consider the case $\gamma \in L^1(\mathbb{R}^3)$.
\begin{Lem}
\label{Lemma L1 nondegeneracy}
Let $\gamma \in L^1(\mathbb{R}^3)$ such that $\gamma(x) > 0$ for all $x \in \mathbb{R}^3$. For $t \in (0,T]$, we have $\sigma_R^2(t) >0$, $\sigma_{n,R}^2(t)>0$ for any $R>0$ and $n \geqslant 1$. Moreover, if $R\geqslant 2(T+1)$, then we have 
\begin{gather}
\label{l1 nondegeneracy}
    \sigma^2_R(t) \gtrsim R^3, \quad \sigma^2_{n,R}(t) \gtrsim R^3 \quad ( n \geqslant 1).
\end{gather}
In particular, implicit constants in \eqref{l1 nondegeneracy} are independent of $n$ and $R$.
\end{Lem}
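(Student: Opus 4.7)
The plan is to exploit the identity $F_R(t) = \delta(V_{t,R})$ (and its $u_n$ analogue) together with the isometry for Walsh integrals of predictable integrands, which yields
\begin{align*}
\sigma_R^2(t) = \int_0^t\!\int_{\mathbb{R}^6} \varphi_{t,R}(s,y)\varphi_{t,R}(s,z)\gamma(y-z)\,\mathbb{E}[\sigma(u(s,y))\sigma(u(s,z))]\,dy\,dz\,ds
\end{align*}
and an analogous formula for $\sigma_{n,R}^2(t)$ with $\varphi_{n,t,R}$ and $u_{n-1}$. By Lemma \ref{Lemma fourier transform} each spatial integrand is in fact an $\mathcal{H}$-norm squared of a compactly supported bounded function, so the $s$-integrand is pointwise nonneg. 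The strategy is then to isolate a subregion of $[0,t] \times \mathbb{R}^6$ on which every factor admits an explicit positive lower bound.

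The crucial input comes from $u(0,\cdot) = u_n(0,\cdot) = 1$: Lipschitz continuity of $\sigma$ combined with the uniform-in-$(n,y)$ $L^2(\Omega)$-equicontinuity from Proposition \ref{Proposition u_n uniform property} (extended to $u$ via \eqref{iteration uniform convergence}) produces $\epsilon_0 \in (0,t]$, independent of $n$, with
\begin{align*}
\mathbb{E}[\sigma(u(s,y))\sigma(u(s,z))] \wedge \mathbb{E}[\sigma(u_{n-1}(s,y))\sigma(u_{n-1}(s,z))] \geqslant \tfrac{1}{2}\sigma(1)^2 > 0
\end{align*}
for every $n \geqslant 1$, $s \in [0,\epsilon_0]$, $y, z \in \mathbb{R}^3$. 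On the spatial side, the inclusions $\supp G(t-s) \subseteq \partial B_{t-s}$ and $\supp G_n(t-s) \subseteq B_{t-s+1}$ (note $a_n \geqslant 1$) imply $\varphi_{t,R}(s,y) = \varphi_{n,t,R}(s,y) = t-s$ whenever $|y| + (t-s) + 1 \leqslant R$. For $R \geqslant 2(T+1)$ and $|y| \leqslant R/2$ this holds at every $s \in [0,t]$.

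Combining these with the choice $\epsilon := \epsilon_0 \wedge (t/2)$ and restricting to $s \in [0,\epsilon]$, $|y|, |z| \leqslant R/2$, both variances are bounded below by
\begin{align*}
\frac{\sigma(1)^2 t^2 \epsilon}{8} \int_{B_{R/2}}\int_{B_{R/2}} \gamma(y-z)\,dy\,dz.
\end{align*}
The change of variables $u = y-z$ together with the elementary inclusion $B_{R/4} \subseteq B_{R/2} \cap (B_{R/2} - u)$ for $|u| \leqslant R/4$ gives the spatial double integral at least $|B_{R/4}| \int_{B_{R/4}} \gamma \geqslant |B_{R/4}| \int_{B_{1/2}} \gamma$ when $R \geqslant 2(T+1) \geqslant 2$. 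Since $\gamma$ is measurable and strictly positive everywhere, $\int_{B_{1/2}}\gamma > 0$, and this yields $\sigma_R^2(t), \sigma_{n,R}^2(t) \gtrsim R^3$ with constant independent of $n$ and $R$.

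For strict positivity at an arbitrary $R > 0$, the same decomposition works without the $R^3$ estimate. For every $s \in (0,t)$, the deterministic function $\varphi_{t,R}(s,\cdot)$ (and similarly $\varphi_{n,t,R}(s,\cdot)$) is nonneg and not identically zero, its support containing every $y$ with $||y|-(t-s)| \leqslant R$; strict positivity of $\gamma$ then forces $\|\varphi_{t,R}(s,\cdot)\|_{\mathcal{H}}^2 > 0$. Multiplying by $\mathbb{E}[\sigma\sigma] \geqslant \sigma(1)^2/2 > 0$ on $(0,\epsilon_0 \wedge t)$ and integrating in $s$ yields $\sigma_R^2(t), \sigma_{n,R}^2(t) > 0$. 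The one genuine technical obstacle is making the choice of $\epsilon_0$ uniform in $n$, and this is precisely what the uniform equicontinuity in Proposition \ref{Proposition u_n uniform property} provides; everything else reduces to the isometry formula and elementary geometry of balls.
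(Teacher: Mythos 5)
Your proposal is correct and follows essentially the same route as the paper: the isometry formula for $\delta(V_{t,R})$, a small-time lower bound $\mathbb{E}[\sigma(u)\sigma(u)]\geqslant \sigma(1)^2/2$ obtained uniformly in $n$ from the $L^2(\Omega)$-equicontinuity and $u(0,\cdot)=u_n(0,\cdot)=1$, and then a finite-propagation/support argument producing the $R^3$ lower bound with constants independent of $n$ and $R$. The only difference is cosmetic bookkeeping in the geometric step (you observe $\varphi_{t,R}(s,y)=\varphi_{n,t,R}(s,y)=t-s$ on $B_{R/2}$ and bound $\int_{B_{R/2}^2}\gamma(y-z)\,dy\,dz\gtrsim R^3\int_{B_{1/2}}\gamma$, whereas the paper shifts the indicator $\mathbf{1}_{B_R}$ by points of the light cone), which does not change the substance.
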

\begin{proof}
We first prove the claim for $\sigma_R^2(t)$.
By the isometry property of stochastic integral and Lemma \ref{Lemma L2(Omega,H_T)}, we have
\begin{align*}
    \sigma^2_R(t) &= \int_0^tdr\int_{\mathbb{R}^6}dzdz'\varphi_{t,R}(r,z)\varphi_{t,R}(r,z')\gamma(z-z')\mathbb{E}[\sigma(u(r,z))\sigma(u(r,z'))]\\
    &= \int_0^t\mathbb{E}[\lVert V_{t,R}(r,\cdot)\rVert_{\mathcal{H}}^2]dr.
\end{align*}
Now observe that $t \mapsto u(t,x)$ is $L^2(\Omega)$-equicontinuous for $x \in \mathbb{R}^3$ by \eqref{iteration uniform equicontinuous} and \eqref{iteration uniform convergence}.
Since $u(0,x) = 1$ for all $x \in \mathbb{R}^3$ and $\sigma$ is the Lipschitz continuous function, we deduce from this equicontinuity and \eqref{solution uniform bound} that 
\begin{align*}
    &\lim_{r \to 0} \sup_{z \in \mathbb{R}^3} \sup_{z' \in \mathbb{R}^3}|\mathbb{E}[\sigma(u(r,z))\sigma(u(r,z'))] - \sigma(1)^2|\\
    &= \lim_{r \to 0} \sup_{z \in \mathbb{R}^3} \sup_{z' \in \mathbb{R}^3}|\mathbb{E}[\sigma(u(r,z))\sigma(u(r,z'))] - \mathbb{E}[\sigma(u(0,z))\sigma(u(0,z'))]| = 0.
\end{align*}
This together with $\sigma(1) \neq 0$ (see Assumption \ref{assumption}) implies that we can find small enough $\kappa >0$ such that
\begin{align*}
    \mathbb{E} [\sigma(u(r,z))\sigma(u(r,z'))] \geqslant \frac{|\sigma(1)|^2}{2} > 0
\end{align*}
for all $(r,z,z') \in [0,\kappa] \times \mathbb{R}^3 \times \mathbb{R}^3$ and $\kappa <t$.
This leads to the estimate
\begin{align*}
    \sigma^2_R(t) &=\int_0^t\mathbb{E}[\lVert V_{t,R}(r,\cdot)\rVert_{\mathcal{H}}^2]dr
    \geqslant \int_0^{\kappa}\mathbb{E}[\lVert V_{t,R}(r,\cdot)\rVert_{\mathcal{H}}^2]dr\\
    &\geqslant \frac{|\sigma(1)|^2}{2}\int_0^{\kappa}dr\int_{\mathbb{R}^6}dzdz'\varphi_{t,R}(r,z)\varphi_{t,R}(r,z')\gamma(z-z')\\
    &= \frac{|\sigma(1)|^2}{2}\int_0^{\kappa}dr\int_{\mathbb{R}^6}G(t-r,dx)G(t-r,dx')\left(\int_{\mathbb{R}^6}dzdz'\mathbf{1}_{B_R}(z-x)\mathbf{1}_{B_R}(z'-x')\gamma(z-z')\right).
\end{align*}
As $\gamma(x) >0$ for all $x \in \mathbb{R}^3$, we get $\sigma^2_R(t) > 0$ for $R>0$. 
Now we assume $R\geqslant 2(T+1)$. Recall (see \eqref{fundamental solution}) that the support of the measure $G(t-r, dx)$ is $\partial B_{t-r} = \{x\in \mathbb{R}^3 \mid |x| = t-r\}$ .
Because 
\begin{align*}
    \mathbf{1}_{B_{R-T}}(z) \leqslant \mathbf{1}_{B_{R}}(z-x), \quad \mathbf{1}_{B_{R-T}}(z') \leqslant \mathbf{1}_{B_{R}}(z'-x')
\end{align*}
for any fixed $x,x' \in \partial B_{t-r}$, we obtain
\begin{align*}
    \sigma^2_R(t)
    &\geqslant \frac{|\sigma(1)|^2}{2}\int_0^{\kappa}dr\int_{\mathbb{R}^6}G(t-r,dx)G(t-r,dx')\left(\int_{\mathbb{R}^6}dzdz'\mathbf{1}_{B_{R-T}}(z)\mathbf{1}_{B_{R-T}}(z')\gamma(z-z')\right)\\
    &\geqslant \frac{|\sigma(1)|^2}{2}\int_0^{\kappa}dr\int_{\mathbb{R}^6}G(t-r,dx)G(t-r,dx')\int_{\mathbb{R}^3}\mathbf{1}_{B_{\frac{R-T}{2}}}(z)\left(\int_{\mathbb{R}^3}\mathbf{1}_{B_{R-T}}(z')\gamma(z-z')dz'\right)dz\\
    &= \frac{|\sigma(1)|^2}{2}\int_0^{\kappa}dr(t-r)^2\int_{\mathbb{R}^3}\mathbf{1}_{B_{\frac{R-T}{2}}}(z)\left(\int_{\mathbb{R}^3}\mathbf{1}_{B_{R-T}}(z-z')\gamma(z')dz'\right)dz,
\end{align*}
where the last equality follows from \eqref{G total measure}. 
Similarly, since $\mathbf{1}_{B_{\frac{R-T}{2}}}(z') \leqslant \mathbf{1}_{B_{R-T}}(z-z')$ for any fixed $z \in B_{\frac{R-T}{2}}$, 
we have
\begin{align*}
    \sigma^2_R(t)
    &\geqslant \frac{|\sigma(1)|^2}{2}\int_0^{\kappa}dr(t-r)^2\int_{\mathbb{R}^3}\mathbf{1}_{B_{\frac{R-T}{2}}}(z)dz\int_{\mathbb{R}^3}\mathbf{1}_{B_{\frac{R-T}{2}}}(z')\gamma(z')dz'\\
    &\geqslant \left( \frac{|\sigma(1)|^2\kappa(t-\kappa)^2|B_1|}{16}\left(1- \frac{T}{R}\right)^3\int_{B_{\frac{R-T}{2}}}\gamma(x)dx\right) R^3\\
    &\geqslant \left(\frac{|\sigma(1)|^2\kappa(t-\kappa)^2|B_1|}{128} \int_{B_{\frac{T}{2}}}\gamma(x)dx \right)R^3,
\end{align*}
and $\sigma^2_R(t) \gtrsim R^3$ is proved.
Similar argument is also applied to the case $\sigma^2_{n,R}(t)$ and we only sketch the proof.
Since $u_n(0,x)=1$ for all $x \in \mathbb{R}^3$ and \eqref{iteration uniform equicontinuous} holds, we have
\begin{align*}
    \lim_{r \to 0} \sup_n \sup_{z \in \mathbb{R}^3} \sup_{z' \in \mathbb{R}^3}|\mathbb{E}[\sigma(u_n(r,z))\sigma(u_n(r,z'))] - \sigma(1)^2|=0.
\end{align*}
From this we can find independently of $n$ small enough $\kappa \in (0,t)$ such that
\begin{align*}
    \mathbb{E} [\sigma(u_n(r,z))\sigma(u_n(r,z'))] \geqslant \frac{|\sigma(1)|^2}{2} > 0
\end{align*}
for every $n \geqslant 1$ and $(r,z,z') \in [0,\kappa] \times \mathbb{R}^3 \times \mathbb{R}^3$. Hence
\begin{align*}
    \sigma^2_{n,R}(t) &\geqslant \frac{|\sigma(1)|^2}{2}\int_0^{\kappa}dr\int_{\mathbb{R}^6}G_n(t-r,dx)G_n(t-r,dx')\left(\int_{\mathbb{R}^6}dzdz'\mathbf{1}_{B_{R}}(z-x)\mathbf{1}_{B_{R}}(z'-x')\gamma(z-z')\right),
\end{align*}
and we get $\sigma^2_{n,R}(t) > 0$ for every $n  \geqslant 1$ and $R>0$. Now assume that $R\geqslant 2(T+1)$. Since $\lVert G_n(t-r) \rVert_{L^1(\mathbb{R}^3)} = G(t-r,\mathbb{R}^3) = t-r$, we obtain 
\begin{align*}
    \sigma^2_{n,R}(t) \geqslant \left(\frac{|\sigma(1)|^2\kappa(t-\kappa)^2|B_1|}{128} \int_{B_{\frac{T+1}{2}}}\gamma(x)dx \right)R^3,
\end{align*}
by the same argument above. Therefore $\sigma^2_{n,R}(t) \gtrsim R^3 \quad (n \geqslant 1)$ and the proof is completed.
\end{proof}
Next we prove similar results for $\gamma(x) = |x|^{-\beta}$.
\begin{Lem}
\label{Lemma Riesz nondegeneracy}
Let $\gamma(x) = |x|^{-\beta}$ for some $0< \beta <2$. For $t\in(0,T]$, we have $\sigma_R^2(t) >0$, $\sigma_{n,R}^2(t)>0$ for any $R>0$ and $n \geqslant 1$. Furthermore, if $R \geqslant 1$, then we have 
\begin{gather}
\label{Riesz nondegeneracy}
    \sigma^2_R(t) \gtrsim R^{6-\beta}, \quad \sigma^2_{n,R}(t) \gtrsim R^{6-\beta} \quad ( n \geqslant 1).
\end{gather}
In particular, implicit constants in \eqref{Riesz nondegeneracy} are independent of $n$ and $R$.
\end{Lem}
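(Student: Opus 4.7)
Following the template of Lemma \ref{Lemma L1 nondegeneracy}, I would first write
\begin{align*}
\sigma_R^2(t) = \int_0^t dr \int_{\mathbb{R}^6} \varphi_{t,R}(r,z)\varphi_{t,R}(r,z')|z-z'|^{-\beta}\,\mathbb{E}[\sigma(u(r,z))\sigma(u(r,z'))]\,dz\,dz',
\end{align*}
and analogously for $\sigma_{n,R}^2(t)$ with $\varphi_{n,t,R}$ and $u_{n-1}$. The $L^2(\Omega)$-equicontinuity of $t\mapsto u(t,x)$ (respectively of $u_n$, uniformly in $n$, via \eqref{iteration uniform equicontinuous} and \eqref{iteration uniform convergence}), combined with $u(0,x)=1$ and $\sigma(1)\neq 0$, produces $\kappa\in(0,t)$ with $\mathbb{E}[\sigma(u(r,z))\sigma(u(r,z'))]\geq\tfrac{1}{2}|\sigma(1)|^2$ for all $r\in[0,\kappa]$ and all $z,z'$, and the same with $u$ replaced by $u_n$ uniformly in $n$. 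Restricting the $r$-integral to $[0,\kappa]$, the positivity of $\sigma_R^2(t)$ and $\sigma_{n,R}^2(t)$ for every $R>0$ is immediate from the strict positivity of the kernel and of $\varphi_{t,R}$, $\varphi_{n,t,R}$.

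For the polynomial lower bound, rather than imitate the geometric argument of Lemma \ref{Lemma L1 nondegeneracy}, I would pass to Fourier space using the identification $\mu(d\xi)=c_\beta|\xi|^{\beta-3}d\xi$ for the Riesz kernel (recalled in the remark following Theorem \ref{main result1}). Because $\varphi_{t,R}(r,\cdot)=\mathbf{1}_{B_R}*G(t-r)$ is bounded and compactly supported, Lemma \ref{Lemma fourier transform} gives
\begin{align*}
\int_{\mathbb{R}^6}\varphi_{t,R}(r,z)\varphi_{t,R}(r,z')|z-z'|^{-\beta}\,dz\,dz' = c_\beta \int_{\mathbb{R}^3}|\mathcal{F}\mathbf{1}_{B_R}(\xi)|^2|\mathcal{F}G(t-r)(\xi)|^2|\xi|^{\beta-3}\,d\xi.
\end{align*}
The homogeneity $|\mathcal{F}\mathbf{1}_{B_R}(\xi)|^2=R^6|\mathcal{F}\mathbf{1}_{B_1}(R\xi)|^2$ together with the substitution $\eta=R\xi$ extracts the factor $R^{6-\beta}$, producing
\begin{align*}
c_\beta R^{6-\beta}\int_{\mathbb{R}^3}|\mathcal{F}\mathbf{1}_{B_1}(\eta)|^2\,|\mathcal{F}G(t-r)(\eta/R)|^2\,|\eta|^{\beta-3}\,d\eta.
\end{align*}
I would then restrict the $\eta$-integral to a small fixed ball $|\eta|\leq c_0$, chosen so small that, for $r\in[0,\kappa]$ and $R\geq 1$, the factor $|\mathcal{F}G(t-r)(\eta/R)|^2$ is bounded below by $(t-\kappa)^2/4$ (using $\sin x/x\to 1$ as $x\to 0$) and $|\mathcal{F}\mathbf{1}_{B_1}(\eta)|^2\geq |B_1|^2/4$ (by continuity at $0$). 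Since $\int_{|\eta|\leq c_0}|\eta|^{\beta-3}d\eta<\infty$ for $\beta\in(0,2)\subset(0,3)$, integration in $r\in[0,\kappa]$ yields $\sigma_R^2(t)\gtrsim R^{6-\beta}$ uniformly in $R\geq 1$.

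The argument for $\sigma_{n,R}^2(t)$ runs identically, except that the Fourier-side integrand carries an extra factor $|\mathcal{F}\rho_n(\eta/R)|^2=|\mathcal{F}\rho(\eta/(R a_n))|^2$. This is the one step I expect to require the most care: I need a lower bound on $|\mathcal{F}\rho_n|$ that is \emph{uniform} in both $n\geq 1$ and $R\geq 1$. It is supplied by $\mathcal{F}\rho(0)=1$ and continuity of $\mathcal{F}\rho$, which give $|\mathcal{F}\rho(\zeta)|\geq 1/2$ on some fixed ball $|\zeta|\leq\delta$; because $R a_n\geq a_1>0$ for all admissible $n,R$, further shrinking $c_0$ to satisfy $c_0\leq\delta a_1$ ensures $|\mathcal{F}\rho_n(\eta/R)|\geq 1/2$ throughout the integration region. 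Repeating the previous computation then delivers $\sigma_{n,R}^2(t)\gtrsim R^{6-\beta}$ with implicit constants independent of both $n$ and $R$, completing the proof.
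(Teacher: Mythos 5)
Your argument is correct, and for the quantitative bound \eqref{Riesz nondegeneracy} it takes a genuinely different route from the paper's. The preliminary step coincides: both you and the paper use the uniform $L^2(\Omega)$-equicontinuity at $t=0$ together with $\sigma(1)\neq 0$ to produce a $\kappa\in(0,t)$, uniform in $n$, on which $\mathbb{E}[\sigma(u(r,z))\sigma(u(r,z'))]\geqslant |\sigma(1)|^2/2$, restrict the time integral to $[0,\kappa]$ (legitimate since each $r$-slice is an $\mathcal{H}$-norm squared), and read off positivity; your phrase ``strict positivity of $\varphi_{t,R}$'' is a slight overstatement (these functions are only nonnegative with nontrivial support), but nonnegativity plus $\gamma>0$ is all that is needed, so this is harmless. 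For the lower bound the paper stays in physical space: after shifting variables it applies Peetre's inequality twice to factor $\langle x-x'+z-z'\rangle^{-\beta}\geqslant 2^{-\beta}\langle z-z'\rangle^{-\beta}\langle x\rangle^{-\beta}\langle x'\rangle^{-\beta}$, absorbs the $\langle x\rangle^{-\beta}$ factors using the compact support and total mass $G(t-r,\mathbb{R}^3)=t-r$ (and $\lVert G_n(t-r)\rVert_{L^1(\mathbb{R}^3)}=t-r$ in the approximating case), and is left with $\int_{B_R^2}\langle z-z'\rangle^{-\beta}dzdz'\gtrsim R^{6-\beta}$ for $R\geqslant 1$. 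You instead pass to the spectral side via Lemma \ref{Lemma fourier transform} and $\mu(d\xi)=c_\beta|\xi|^{\beta-3}d\xi$, extract $R^{6-\beta}$ by scaling, and bound the Fourier factors below on a fixed small ball around the origin; this is precisely the computation the paper performs later for limits rather than lower bounds (Lemma \ref{Lemma riesz limit}), so every ingredient is available, and your handling of the one delicate factor $|\mathcal{F}\rho_n(\eta/R)|$ --- a lower bound uniform in $n$ and $R\geqslant 1$ from $Ra_n\geqslant a_1>0$ and $\mathcal{F}\rho(0)=1$ --- is sound. The trade-off: the paper's real-space argument never needs Fourier lower bounds and treats the mollifier trivially through its $L^1$-norm, while your spectral argument makes the origin of the exponent $6-\beta$ transparent and is closer in spirit to the sharp asymptotics of Section \ref{FCLT}, the constant you obtain being essentially a truncated version of $\tau_\beta$.
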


\begin{proof}
Applying the same arguments in the proof of Lemma \ref{Lemma L1 nondegeneracy}, we can take $\kappa >0$ small enough to obtain
\begin{align*}
    \sigma^2_R(t) 
    &\geqslant \frac{|\sigma(1)|^2}{2}\int_0^{\kappa}dr\int_{\mathbb{R}^6}G(t-r,dx)G(t-r,dx')\left(\int_{\mathbb{R}^6}dzdz'\mathbf{1}_{B_R}(z-x)\mathbf{1}_{B_R}(z'-x')|z-z'|^{-\beta}\right)\\
    &\geqslant \frac{|\sigma(1)|^2}{2}\int_0^{\kappa}dr\int_{\mathbb{R}^6}G(t-r,dx)G(t-r,dx')\left(\int_{\mathbb{R}^6}dzdz'\mathbf{1}_{B_R}(z)\mathbf{1}_{B_R}(z')\langle x-x'+z-z' \rangle^{-\beta} \right),
\end{align*}
which proves $\sigma_R^2(t) >0$ for $R>0$.
Now we assume $R \geqslant 1$.
Using Peetre's inequality (see Proposition \ref{Peetre inequality}) twice, we get 
\begin{align*}
    \langle x-x'+z-z' \rangle^{-\beta} 
    &\geqslant 2^{-\frac{\beta}{2}}\langle z-z' \rangle^{-\beta}\langle x-x' \rangle^{-\beta}\\
    &\geqslant 2^{-\beta}\langle z-z' \rangle^{-\beta}\langle x \rangle^{-\beta}\langle x' \rangle^{-\beta},
\end{align*}
and we have
\begin{align*}
    \sigma^2_R(t)
    &\geqslant \frac{|\sigma(1)|^2}{2^{1+\beta}}\int_0^{\kappa}\left(\int_{\mathbb{R}^3}\langle x \rangle^{-\beta}G(t-r,dx)\right)^2dr\int_{B^2_R}\langle z-z' \rangle^{-\beta}dzdz'\\
    &\geqslant \frac{|\sigma(1)|^2|B_1|^2}{2^{1+\beta}5^{\frac{\beta}{2}}}\langle T \rangle^{-2\beta}\left(\int_0^{\kappa}\left(\int_{\mathbb{R}^3}G(t-r,dx)\right)^2dr\right) R^{6-\beta}\\
    &\geqslant \frac{|\sigma(1)|^2\kappa(t-\kappa)^2|B_1|^2}{2^{1+\beta}5^{\frac{\beta}{2}}}\langle T \rangle^{-2\beta} R^{6-\beta}.
\end{align*}
Therefore, we get $\sigma^2_R(t) \gtrsim R^{6-\beta}$.
The proof for $\sigma_{n,R}^2(t)$ is similar and we omit details.
\end{proof}

To end this section, we note that the following two conditions are equivalent in our setting:
\begin{enumerate}[(i)]
    \item $\sigma(1) \neq 0$.
    \item $\sigma_R(t) >0$ and $\sigma_{n,R}(t) > 0$ for every $R >0$, $t > 0$, and $n \geqslant 1$. 
\end{enumerate}
Indeed, if $\sigma(1) =0$, then the Picard iteration scheme \eqref{u_n+1} leads to that $u_n(t,x) = u(t,x) = 1$ for every $(t,x)$. Hence (ii) implies (i). The opposite direction follows from Lemmas \ref{Lemma L1 nondegeneracy} and \ref{Lemma Riesz nondegeneracy}. See \cite[Lemma 3.4]{SWE1dfracnoise} for similar arguments.

\subsection{Proof of Theorem \ref{main result1}}
\label{section Proof of Theorem}
We first prepare the following lemma.
\begin{Lem}
\label{Lemma lim sup norm zero}
Let $\gamma \in L^1(\mathbb{R}^3)$ such that $\gamma(x) >0$ for all $x \in \mathbb{R}^3$, or $\gamma(x) = |x|^{-\beta}$ for some $\beta \in (0,2)$. Then, for any fixed $t \in (0,T]$, we have
\begin{align}
\label{lim sup norm zero}
    \lim_{n\to \infty}\left( \sup_{R \geqslant 2(T+1)} \left \lVert \frac{F_{n,R}(t)}{\sigma_{n,R}(t)} - \frac{F_R(t)}{\sigma_{R}(t)} \right \rVert _2 \right) = 0.
\end{align}
\end{Lem}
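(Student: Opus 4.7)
The plan is to reduce the problem to estimating $\|F_R(t) - F_{n,R}(t)\|_2$ and then to invoke the uniform-in-$n$ variance lower bounds from Lemmas~\ref{Lemma L1 nondegeneracy}--\ref{Lemma Riesz nondegeneracy}. First I would write
\[
\frac{F_{n,R}(t)}{\sigma_{n,R}(t)} - \frac{F_R(t)}{\sigma_R(t)} = \frac{F_{n,R}(t) - F_R(t)}{\sigma_{n,R}(t)} + \frac{F_R(t)(\sigma_R(t) - \sigma_{n,R}(t))}{\sigma_{n,R}(t)\sigma_R(t)},
\]
and combine $\|F_R(t)\|_2 = \sigma_R(t)$ with the reverse triangle inequality $|\sigma_R(t) - \sigma_{n,R}(t)| \leq \|F_R(t) - F_{n,R}(t)\|_2$ to obtain
\[
\left\lVert\frac{F_{n,R}(t)}{\sigma_{n,R}(t)} - \frac{F_R(t)}{\sigma_R(t)}\right\rVert_2 \leq \frac{2\|F_R(t) - F_{n,R}(t)\|_2}{\sigma_{n,R}(t)}.
\]
Since $\sigma_{n,R}^2(t) \gtrsim R^3$ in case (i) and $\sigma_{n,R}^2(t) \gtrsim R^{6-\beta}$ in case (ii), uniformly in $n \geq 1$ and $R \geq 2(T+1)$, this reduces the task to showing $\|F_R(t) - F_{n,R}(t)\|_2^2 = o(R^3)$ (respectively $o(R^{6-\beta})$) uniformly in $R \geq 2(T+1)$ as $n \to \infty$.

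Using \eqref{fn delta}--\eqref{f delta} together with the split
\[
V_{t,R} - V_{n,t,R} = \varphi_{t,R}[\sigma(u) - \sigma(u_{n-1})] + [\varphi_{t,R} - \varphi_{n,t,R}]\sigma(u_{n-1}),
\]
the Walsh-integral isometry gives $\|F_R(t) - F_{n,R}(t)\|_2^2 \leq 2A_n(R) + 2B_n(R)$, where $A_n(R)$ and $B_n(R)$ are the respective $L^2$-norms-squared of the two summands. For $A_n(R)$, the Lipschitz property of $\sigma$ combined with the uniform convergence $\epsilon_n := \sup_{(s,y)}\|u(s,y) - u_{n-1}(s,y)\|_2 \to 0$ from \eqref{iteration uniform convergence} yields
\[
A_n(R) \leq L^2 \epsilon_n^2 \int_0^t \|\varphi_{t,R}(s,\cdot)\|_{\mathcal{H}}^2 ds.
\]
The latter integral is $\lesssim R^3$ in case (i) by Young's inequality (using $\|\varphi_{t,R}(s,\cdot)\|_1 \leq T|B_1|R^3$, $\|\varphi_{t,R}(s,\cdot)\|_\infty \leq T$ from Lemma~\ref{varphi property}, and $\|\gamma\|_1 < \infty$), and $\lesssim R^{6-\beta}$ in case (ii) by the Hardy--Littlewood--Sobolev inequality at $p = 6/(6-\beta)$ together with the interpolation $\|\varphi_{t,R}\|_p^p \leq \|\varphi_{t,R}\|_\infty^{p-1}\|\varphi_{t,R}\|_1$. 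Hence $A_n(R)/\sigma_{n,R}^2(t) \lesssim \epsilon_n^2 \to 0$ uniformly in $R$.

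The delicate term is $B_n(R)$. After bounding $|\mathbb{E}[\sigma(u_{n-1}(s,y))\sigma(u_{n-1}(s,y'))]| \leq \Lambda(T,2)^2$, it remains to estimate $\int_0^t \|\varphi_{t,R}(s,\cdot) - \varphi_{n,t,R}(s,\cdot)\|_{\mathcal{H}}^2 ds$. Here I would use the Fubini identity $\varphi_{n,t,R}(s,\cdot) = \rho_n * \varphi_{t,R}(s,\cdot)$ together with $\varphi_{t,R}(s,\cdot) = \mathbf{1}_{B_R} * G(t-s)$ to obtain
\[
\|\varphi_{t,R}(s,\cdot) - \varphi_{n,t,R}(s,\cdot)\|_1 \leq G(t-s,\mathbb{R}^3)\int_{\mathbb{R}^3}\rho_n(w)|B_R \triangle (B_R + w)|\, dw \lesssim \frac{TR^2}{a_n},
\]
exploiting $|B_R \triangle (B_R + w)| \lesssim |w|R^2$ for $|w| \leq R$, $\supp \rho_n \subset B_{1/a_n}$, and $G(t-s,\mathbb{R}^3) = t-s \leq T$. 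Coupled with $\|\varphi_{t,R} - \varphi_{n,t,R}\|_\infty \leq 2T$, Young's inequality yields $B_n(R) \lesssim R^2/a_n$ in case (i), while interpolation plus Hardy--Littlewood--Sobolev gives $B_n(R) \lesssim R^{2(6-\beta)/3}/a_n^{(6-\beta)/3}$ in case (ii). Dividing by $\sigma_{n,R}^2(t)$, both ratios are $\lesssim (a_nR)^{-\alpha}$ for some $\alpha > 0$, vanishing uniformly in $R \geq 2(T+1)$ as $a_n \to \infty$.

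The main anticipated obstacle is securing the correct $R$-scaling for $B_n(R)$: a naive bound $\|\varphi_{t,R} - \varphi_{n,t,R}\|_1 \leq \|\varphi_{t,R}\|_1 + \|\varphi_{n,t,R}\|_1 = O(R^3)$ would give $B_n(R)/\sigma_{n,R}^2(t) = O(1)$, which is useless for the limit. The improvement to $O(R^2/a_n)$ hinges on the geometric observation that mollification by $\rho_n$ only perturbs $\varphi_{t,R}$ on a thin boundary layer near $\partial B_R$, quantified by the surface-area-type estimate $|B_R \triangle (B_R + w)| \lesssim |w|R^2$.
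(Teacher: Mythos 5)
Your argument is correct, and while the overall architecture (reduce to $\lVert F_R(t)-F_{n,R}(t)\rVert_2^2/\sigma_{n,R}^2(t)$ via the reverse triangle inequality, then split into a ``$\sigma(u)-\sigma(u_{n-1})$'' term and a ``$\varphi_{t,R}-\varphi_{n,t,R}$'' term) matches the paper's, your treatment of the mollification-error term is genuinely different. The paper handles its analogue $\mathbf{F_2}$ entirely on the Fourier side: it writes the error as $\int_0^t\int\mu_r^{\sigma(u)}(d\xi)\,|\mathcal{F}\mathbf{1}_{B_R}(\xi)|^2|\mathcal{F}G(t-r)(\xi)|^2|\mathcal{F}\rho_n(\xi)-1|^2$, splits frequency space into the compact set $K_\varepsilon=\{\langle\xi\rangle^{-2}\geqslant\varepsilon\}$ (where $|\mathcal{F}\rho(\xi/a_n)-1|\to 0$ uniformly) and its complement (where $\langle\xi\rangle^{-2}<\varepsilon$ absorbs the smallness), and then reassembles $\int_{B_R^2}\gamma$ to get the bound $\varepsilon\cdot\sigma_R^{-2}(t)\int_{B_R^2}\gamma(z-z')\,dzdz'\lesssim\varepsilon$ via \eqref{gamma ball order}. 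You instead work in physical space, using $\varphi_{n,t,R}(s,\cdot)=\rho_n*\varphi_{t,R}(s,\cdot)$ and the boundary-layer estimate $|B_R\triangle(B_R+w)|\lesssim|w|R^2$ to get the quantitative bound $\lVert\varphi_{t,R}(s,\cdot)-\varphi_{n,t,R}(s,\cdot)\rVert_{L^1}\lesssim TR^2/a_n$, which you then convert into $\mathcal{H}$-norm control by Young's inequality (case (i)) or Hardy--Littlewood--Sobolev with $L^1$--$L^\infty$ interpolation (case (ii)); the ratio $B_n(R)/\sigma_{n,R}^2(t)\lesssim(a_nR)^{-\alpha}$ then vanishes uniformly because $\sum 1/a_n<\infty$ forces $a_n\to\infty$. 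Your route buys an explicit rate in $n$ (of order $a_n^{-\alpha}$) rather than the paper's qualitative $\varepsilon$-argument, at the cost of being tied to the two specific correlation structures (integrable $\gamma$, Riesz kernel) where Young/HLS apply; the paper's Fourier argument only uses Dalang's condition on $\mu$ together with \eqref{gamma ball order} and so is structurally closer to the generality of the rest of the paper. Similarly, for the first term you bound $\int_0^t\lVert\varphi_{t,R}(s,\cdot)\rVert_{\mathcal{H}}^2ds$ by Young/HLS where the paper passes through $\int_{B_R^2}\gamma$ via the Fourier transform; both give the same $R^3$ (resp.\ $R^{6-\beta}$) growth. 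All the individual steps you invoke check out, including the identities $\lVert F_R(t)\rVert_2=\sigma_R(t)$ and $\varphi_{n,t,R}=\rho_n*\varphi_{t,R}$ and the uniform-in-$n$ lower bounds from Lemmas \ref{Lemma L1 nondegeneracy} and \ref{Lemma Riesz nondegeneracy}.
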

\begin{proof}
Let $R \geqslant 2(T+1)$. We deduce from Lemmas \ref{Lemma L1 nondegeneracy} and \ref{Lemma Riesz nondegeneracy} that $\sigma_{n,R}(t)>0$, $\sigma_{R}(t)>0$.
Since $\sigma_{n,R}^2(t) = \lVert F_{n,R}(t) \rVert_2^2$ and $\sigma_{R}^2(t) = \lVert F_{R}(t) \rVert_2^2$, the triangle inequality yields that
\begin{align*}
    \left \lVert \frac{F_{n,R}(t)}{\sigma_{n,R}(t)} - \frac{F_R(t)}{\sigma_R(t)} \right \rVert _2^2 
    &\leqslant \frac{2}{{\sigma^2_R}(t)}\lVert F_R(t) - F_{n,R}(t)\rVert_2^2 + 2\frac{|\sigma_R(t) - \sigma_{n,R}(t)|^2}{{\sigma^2_R}(t){\sigma^2_{n,R}}(t)}\lVert F_{n,R}(t)\rVert_2^2\\
    &= \frac{2}{{\sigma^2_R}(t)} \left \{ \lVert F_R(t) - F_{n,R}(t)\rVert_2^2 +  \left | \lVert F_R(t)\rVert_2 - \lVert F_{n,R}(t)\rVert_2 \right|^2 \right\} \\
    &\leqslant \frac{4}{{\sigma^2_R}(t)}\lVert F_R(t) - F_{n,R}(t)\rVert_2^2.
\end{align*}
Taking into account \eqref{fn delta} and \eqref{f delta}, we have $\lVert F_R(t) - F_{n,R}(t) \rVert_2^2 \leqslant 2 \mathbf{F_1} + 2\mathbf{F_2}$, where
\begin{align*}
    \mathbf{F_1} &= \int_0^tdr\int_{\mathbb{R}^6}dzdz'\varphi_{n,t,R}(r,z)\varphi_{n,t,R}(r,z')\gamma(z-z')\\
    & \qquad \times \mathbb{E} [(\sigma(u_n(r,z)) - \sigma(u(r,z)))(\sigma(u_n(r,z')) - \sigma(u(r,z')))],\\
    \mathbf{F_2} &= \int_0^tdr\int_{\mathbb{R}^6}dzdz'(\varphi_{n,t,R}(r,z) - \varphi_{t,R}(r,z))(\varphi_{n,t,R}(r,z') - \varphi_{t,R}(r,z'))\\
    &\qquad \times \gamma(z-z') \mathbb{E}[\sigma(u(r,z))\sigma(u(r,z'))].\\
\end{align*}
Using the Lipschitz continuity of $\sigma$, the Fourier transform, and \eqref{FG bound}, we get
\begin{align}
    \mathbf{F_1} 
    &\leqslant L^2 \int_0^tdr\int_{\mathbb{R}^6}dzdz'\varphi_{n,t,R}(r,z)\varphi_{n,t,R}(r,z')\gamma(z-z')\nonumber\\
    & \qquad \times \lVert u_n(r,z) - u(r,z)\rVert_2\lVert u_n(r,z') - u(r,z')\rVert_2\nonumber\\
    &\lesssim \sup_{(\theta,\eta) \in [0,T]\times\mathbb{R}^3}\lVert u_n(\theta,\eta) - u(\theta,\eta)\rVert_2^2 \int_0^tdr \int_{\mathbb{R}^3}|\mathcal{F} \mathbf{1}_{B_R}(\xi)|^2 |\mathcal{F} G(t-r)(\xi)|^2 |\mathcal{F} \rho_n(\xi)|^2 \mu(d\xi)\nonumber\\
    &\lesssim_T \sup_{(\theta,\eta) \in [0,T]\times\mathbb{R}^3}\lVert u_n(\theta,\eta) - u(\theta,\eta)\rVert_2^2 \int_{\mathbb{R}^3}|\mathcal{F} \mathbf{1}_{B_R}(\xi)|^2\mu(d\xi)\nonumber\\
    &= \sup_{(\theta,\eta) \in [0,T]\times\mathbb{R}^3}\lVert u_n(\theta,\eta) - u(\theta,\eta)\rVert_2^2 \int_{{B^2_R}}\gamma(z-z')dzdz'.\label{c1}
\end{align}
where the last equality follows from Lemma \ref{Lemma fourier transform}. 
This together with \eqref{iteration uniform convergence} implies that for any $\varepsilon > 0$, we can find $n$ large enough such that
\begin{align}
\label{F_1 estimate}
    \frac{1}{\sigma_{R}^2(t)}\mathbf{F_1} \lesssim \varepsilon \frac{1}{\sigma_{R}^2(t)} \int_{{B^2_R}}\gamma(z-z')dzdz'.
\end{align}
For the term $\mathbf{F_2}$, the Fourier transform and \eqref{fourier G bound} yield that 
\begin{align*}
    \mathbf{F_2} 
    &= \int_0^tdr\int_{\mathbb{R}^3}\mu_{r}^{\sigma(u)}(d\xi)|\mathcal{F}\varphi_{n,t,R}(r)(\xi) - \mathcal{F}\varphi_{t,R}(r)(\xi)|^2\\
    &\leqslant \int_0^tdr \int_{\mathbb{R}^3}\mu_{r}^{\sigma(u)}(d\xi)|\mathcal{F} \mathbf{1}_{B_R}(\xi)|^2 |\mathcal{F} G(t-r)(\xi)|^2 |\mathcal{F} \rho_n(\xi) - 1|^2 \\
    &\lesssim_T \int_0^tdr\int_{\mathbb{R}^3}\mu_{r}^{\sigma(u)}(d\xi)|\mathcal{F} \mathbf{1}_{B_R}(\xi)|^2 |\mathcal{F} \rho_n(\xi) - 1|^2 \langle \xi \rangle^{-2},
\end{align*}
where $\mu_{r}^{\sigma(u)}$ is defined in \eqref{mu^sigma(u)}. 
Let 
\begin{align*}
    K_{\varepsilon} \coloneqq \left \{ \xi \in \mathbb{R}^3 \bigg| \: \langle \xi \rangle^{-2} \geqslant \varepsilon \right\}
\end{align*}
for any given $0 < \varepsilon < 1 $. Then we have 
\begin{align}
    &\int_0^tdr\int_{\mathbb{R}^3}|\mathcal{F} \mathbf{1}_{B_R}(\xi)|^2 |\mathcal{F} \rho_n(\xi) - 1|^2 \langle \xi \rangle^{-2}\mu_{r}^{\sigma(u)}(d\xi)\nonumber\\
    &\leqslant 4\varepsilon \int_0^tdr\int_{\mathbb{R}^3\backslash K_{\varepsilon}}|\mathcal{F} \mathbf{1}_{B_R}(\xi)|^2\mu_{r}^{\sigma(u)}(d\xi) + \int_0^tdr\int_{K_{\varepsilon}} |\mathcal{F} \mathbf{1}_{B_R}(\xi)|^2 |\mathcal{F} \rho_n(\xi) - 1|^2 \langle \xi \rangle^{-2} \mu_{r}^{\sigma(u)}(d\xi)\nonumber\\
    &\leqslant \left(4 \varepsilon +  \sup_{x \in K_{\varepsilon}} |\mathcal{F} \rho_n(x) - 1|^2   \right) \int_0^tdr\int_{\mathbb{R}^3} |\mathcal{F} \mathbf{1}_{B_R}(\xi)|^2 \mu_{r}^{\sigma(u)}(d\xi)\nonumber\\
    &= \left(4 \varepsilon +  \sup_{x \in K_{\varepsilon}} \left|\mathcal{F} \rho\left(\frac{x}{a_n}\right) - 1\right|^2   \right) \int_0^tdr\int_{{B^2_R}}\gamma(z-z')\mathbb{E}[\sigma(u(r,z-z'))\sigma(u(r,0))]dzdz'\nonumber\\
    &\lesssim_T \left(4 \varepsilon +  \sup_{x \in K_{\varepsilon}} \left|\mathcal{F} \rho\left(\frac{x}{a_n}\right) - 1\right|^2 \right) \int_{{B^2_R}}\gamma(z-z')dzdz',\label{c2}
\end{align}
where the last step follows from H\"{o}lder's inequality and \eqref{solution uniform bound}.
Since $\lim_{y \to 0}\mathcal{F}\rho(y) =1$, we have the uniform convergence on the compact set $K_{\varepsilon}$;
\begin{align*}
    \lim_{n \to \infty} \sup_{x \in K_{\varepsilon}} \left|\mathcal{F} \rho\left(\frac{x}{a_n}\right) - 1\right|^2 = 0.
\end{align*}
Therefore, we can take $n$ large enough to get 
\begin{align}
\label{F_2 estimate}
    \frac{1}{\sigma_{R}^2(t)}\mathbf{F_2} \lesssim \varepsilon \frac{1}{\sigma_{R}^2(t)} \int_{{B^2_R}}\gamma(z-z')dzdz'.
\end{align}
Combining \eqref{F_1 estimate}, \eqref{F_2 estimate}, and Lemmas \ref{Lemma L1 nondegeneracy} and \ref{Lemma Riesz nondegeneracy}, we obtain
\begin{align*}
    \left \lVert \frac{F_{n,R}(t)}{\sigma_{n,R}(t)} - \frac{F_R(t)}{\sigma_{R}(t)} \right \rVert _2^2 \lesssim 
    \begin{cases}
    \displaystyle
        \varepsilon \frac{1}{R^3}\int_{{B^2_R}}\gamma(z-z')dzdz'  &(\gamma \in L^1(\mathbb{R}^3))\\
    \displaystyle
        \varepsilon \frac{1}{R^{6-\beta}}\int_{{B^2_R}}\gamma(z-z')dzdz'  &(\gamma(x) = |x|^{-\beta} \quad (0 < \beta <2))
    \end{cases}
\end{align*}
for large enough $n$.
It is easy to check 
\begin{align}
\label{gamma ball order}
    \int_{{B^2_R}}\gamma(z-z')dzdz' \lesssim 
    \begin{cases}
        R^3  &(\gamma \in L^1(\mathbb{R}^3))\\
        R^{6-\beta}  &(\gamma(x) = |x|^{-\beta} \quad (0 < \beta <2))
    \end{cases},
\end{align}
and so 
\begin{align*}
    \sup_{R \geqslant 2(T+1)} \left \lVert \frac{F_{n,R}(t)}{\sigma_{n,R}(t)} - \frac{F_R(t)}{\sigma_{R}(t)} \right \rVert _2^2 \lesssim \varepsilon, 
\end{align*}
which gives \eqref{lim sup norm zero}. This completes the proof.
\end{proof}

The following lemma is the key in proving Theorems \ref{main result1} and \ref{main result2}.
\begin{Lem}
\label{key}
Let $t_1,t_2 \in (0,T]$, $n \geqslant 1$, and $R \geqslant T+1$. Then, the following statements hold true:
\begin{enumerate}
    \item[(1)] When $\gamma \in L^1(\mathbb{R}^3)$, then,
    \begin{align}
    \label{y7}
        \mathrm{Var}(\langle DF_{n,R}(t_1),V_{n,t_2,R}\rangle_{\mathcal{H}_T})\lesssim \Theta(T,n-1)^2R^{3}.
    \end{align}
    \item[(2)] When $\gamma(x) = |x|^{-\beta}$ for some $0<\beta<2$, then,
    \begin{align}
    \label{y8}
        \mathrm{Var}(\langle DF_{n,R}(t_1),V_{n,t_2,R}\rangle_{\mathcal{H}_T}) \lesssim \Theta(T,n-1)^2R^{12-3\beta}.
    \end{align}
\end{enumerate}
\end{Lem}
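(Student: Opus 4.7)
The plan is to apply the Poincar\'e-type inequality \eqref{Poincare inequality} with $F=G=X:=\langle DF_{n,R}(t_1),V_{n,t_2,R}\rangle_{\mathcal{H}_T}$. The naive computation of $D_{r,w}X$ via the product rule would produce second-order Malliavin derivatives of $u_n$, which are not available under the assumption $\sigma\in C^1$. I bypass this by integrating the Picard identity for $M_n$ (Section~\ref{section The Malliavin derivative of the Picard approxximation}) over $B_R$, which yields
$$D_{s,y}F_{n,R}(t_1) = V_{n,t_1,R}(s,y) + Z(s,y), \qquad Z(s,y):=\int_s^{t_1}\int_{\mathbb{R}^3}\varphi_{n,t_1,R}(r,z)\sigma'(u_{n-1}(r,z))M_{n-1}(r,z,s,y)\,W(dr,dz).$$
This decomposes $X=X_1+X_2$ with $X_1:=\langle V_{n,t_1,R},V_{n,t_2,R}\rangle_{\mathcal{H}_T}$ and $X_2:=\langle Z,V_{n,t_2,R}\rangle_{\mathcal{H}_T}$, so $\mathrm{Var}(X)\lesssim \mathrm{Var}(X_1)+\mathrm{Var}(X_2)$, and each piece can now be treated using only first-order Malliavin derivatives.

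For $\mathrm{Var}(X_1)$, apply \eqref{Poincare inequality} directly. By the $\sigma\in C^1$ chain rule, $D_{r,w}X_1$ is a sum of two symmetric terms in which $\sigma(u_{n-1}(s,\cdot))$ is differentiated to $\sigma'(u_{n-1}(s,\cdot))M_{n-1}(s,\cdot,r,w)$. Combining the pointwise moment estimate $\|M_{n-1}(s,y,r,w)\|_p\lesssim\Theta(T,n-1)\mathbf{1}_{B_{T+1}}(y-w)$ from Theorem~\ref{Theorem moment estimate} and Proposition~\ref{Proposition M_nDu_n}, the bound $\varphi_{n,t_i,R}(s,\cdot)\leqslant\varphi_{t_i,R+1}(s,\cdot)$ from Lemma~\ref{varphi property} (supported in $B_{R+T+1}$, bounded by $T$), and the uniform bound $\Lambda(T,p)$ on $\sigma(u_{n-1})$ from Proposition~\ref{Proposition u_n uniform property}, gives
$$\|D_{r,w}X_1\|_2 \lesssim \Theta(T,n-1)\,\mathbf{1}_{B_{R+2T+2}}(w)\int_{B_{R+T+1}}\int_{B_{T+1}(w)}\gamma(y-z)\,dy\,dz.$$
For $\gamma\in L^1(\mathbb{R}^3)$ the inner double integral is $O(1)$, while for the Riesz kernel it is $O(R^{3-\beta})$ by the standard scaling of $|x|^{-\beta}$ against a ball. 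Plugging back into the Poincar\'e bound and integrating once more against $\gamma(w-w')$ over $(w,w')\in B_{R+O(1)}^2$ produces $R^3$ in case (1) and $R^{6-2\beta}\cdot R^{6-\beta}=R^{12-3\beta}$ in case (2).

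For $\mathrm{Var}(X_2)$, observe that $Z(s,y)$ has vanishing conditional mean given $\mathscr{F}_s$ while $V_{n,t_2,R}(s,z)$ is $\mathscr{F}_s$-measurable, so $\mathbb{E}[X_2]=0$ and $\mathrm{Var}(X_2)=\mathbb{E}[X_2^2]$. A stochastic Fubini argument of the same type as in the proof of Proposition~\ref{Proposition M_nDu_n}, justified by the integrability provided by Theorem~\ref{Theorem moment estimate}, rewrites
$$X_2 = \int_0^{t_1}\int_{\mathbb{R}^3}\varphi_{n,t_1,R}(r,z)\sigma'(u_{n-1}(r,z))\,\langle Du_{n-1}(r,z),V_{n,t_2,R}\rangle_{\mathcal{H}_T}\,W(dr,dz).$$
Applying the BDG inequality (Lemma~\ref{BDG inequality}), Minkowski's inequality, and the pointwise bound on $M_{n-1}=Du_{n-1}$, one reduces $\mathbb{E}[X_2^2]$ to spatial integrals of the same shape as those handled for $X_1$, producing a bound of the same order in $R$. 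The main obstacle is the integral bookkeeping in case (2): the exponent $12-3\beta$ arises from three distinct Riesz factors (two inner $\gamma$'s inside $\|D_{r,w}X_1\|_2\|D_{r,w'}X_1\|_2$ and the outer $\gamma(w-w')$), each contributing $R^{3-\beta}$ after integration over balls of radius $\sim R$, and this scaling must be tracked using the technical integral estimates collected in the appendix. A secondary issue is the rigorous justification of the stochastic Fubini step for $X_2$ under only $\mathscr{F}_r$-measurability of the integrand, which is handled exactly as in Proposition~\ref{Proposition M_nDu_n}.
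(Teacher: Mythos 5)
Your proposal is correct and follows essentially the same route as the paper: the same decomposition of $\langle DF_{n,R}(t_1),V_{n,t_2,R}\rangle_{\mathcal{H}_T}$ into $X_1=\langle V_{n,t_1,R},V_{n,t_2,R}\rangle_{\mathcal{H}_T}$ plus the stochastic-integral cross term coming from the Picard identity for $DF_{n,R}(t_1)$, the Poincar\'e inequality \eqref{Poincare inequality} combined with the key bound $\lVert M_{n-1}(\cdot)\rVert_p\lesssim\Theta(T,n-1)\mathbf{1}_{B_{T+1}}$ from Theorem \ref{Theorem moment estimate} for the first term, and a second-moment/isometry computation for the second. The differences are only organizational (you apply \eqref{Poincare inequality} to $X_1$ globally with uniform-in-$w$ bounds instead of time-slicing and invoking Lemma \ref{Lemma estimate}, and you treat $X_2$ via a stochastic Fubini rewriting), and they yield the same exponents $R^{3}$ and $R^{12-3\beta}$.
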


\begin{proof}
The proof is similar to \cite[Section 3.2]{Averaging2dSWE} and we only give the sketch of the proof.
Let $p \geqslant 1$.
Because $u_n(t,x) \in \mathbb{D}^{1,p}$ for all $(t,x) \in [0,T] \times \mathbb{R}^3$, and \eqref{Du_n iteration} and \eqref{M_nDu_n} hold, we can show that $F_{n,R}(t) \in \mathbb{D}^{1,p}$ for all $t \in [0,T]$ and that it satisfies 
\begin{align*}
    DF_{n,R}(t) = V_{n,t,R}(\cdot,\star) + \int_0^t\int_{\mathbb{R}^3}\varphi_{n,t,R}(r,z)\sigma^{\prime}(u_{n-1}(r,z)) M_{n-1}(r,z,\cdot,\star)W(dr,dz).
\end{align*}
From this and Lemma \ref{Lemma L2(Omega,H_T)}, we see that
\begin{align*}
    &\langle DF_{n,R}(t_1), V_{n,t_2,R}\rangle_{\mathcal{H}_T}\\
    &= \langle V_{n,t_1,R}, V_{n,t_2,R}\rangle_{\mathcal{H}_T}
    + \left \langle \int_0^{t_1}\int_{\mathbb{R}^3}\varphi_{n,t_1,R}(r,z)\sigma^{\prime}(u_{n-1}(r,z)) M_{n-1}(r,z,\cdot,\star)W(dr,dz), V_{n,t_2,R} \right \rangle _{\mathcal{H}_T}\\
    &= \int_0^{t_1 \land t_2}ds\int_{\mathbb{R}^6}dydy'\varphi_{n,t_1,R}(s,y)\varphi_{n,t_2,R}(s,y')\gamma(y-y')\sigma(u_{n-1}(s,y))\sigma(u_{n-1}(s,y'))\\
    &\quad + \int_0^{t_1 \land t_2}ds\int_{\mathbb{R}^6}dydy'\left( \int_s^{t_1}\int_{\mathbb{R}^3}\varphi_{n,t_1,R}(r,z)\sigma^{\prime}(u_{n-1}(r,z)) M_{n-1}(r,z,s,y)W(dr,dz) \right)\\
    &\qquad \qquad \qquad \qquad \times \varphi_{n,t_2,R}(s,y')\sigma(u_{n-1}(s,y'))\gamma(y-y')\\
    &\eqqcolon \mathbf{V_1} + \mathbf{V_2}.
\end{align*}
Thus we obtain $\mathrm{Var}(\langle DF_{n,R}(t_1), V_{n,t_2,R}\rangle_{\mathcal{H}_T}) \leqslant 2\mathrm{Var}(\mathbf{V_1}) + 2\mathrm{Var}(\mathbf{V_2})$.
Using the following inequality
\begin{align*}
    \mathrm{Var}\left(\int_{\mathbb{R}_{+}}X_sds\right) \leqslant \left(\int_{\mathbb{R}_{+}}\sqrt{\mathrm{Var}(X_s)}ds\right)^2,
\end{align*}
which holds for every measurable process $X = \{X(s,\omega)\}_{s \geqslant 0}$ such that $\int_{\mathbb{R}_{+}}\sqrt{\mathrm{Var}(X_s)}ds < \infty$,
we get
\begin{align}
    &\mathrm{Var}(\mathbf{V_1}) \nonumber\\
    &\leqslant \left( \int_0^{t_1 \land t_2}\left \{ \mathrm{Var}\left(\int_{\mathbb{R}^6}dydy'\varphi_{n,t_1,R}(s,y)\varphi_{n,t_2,R}(s,y')\gamma(y-y')\sigma(u_{n-1}(s,y))\sigma(u_{n-1}(s,y'))\right) \right\}^{\frac{1}{2}}ds \right)^2, \label{y3}
\end{align}
where
\begin{align}
    &\mathrm{Var}\left(\int_{\mathbb{R}^6}dydy'\varphi_{n,t_1,R}(s,y)\varphi_{n,t_2,R}(s,y')\gamma(y-y')\sigma(u_{n-1}(s,y))\sigma(u_{n-1}(s,y'))\right)\nonumber\\
    &=\int_{\mathbb{R}^{12}}dydy'dzdz'\varphi_{n,t_1,R}(s,y)\varphi_{n,t_2,R}(s,y')\gamma(y-y')\varphi_{n,t_1,R}(s,z)\varphi_{n,t_2,R}(s,z')\gamma(z-z')\nonumber\\
    &\quad \qquad \times \mathrm{Cov}(\sigma(u_{n-1}(s,y))\sigma(u_{n-1}(s,y')), \sigma(u_{n-1}(s,z))\sigma(u_{n-1}(s,z'))).\label{y1}
\end{align}
Since 
\begin{align*}
    D\sigma(u_{n-1}(s,y))\sigma(u_{n-1}(s,y')) &= \sigma^{\prime}(u_{n-1}(s,y))Du_{n-1}(s,y)\sigma(u_{n-1}(s,y'))\\
    &\quad + \sigma(u_{n-1}(s,y))\sigma^{\prime}(u_{n-1}(s,y'))Du_{n-1}(s,y')
\end{align*}
by the chain rule of the Malliavin derivative, 
we deduce from the Poincar\'{e} inequality \eqref{Poincare inequality}, Cauchy–Schwarz inequality, \eqref{iteration uniform bound}, and Theorem \ref{Theorem moment estimate} that
\begin{align}
    &\mathrm{Cov}(\sigma(u_{n-1}(s,y))\sigma(u_{n-1}(s,y')), \sigma(u_{n-1}(s,z))\sigma(u_{n-1}(s,z')))\nonumber\\
    &\lesssim \int_0^sdr\int_{\mathbb{R}^6}dwdw'\left( \lVert M_{n-1}(s,y,r,w)\rVert_4 + \lVert M_{n-1}(s,y',r,w)\rVert_4 \right)\nonumber\\
    &\qquad \qquad \qquad \times \left( \lVert M_{n-1}(s,z,r,w')\rVert_4 + \lVert M_{n-1}(s,z',r,w')\rVert_4 \right)\gamma(w-w')\nonumber\\
    &\lesssim \Theta(T,n-1)^2\int_0^sdr\int_{\mathbb{R}^6}dwdw'\left( \mathbf{1}_{B_{T+1}}(y-w) + \mathbf{1}_{B_{T+1}}(y'-w) \right)\nonumber\\
    &\qquad \qquad \qquad \qquad \qquad \times \left( \mathbf{1}_{B_{T+1}}(z-w') + \mathbf{1}_{B_{T+1}}(z'-w') \right)\gamma(w-w'). \label{y2}
\end{align}
Taking into account \eqref{y3}, \eqref{y1}, and \eqref{y2}, we only need to estimate
\begin{align*}
    &\Theta(T,n-1)^2\int_0^sdr\int_{\mathbb{R}^{12}}dydy'dzdz'\varphi_{n,t_1,R}(s,y)\varphi_{n,t_2,R}(s,y')\gamma(y-y')\varphi_{n,t_1,R}(s,z)\varphi_{n,t_2,R}(s,z')\\
    &\qquad\qquad \qquad \qquad \times \gamma(z-z')\left(\int_{\mathbb{R}^6}\mathbf{1}_{B_{T+1}}(y'-w)\mathbf{1}_{B_{T+1}}(z'-w')\gamma(w-w')dwdw'\right)
\end{align*}
because the other terms appearing from \eqref{y2} can be estimated in the same way with the same bound.
Applying Lemma \ref{Lemma estimate} with $F(y'-z') \coloneqq  \int_{B^2_{T+1}}\gamma(y'-z'+w-w')dwdw'$, we easily check that
\begin{align}
\label{y5}
    \mathrm{Var}(\mathbf{V_1}) \lesssim \Theta(T,n-1)^2 \times
    \begin{cases}
        R^3  &(\gamma \in L^1(\mathbb{R}^3))\\
        R^{12-3\beta}  &(\gamma(x) = |x|^{-\beta} \quad (0<\beta<2))
    \end{cases}.
\end{align}
Next we consider the term $\mathrm{Var}(\mathbf{V_2})$. Following the argument in \cite{Averaging2dSWE} again, we only need to estimate 
\begin{align}
    &\Theta(T,n-1)^2 \int_s^{t_1}dr\int_{\mathbb{R}^{18}}dzdz'dydy'd\tilde{y}d\tilde{y}'\gamma(y-y') \gamma(\tilde{y}-\tilde{y}')\varphi_{n,t_2,R}(s,y')\varphi_{n,t_2,R}(s,\tilde{y}')\nonumber\\
    &\qquad \qquad \qquad \qquad \qquad  \times \varphi_{n,t_1,R}(r,z+y)\varphi_{n,t_1,R}(r,z'+\tilde{y})\mathbf{1}_{B_{T+1}}(z)\mathbf{1}_{B_{T+1}}(z')\gamma(y-\tilde{y}+z-z'). \label{y4}
\end{align}
Because 
\begin{align*}
    \varphi_{n,t_1,R}(r,z+y) \leqslant \varphi_{n,t_1,R+T+1}(r,y), \quad \varphi_{n,t_2,R}(s,y') \leqslant \varphi_{n,t_2,R+T+1}(s,y')
\end{align*}
for every $|z|, |z'| < T+1$, \eqref{y4} is bounded from above by
\begin{align*}
    &\Theta(T,n-1)^2\int_s^{t_1}dr
    \int_{\mathbb{R}^{12}}dydy'd\tilde{y}d\tilde{y}'\varphi_{n,t_1,R+T+1}(r,y)\varphi_{n,t_2,R+T+1}(s,y')\gamma(y-y')\\
    &\qquad \qquad \qquad \qquad \times \varphi_{n,t_1,R+T+1}(r,\tilde{y})\varphi_{n,t_2,R+T+1}(s,\tilde{y}')\gamma(\tilde{y}-\tilde{y}')\left(\int_{B^2_{T+1}}\gamma(y-\tilde{y}+z-z')dzdz'\right)
\end{align*}
Then, using Lemma \ref{Lemma estimate} with $F(y-\tilde{y}) \coloneqq  \int_{B^2_{T+1}}\gamma(y-\tilde{y}+z-z')dzdz'$, we obtain
\begin{align*}
    \mathrm{Var}(\mathbf{V_2}) \lesssim \Theta(T,n-1)^2 \times
    \begin{cases}
        (R+T+1)^3  &(\gamma \in L^1(\mathbb{R}^3))\\
        (R+T+1)^{12-3\beta}  &(\gamma(x) = |x|^{-\beta},\quad (0<\beta<2))
    \end{cases}.
\end{align*}
In particular, when $R \geqslant T+1$, we have
\begin{align}
\label{y6}
    \mathrm{Var}(\mathbf{V_2}) \lesssim \Theta(T,n-1)^2 \times
    \begin{cases}
        R^3  &(\gamma \in L^1(\mathbb{R}^3))\\
        R^{12-3\beta}  &(\gamma(x) = |x|^{-\beta},\quad (0<\beta<2))
    \end{cases}.
\end{align}
Finally, combining \eqref{y5} and \eqref{y6}, we obtain \eqref{y7} and \eqref{y8}.
Thus Lemma \ref{key} holds.
\end{proof}

Finally, we are now able to prove Theorem \ref{main result1}.\\
\noindent
\textit{Proof of Theorem \ref{main result1}.} The nondegeneracy of the variance in Theorem \ref{main result1} follows from Lemmas \ref{Lemma L1 nondegeneracy} and \ref{Lemma Riesz nondegeneracy}.
Let $R \geqslant 2(T+1)$. By the triangle inequality, we see that
\begin{align}
    d_{\mathrm{W}}\left(\frac{F_R(t)}{\sigma_R(t)}, \mathcal{N}(0,1) \right) 
    &\leqslant d_{\mathrm{W}}\left( \frac{F_R(t)}{\sigma_R(t)}, \frac{F_{n,R}(t)}{\sigma_{n,R}(t)} \right) + d_{\mathrm{W}}\left( \frac{F_{n,R}(t)}{\sigma_{n,R}(t)}, \mathcal{N}(0,1) \right) \nonumber\\
    &\leqslant \sup_{R \geqslant 2(T+1)} \left \lVert \frac{F_{n,R}(t)}{\sigma_{n,R}(t)} - \frac{F_R(t)}{\sigma_{R}(t)} \right \rVert _2 + d_{\mathrm{W}}\left( \frac{F_{n,R}(t)}{\sigma_{n,R}(t)}, \mathcal{N}(0,1) \right), \label{beta1}
\end{align}
where the last inequality is easily checked by the definition of $d_{\mathrm{W}}$ (see \eqref{Wasserstein distance def}). Now we apply Lemma \ref{Lemma lim sup norm zero}. For any $\varepsilon>0$, we can find large enough $n$ such that 
\begin{align}
\label{beta2}
    \sup_{R \geqslant 2(T+1)} \left \lVert \frac{F_{n,R}(t)}{\sigma_{n,R}(t)} - \frac{F_R(t)}{\sigma_{R}(t)} \right \rVert _2 < \varepsilon.
\end{align}
From now on, we fix such $n$.
It remains to prove that the second term on the right-hand side of \eqref{beta1} converges to 0 as $R \to \infty$.
Thanks to Proposition \ref{Proposition Wasserstein bound}, we have
\begin{align*}
    d_{\mathrm{W}}\left( \frac{F_{n,R}(t)}{\sigma_{n,R}(t)}, \mathcal{N}(0,1) \right) 
    \leqslant \sqrt{\frac{2}{\pi}}\frac{\sqrt{\mathrm{Var} \langle DF_{n,R}(t),V_{n,t,R}\rangle_{\mathcal{H}_T}}}{\sigma^2_{n,R}(t)}.
\end{align*}
Therefore, it follows from Lemmas \ref{Lemma L1 nondegeneracy}, \ref{Lemma Riesz nondegeneracy}, and \ref{key} that
\begin{align*}
    d_{\mathrm{W}}\left(\frac{F_{n,R}(t)}{\sigma_{n,R}(t)}, \mathcal{N}(0,1) \right) \lesssim \Theta(T,n-1) \times
    \begin{cases}
        R^{-\frac{3}{2}}  &(\gamma \in L^1(\mathbb{R}^3))\\
        R^{-\frac{\beta}{2}}  &(\gamma(x) = |x|^{-\beta} \quad (0<\beta<2))
    \end{cases}.
\end{align*}
This together with \eqref{beta2} implies \eqref{distance convergence}. Thus Theorem \ref{main result1} follows. \qed

\section{Functional Central Limit Theorems}
\label{FCLT}
In this section we prove Theorem \ref{main result2}. It is sufficient to show the convergence of the finite-dimensional distributions and the tightness. The former is proved in Section \ref{section Convergence of finite dimensional distributions}  and the latter in Section \ref{section Tightness}.
In Section \ref{section Limits of the covariance functions} we first determine the limit of the covariance of $F_R(t)$ and $F_{n,R}(t)$ as $R \to \infty$.
\subsection{Limits of the covariance functions}
\label{section Limits of the covariance functions}
Let us first consider the case $\gamma \in L^1(\mathbb{R}^3)$.
\begin{Prop}
\label{Proposition L1 limit}
Let $\gamma \in L^1(\mathbb{R}^3)$. Then, for any $t_1, t_2 \in [0,T]$, 
\begin{align}
    \lim_{R \to \infty} R^{-3}\mathbb{E}[F_{n,R}(t_1)F_{n,R}(t_2)] &= |B_1| \int_{\mathbb{R}^3}\mathrm{Cov}(u_n(t_1,x),u_n(t_2,0))dx, \label{l1 n limit}\\
        \lim_{R \to \infty} R^{-3}\mathbb{E}[F_{R}(t_1)F_{R}(t_2)] &= |B_1| \int_{\mathbb{R}^3}\mathrm{Cov}(u(t_1,x),u(t_2,0))dx. \label{l1 limit}
\end{align}
\end{Prop}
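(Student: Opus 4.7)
The plan is to exploit spatial stationarity of the solution (Proposition \ref{Proposition u_n property}(iii) and its analogue for $u$), change variables to expose the overlap volume $|B_R \cap (B_R - z)|$, and then apply dominated convergence. Both \eqref{l1 n limit} and \eqref{l1 limit} follow the same template; I would treat the Picard iterate case first.

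Setting $\psi_n(z) := \mathrm{Cov}(u_n(t_1,z), u_n(t_2,0))$, the spatial stationarity of $u_n$ together with the change of variables $z=x-y$ yields
\begin{align*}
\mathbb{E}[F_{n,R}(t_1)F_{n,R}(t_2)] = \int_{B_R\times B_R}\psi_n(x-y)\,dxdy = \int_{\mathbb{R}^3}\psi_n(z)\,|B_R\cap(B_R-z)|\,dz.
\end{align*}
Since $R^{-3}|B_R\cap (B_R-z)| \to |B_1|$ for every $z$ and is uniformly dominated by $|B_1|$, the conclusion \eqref{l1 n limit} will follow from dominated convergence once one establishes $\psi_n \in L^1(\mathbb{R}^3)$.

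To verify integrability, I would use the bilinear version of the isometry in Proposition \ref{dalang prop}. Writing $K_{n-1,s}(y_1,y_2):=\mathbb{E}[\sigma(u_{n-1}(s,y_1))\sigma(u_{n-1}(s,y_2))]$, we have
\begin{align*}
\psi_n(z) = \int_0^{t_1\wedge t_2}\!ds\int_{\mathbb{R}^6}G_n(t_1-s,z-y_1)G_n(t_2-s,-y_2)\gamma(y_1-y_2)K_{n-1,s}(y_1,y_2)\,dy_1dy_2.
\end{align*}
By \eqref{iteration uniform bound} and the Lipschitz property of $\sigma$, $|K_{n-1,s}|$ is bounded by a constant $M$ independent of $(s,y_1,y_2,n)$. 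Applying Fubini together with $\int_{\mathbb{R}^3} G_n(t,z)dz=t$ (which follows from \eqref{G total measure}) and $\gamma\in L^1(\mathbb{R}^3)$, we obtain
\begin{align*}
\int_{\mathbb{R}^3}|\psi_n(z)|\,dz\leq M\lVert\gamma\rVert_{L^1}\int_0^{t_1\wedge t_2}(t_1-s)(t_2-s)\,ds<\infty,
\end{align*}
and dominated convergence gives \eqref{l1 n limit}.

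For \eqref{l1 limit} the argument is structurally identical, with $G_n$ replaced by the fundamental solution $G(t,\cdot)$; one uses $G(t,\mathbb{R}^3)=t$ from \eqref{G total measure} and the uniform moment bound \eqref{solution uniform bound} in place of \eqref{iteration uniform bound}. The main obstacle I anticipate is the rigorous interpretation of the bilinear-isometry representation of $\mathrm{Cov}(u(t_1,z),u(t_2,0))$ when $G(t,\cdot)$ is a surface measure on $\partial B_t$ rather than a function: the expression pairs $\gamma(y_1-y_2)$ against the product of surface measures $G(t_1-s,z-dy_1)\otimes G(t_2-s,-dy_2)$, and one then wants to interchange this with the $dz$-integration via Fubini. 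I would justify this either by passing to the Fourier side via Proposition \ref{dalang prop} and the spectral measures $\mu_s^{\sigma(u)}$, or by approximating $G$ with $G_n$ and using the convergence $u_n\to u$ from \eqref{iteration uniform convergence} together with the already-established integrability for $\psi_n$.
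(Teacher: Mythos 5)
Your proposal is correct and follows essentially the same route as the paper: spatial stationarity plus the overlap-volume change of variables, dominated convergence, and an $L^1$ bound on the covariance obtained from the mild-form representation with $G_n$, the uniform moment bounds, and $\gamma\in L^1(\mathbb{R}^3)$. For \eqref{l1 limit}, the second option you sketch is exactly what the paper does: it avoids handling $G$ as a measure by bounding $\int_{\mathbb{R}^3}|\mathrm{Cov}(u(t_1,x),u(t_2,0))|dx$ via Fatou's lemma applied to the already-integrable covariances of $u_n$, using \eqref{iteration uniform convergence}.
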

\begin{proof}
The strict stationarity of $u_n(t,x)$ leads to 
\begin{align*}
    \mathbb{E}[F_{n,R}(t_1)F_{n,R}(t_2)] &= \int_{{B^2_R}}\mathbb{E}[(u_n(t_1,x)-1)(u_n(t_2,y)-1)]dxdy\\
    &= \int_{{B^2_R}}\mathrm{Cov}(u_n(t_1,x-y),  u_n(t_2,0))dxdy\\
    &= R^{3}\int_{B_1}\left( \int_{B_R(-Ry)}\mathrm{Cov}(u_n(t_1,x),u_n(t_2,0))dx \right) dy.
\end{align*}
Because, by Lemma \ref{G_n property} and \eqref{iteration uniform bound},
\begin{align*}
    &\int_{\mathbb{R}^3} |\mathrm{Cov}(u_n(t_1,x),u_n(t_2,0))|dx\\
    &\leqslant \int_{\mathbb{R}^3}dx\int_0^{t_1 \land t_2}dr \int_{\mathbb{R}^6}dzdz'G_n(t_1-r,x-z)G_n(t_2-r,-z') \gamma(z-z')\\
    &\qquad \qquad \times 
    |\mathbb{E}[\sigma(u_{n-1}(r,z))\sigma(u_{n-1}(r,z'))]|\\
    &\lesssim_T \int_0^{t_1 \land t_2}dr \int_{\mathbb{R}^6}dzdz'G_n(t_2-r,-z') \gamma(z-z')\\
    &\lesssim_T \lVert \gamma \rVert_{L^1(\mathbb{R}^3)} < \infty,
\end{align*}
we conclude from the Lebesgue dominated convergence theorem that \eqref{l1 n limit} holds true.
Furthermore, by \eqref{iteration uniform bound} and \eqref{iteration uniform convergence} we apply Fatou's lemma to obtain 
\begin{align*}
    \int_{\mathbb{R}^3}|\mathrm{Cov}(u(t_1,x),u(t_2,0))|dx \leqslant \liminf_{n \to \infty} \int_{\mathbb{R}^3}|\mathrm{Cov}(u_n(t_1,x),u_n(t_2,0))|dx \lesssim_T  \lVert \gamma \rVert_{L^1(\mathbb{R}^3)} < \infty.
\end{align*}
Therefore, \eqref{l1 limit} also follows from the strict stationarity of $u(t,x)$ and the Lebesgue dominated convergence theorem.
\end{proof}

Next we consider the case $\gamma(x) = |x|^{-\beta}$.
Let $\gamma_0(x) \coloneqq \gamma(x)\mathbf{1}_{B_1}(x)$ for all $x \in \mathbb{R}^3$. Then we have
\begin{align}
\label{riesz bound}
    \gamma(x) =|x|^{-\beta} = \gamma_0(x) + |x|^{-\beta}\mathbf{1}_{\mathbb{R}^3 \setminus B_1}(x) \leqslant \gamma_0(x) + 2\langle x \rangle^{-\beta}.
\end{align}
Notice that $\gamma_0 \in L^1(\mathbb{R}^3)$. 
In order to obtain the limit of the covariance, we prepare some lemmas.
\begin{Lem}
\label{Lemma Cov n estimate}
Let $\gamma(x) = |x|^{-\beta}$ for some $0<\beta<2$. Then, there is an integrable function $g \in L^1(\mathbb{R}^3)$ such that
\begin{align*}
    |\mathrm{Cov}(\sigma(u_n(t,x)), \sigma(u_n(s,y)))| \lesssim_{T,\beta} \Theta(T,n)^2 \{g(x-y) + \langle x-y \rangle^{-\beta}\}
\end{align*}
\end{Lem}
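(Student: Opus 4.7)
The plan is to apply the Poincar\'{e} inequality \eqref{Poincare inequality} to $F = \sigma(u_n(t,x))$ and $G = \sigma(u_n(s,y))$, control the resulting Malliavin derivatives by combining Proposition \ref{Proposition M_nDu_n} with the pointwise moment bound in Theorem \ref{Theorem moment estimate}, and then invoke the splitting \eqref{riesz bound} of the Riesz kernel to cleanly separate the estimate into an $L^1$ piece $g$ and a polynomial-decay piece $\langle x-y\rangle^{-\beta}$.

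\textbf{Step 1: reduction to a spatial integral.} By the Malliavin chain rule, $D_{r,z}\sigma(u_n(t,x)) = \sigma'(u_n(t,x))\,D_{r,z}u_n(t,x)$, together with $|\sigma'| \leqslant L$, Proposition \ref{Proposition M_nDu_n} which identifies $Du_n(t,x)$ with $M_n(t,x,\cdot,\star)$, and the pointwise bound $\lVert M_n(t,x,r,z)\rVert_2 \lesssim \Theta(T,n)\,\mathbf{1}_{B_{T+1}}(x-z)$ from Theorem \ref{Theorem moment estimate}, the Poincar\'{e} inequality \eqref{Poincare inequality} yields
\[
|\mathrm{Cov}(\sigma(u_n(t,x)),\sigma(u_n(s,y)))| \lesssim_T \Theta(T,n)^2 \int_{B_{T+1}^{2}} |u-v+(x-y)|^{-\beta}\,du\,dv,
\]
after the translation $z=x+u$, $z'=y+v$ and integrating the time variable over $[0,t\wedge s]\subseteq[0,T]$ (which is finite thanks to property (iii) of Proposition \ref{Proposition M_n property}).

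\textbf{Step 2: kernel splitting.} Writing $w := x-y$ and applying \eqref{riesz bound}, we have $|u-v+w|^{-\beta} \leqslant \gamma_0(u-v+w) + 2\langle u-v+w\rangle^{-\beta}$. For the polynomial-decay part, Peetre's inequality (Proposition \ref{Peetre inequality}) gives $\langle u-v+w\rangle^{-\beta} \leqslant 2^{\beta/2}\langle u-v\rangle^{\beta}\langle w\rangle^{-\beta} \lesssim_T \langle w\rangle^{-\beta}$ uniformly in $|u|,|v|\leqslant T+1$, so that contribution to the integral is bounded by $C_T\langle w\rangle^{-\beta}$. For the $\gamma_0$ part, define
\[
g(w) := \int_{B_{T+1}^{2}} \gamma_0(u-v+w)\,du\,dv.
\]
Since $\mathbf{1}_{B_{T+1}}$ is even, a straightforward change of variable shows $g = \gamma_0 \ast (\mathbf{1}_{B_{T+1}} \ast \mathbf{1}_{B_{T+1}})$, and Young's convolution inequality (or Fubini directly) gives $\lVert g\rVert_{L^1} \leqslant \lVert\gamma_0\rVert_{L^1}\,\lVert\mathbf{1}_{B_{T+1}}\rVert_{L^1}^{2} < \infty$, so $g \in L^1(\mathbb{R}^3)$. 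Assembling the two pieces yields the claimed bound.

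\textbf{Main obstacle.} The only delicate point is that the Riesz kernel $|z|^{-\beta}$ is neither integrable at infinity nor bounded near the origin, so the naive estimate $\int_{B_{T+1}^{2}}|u-v+w|^{-\beta}\,du\,dv \leqslant \lVert\gamma\rVert_{\infty}\,|B_{T+1}|^{2}$ is unavailable. The key is to exploit the compact-support structure of $M_n$ to localize the spatial integration to $|u|,|v| \leqslant T+1$, which then allows the behavior of $\gamma$ at infinity to be transferred to decay in the separation $w=x-y$ via Peetre's inequality, while the near-diagonal singularity is confined to the compactly supported part $\gamma_0$ and absorbed into the $L^1$ function $g$ via Young's inequality.
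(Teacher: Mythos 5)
Your proposal is correct and follows essentially the same route as the paper: the Poincar\'{e} inequality combined with the pointwise bound $\lVert M_n(t,x,r,z)\rVert_2 \lesssim \Theta(T,n)\mathbf{1}_{B_{T+1}}(x-z)$ from Theorem \ref{Theorem moment estimate}, the splitting \eqref{riesz bound} of the Riesz kernel into $\gamma_0 + 2\langle\cdot\rangle^{-\beta}$, Peetre's inequality for the tail part, and the convolution structure of $g$ for the $L^1$ part. The only differences are cosmetic (your change of variables $z = x+u$, $z' = y+v$ versus the paper keeping $z, z'$ explicit).
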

\begin{proof}
By the Poincar\'{e} inequality \eqref{Poincare inequality}, Theorem \ref{Theorem moment estimate}, and \eqref{riesz bound}, we have
\begin{align*}
    &|\mathrm{Cov}(\sigma(u_n(t,x)), \sigma(u_n(s,y)))|\\
    &\lesssim \int_0^{t \land s} dr \int_{\mathbb{R}^6} \lVert M_n(t,x,r,z)\rVert_2 \lVert M_n(s,y,r,z')\rVert_2\gamma(z-z')dzdz'\\
    &\lesssim \Theta(T,n)^2 \int_{\mathbb{R}^6} \mathbf{1}_{B_{T+1}}(x-z) \mathbf{1}_{B_{T+1}}(y-z')\gamma(z - z')dzdz'\\
    &\lesssim \Theta(T,n)^2 \bigg \{ \int_{\mathbb{R}^6} \mathbf{1}_{B_{T+1}}(x-z) \mathbf{1}_{B_{T+1}}(y-z')\gamma_0(z - z')dzdz'\\
    &\qquad \qquad \qquad + \int_{\mathbb{R}^6} \mathbf{1}_{B_{T+1}}(x-z) \mathbf{1}_{B_{T+1}}(y-z')\langle z-z' \rangle^{-\beta}dzdz' \bigg \}.
\end{align*}
Let 
\begin{align*}
    g(x-y) \coloneqq \int_{\mathbb{R}^6} \mathbf{1}_{B_{T+1}}(x-z) \mathbf{1}_{B_{T+1}}(y-z')\gamma_0(z - z')dzdz'.
\end{align*}
Then, it is easy to check that $g \in L^1(\mathbb{R}^3)$ and $\lVert g \rVert_{L^1(\mathbb{R}^3)} \lesssim_T \lVert \gamma_0 \rVert_{L^1(\mathbb{R}^3)} < \infty$.
Furthermore, by Peetre's inequality (Lemma \ref{Peetre inequality}), we obtain
\begin{align*}
    &\int_{\mathbb{R}^6} \mathbf{1}_{B_{T+1}}(x-z) \mathbf{1}_{B_{T+1}}(y-z')\langle z-z' \rangle^{-\beta}dzdz'\\
    &\lesssim_{\beta} \langle x-y \rangle^{-\beta} \left (\int_{\mathbb{R}^3} \mathbf{1}_{B_{T+1}}(x-z)\langle x-z \rangle^{\beta}dz \right) \left (\int_{\mathbb{R}^3} \mathbf{1}_{B_{T+1}}(y-z')\langle y-z' \rangle^{\beta}dz' \right)\\
    &\lesssim_{T,\beta} \langle x-y \rangle^{-\beta}.
\end{align*}
Thus, Lemma \ref{Lemma Cov n estimate} holds.
\end{proof}

\begin{Lem}
\label{Lemma varphi cov order}
Let $\gamma(x) = |x|^{-\beta}$ for some $0<\beta<2$ and $R \geqslant 2$. Then, 
\begin{align}
    &\int_0^{t_1 \land t_2}dr\int_{\mathbb{R}^6}\varphi_{t_1,R}(r,z)\varphi_{t_2,R}(r,z')\gamma(z-z')|\mathrm{Cov}(\sigma(u_n(r,z)), \sigma(u_n(r,z')))|dzdz' \label{varphi cov}\\
    &\lesssim \Theta(T,n)^2 \times
    \begin{cases}
        R^{6-2\beta}  &(0<\beta < \frac{3}{2})\\
        R^3\log R  &(\beta = \frac{3}{2})\\
        R^3  &(\beta > \frac{3}{2})
    \end{cases}. \label{varphi cov order}
\end{align}
\end{Lem}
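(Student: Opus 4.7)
The plan is to combine Lemma \ref{Lemma Cov n estimate} with the elementary bound $\varphi_{t,R}(r,z) \leqslant T\,\mathbf{1}_{B_{R+T}}(z)$, which follows from $\|\varphi_{t,R}(r,\cdot)\|_{L^\infty} \leqslant T$ (Lemma \ref{varphi property}) together with $\mathrm{supp}\,G(t-r,\cdot) \subset \partial B_{t-r}$, so that $\varphi_{t,R}(r,z) = 0$ whenever $|z| > R+(t-r)$. After performing the trivial $dr$-integration over $[0,t_1 \wedge t_2] \subset [0,T]$, the expression in \eqref{varphi cov} is controlled, up to constants and the factor $\Theta(T,n)^2$, by
\begin{align*}
    \int_{B_{R+T}^2} |z-z'|^{-\beta}\bigl\{g(z-z') + \langle z-z'\rangle^{-\beta}\bigr\}\,dzdz'.
\end{align*}

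For the piece involving $g$, I would use that the explicit formula from the proof of Lemma \ref{Lemma Cov n estimate} shows $g$ is bounded and supported in $B_{2T+3}$ (this is because $\gamma_0$ is supported in $B_1$). Hence $w \mapsto |w|^{-\beta} g(w)$ lies in $L^1(\mathbb{R}^3)$ with norm depending only on $T$ and $\beta$ (using $\beta < 3$). Substituting $w = z - z'$, bounding the outer $z$-integration by $|B_{R+T}|$, and integrating $|w|^{-\beta}g(w)$ in $w$ yields a contribution $\lesssim R^3$.

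The main term is the $\langle \cdot \rangle^{-\beta}$ piece. Since $|w| = |z-z'| \leqslant 2(R+T)$ in the domain of integration, the same substitution $w = z - z'$ gives
\begin{align*}
    \int_{B_{R+T}^2}|z-z'|^{-\beta}\langle z-z'\rangle^{-\beta}\,dzdz' \;\lesssim\; R^3 \int_{B_{2(R+T)}} |w|^{-\beta}\langle w\rangle^{-\beta}\,dw,
\end{align*}
and passing to spherical coordinates reduces the residual integral to $\int_0^{cR} r^{2-\beta}(1+r^2)^{-\beta/2}\,dr$. The near-origin part is harmless since $\beta<2<3$, while at infinity the integrand is comparable to $r^{2-2\beta}$. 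A direct case split then yields $R^{3-2\beta}$ when $\beta < 3/2$, $\log R$ when $\beta = 3/2$, and a finite constant when $\beta > 3/2$. Multiplying by the outer factor $R^3$ reproduces the three regimes in \eqref{varphi cov order}, and in each regime the $R^3$ bound from the $g$-piece is either dominated or matches the claimed order.

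I expect no genuine obstacle: the argument is essentially careful bookkeeping of constants plus a routine radial-integral case split at the critical value $\beta = 3/2$. The only point requiring mild care is verifying the boundedness and compact support of $g$ from the explicit expression in Lemma \ref{Lemma Cov n estimate}, which is precisely what ensures that the $g$-contribution stays at the $R^3$ scale and does not pollute the sharper $R^{6-2\beta}$ bound in the subcritical range.
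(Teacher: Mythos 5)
Your argument is correct and rests on the same two pillars as the paper's proof: Lemma \ref{Lemma Cov n estimate} for the covariance and the dichotomy at the critical exponent $2\beta=3$ for the resulting $\int_{B_R^2}\langle z-z'\rangle^{-2\beta}dzdz'$-type integral. The assembly differs in two respects. First, the paper begins by splitting the \emph{outer} kernel $\gamma=\gamma_0+O(\langle\cdot\rangle^{-\beta})$ as in \eqref{riesz bound}; the $\gamma_0$-part is then disposed of using only the uniform moment bound \eqref{iteration uniform bound} (no covariance lemma), and Lemma \ref{Lemma Cov n estimate} is applied only against the bounded factor $\langle z-z'\rangle^{-\beta}$, so the product $\langle\cdot\rangle^{-\beta}g$ is trivially dominated by $g\in L^1$. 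You keep $|z-z'|^{-\beta}$ intact, which forces you to control $|w|^{-\beta}g(w)$ near the diagonal; your observation that $g$ is bounded with compact support (which does follow from the explicit convolution formula in the proof of Lemma \ref{Lemma Cov n estimate}, even though the lemma itself only records $g\in L^1$) is exactly the extra ingredient needed, and you correctly flag it. Second, for the main term the paper keeps the surface measures $G(t_i-r,dx)$ and uses Peetre's inequality plus the tabulated bound \eqref{gamma ball order alpha}, whereas you use the cruder $\varphi_{t,R}(r,z)\leqslant T\,\mathbf{1}_{B_{R+T}}(z)$ and a direct radial integration; since $|w|^{-\beta}\langle w\rangle^{-\beta}$ is integrable at the origin for $\beta<2$ and behaves like $|w|^{-2\beta}$ at infinity, this reproduces the same three regimes. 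Both routes are sound; yours is slightly more self-contained at the price of the extra verification on $g$, while the paper's front-loaded splitting of $\gamma$ lets it reuse pre-packaged estimates.
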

\begin{proof}
By \eqref{riesz bound}, \eqref{varphi cov} is bounded from above by
\begin{align*}
    &\int_0^{t_1 \land t_2}dr\int_{\mathbb{R}^6}\varphi_{t_1,R}(r,z)\varphi_{t_2,R}(r,z')\gamma_0(z-z')|\mathrm{Cov}(\sigma(u_n(r,z)), \sigma(u_n(r,z')))|dzdz'\\
    &\quad + 2\int_0^{t_1 \land t_2}dr\int_{\mathbb{R}^6}\varphi_{t_1,R}(r,z)\varphi_{t_2,R}(r,z')\langle z-z' \rangle^{-\beta}|\mathrm{Cov}(\sigma(u_n(r,z)), \sigma(u_n(r,z')))|dzdz'\\
    &\eqqcolon \mathbf{R_1} + 2\mathbf{R_2}.
\end{align*}
From \eqref{iteration uniform bound} and Lemma \ref{varphi property}, we obtain that
\begin{align}
    \mathbf{R_1} &\lesssim \int_0^{t_1 \land t_2}dr\int_{\mathbb{R}^6}\varphi_{t_1,R}(r,z)\varphi_{t_2,R}(r,z')\gamma_0(z-z')dzdz'\nonumber\\
    &\lesssim_T \int_0^{t_1 \land t_2}dr\int_{\mathbb{R}^6}\varphi_{t_2,R}(r,z')\gamma_0(z-z')dzdz'\nonumber\\
    &\lesssim_T R^3. \label{R_1 estimate}
\end{align}
For the term $\mathbf{R_2}$, Lemma \ref{Lemma Cov n estimate} implies that there is a function $g \in L^1(\mathbb{R}^3)$ such that
\begin{align*}
    \mathbf{R_2} &\lesssim \Theta(T,n)^2 \int_0^{t_1 \land t_2}dr\int_{\mathbb{R}^6}\varphi_{t_1,R}(r,z)\varphi_{t_2,R}(r,z')\langle z-z' \rangle^{-\beta} g(z-z') dzdz'\\
    &\quad + \Theta(T,n)^2 \int_0^{t_1 \land t_2}dr\int_{\mathbb{R}^6}\varphi_{t_1,R}(r,z)\varphi_{t_2,R}(r,z')\langle z-z' \rangle^{-2\beta}dzdz'\\
    &\eqqcolon \mathbf{R_{21}} + \mathbf{R_{22}}.
\end{align*}
Since $g \in L^1(\mathbb{R}^3)$, $\mathbf{R_{21}}$ is estimated by 
\begin{align}
    \mathbf{R_{21}} &\lesssim \Theta(T,n)^2\int_0^{t_1 \land  t_2}dr\int_{\mathbb{R}^6}\varphi_{t_1,R}(r,z)\varphi_{t_2,R}(r,z')g(z-z') dzdz'\nonumber\\
    &\lesssim_T \Theta(T,n)^2\int_0^{t_1 \land  t_2}dr\int_{\mathbb{R}^6}\varphi_{t_2,R}(r,z')g(z-z') dzdz'\nonumber\\
    &\lesssim_T R^3.\label{R_21 estimate}
\end{align}
For the term $\mathbf{R_{22}}$, we apply Peetre's inequality to get that
\begin{align*}
    \mathbf{R_{22}} &\lesssim \Theta(T,n)^2\int_0^{t_1 \land  t_2}dr\int_{\mathbb{R}^6}\varphi_{t_1,R}(r,z)\varphi_{t_2,R}(r,z')\langle z-z' \rangle^{-2\beta}dzdz'\\
    &=\Theta(T,n)^2\int_0^{t_1 \land  t_2}dr\int_{\mathbb{R}^6}G(t_1-r,dx)G(t_2-r,dx')\int_{B^2_R}\langle x-x'+z-z' \rangle^{-2\beta}dzdz'\\
    &\lesssim_T \left(\int_{\mathbb{R}^3}\langle x \rangle^{2\beta}G(t_1-r, dx)\right)\left(\int_{\mathbb{R}^3}\langle x' \rangle^{2\beta}G(t_2-r, dx')\right)\int_{{B^2_R}}\langle z-z' \rangle^{-2\beta}dzdz'\\
    &\lesssim_{T,\beta} \int_{{B^2_R}}\langle z-z' \rangle^{-2\beta}dzdz',
\end{align*}
where the last step follows since the support of $G$ is compact (see \eqref{fundamental solution}).
Then, using the simple estimate
\begin{align}
\label{gamma ball order alpha}
    \int_{{B^2_R}}\langle x-y \rangle^{\alpha}dxdy &\lesssim 
    \begin{cases}
        R^{6+\alpha}  &(\alpha > -3)\\
        R^3\log R  &(\alpha = -3)\\
        R^3  &(\alpha < -3)
    \end{cases},
\end{align}
we obtain 
\begin{align}
    \mathbf{R_{22}} \lesssim \Theta(T,n)^2 \times
    \begin{cases}
        R^{6-2\beta}  &(0<\beta < \frac{3}{2})\\
        R^3\log R  &(\beta = \frac{3}{2})\\
        R^3  &(\beta > \frac{3}{2})
    \end{cases}. \label{R_22 estimate}
\end{align}
Combining \eqref{R_1 estimate}, \eqref{R_21 estimate}, and \eqref{R_22 estimate}, we obtain \eqref{varphi cov order}. Thus Lemma \ref{Lemma varphi cov order} holds.
\end{proof}

In the next lemma, \eqref{1 limit} is proved in \cite[Lemma 2.2]{Averaging2dSWE} when the spatial dimension is two. Exactly the same arguments also work in spatial dimension three.
Recall that $\tau_{\beta}$ is defined in \eqref{notation a}.
\begin{Lem}
\label{Lemma riesz limit}
Let $\gamma(x) = |x|^{-\beta}$ for some $0<\beta<2$. Then, 
\begin{align}
    &\frac{1}{R^{6-\beta}}\int_{\mathbb{R}^{6}}\varphi_{t_1,R}(r,y)\varphi_{t_2,R}(r,z)|y-z|^{-\beta}dydz \xrightarrow{R \to \infty} \tau_{\beta}(t_1-r)(t_2-r), \label{1 limit}\\
    &\frac{1}{R^{6-\beta}}\int_{\mathbb{R}^{6}}\varphi_{n,t_1,R}(r,y)\varphi_{n,t_2,R}(r,z)|y-z|^{-\beta}dydz \xrightarrow{R \to \infty} \tau_{\beta}(t_1-r)(t_2-r). \label{2 limit}
\end{align}
\end{Lem}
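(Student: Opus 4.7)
The plan is to use Fubini's theorem and a scaling argument to reduce both limits to a single application of the dominated convergence theorem with respect to the finite product measure $G(t_1-r,dx) \otimes G(t_2-r,dx')$ (respectively $G_n(t_1-r,x)dx \otimes G_n(t_2-r,x')dx'$).

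First, by Fubini's theorem combined with the substitution $(y,z) \mapsto (y+x, z+x')$,
\begin{align*}
\int_{\mathbb{R}^6}\varphi_{t_1,R}(r,y)\varphi_{t_2,R}(r,z)|y-z|^{-\beta}dydz = \int_{\mathbb{R}^6} G(t_1-r,dx)G(t_2-r,dx') \int_{B_R^2} |y-z+x-x'|^{-\beta}dydz.
\end{align*}
Rescaling the inner integral via $(y,z) \mapsto (Ry, Rz)$ extracts a factor $R^{6-\beta}$, so that
\begin{align*}
\frac{1}{R^{6-\beta}}\int_{\mathbb{R}^{6}}\varphi_{t_1,R}(r,y)\varphi_{t_2,R}(r,z)|y-z|^{-\beta}dydz = \int_{\mathbb{R}^6} G(t_1-r,dx)G(t_2-r,dx')\, h_R(x-x'),
\end{align*}
where $h_R(a) := \int_{B_1^2}|y-z+a/R|^{-\beta}dydz$. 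For each fixed $a \in \mathbb{R}^3$, we have $a/R \to 0$, and an elementary application of dominated convergence (the integrand is locally dominated on $B_2$ by a constant multiple of $|y-z|^{-\beta}$, which is integrable since $\beta < 3$) gives $h_R(a) \to \tau_\beta$.

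The key step is to establish a bound on $h_R$ that is uniform in $R$ and $a$, so that the $R \to \infty$ limit may be interchanged with the outer integration against the finite measure $G(t_1-r,\cdot)\otimes G(t_2-r,\cdot)$ of total mass $(t_1-r)(t_2-r)$ (recall $G(t,\mathbb{R}^3)=t$). I would obtain this bound by observing that
\begin{align*}
\int_{B_1^2} |y - z + a|^{-\beta}\,dy\,dz = \bigl((\mathbf{1}_{B_1} * \mathbf{1}_{B_1}) * |\cdot|^{-\beta}\bigr)(a),
\end{align*}
which is a convolution of two symmetric decreasing functions, hence symmetric decreasing, hence maximized at $a=0$ with value $\tau_\beta$; alternatively one can split into $|a|\le 3$ (bounded by $|B_1|\int_{|w|\le 5}|w|^{-\beta}dw$) and $|a|>3$ (where $|y-z+a|\ge |a|/3$ forces $h_R(a)\lesssim 1$). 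Either way $h_R(a)\le C$ uniformly, so the dominated convergence theorem applied to the outer integral yields
\begin{align*}
\lim_{R\to\infty}\frac{1}{R^{6-\beta}}\int_{\mathbb{R}^{6}}\varphi_{t_1,R}(r,y)\varphi_{t_2,R}(r,z)|y-z|^{-\beta}dydz = \tau_\beta(t_1-r)(t_2-r).
\end{align*}

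The proof of \eqref{2 limit} is identical, with $G(t_i-r, dx)$ replaced by the density $G_n(t_i-r, x)\,dx$; this is again a finite nonnegative measure of total mass $t_i - r$, since $G_n = \rho_n * G$ and $\int \rho_n = 1$. The only nontrivial point in the whole argument is the uniform bound on $h_R$; once that is in hand, everything else is a combination of Fubini, a change of variables, and two invocations of dominated convergence.
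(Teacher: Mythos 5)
Your proof is correct, but it takes a genuinely different route from the paper: you work entirely in physical space, while the paper argues on the Fourier side. The paper invokes Lemma \ref{Lemma fourier transform} to rewrite the integral as $c_\beta\int_{\mathbb{R}^3}|\mathcal{F}\mathbf{1}_{B_R}(\xi)|^2\mathcal{F}G(t_1-r)(\xi)\mathcal{F}G(t_2-r)(\xi)|\xi|^{\beta-3}d\xi$, rescales $\xi\mapsto\xi/R$ to pull out $R^{6-\beta}$, and applies dominated convergence using $|\mathcal{F}G(t)(\xi)|\leqslant T$, the identity $c_\beta\int|\mathcal{F}\mathbf{1}_{B_1}(\xi)|^2|\xi|^{\beta-3}d\xi=\tau_\beta<\infty$, and $\mathcal{F}G(t_i-r)(\xi/R)\to t_i-r$; the proof of \eqref{2 limit} is then literally the same computation because $|\mathcal{F}\rho_n|\leqslant 1$ and $\mathcal{F}\rho_n(\xi/R)\to 1$. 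Your route (Fubini--Tonelli, translation and dilation of variables, then dominated convergence against the finite measure $G(t_1-r,\cdot)\otimes G(t_2-r,\cdot)$) avoids the spectral measure and Lemma \ref{Lemma fourier transform} altogether and is more elementary, at the cost of having to bound the shifted Riesz integral $h_R$ uniformly, which you do correctly (either via the symmetric-decreasing convolution observation or via the splitting $|a|\leqslant 3$ versus $|a|>3$); in fact, since the measures $G(t_i-r,\cdot)$ and $G_n(t_i-r,x)dx$ have compact support, boundedness of $h_R$ on bounded sets would already suffice. One step to tighten: the parenthetical justification of $h_R(a)\to\tau_\beta$ via domination of $|y-z+a/R|^{-\beta}$ by a constant multiple of $|y-z|^{-\beta}$ is not literally valid pointwise (take $y-z=-a/R$); but the convolution representation you already wrote, $h_R(a)=\bigl((\mathbf{1}_{B_1}*\mathbf{1}_{B_1})*|\cdot|^{-\beta}\bigr)(a/R)$, is continuous at the origin (a continuous compactly supported function convolved with a locally integrable kernel), or equivalently one can change variables so that the singular factor is fixed and dominate by $C|v|^{-\beta}\mathbf{1}_{B_3}(v)$, which repairs the step immediately.
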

\begin{proof}
We follow the arguments in \cite[Lemma 2.2]{Averaging2dSWE}. Recall that the tempered measure $|x|^{-\beta}dx$ is the Fourier transform of its spectral measure $c_{\beta}|\xi|^{\beta -3}d\xi$.
Taking into account Lemma \ref{Lemma fourier transform}, we can use the Fourier transform to obtain
\begin{align*}
    &\int_{\mathbb{R}^{6}}\varphi_{t_1,R}(r,y)\varphi_{t_2,R}(r,z)|y-z|^{-\beta}dydz = c_{\beta} \int_{\mathbb{R}^3}|\mathcal{F} \mathbf{1}_{B_R}(\xi)|^2 \mathcal{F} G(t_1-r)(\xi) \mathcal{F} G(t_2-r)(\xi)|\xi|^{\beta -3}d\xi\\
    &= c_{\beta}R^{-\beta} \int_{\mathbb{R}^3}\left|\mathcal{F} \mathbf{1}_{B_R}\left(\frac{\xi}{R}\right)\right|^2 \mathcal{F} G(t_1-r)\left(\frac{\xi}{R}\right) \mathcal{F} G(t_2-r)\left(\frac{\xi}{R}\right)|\xi|^{\beta -3}d\xi\\
    &= c_{\beta}R^{6-\beta} \int_{\mathbb{R}^3}\left|\mathcal{F} \mathbf{1}_{B_1}\left(\xi\right)\right|^2 \mathcal{F} G(t_1-r)\left(\frac{\xi}{R}\right) \mathcal{F} G(t_2-r)\left(\frac{\xi}{R}\right)|\xi|^{\beta -3}d\xi.
\end{align*}
Since \eqref{FG bound} holds and
\begin{align*}
    c_{\beta}\int_{\mathbb{R}^3}|\mathcal{F} \mathbf{1}_{B_1}\left(\xi\right)|^2|\xi|^{\beta -3}d\xi = \int_{B^2_1} |x-y|^{-\beta} dxdy = \tau_{\beta} < \infty
\end{align*}
by Lemma \ref{Lemma fourier transform}, we apply the Lebesgue dominated convergence theorem to get 
\begin{align*}
    &\frac{1}{R^{6-\beta}}\int_{\mathbb{R}^{6}}\varphi_{t_1,R}(r,y)\varphi_{t_2,R}(r,z)|y-z|^{-\beta}dydz\\
    &= c_{\beta} \int_{\mathbb{R}^3}\left|\mathcal{F} \mathbf{1}_{B_1}\left(\xi\right)\right|^2 \mathcal{F} G(t_1-r)\left(\frac{\xi}{R}\right) \mathcal{F} G(t_2-r)\left(\frac{\xi}{R}\right)|\xi|^{\beta -3}d\xi\\
    &\xrightarrow{R \to \infty} c_{\beta} (t_1-r) (t_2-r) \int_{\mathbb{R}^3}\left|\mathcal{F} \mathbf{1}_{B_1}\left(\xi\right)\right|^2 |\xi|^{\beta -3}d\xi = \tau_{\beta}(t_1-r)(t_2-r),
\end{align*}
and proves \eqref{1 limit}. \eqref{2 limit} follows by the same arguments.
Thus Lemma \ref{Lemma riesz limit} holds.
\end{proof}

By Lemmas \ref{Lemma varphi cov order} and \ref{Lemma riesz limit}, we can prove the following proposition.
\begin{Prop}
\label{Proposition Riesz limit}
Let $\gamma(x) = |x|^{-\beta}$ for some $0<\beta<2$. 
Then, for any $t_1, t_2 \in [0,T]$, 
\begin{align}
    \lim_{R \to \infty} R^{ \beta -6}\mathbb{E}[F_{n,R}(t_1)F_{n,R}(t_2)] &= \tau_{\beta} \int_0^{t_1 \land t_2}(t_1-r)(t_2-r)\eta_{n-1}^2(r)dr, \label{riesz n limit}\\
    \lim_{R \to \infty} R^{\beta-6}\mathbb{E}[F_{R}(t_1)F_{R}(t_2)] &= \tau_{\beta} \int_0^{t_1 \land t_2}(t_1-r)(t_2-r)\eta^2(r)dr. \label{riesz limit}
\end{align}
Here $\eta_{n}(r) = \mathbb{E}[\sigma(u_n(r,0))]$ and $\eta(r) = \mathbb{E}[\sigma(u(r,0))]$.
\end{Prop}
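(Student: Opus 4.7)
The plan is a two-step argument: first establish \eqref{riesz n limit} by direct analysis of the Picard approximation, then bootstrap to \eqref{riesz limit}.

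For \eqref{riesz n limit}, the isometry applied to the representation \eqref{fn delta} together with Lemma \ref{Lemma L2(Omega,H_T)} gives
\begin{align*}
    \mathbb{E}[F_{n,R}(t_1) F_{n,R}(t_2)] &= \int_0^{t_1 \land t_2} dr \int_{\mathbb{R}^6} \varphi_{n,t_1,R}(r,z) \varphi_{n,t_2,R}(r,z') |z-z'|^{-\beta} \\
    &\qquad \times \mathbb{E}[\sigma(u_{n-1}(r,z))\sigma(u_{n-1}(r,z'))] dz dz'.
\end{align*}
By strict stationarity of $u_{n-1}$ (Proposition \ref{Proposition u_n property}), the inner expectation decomposes as $\eta_{n-1}(r)^2 + \mathrm{Cov}(\sigma(u_{n-1}(r,z)), \sigma(u_{n-1}(r,z')))$. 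The main term, after multiplication by $R^{\beta-6}$, reads
\begin{align*}
    \int_0^{t_1 \land t_2} \eta_{n-1}(r)^2 \left( R^{\beta-6} \int_{\mathbb{R}^6} \varphi_{n,t_1,R}(r,z)\varphi_{n,t_2,R}(r,z')|z-z'|^{-\beta} dz dz' \right) dr,
\end{align*}
whose integrand converges pointwise in $r$ to $\tau_\beta (t_1-r)(t_2-r) \eta_{n-1}(r)^2$ by \eqref{2 limit}. An $R$-uniform, $r$-integrable majorant of the form $C_{\beta,T}(t_1-r)(t_2-r)$ is obtained by unpacking the Fourier representation from the proof of Lemma \ref{Lemma riesz limit}, using $|\mathcal{F} G(t)(\xi)| \leqslant t$ and the comparison $\varphi_{n,t_i,R} \leqslant \varphi_{t_i,R+1}$ of Lemma \ref{varphi property}. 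Since $|\eta_{n-1}(r)| \leqslant \Lambda(T,2)$, dominated convergence yields the target limit $\tau_\beta \int_0^{t_1 \land t_2}(t_1-r)(t_2-r)\eta_{n-1}^2(r) dr$. The covariance error term, multiplied by $R^{\beta-6}$, is bounded (again via $\varphi_{n,t_i,R} \leqslant \varphi_{t_i,R+1}$) by Lemma \ref{Lemma varphi cov order} times $R^{\beta-6}$, which is $O(R^{-\beta})$, $O(R^{\beta-3}\log R)$, or $O(R^{\beta-3})$ according as $\beta<\tfrac{3}{2}$, $\beta=\tfrac{3}{2}$, or $\beta>\tfrac{3}{2}$. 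All three vanish as $R\to\infty$ since $\beta \in (0,2)$, completing \eqref{riesz n limit}.

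For \eqref{riesz limit}, I split $F_R = F_{n,R} + (F_R - F_{n,R})$ and expand the product. The cross terms are handled by Cauchy--Schwarz together with two ingredients: (a) the a priori bound $\lVert F_R(t)\rVert_2 + \lVert F_{n,R}(t)\rVert_2 \lesssim R^{(6-\beta)/2}$ (from the isometry, \eqref{solution uniform bound}--\eqref{iteration uniform bound}, and the upper estimate on $\int \varphi \varphi |\cdot|^{-\beta}$ implicit in Lemma \ref{Lemma riesz limit}); and (b) a quantitative form of Lemma \ref{Lemma lim sup norm zero}: inspecting its proof in the Riesz case, the bounds on $\mathbf{F_1}, \mathbf{F_2}$ together with \eqref{gamma ball order} show that for any $\varepsilon>0$ there exists $n_\varepsilon$ with $\lVert F_R(t) - F_{n,R}(t)\rVert_2^2 \leqslant \varepsilon R^{6-\beta}$ uniformly in $R \geqslant 2(T+1)$. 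The cross terms thus contribute $O(\sqrt{\varepsilon})$ to $R^{\beta-6}\mathbb{E}[F_R(t_1)F_R(t_2)]$. Applying \eqref{riesz n limit} and then letting $n\to\infty$, with $\eta_{n-1}(r) \to \eta(r)$ (from $L^2$-convergence $u_n\to u$, the Lipschitz continuity of $\sigma$, and the uniform bound $|\eta_{n-1}|\leqslant \Lambda(T,2)$) allowing dominated convergence in the $r$-integral, yields \eqref{riesz limit}.

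The main technical obstacle is producing an $R$-uniform, $r$-integrable majorant for the main integral, rather than relying only on the asymptotic order $R^{6-\beta}$ given by Lemma \ref{Lemma riesz limit}. This is resolved by the Fourier-side computation from the proof of that lemma, where the elementary bound $|\mathcal{F} G(t-r)(\xi)|\leqslant t-r$ produces exactly the $(t_1-r)(t_2-r)$ factor needed for integrability in $r$; all other steps are standard applications of Cauchy--Schwarz, strict stationarity, and the $L^2$-convergence of the Picard scheme.
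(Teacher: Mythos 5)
Your argument is correct. For \eqref{riesz n limit} it coincides with the paper's proof: split $\mathbb{E}[\sigma(u_{n-1}(r,z))\sigma(u_{n-1}(r,z'))]$ into $\eta_{n-1}^2(r)$ plus a covariance, kill the covariance part with Lemma \ref{varphi property} and Lemma \ref{Lemma varphi cov order} (all three regimes are $o(R^{6-\beta})$ since $\beta\in(0,2)$), and pass to the limit in the main term via \eqref{2 limit}; you in fact make explicit the $r$-dominated-convergence majorant $\tau_\beta(t_1-r)(t_2-r)$ coming from $|\mathcal{F}G(t)(\xi)|\leqslant t$, a step the paper leaves implicit. For \eqref{riesz limit} you diverge from the paper: the paper stays at the level of the integrand, writing $\mathbb{E}[\sigma(u(r,z))\sigma(u(r,z'))]=\mathrm{Cov}+\eta^2(r)$, using the uniform convergence of $\mathrm{Cov}(\sigma(u_n),\sigma(u_n))$ to $\mathrm{Cov}(\sigma(u),\sigma(u))$ to reduce the covariance contribution to the already-controlled Picard covariance plus an $\varepsilon\cdot R^{\beta-6}\int\varphi\varphi\gamma\lesssim\varepsilon$ remainder, and then invokes \eqref{1 limit} directly with $\eta$; you instead decompose the random variables, $F_R=F_{n,R}+(F_R-F_{n,R})$, combine the a priori bound $\lVert F_R(t)\rVert_2+\lVert F_{n,R}(t)\rVert_2\lesssim R^{(6-\beta)/2}$ with the quantitative form $\lVert F_R(t)-F_{n,R}(t)\rVert_2^2\lesssim\varepsilon R^{6-\beta}$ (uniformly in $R$), which is indeed extractable from the bounds on $\mathbf{F_1},\mathbf{F_2}$ and \eqref{gamma ball order} inside the proof of Lemma \ref{Lemma lim sup norm zero}, and then send $n\to\infty$ using $\eta_{n-1}\to\eta$ uniformly on $[0,T]$ (Lipschitz continuity of $\sigma$ and \eqref{iteration uniform convergence}). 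Both routes are sound: yours recycles the uniform-in-$R$ approximation already built for Theorem \ref{main result1} and avoids estimating the covariance of $\sigma(u)$ again, at the cost of an extra limit in $n$ and the additional a priori variance bound; the paper's route is more self-contained at this point of the text, needs no second passage to the limit in $n$, and produces the limiting covariance with $\eta$ directly, but has to re-run the covariance estimate of Lemma \ref{Lemma varphi cov order} through the uniform approximation of the covariances.
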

\begin{proof}
By Lemmas \ref{varphi property} and \ref{Lemma varphi cov order}, we have
\begin{align}
     &R^{\beta-6} \int_0^{t_1 \land t_2}dr\int_{\mathbb{R}^6}\varphi_{n,t_1,R}(r,z)\varphi_{n,t_2,R}(r,z')\gamma(z-z')\mathrm{Cov}(\sigma(u_{n-1}(r,z)), \sigma(u_{n-1}(r,z')))dzdz' \nonumber\\
    &\leqslant R^{\beta-6} \int_0^{t_1 \land t_2}dr\int_{\mathbb{R}^6}\varphi_{t_1,R+1}(r,z)\varphi_{t_2,R+1}(r,z')\gamma(z-z')\left|\mathrm{Cov}(\sigma(u_{n-1}(r,z)), \sigma(u_{n-1}(r,z')))\right|dzdz' \nonumber\\
    &\lesssim \Theta(T,n-1)^2 R^{\beta - 6} \times
    \begin{cases}
        (R+1)^{6-2\beta}  &(0<\beta < \frac{3}{2})\\
        (R+1)^3\log (R+1)  &(\beta = \frac{3}{2})\\
        (R+1)^3  &(\frac{3}{2}< \beta <2)
    \end{cases} \label{a2}\\
    &\xrightarrow{R \to \infty} 0. \nonumber
\end{align}
Hence we get from strict stationarity of $u_n$ and \eqref{2 limit} that
\begin{align*}
    &R^{ \beta -6}\mathbb{E}[F_{n,R}(t_1)F_{n,R}(t_2)]\\
    &= R^{\beta-6} \int_0^{t_1 \land t_2}dr\int_{\mathbb{R}^6}\varphi_{n,t_1,R}(r,z)\varphi_{n,t_2,R}(r,z')\gamma(z-z')\mathbb{E}[\sigma(u_{n-1}(r,z))\sigma(u_{n-1}(r,z'))]\\
    &= R^{\beta-6} \int_0^{t_1 \land t_2}dr\int_{\mathbb{R}^6}\varphi_{n,t_1,R}(r,z)\varphi_{n,t_2,R}(r,z')\gamma(z-z')\mathrm{Cov}(\sigma(u_{n-1}(r,z)), \sigma(u_{n-1}(r,z')))dzdz'\\
    &\quad + R^{\beta-6} \int_0^{t_1 \land t_2}dr\eta_{n-1}^2(r)\int_{\mathbb{R}^6}\varphi_{n,t_1,R}(r,z)\varphi_{n,t_2,R}(r,z')\gamma(z-z')dzdz'\\
    &\xrightarrow{R \to \infty} \tau_{\beta} \int_0^{t_1 \land t_2}(t_1-r)(t_2-r)\eta_{n-1}^2(r)dr 
\end{align*}
and \eqref{riesz n limit} is proved. 
Next we prove \eqref{riesz limit}. Taking into account \eqref{1 limit}, we see that 
\begin{align}
    &R^{\beta-6} \int_0^{t_1 \land t_2}dr\eta^2(r)\int_{\mathbb{R}^6}\varphi_{t_1,R}(r,z)\varphi_{t_2,R}(r,z')\gamma(z-z')dzdz' \nonumber\\
    &\xrightarrow{R \to \infty} \tau_{\beta} \int_0^{t_1 \land t_2}(t_1-r)(t_2-r)\eta^2(r)dr.\label{a3}
\end{align}
Moreover, by \eqref{iteration uniform bound}, \eqref{iteration uniform convergence}, and the Lipschitz continuity of $\sigma$, we see that 
\begin{align*}
    \lim_{n\to \infty}\sup_{t \in [0,T]} \sup_{(x,x') \in \mathbb{R}^6} |\mathrm{Cov}(\sigma(u(t,x)),\sigma(u(t,x'))) - \mathrm{Cov}(\sigma(u_n(t,x)),\sigma(u_n(t,x')))| = 0.
\end{align*}
Therefore, for any $\varepsilon>0$, we can take $n$ large enough to obtain
\begin{align}
    &R^{\beta-6} \int_0^{t_1 \land t_2}dr\int_{\mathbb{R}^6}\varphi_{t_1,R}(r,z)\varphi_{t_2,R}(r,z')\gamma(z-z')|\mathrm{Cov}(\sigma(u(r,z)), \sigma(u(r,z')))|dzdz' \nonumber\\
    &\leqslant R^{\beta-6} \int_0^{t_1 \land t_2}dr\int_{\mathbb{R}^6}\varphi_{t_1,R}(r,z)\varphi_{t_2,R}(r,z')\gamma(z-z')|\mathrm{Cov}(\sigma(u_{n}(r,z)), \sigma(u_{n}(r,z')))|dzdz' \nonumber\\
    &\quad + \varepsilon R^{\beta-6} \int_0^{t_1 \land t_2}dr\int_{\mathbb{R}^6}\varphi_{t_1,R}(r,z)\varphi_{t_2,R}(r,z')\gamma(z-z')dzdz'. \label{a1}
\end{align}
In \eqref{a1}, the first term on the right-hand side is bounded by \eqref{a2} and converges to 0 as $R\to \infty$.
Furthermore, it follows from \eqref{riesz bound} and Lemma \ref{varphi property} that
\begin{align*}
    &\varepsilon R^{\beta-6} \int_0^{t_1 \land t_2}dr\int_{\mathbb{R}^6}\varphi_{t_1,R}(r,z)\varphi_{t_2,R}(r,z')\gamma(z-z')dzdz'\\
    &\lesssim \varepsilon R^{\beta-6} \int_0^{t_1 \land t_2}dr\int_{\mathbb{R}^6}\varphi_{t_1,R}(r,z)\varphi_{t_2,R}(r,z')\gamma_0(z-z')dzdz'\\
    &\quad + \varepsilon R^{\beta-6} \int_0^{t_1 \land t_2}dr\int_{\mathbb{R}^6}\varphi_{t_1,R}(r,z)\varphi_{t_2,R}(r,z')\langle z-z' \rangle^{-\beta}dzdz'\\
    &\lesssim \varepsilon R^{\beta-6} \int_0^{t_1 \land t_2}dr\int_{\mathbb{R}^6}\varphi_{t_1,R}(r,z)\varphi_{t_2,R}(r,z')\gamma_0(z-z')dzdz'\\
    &\quad + \varepsilon R^{\beta-6} \int_{B_{R+T}^2}
    \langle z-z' \rangle^{-\beta}dzdz'.
\end{align*}
Hence, by \eqref{R_1 estimate} and \eqref{gamma ball order alpha}, we have for any $R \geqslant 2 \lor T$ that 
\begin{align*}
    &\varepsilon R^{\beta-6} \int_0^{t_1 \land t_2}dr\int_{\mathbb{R}^6}\varphi_{t_1,R}(r,z)\varphi_{t_2,R}(r,z')\gamma(z-z')dzdz' \lesssim \varepsilon,
\end{align*}
where the implicit constant is independent of $R$.
Thus we obtain 
\begin{align}
\label{a4}
    R^{\beta-6} \int_0^{t_1 \land t_2}dr\int_{\mathbb{R}^6}\varphi_{t_1,R}(r,z)\varphi_{t_2,R}(r,z')\gamma(z-z')|\mathrm{Cov}(\sigma(u(r,z)), \sigma(u(r,z')))|dzdz'  \xrightarrow{R \to \infty} 0.
\end{align}
Combining \eqref{a3} and \eqref{a4}, we get 
\begin{align*}
    &R^{ \beta -6}\mathbb{E}[F_{R}(t_1)F_{R}(t_2)]\\
    &= R^{\beta-6} \int_0^{t_1 \land t_2}dr\int_{\mathbb{R}^6}\varphi_{t_1,R}(r,z)\varphi_{t_2,R}(r,z')\gamma(z-z')\mathbb{E}[\sigma(u(r,z))\sigma(u(r,z'))]\\
    &= R^{\beta-6} \int_0^{t_1 \land t_2}dr\int_{\mathbb{R}^6}\varphi_{t_1,R}(r,z)\varphi_{t_2,R}(r,z')\gamma(z-z')\mathrm{Cov}(\sigma(u(r,z)), \sigma(u(r,z')))dzdz'\\
    &\quad + R^{\beta-6} \int_0^{t_1 \land t_2}dr\eta^2(r)\int_{\mathbb{R}^6}\varphi_{t_1,R}(r,z)\varphi_{t_2,R}(r,z')\gamma(z-z')dzdz'\\
    &\xrightarrow{R \to \infty} \tau_{\beta} \int_0^{t_1 \land t_2}(t_1-r)(t_2-r)\eta^2(r)dr, 
\end{align*}
which proves \eqref{riesz limit}.
\end{proof}

\subsection{Convergence of finite-dimensional distributions}
\label{section Convergence of finite dimensional distributions}
Taking into account Propositions \ref{Proposition L1 limit} and \ref{Proposition Riesz limit}, we set
\begin{align*}
    \Phi_{i,j} \coloneqq |B_1| \int_{\mathbb{R}^3}\mathrm{Cov}(u(t_i,x),u(t_j,0))dx, \quad \Phi^n_{i,j} \coloneqq |B_1| \int_{\mathbb{R}^3}\mathrm{Cov}(u_n(t_i,x),u_n(t_j,0))dx,\\
    \Psi_{i,j} \coloneqq \tau_{\beta} \int_0^{t_i \land t_j}(t_i-r)(t_j-r)\eta^2(r)dr, \quad \Psi^n_{i,j} \coloneqq \tau_{\beta} \int_0^{t_i \land t_j}(t_i-r)(t_j-r)\eta_{n-1}^2(r)dr.
\end{align*}
From \eqref{iteration uniform convergence} and the Lebesgue dominated convergence theorem, it is easy to check that 
\begin{align}
\label{Psi n limit}
    \lim_{n \to \infty} \Psi^n_{i,j} = \Psi_{i,j}. 
\end{align}
For $\Phi_{i,j}$ and $\Phi^n_{i,j}$, we have the following result.

\begin{Lem}
\label{Lemma Phi convergence}
When $\gamma \in L^1(\mathbb{R}^3)$, then,
\begin{align}
\label{Phi n limit}
    \lim_{n \to \infty} \Phi^n_{i,j} = \Phi_{i,j}.
\end{align}
\end{Lem}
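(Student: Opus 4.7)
My plan is to derive closed-form expressions for both $\Phi^n_{i,j}$ and $\Phi_{i,j}$ by applying the Walsh--Dalang isometry to the stochastic integrals defining $u_n$ and $u$, integrating in $x$, and invoking spatial stationarity. The resulting one-variable integrand will involve only $\gamma$ and $\mathbb{E}[\sigma\sigma]$, so the $L^2$-convergence $u_n \to u$ (equation \eqref{iteration uniform convergence}) together with $\gamma \in L^1$ will yield the desired conclusion.

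Concretely, starting from the Picard representation \eqref{u_n+1} and a polarization of \eqref{isometry fourier} (equivalently, Walsh's isometry), I would first write
\begin{align*}
    \mathrm{Cov}(u_n(t_i,x),u_n(t_j,0)) = \int_0^{t_i \wedge t_j} dr \int_{\mathbb{R}^6} G_n(t_i-r,x-z) G_n(t_j-r,-z') \gamma(z-z') \mathbb{E}[\sigma(u_{n-1}(r,z))\sigma(u_{n-1}(r,z'))] dzdz',
\end{align*}
together with the analogous identity for $u$ in which $G_n$ is replaced by the measure $G(t-\cdot,\cdot)$. Since $|\mathbb{E}[\sigma\sigma]| \leqslant \Lambda(T,2)^2$ and $\gamma \in L^1(\mathbb{R}^3)$, the resulting quadruple integral on $\mathbb{R}^3 \times [0,t_i \wedge t_j] \times \mathbb{R}^6$ is absolutely convergent, so Fubini will let me integrate in $x$ first. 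Using $\int_{\mathbb{R}^3} G_n(t,x-z) dx = t$ (and the corresponding identity for the measure $G$), performing the substitution $w = z-z'$, and invoking strict stationarity of $u_n$ and $u$, the computation would collapse to
\begin{align*}
    \int_{\mathbb{R}^3} \mathrm{Cov}(u_n(t_i,x),u_n(t_j,0)) dx &= \int_0^{t_i \wedge t_j}(t_i-r)(t_j-r) J_{n-1}(r) dr, \\
    \int_{\mathbb{R}^3} \mathrm{Cov}(u(t_i,x),u(t_j,0)) dx &= \int_0^{t_i \wedge t_j}(t_i-r)(t_j-r) J(r) dr,
\end{align*}
where $J_m(r) := \int_{\mathbb{R}^3} \gamma(w) \mathbb{E}[\sigma(u_m(r,w))\sigma(u_m(r,0))] dw$ and $J(r)$ is defined likewise from $u$.

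It then suffices to establish $J_{n-1} \to J$ uniformly on $[0,T]$. Using Cauchy--Schwarz and the Lipschitz continuity of $\sigma$, I would bound
\begin{align*}
    \sup_{r \in [0,T]} |J_{n-1}(r) - J(r)| \leqslant 2L \Lambda(T,2) \|\gamma\|_{L^1(\mathbb{R}^3)} \sup_{(s,y) \in [0,T] \times \mathbb{R}^3}\|u_{n-1}(s,y) - u(s,y)\|_2,
\end{align*}
and the right-hand side tends to $0$ by \eqref{iteration uniform convergence}. Multiplying by $(t_i-r)(t_j-r)$ and integrating in $r$ over $[0,t_i \wedge t_j]$ then gives \eqref{Phi n limit}. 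The main technical point will be interpreting the covariance formula for $u$ when $G(t-s,\cdot)$ is a measure rather than a function: one must read $\int G(t,x-dz) f(z)$ as integration of $f$ against the pushforward of $G(t,dy)$ under $y \mapsto x-y$, after which symmetry of $G$ and translation-invariance of Lebesgue measure make the Fubini step go through exactly as in the function case.
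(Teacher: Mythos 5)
Your treatment of the approximating sequence is sound: because $G_n$ is a bounded, compactly supported function, the Walsh isometry in physical space, Tonelli (the nonnegative iterated integral is bounded by $\Lambda(T,2)^2\lVert\gamma\rVert_{L^1(\mathbb{R}^3)}\int_0^{t_i\wedge t_j}(t_i-r)(t_j-r)\,dr$), stationarity and $\int_{\mathbb{R}^3}G_n(t,x)\,dx=t$ do give $\Phi^n_{i,j}=|B_1|\int_0^{t_i\wedge t_j}(t_i-r)(t_j-r)J_{n-1}(r)\,dr$, and your Cauchy--Schwarz/Lipschitz estimate combined with \eqref{iteration uniform convergence} gives $J_{n-1}\to J$ uniformly on $[0,T]$. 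The genuine gap is your item for $u$ itself, which you dismiss as a reading convention. For the limiting solution the covariance is only available through Dalang's Fourier-space identity \eqref{isometry fourier}; the physical-space formula $\int_0^{t_i\wedge t_j}ds\int G(t_i-s,x-dz)\,G(t_j-s,-dz')\,\gamma(z-z')\,\mathbb{E}[\sigma(u(s,z))\sigma(u(s,z'))]$ is not established anywhere in the paper, and it is not obtained by "symmetry of $G$ and translation invariance": $\gamma$ is only an a.e.-defined $L^1$ function, whereas $G(t_i-s,x-dz)\otimes G(t_j-s,-dz')$ is carried by a product of spheres and is singular with respect to Lebesgue measure on $\mathbb{R}^6$, so the pairing must first be constructed (e.g.\ by pushing forward to the difference variable, where the convolution of two sphere measures has a density blowing up like $|w|^{-1}$, whose integrability against $\gamma$ is itself not free), or else obtained by approximating $G$ by $G_n$ --- and the latter reintroduces exactly the interchange of $n\to\infty$ with $\int_{\mathbb{R}^3}dx$ that the lemma is about. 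Nor can you integrate the Fourier representation of $\mathrm{Cov}(u(t_i,x),u(t_j,0))$ over $x\in\mathbb{R}^3$ by Fubini, since $e^{-2\pi\sqrt{-1}\,x\cdot\xi}$ is not integrable in $x$.

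Consequently your argument, as written, only yields $\lim_n\Phi^n_{i,j}=|B_1|\int_0^{t_i\wedge t_j}(t_i-r)(t_j-r)J(r)\,dr$ without identifying this limit with $\Phi_{i,j}$. The paper closes precisely this point by a different route: it keeps the $x$-integral of the covariances, dominates $|\mathrm{Cov}(u_n(t_i,x),u_n(t_j,0))|\lesssim g^n_{i,j}(x)$ by explicit functions satisfying $g^n_{i,j}\to g_{i,j}$ pointwise and $\int_{\mathbb{R}^3}g^n_{i,j}\,dx=\int_{\mathbb{R}^3}g_{i,j}\,dx=\lVert\gamma\rVert_{L^1(\mathbb{R}^3)}\int_0^{t_i\wedge t_j}(t_i-r)(t_j-r)\,dr$, and then applies a generalized dominated convergence theorem to pass the limit inside $\int_{\mathbb{R}^3}dx$. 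If you wish to keep your closed-form strategy, you must first prove the physical-space representation of $\int_{\mathbb{R}^3}\mathrm{Cov}(u(t_i,x),u(t_j,0))\,dx$ rigorously, and the natural proof of that step is essentially the domination/approximation argument of the paper.
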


\begin{proof}
From \eqref{iteration uniform bound} and \eqref{iteration uniform convergence}, we have
\begin{gather*}
    \lim_{n \to \infty}\mathrm{Cov}(u_n(t_i,x),u_n(t_j,0)) = \mathrm{Cov}(u(t_i,x),u(t_j,0)).
\end{gather*}
Hence it is sufficient to justify interchanging the limit and the integral.
For this purpose, we set 
\begin{align*}
    g^n_{i,j}(x) &\coloneqq \int_0^{t_i \land t_j}dr\int_{\mathbb{R}^6}\gamma(x-z-z')G_n(t_i -r, z)G_n(t_j -r, z')dzdz',\\
    g_{i,j}(x) &\coloneqq \int_0^{t_i \land t_j}dr\int_{\mathbb{R}^6}\gamma(x-z-z')G(t_i-r,dz)G(t_j -r, dz').
\end{align*}
Using the Fourier transform and similar arguments in \cite[Lemma 6.5]{swe3holder}, we can show that 
\begin{align*}
    g^n_{i,j}(x) &= \int_0^{t_i \land t_j}dr\int_{\mathbb{R}^3}\mu(d\xi)e^{-2\pi \sqrt{-1} x \cdot \xi}\mathcal{F} G(t_i-r)(\xi) \mathcal{F} G(t_j -r)(\xi)|\mathcal{F} \rho_n(\xi)|^2, \\
    g_{i,j}(x) &= \int_0^{t_i \land t_j}dr\int_{\mathbb{R}^3}\mu(d\xi)e^{-2\pi \sqrt{-1} x \cdot \xi}\mathcal{F} G(t_i-r)(\xi) \mathcal{F} G(t_j -r)(\xi).
\end{align*}
Because 
\begin{align*}
    \int_0^{t_i \land t_j}dr\int_{\mathbb{R}^3}\mu(d\xi)|\mathcal{F} G(t_i-r)(\xi)| |\mathcal{F} G(t_j -r)(\xi)| 
    \lesssim_T \int_{\mathbb{R}^3}\langle \xi \rangle^{-2}\mu(d\xi) < \infty,
\end{align*}
we conclude from the Lebesgue dominated convergence theorem that
\begin{align}
    \lim_{n \to \infty} g^n_{i,j}(x) &= \int_0^{t_i \land t_j}dr\int_{\mathbb{R}^3}\mu(d\xi)e^{-2\pi \sqrt{-1} x \cdot \xi}\mathcal{F} G(t_i-r)(\xi) \mathcal{F} G(t_j -r)(\xi) \nonumber\\
    &= g_{i,j}(x) \label{b1}
\end{align}
Furthermore, since
\begin{align*}
    |\mathrm{Cov}(u_n(t_i,x),u_n(t_j,0))| &\leqslant \int_0^{t_i \land t_j}dr\int_{\mathbb{R}^6}dzdz'\gamma(z-z')G_n(t_i -r, x-z)G_n(t_j -r, -z')\\
    &\qquad \quad \times |\mathbb{E}[\sigma(u_{n-1}(r,z))\sigma(u_{n-1}(r,z'))]|\\
    &\lesssim \int_0^{t_i \land t_j}dr\int_{\mathbb{R}^6}\gamma(x-z-z')G_n(t_i -r, z)G_n(t_j -r, z')dzdz'
\end{align*}
by \eqref{iteration uniform bound}, we have  
\begin{align}
\label{b2}
    |\mathrm{Cov}(u_n(t_i,x),u_n(t_j,0))| \lesssim g^n_{i,j}(x).
\end{align}
Furthermore, we have 
\begin{align}
\label{b3}
    \int_{\mathbb{R}^3}g^n_{i,j}(x)dx = \lVert \gamma \rVert_{L^1(\mathbb{R}^3)}\int_0^{t_i \land t_j}(t_i-r)(t_j-r)dr = \int_{\mathbb{R}^3}g_{i,j}(x)dx.
\end{align}
By \eqref{b1},\eqref{b2}, and \eqref{b3}, we apply a generalization of the Lebesgue dominated convergence theorem (see \cite[Theorem 2.8.8.]{MR2267655}) to obtain
\begin{align*}
    \lim_{n \to \infty} \int_{\mathbb{R}^3}\mathrm{Cov}(u_n(t_i,x),u_n(t_j,0))dx = \int_{\mathbb{R}^3}\mathrm{Cov}(u(t_i,x),u(t_j,0))dx,
\end{align*}
which proves the theorem.
\end{proof}

Here is the main result in this subsection.
\begin{Prop}
\label{Proposition fdd convergence}
Fix $m\geqslant1$, and let $\mathbf{F}_R = (F_R(t_1),..., F_R(t_m))$ for any $t_1,...,t_m \in [0,T]$.
\begin{enumerate}
    \item[(1)] Suppose $\gamma \in L^1(\mathbb{R}^3)$ and let $\mathbf{G_1} = (\mathcal{G}_1(t_1),...,\mathcal{G}_1(t_m))$ be a centered Gaussian random vector with covariance matrix $(\Phi_{i,j})_{1 \leqslant i,j \leqslant m}$.
    Then, as $R \to \infty$,
    \begin{align}
        R^{-\frac{3}{2}}\mathbf{F}_R \xrightarrow{d} \mathbf{G_1}. \label{d3}
    \end{align}
    \item[(2)] Suppose $\gamma(x) = |x|^{-\beta}$ for some $0< \beta <2$ and let $\mathbf{G_2} = (\mathcal{G}_2(t_1),...,\mathcal{G}_2(t_m))$ be a centered Gaussian random vector with covariance matrix $(\Psi_{i,j})_{1 \leqslant i,j \leqslant m}$. Then, as $R \to \infty$,
    \begin{align}
        R^{\frac{\beta}{2}-3}\mathbf{F}_R \xrightarrow{d} \mathbf{G_2}. \label{d4}
    \end{align}
\end{enumerate}
Here ``$\xrightarrow{d}$" denotes the convergence in law.
\end{Prop}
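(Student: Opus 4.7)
The proof adapts the method of Theorem \ref{main result1} to the multivariate setting. I sketch case (1); case (2) is entirely analogous with obvious changes of exponents. Fix a smooth function $h : \mathbb{R}^m \to \mathbb{R}$ with bounded first and second partial derivatives; such functions form a convergence-determining class on $\mathbb{R}^m$, the requisite tightness being automatic from the uniform second-moment bounds furnished by Propositions \ref{Proposition L1 limit} and \ref{Proposition Riesz limit}. Let $\mathbf{F}_{n,R} := (F_{n,R}(t_1), \ldots, F_{n,R}(t_m))$ and let $Z_n$ be a centered Gaussian vector with covariance matrix $(\Phi^n_{i,j})$. The strategy is the triangle split
\begin{align*}
    |\mathbb{E}[h(R^{-3/2}\mathbf{F}_{R})] - \mathbb{E}[h(\mathbf{G_1})]|
    &\leq |\mathbb{E}[h(R^{-3/2}\mathbf{F}_R)] - \mathbb{E}[h(R^{-3/2}\mathbf{F}_{n,R})]|\\
    &\quad + |\mathbb{E}[h(R^{-3/2}\mathbf{F}_{n,R})] - \mathbb{E}[h(Z_n)]|\\
    &\quad + |\mathbb{E}[h(Z_n)] - \mathbb{E}[h(\mathbf{G_1})]|.
\end{align*}

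The middle term is estimated via the multivariate Malliavin--Stein bound of Proposition \ref{Proposition multivariate stein bound}, applied to $F_i := R^{-3/2}F_{n,R}(t_i) = \delta(R^{-3/2}V_{n,t_i,R})$, $v_j := R^{-3/2}V_{n,t_j,R}$, with covariance matrix $(\Phi^n_{i,j})$. The bound reduces to controlling $\mathbb{E}[(\Phi^n_{i,j} - R^{-3}\langle DF_{n,R}(t_i), V_{n,t_j,R}\rangle_{\mathcal{H}_T})^2]$, which I split into a bias and a variance contribution. By the duality formula \eqref{duality formula} applied to $F_{n,R}(t_j) = \delta(V_{n,t_j,R})$, $\mathbb{E}[\langle DF_{n,R}(t_i), V_{n,t_j,R}\rangle_{\mathcal{H}_T}] = \mathbb{E}[F_{n,R}(t_i)F_{n,R}(t_j)]$, so Proposition \ref{Proposition L1 limit} yields $R^{-3}\mathbb{E}[\langle DF_{n,R}(t_i), V_{n,t_j,R}\rangle_{\mathcal{H}_T}] \to \Phi^n_{i,j}$ as $R \to \infty$ (bias $\to 0$), while Lemma \ref{key}(1) gives $R^{-6}\mathrm{Var}(\langle DF_{n,R}(t_i), V_{n,t_j,R}\rangle_{\mathcal{H}_T}) \lesssim \Theta(T,n-1)^2 R^{-3} \to 0$ (variance $\to 0$). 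Hence the middle term vanishes as $R \to \infty$ for each fixed $n$. The third term vanishes as $n \to \infty$ since centered Gaussian laws depend continuously on their covariance matrices and $\Phi^n_{i,j} \to \Phi_{i,j}$ by Lemma \ref{Lemma Phi convergence}.

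The first term is controlled by the Lipschitz estimate
$$|\mathbb{E}[h(R^{-3/2}\mathbf{F}_R)] - \mathbb{E}[h(R^{-3/2}\mathbf{F}_{n,R})]| \leq \|\nabla h\|_\infty \sum_{i=1}^m R^{-3/2}\|F_R(t_i) - F_{n,R}(t_i)\|_2,$$
together with the uniform bound $\sup_{R \geq 2(T+1)} R^{-3}\|F_R(t_i) - F_{n,R}(t_i)\|_2^2 \to 0$ as $n \to \infty$; this last fact is precisely what the analysis of the terms $\mathbf{F_1}$ and $\mathbf{F_2}$ in the proof of Lemma \ref{Lemma lim sup norm zero}, combined with the estimate \eqref{gamma ball order}, delivers. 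Given $\varepsilon > 0$, first choose $n$ large enough to make the first and third terms below $\varepsilon$ uniformly in $R$, then let $R \to \infty$ to drive the middle term below $\varepsilon$. Case (2) follows from the identical scheme with normalization $R^{\beta/2-3}$, replacing Proposition \ref{Proposition L1 limit} and Lemma \ref{Lemma Phi convergence} by Proposition \ref{Proposition Riesz limit} and \eqref{Psi n limit}, and invoking Lemma \ref{key}(2) (which produces $R^{2\beta-12}\mathrm{Var}(\langle \cdot,\cdot\rangle_{\mathcal{H}_T}) \lesssim \Theta(T,n-1)^2 R^{-\beta} \to 0$) together with the analogous bound $\sup_{R \geq 2(T+1)} R^{\beta-6}\|F_R(t_i) - F_{n,R}(t_i)\|_2^2 \to 0$. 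The main technical obstacle is securing the uniformity in $R$ of the Picard truncation error in the first term; this is exactly what the explicit estimates established in the course of proving Lemma \ref{Lemma lim sup norm zero} were designed to provide.
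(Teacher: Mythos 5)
Your proposal is correct and follows essentially the same route as the paper: the same Picard-truncation triangle split with the same order of limits ($n$ first, uniformly in $R$, then $R\to\infty$), the same use of \eqref{c1}--\eqref{c2} for the first term, and the same combination of the duality formula, Propositions \ref{Proposition L1 limit}/\ref{Proposition Riesz limit}, and Lemma \ref{key} inside the multivariate Stein bound. The only (cosmetic) difference is that you insert an intermediate Gaussian vector $Z_n$ with covariance $(\Phi^n_{i,j})$ and handle $|\mathbb{E}[h(Z_n)]-\mathbb{E}[h(\mathbf{G_1})]|$ by continuity of Gaussian laws in the covariance, whereas the paper applies Proposition \ref{Proposition multivariate stein bound} directly against $\mathbf{G_1}$ and absorbs the discrepancy $|\Phi_{i,j}-\Phi^n_{i,j}|^2<\varepsilon$ into the limit of the Stein bound.
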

\begin{proof}
First we consider the case $\gamma \in L^1(\mathbb{R}^3)$. It is sufficient to show that for any $h \in C_0^{\infty}(\mathbb{R}^m ; \mathbb{R})$, 
\begin{align*}
    |\mathbb{E}[h(R^{-\frac{3}{2}}\mathbf{F}_R) - h(\mathbf{G_1})]| \xrightarrow{R \to \infty} 0.
\end{align*}
Let $\mathbf{F}_{n,R} = (F_{n,R}(t_1),..., F_{n,R}(t_m))$. Then, we have 
\begin{align}
    |\mathbb{E}[h(R^{-\frac{3}{2}}\mathbf{F}_R) - h(\mathbf{G_1})]| &\leqslant |\mathbb{E}[h(R^{-\frac{3}{2}}\mathbf{F}_R) - h(R^{-\frac{3}{2}}\mathbf{F}_{n,R})]|+ |\mathbb{E}[h(R^{-\frac{3}{2}}\mathbf{F}_{n,R}) - h(\mathbf{G_1})]| \nonumber\\
    &\leqslant \sup_{R>0} \left(\lVert \nabla h\rVert_{\infty}R^{-\frac{3}{2}}\mathbb{E}[|\mathbf{F}_R - \mathbf{F}_{n,R}|]\right) + |\mathbb{E}[h(R^{-\frac{3}{2}}\mathbf{F}_{n,R}) - h(\mathbf{G_1})]| \label{rhs},
\end{align}
where 
\begin{align*}
    \lVert \nabla h \rVert _{\infty} = \max_{1 \leqslant i \leqslant m} \sup_{x \in \mathbb{R}^m} \left| \frac{\partial h}{\partial x_i}(x) \right|.
\end{align*}
Given $\varepsilon >0$, from \eqref{c1} and \eqref{c2} we can estimate the first term on the right-hand side of \eqref{rhs} by
\begin{align*}
    &\lVert \nabla h\rVert_{\infty}R^{-\frac{3}{2}}\mathbb{E}[|\mathbf{F}_R - \mathbf{F}_{n,R}|] \lesssim R^{-\frac{3}{2}}\sum_{i = 1}^m \lVert F_R(t_i) - F_{n,R}(t_i)\rVert_2 \\
    &\lesssim \left( \sup_{(\theta,\eta) \in [0,T]\times\mathbb{R}^3}\lVert u_n(\theta,\eta) - u(\theta,\eta)\rVert_2 + 2\sqrt{\varepsilon} + \sup_{x \in K_{\varepsilon}} \left|\mathcal{F} \rho\left(\frac{x}{a_n}\right) - 1\right|  \right) \left(R^{-3}\int_{{B^2_R}}\gamma(z-z')dzdz'\right)^{\frac{1}{2}}\\
    &\lesssim \left( \sup_{(\theta,\eta) \in [0,T]\times\mathbb{R}^3}\lVert u_n(\theta,\eta) - u(\theta,\eta)\rVert_2 + 2\sqrt{\varepsilon} + \sup_{x \in K_{\varepsilon}} \left|\mathcal{F} \rho\left(\frac{x}{a_n}\right) - 1\right|  \right),
\end{align*}
and thus, by \eqref{iteration uniform convergence}, we can find $n$ large enough such that 
\begin{align}
\label{d1}
    \sup_{R>0} \left(\lVert \nabla h\rVert_{\infty}R^{-\frac{3}{2}}\mathbb{E}[|\mathbf{F}_R - \mathbf{F}_{n,R}|]\right) \lesssim \varepsilon.
\end{align}
Therefore, by \eqref{Phi n limit} we can find $n$ large enough such that \eqref{d1} holds true and 
\begin{align*}
    \max_{1 \leqslant i,j \leqslant m}|\Phi^n_{i,j} - \Phi_{i,j}|^2 < \varepsilon.
\end{align*}
From now on, we fix such an integer $n$.
Now we prove that 
\begin{align}
    |\mathbb{E}[h(R^{-\frac{3}{2}}\mathbf{F}_{n,R}) - h(\mathbf{G_1})]| \xrightarrow{R \to \infty} 0. \label{d2}
\end{align}
Since \eqref{fn delta} holds, we apply Proposition \ref{Proposition multivariate stein bound} to obtain
\begin{align*}
    |\mathbb{E}[h(R^{-\frac{3}{2}}\mathbf{F}_{n,R}) - h(\mathbf{G_1})]| \leqslant \frac{m}{2}\lVert \nabla^2 h\rVert_{\infty}\sqrt{\sum_{i,j}^m \mathbb{E}[|\Phi_{i,j} - R^{-3}\langle DF_{n,R}(t_i), V_{n,t_j,R}\rangle_{\mathcal{H}_T}|^2]}, 
\end{align*}
and thus it suffices to show that
\begin{align*}
    \lim_{R \to \infty} \mathbb{E}[|\Phi_{i,j} - R^{-3}\langle DF_{n,R}(t_i), V_{n,t_j,R}\rangle_{\mathcal{H}_T}|^2] = 0.
\end{align*}
Since, by the duality formula \eqref{duality formula} and Proposition \ref{Proposition L1 limit},
\begin{align*}
    R^{-3} \mathbb{E}[\langle DF_{n,R}(t_i), V_{n,t_j,R}\rangle_{\mathcal{H}_T}] = R^{-3} \mathbb{E}[F_{n,R}(t_i)F_{n,R}(t_j)] \xrightarrow{R \to \infty} \Phi^n_{i,j},
\end{align*}
 we deduce from Lemma \ref{key} that
\begin{align*}
    &\mathbb{E}[|\Phi_{i,j} - R^{-3}\langle DF_{n,R}(t_i), V_{n,t_j,R}\rangle_{\mathcal{H}_T}|^2] \\
    &= \Phi_{i,j}^2 - 2 \Phi_{i,j}\left(R^{-3} \mathbb{E}[\langle DF_{n,R}(t_i), V_{n,t_j,R}\rangle_{\mathcal{H}_T}]\right)\\
    &\quad + R^{-6} \left(\mathrm{Var}(\langle DF_{n,R}(t_i), V_{n,t_j,R}\rangle_{\mathcal{H}_T}) + \mathbb{E}[\langle DF_{n,R}(t_i), V_{n,t_j,R}\rangle_{\mathcal{H}_T}]^2\right) \\
    &\xrightarrow[]{R \to \infty} |\Phi_{i,j} - \Phi^n_{i,j}|^2 < \varepsilon,
\end{align*}
and \eqref{d2} is proved. 
Combining \eqref{d1} and \eqref{d2}, we obtain \eqref{d3}.

Next we consider the case $\gamma(x) = |x|^{-\beta}$.
Similar arguments also work for this case and we only sketch the proof.
For any $h \in C_0^{\infty}(\mathbb{R}^m ; \mathbb{R})$, we have
\begin{align*}
    |\mathbb{E}[h(R^{\frac{\beta}{2}-3}\mathbf{F}_R) - h(\mathbf{G_2})]| \leqslant \sup_{R>0} \left(\lVert \nabla h\rVert_{\infty}R^{\frac{\beta}{2}-3}\mathbb{E}[|\mathbf{F}_R - \mathbf{F}_{n,R}|]\right) + |\mathbb{E}[h(R^{\frac{\beta}{2}-3}\mathbf{F}_{n,R}) - h(\mathbf{G_2})]|.
\end{align*}
Given $\varepsilon >0$, the first term above can be estimated by 
\begin{align*}
    &\lVert \nabla h\rVert_{\infty}R^{\frac{\beta}{2}-3}\mathbb{E}[|\mathbf{F}_R - \mathbf{F}_{n,R}|]\\
    &\lesssim \left( \sup_{(\theta,\eta) \in [0,T]\times\mathbb{R}^3}\lVert u_n(\theta,\eta) - u(\theta,\eta)\rVert_2 + 2\sqrt{\varepsilon} + \sup_{x \in K_{\varepsilon}} \left|\mathcal{F} \rho\left(\frac{x}{a_n}\right) - 1\right|  \right) \left(R^{\beta -6}\int_{{B^2_R}}|z-z'|^{-\beta}dzdz'\right)^{\frac{1}{2}}\\
    &\lesssim \left( \sup_{(\theta,\eta) \in [0,T]\times\mathbb{R}^3}\lVert u_n(\theta,\eta) - u(\theta,\eta)\rVert_2 + 2\sqrt{\varepsilon} + \sup_{x \in K_{\varepsilon}} \left|\mathcal{F} \rho\left(\frac{x}{a_n}\right) - 1\right|  \right).
\end{align*}
Therefore,  by \eqref{iteration uniform convergence} and \eqref{Psi n limit}, we can find $n$ large enough such that 
\begin{align*}
    \sup_{R>0} \left(\lVert \nabla h\rVert_{\infty}R^{\frac{\beta}{2}-3}\mathbb{E}[|\mathbf{F}_R - \mathbf{F}_{n,R}|]\right) \lesssim \varepsilon, \quad 
    \max_{1 \leqslant i,j \leqslant m}|\Psi^n_{i,j} - \Psi_{i,j}|^2 < \varepsilon.
\end{align*}
Fix such an integer $n$. In view of Proposition \ref{Proposition multivariate stein bound}, we are reduced to proving 
\begin{align*}
    \lim_{R \to \infty} \mathbb{E}[|\Psi_{i,j} - R^{\beta-6}\langle DF_{n,R}(t_i), V_{n,t_j,R}\rangle_{\mathcal{H}_T}|^2] = 0.
\end{align*}
We can derive this convergence by Proposition \ref{Proposition Riesz limit} and Lemma \ref{key}, and thus \eqref{d4} follows.
This completes the proof.
\end{proof}

\subsection{Tightness}
\label{section Tightness}
Let us now prove the tightness.
Thanks to the tightness criterion of Kolmogorov-Chentsov (see \textit{e.g.}  \cite[Theorem 23.7]{MR4226142}), we only need to show the following moment estimates.

\begin{Prop}
\label{Proposition tightness}
For any $0 \leqslant s < t \leqslant T$, the following results hold.
\begin{enumerate}
    \item[(1)] When $\gamma \in L^1(\mathbb{R}^3)$, then, 
    \begin{align*}
        \mathbb{E}\left[\left|\frac{1}{R^{3/2}}F_R(t) - \frac{1}{R^{3/2}}F_R(s)\right|^2\right] \lesssim_{T} (t-s)^{2}.
    \end{align*}
    \item[(2)] When $\gamma(x) = |x|^{-\beta}$ for some $0< \beta <2$, then,
    \begin{align*}
        \mathbb{E}\left[\left|\frac{1}{R^{3-\beta/2}}F_R(t) - \frac{1}{R^{3-\beta/2}}F_R(s)\right|^2\right] \lesssim_{T} (t-s)^{2}.
    \end{align*}
\end{enumerate}
\end{Prop}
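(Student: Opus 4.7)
The plan is to compute $\mathbb{E}[|F_R(t)-F_R(s)|^2]$ directly via the Dalang isometry \eqref{isometry fourier}, exploiting the Fourier-side factorization $\mathcal{F}\varphi_{t,R}(r)(\xi)=\mathcal{F}\mathbf{1}_{B_R}(\xi)\,\mathcal{F}G(t-r)(\xi)$. Writing $F_R(t)$ as in \eqref{f delta}, I would decompose
\begin{align*}
F_R(t)-F_R(s) &= \int_0^s\!\!\int_{\mathbb{R}^3}(\varphi_{t,R}(r,y)-\varphi_{s,R}(r,y))\sigma(u(r,y))W(dr,dy) \\
&\quad + \int_s^t\!\!\int_{\mathbb{R}^3}\varphi_{t,R}(r,y)\sigma(u(r,y))W(dr,dy) =: A+B,
\end{align*}
using that $\varphi_{s,R}(r,y)=0$ for $r>s$. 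Proposition \ref{dalang prop} then produces
\begin{align*}
\mathbb{E}[A^2] &= \int_0^s\!dr\int_{\mathbb{R}^3}|\mathcal{F}\mathbf{1}_{B_R}(\xi)|^2\,|\mathcal{F}G(t-r)(\xi)-\mathcal{F}G(s-r)(\xi)|^2\,\mu_r^{\sigma(u)}(d\xi),\\
\mathbb{E}[B^2] &= \int_s^t\!dr\int_{\mathbb{R}^3}|\mathcal{F}\mathbf{1}_{B_R}(\xi)|^2\,|\mathcal{F}G(t-r)(\xi)|^2\,\mu_r^{\sigma(u)}(d\xi).
\end{align*}

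The key observation is that the two time-difference factors admit $\xi$-uniform bounds: by \eqref{FG minus FG bound} one has $|\mathcal{F}G(t-r)(\xi)-\mathcal{F}G(s-r)(\xi)|\leq t-s$, and by \eqref{FG bound} one has $|\mathcal{F}G(t-r)(\xi)|\leq t-r\leq t-s$ for $r\in[s,t]$. Pulling these out of the integrals reduces the analysis to controlling $\int_0^T dr\int_{\mathbb{R}^3}|\mathcal{F}\mathbf{1}_{B_R}(\xi)|^2\mu_r^{\sigma(u)}(d\xi)$, which by Lemma \ref{Lemma fourier transform} (applied to $\mathbf{1}_{B_R}$) equals $\int_{B_R^2}\gamma(y-z)\,\mathbb{E}[\sigma(u(r,y))\sigma(u(r,z))]\,dydz$. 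Using \eqref{solution uniform bound} and the Lipschitz assumption on $\sigma$ to bound $\sup_{r,y}\mathbb{E}[\sigma(u(r,y))^2]<\infty$, one obtains
\[
\mathbb{E}[|F_R(t)-F_R(s)|^2]\;\lesssim_T\; (t-s)^2\int_{B_R^2}\gamma(y-z)\,dydz,
\]
where the extra factor of $t-s$ coming from $\mathbb{E}[B^2]$ is absorbed into $T$.

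Finally, I would invoke the elementary volume estimate \eqref{gamma ball order}, which yields $\int_{B_R^2}\gamma(y-z)\,dydz\lesssim R^3$ in case (1) and $\lesssim R^{6-\beta}$ in case (2); dividing by the corresponding normalization $R^3$ or $R^{6-\beta}$ immediately gives the claimed $(t-s)^2$ bounds. I do not anticipate any genuine obstacle: the argument is a clean Fourier-side computation that benefits crucially from the fact that $\mathcal{F}\varphi_{t,R}$ factors as spatial times temporal Fourier transforms, decoupling $R$ and $t-s$ in a way that avoids the distributional difficulties associated with $G(t,dx)$ in dimension three.
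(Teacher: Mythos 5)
Your proposal is correct and follows essentially the same route as the paper: the paper also applies the Dalang isometry on the Fourier side, splits the time integral at $r=s$ (so that for $r\in[s,t]$ only $\mathcal{F}\varphi_{t,R}$ survives), uses \eqref{FG bound} and \eqref{FG minus FG bound} together with Lemma \ref{Lemma fourier transform} and \eqref{solution uniform bound} to arrive at $(t-s)^2\int_{B_R^2}\gamma(y-z)\,dydz$, and concludes with \eqref{gamma ball order}. The only cosmetic difference is that you split the stochastic integral into $A+B$ before applying the isometry (so one should note the cross term vanishes or use $2\mathbb{E}[A^2]+2\mathbb{E}[B^2]$), whereas the paper splits the time integral after the isometry, which amounts to the same computation.
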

\begin{proof}
Using the isometry property of stochastic integral, we have
\begin{align*}
    &\mathbb{E}[|F_{R}(t)-F_{R}(s)|^{2}]
    = \mathbb{E}\left[\left(\int_{0}^{T} \int_{\mathbb{R}^{3}}\left(\varphi_{t,R}(r,y)-\varphi_{s, R}(r, y)\right) \sigma(u(r, y)) W(dr,dy)\right)^{2}\right]\\
    &= \int_{0}^{T} dr \int_{\mathbb{R}^{6}}(\varphi_{t, R}(r , y)-\varphi_{s, R}(r, y))(\varphi_{t, R}(r,z)-\varphi_{s, R}(r,z))\\
    & \qquad \qquad \qquad \times \mathbb{E}[\sigma(u(r, y)) \sigma(u(r , z))] \gamma(y-z)dy dz.
\end{align*}
In view of Lemma \ref{Lemma fourier transform}, we can use the Fourier transform to write
\begin{align*}
    &\int_{0}^{T} dr \int_{\mathbb{R}^{6}}(\varphi_{t, R}(r , y)-\varphi_{s, R}(r, y))(\varphi_{t, R}(r,z)-\varphi_{s, R}(r,z))\\
    & \qquad \qquad \qquad \times \mathbb{E}[\sigma(u(r, y)) \sigma(u(r , z))] \gamma(y-z)dy dz\\
    &= \int_0^Tdr\int_{\mathbb{R}^3}\mu_{r}^{\sigma(u)}(d\xi)|\mathcal{F}\varphi_{t,R}(r)(\xi) - \mathcal{F}\varphi_{s,R}(r)(\xi)|^2\\
    &= \int_s^tdr\int_{\mathbb{R}^3}\mu_{r}^{\sigma(u)}(d\xi)|\mathcal{F}\varphi_{t,R}(r)(\xi)|^2 + \int_0^sdr\int_{\mathbb{R}^3}\mu_{r}^{\sigma(u)}(d\xi)|\mathcal{F}\varphi_{t,R}(r)(\xi) - \mathcal{F}\varphi_{s,R}(r)(\xi)|^2\\
    &\eqqcolon \mathbf{T}_1 + \mathbf{T}_2.
\end{align*}
Since by \eqref{FG bound},
\begin{align*}
    |\mathcal{F}\varphi_{t,R}(r)(\xi)| = |\mathcal{F}\mathbf{1}_{B_R}(\xi)||\mathcal{F}G(t-r)(\xi)| \leqslant |\mathcal{F}\mathbf{1}_{B_R}(\xi)|(t-r),
\end{align*}
the term $\mathbf{T}_1$ is estimated by 
\begin{align}
    \mathbf{T}_1 
    &\leqslant \int_s^t dr (t-r)^{2}\int_{\mathbb{R}^3}\mu_{r}^{\sigma(u)}(d\xi)|\mathcal{F}\mathbf{1}_{B_R}(\xi)|^2\nonumber\\
    &= \int_s^t dr (t-r)^{2}\int_{{B^2_R}}\gamma(y-z)\mathbb{E}[\sigma(u(r, y-z)) \sigma(u(r , 0))]dydz\nonumber\\
    &\lesssim (t-s)^{3}\int_{{B^2_R}}\gamma(y-z)dydz. \label{t1}
\end{align}
Similarly, by \eqref{FG minus FG bound}, the term $\mathbf{T}_2$ is estimated by
\begin{align}
    \mathbf{T}_2 
    &\lesssim (t-s)^{2}\int_0^sdr\int_{\mathbb{R}^3}\mu_{r}^{\sigma(u)}(d\xi)|\mathcal{F}\mathbf{1}_{B_R}(\xi)|^2\nonumber\\
    &= (t-s)^{2}\int_0^sdr\int_{{B^2_R}}\gamma(y-z)\mathbb{E}[\sigma(u(r, y-z)) \sigma(u(r , 0))]dydz\nonumber\\
    &\lesssim_{T} (t-s)^{2}\int_{{B^2_R}}\gamma(y-z)dydz. \label{t2}
\end{align}
Hence we deduce from \eqref{gamma ball order}, \eqref{t1}, and \eqref{t2} that
\begin{align*}
    \mathbb{E}[|F_{R}(t)-F_{R}(s)|^{2}] &\lesssim_T (t-s)^{2}\int_{{B^2_R}}\gamma(y-z)dydz\\
    &\lesssim 
    \begin{cases}
        (t-s)^2R^3  &(\gamma \in L^1(\mathbb{R}^3))\\
        (t-s)^2R^{6-\beta}  &(\gamma(x) = |x|^{-\beta} \quad (0 < \beta <2))
    \end{cases}.
\end{align*}
This completes the proof.
\end{proof}

As mentioned at the beginning of Section \ref{FCLT}, the proof of Theorem \ref{main result2} is completed by combining Propositions \ref{Proposition fdd convergence} and \ref{Proposition tightness}.
w

\section{Appendix}
\label{Appendix}
In this section we gather some technical estimates needed for our proof.
Recall that $\langle x \rangle = \sqrt{1 + |x|^2}$.

We first record the useful inequality known as Peetre's inequality. 
See \cite[Lemma 34.34, p.738]{BKDriver} for the proof.
\begin{Lem}[Peetre's inequality]
\label{Peetre inequality}
For any $k \in \mathbb{R}$ and $x,y \in \mathbb{R}^3$, we have
\begin{align*}
    \langle x + y \rangle^k \leqslant 2^{\frac{|k|}{2}}\langle x \rangle^k\langle y \rangle^{|k|}.
\end{align*}
\end{Lem}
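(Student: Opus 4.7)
The plan is to first establish the case $k=2$, which reduces to an elementary algebraic inequality, and then obtain the general statement for arbitrary $k \in \mathbb{R}$ by taking powers and using a substitution trick to handle negative exponents. The crux is the bound $\langle x+y \rangle^2 \leqslant 2\langle x \rangle^2 \langle y \rangle^2$. Using $|x+y|^2 \leqslant 2(|x|^2 + |y|^2)$, I would compute
\[
\langle x+y \rangle^2 = 1 + |x+y|^2 \leqslant 1 + 2|x|^2 + 2|y|^2 \leqslant 2(1+|x|^2)(1+|y|^2) = 2 \langle x \rangle^2 \langle y \rangle^2,
\]
where the second inequality is equivalent to $2(1+|x|^2)(1+|y|^2) - (1+2|x|^2+2|y|^2) = 1 + 2|x|^2|y|^2 \geqslant 0$, which is immediate.

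For $k \geqslant 0$, I would raise the above inequality to the power $k/2$, obtaining $\langle x+y \rangle^k \leqslant 2^{k/2}\langle x \rangle^k \langle y \rangle^k$, which is exactly the claimed bound since $|k|=k$ in this range. For $k < 0$, I would write $k=-m$ with $m>0$ and apply the just-established case to the pair $(z,-y)$ where $z := x+y$, giving
\[
\langle x \rangle^m = \langle z + (-y) \rangle^m \leqslant 2^{m/2} \langle z \rangle^m \langle -y \rangle^m = 2^{m/2} \langle x+y \rangle^m \langle y \rangle^m,
\]
using $\langle -y \rangle = \langle y \rangle$. Dividing both sides by $\langle x \rangle^m \langle x+y \rangle^m \langle y \rangle^m$ and rearranging yields $\langle x+y \rangle^{-m} \leqslant 2^{m/2} \langle x \rangle^{-m} \langle y \rangle^m$, which matches the inequality with $k=-m$ because $|k|=m$.

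There is no real obstacle here: the only substantive step is the squared inequality, a two-line calculation, while the extension to all real $k$ is formal. One minor point worth stating cleanly in the write-up is that $x \mapsto x^{k/2}$ is monotone increasing on $[0,\infty)$ for $k \geqslant 0$, so we may legitimately raise both sides of a positive inequality to that power.
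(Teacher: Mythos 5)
Your proof is correct: the squared case $\langle x+y\rangle^2 \leqslant 2\langle x\rangle^2\langle y\rangle^2$ is verified by the elementary computation you give, the case $k\geqslant 0$ follows by raising to the power $k/2$, and the case $k<0$ follows correctly from the substitution $x=(x+y)+(-y)$ together with $\langle -y\rangle=\langle y\rangle$. The paper does not prove this lemma itself but cites a reference (Lemma 34.34 of Driver), and your argument is the standard proof of Peetre's inequality, so nothing further is needed.
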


The following two lemmas are easy to check and we state without the proof.
\begin{Lem}
\label{p1}
Let $\alpha \in \mathbb{R}$ and $R \geqslant 2$. Then, we have
\begin{align*}
    \int_{B_R}\langle x \rangle^{\alpha} dx &\lesssim 
    \begin{cases}
        R^{3+\alpha}  &(\alpha > -3)\\
        \log R  &(\alpha = -3)\\
        1  &(\alpha < -3)
    \end{cases},
    \\
    \int_{{B^2_R}}\langle x-y \rangle^{\alpha}dxdy &\lesssim 
    \begin{cases}
        R^{6+\alpha}  &(\alpha > -3)\\
        R^3\log R  &(\alpha = -3)\\
        R^3  &(\alpha < -3)
    \end{cases},
    \\
    \int_{{B^3_R}}\langle x-y \rangle^{\alpha}\langle y-z \rangle^{\alpha}dxdydz &\lesssim 
    \begin{cases}
        R^{9+2\alpha}  &(\alpha > -3)\\
        R^3(\log R)^2  &(\alpha = -3)\\
        R^3  &(\alpha < -3)
    \end{cases}.
\end{align*}
\end{Lem}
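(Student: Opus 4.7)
My plan is to reduce all three estimates to the one-dimensional radial integral $\int_1^R r^{2+\alpha}\,dr$, which is elementary and produces exactly the trichotomy appearing in the statement. The first estimate is direct, and the second and third follow from the first by a translation-plus-Fubini argument.

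\textbf{Estimate (i).} Pass to spherical coordinates on $\mathbb{R}^3$. Since $\langle x\rangle^\alpha$ is bounded above by $2^{|\alpha|/2}$ on the unit ball (by Peetre's inequality with $y=0$, or trivially), the contribution from $B_1$ is a constant. On $B_R\setminus B_1$ we have $\langle x\rangle^\alpha \lesssim |x|^\alpha$, so
\begin{align*}
    \int_{B_R}\langle x\rangle^\alpha\,dx \lesssim 1 + \int_1^R r^{2+\alpha}\,dr,
\end{align*}
and computing this last integral in the three cases $\alpha>-3$, $\alpha=-3$, $\alpha<-3$ yields the three stated bounds (using $R\ge 2$ to absorb the additive constant into $R^{3+\alpha}$, $\log R$, or $1$ respectively).

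\textbf{Estimates (ii) and (iii).} These follow by translation. For (ii), for each fixed $y\in B_R$ the set $\{x : x\in B_R\}$ is contained in $B_{2R}(y)$, so the change of variables $w=x-y$ gives
\begin{align*}
    \int_{B_R}\langle x-y\rangle^\alpha\,dx \leqslant \int_{B_{2R}}\langle w\rangle^\alpha\,dw.
\end{align*}
Integrating in $y$ over $B_R$ produces a factor $|B_R|\lesssim R^3$, and applying (i) with $R$ replaced by $2R$ gives
\begin{align*}
    \int_{B_R^2}\langle x-y\rangle^\alpha\,dxdy \lesssim R^3\cdot
    \begin{cases}
    R^{3+\alpha} & (\alpha>-3)\\
    \log R & (\alpha=-3)\\
    1 & (\alpha<-3),
    \end{cases}
\end{align*}
which is exactly the claim. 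For (iii), by Fubini the integral factorises upon fixing $y$:
\begin{align*}
    \int_{B_R^3}\langle x-y\rangle^\alpha\langle y-z\rangle^\alpha\,dxdydz = \int_{B_R}\left(\int_{B_R}\langle x-y\rangle^\alpha\,dx\right)\left(\int_{B_R}\langle y-z\rangle^\alpha\,dz\right)dy.
\end{align*}
Each inner integral is bounded, as in the proof of (ii), by the estimate from (i) with $R$ replaced by $2R$. Multiplying the two inner bounds and integrating the resulting (constant in $y$) bound over $B_R$ contributes another factor $|B_R|\lesssim R^3$, giving $R^{9+2\alpha}$, $R^3(\log R)^2$, or $R^3$ according to whether $\alpha>-3$, $\alpha=-3$, or $\alpha<-3$.

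\textbf{Main obstacle.} There is no genuine obstacle; the only delicate point is to carry the logarithmic case through the Fubini argument, which is why the factor $(\log R)^2$ (rather than $\log R$) appears in (iii). One must also verify that the additive constant appearing in (i) can be absorbed into each of the three regime-dependent bounds; the hypothesis $R\geqslant 2$ is precisely what allows this, since $\log R \geqslant \log 2 > 0$ and $R^{3+\alpha}\geqslant 2^{3+\alpha}$.
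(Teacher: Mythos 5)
Your proof is correct: the reduction of (i) to the radial integral $\int_1^R r^{2+\alpha}\,dr$ and the translation-plus-Fubini argument for (ii) and (iii), with the inner integrals bounded uniformly in $y$ by (i) applied on $B_{2R}$, gives exactly the stated trichotomies, and the hypothesis $R\geqslant 2$ is used correctly to absorb constants and to write $\log(2R)\lesssim\log R$. The paper states this lemma without proof (calling it easy to check), and your argument is precisely the standard one it implicitly relies on.
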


\begin{Lem}
\label{p2}
For any $\alpha \in \mathbb{R}$, we have
\begin{align*}
    \sup_{t \in [0,T]}\int_{\mathbb{R}^3}\langle x \rangle^{\alpha}G(t,dx) < \infty,\\
    \sup_n\sup_{t \in [0,T]}\int_{\mathbb{R}^3}\langle x \rangle^{\alpha}G_n(t,dx) < \infty.
\end{align*}
\end{Lem}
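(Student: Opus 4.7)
The plan is to exploit the compact support of both $G(t,\cdot)$ and $G_n(t,\cdot)$ together with the total-mass identities $G(t,\mathbb{R}^3) = t$ and $\int_{\mathbb{R}^3} G_n(t,x)\,dx = t$, so that $\langle x \rangle^{\alpha}$ can be replaced by a uniform constant on the relevant supports.

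For the first bound, I will use the explicit formula \eqref{fundamental solution}: since $G(t,\cdot)$ is supported on the sphere $\partial B_t$, the weight $\langle x \rangle = (1+t^2)^{1/2}$ is constant on the support, and the integral collapses to
$$\int_{\mathbb{R}^3}\langle x \rangle^{\alpha}G(t,dx) = (1+t^2)^{\alpha/2}\, G(t,\mathbb{R}^3) = (1+t^2)^{\alpha/2}\, t,$$
by \eqref{G total measure}. This is bounded by $\max\{1,(1+T^2)^{\alpha/2}\}\,T$ uniformly in $t \in [0,T]$, giving the first estimate.

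For the second bound, the first step is to locate the support of $G_n(t,\cdot)$ in a ball whose radius is independent of $n$. Since $\{a_n\}$ is monotone increasing with $\sum_k 1/a_k = 1$, we have $1/a_n \leq 1/a_1 \leq 1$, so (i) of Lemma \ref{G_n property} gives $\mathrm{supp}\, G_n(t,\cdot) \subset B_{t+1/a_n} \subset B_{T+1}$ for all $t \in [0,T]$ and $n \geq 1$. Next, by Fubini's theorem together with $\int \rho_n = 1$,
$$\int_{\mathbb{R}^3} G_n(t,x)\,dx = \int_{\mathbb{R}^3} G(t,dy) \int_{\mathbb{R}^3} \rho_n(x-y)\,dx = G(t,\mathbb{R}^3) = t.$$
Splitting into cases: for $\alpha \geq 0$ one uses $\langle x \rangle^{\alpha} \leq (1+(T+1)^2)^{\alpha/2}$ on $B_{T+1}$ to obtain $\int \langle x \rangle^{\alpha} G_n(t,x)\,dx \leq (1+(T+1)^2)^{\alpha/2}\, T$, while for $\alpha < 0$ the trivial bound $\langle x \rangle^{\alpha} \leq 1$ gives $\int \langle x \rangle^{\alpha} G_n(t,x)\,dx \leq T$. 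Both bounds are uniform in $n$ and $t \in [0,T]$, yielding the second estimate.

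There is essentially no obstacle here; the estimates are immediate once the support and mass structure are observed. The only mildly nontrivial point is extracting the uniform inclusion $\mathrm{supp}\, G_n(t,\cdot) \subset B_{T+1}$ from the summability condition $\sum 1/a_n = 1$, which forces $a_1 \geq 1$ and hence $1/a_n \leq 1$ for every $n$.
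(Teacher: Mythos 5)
Your proof is correct; the paper states Lemma \ref{p2} without proof (``easy to check''), and your argument — constancy of $\langle x\rangle$ on the sphere $\partial B_t$ together with the total mass $G(t,\mathbb{R}^3)=t$, and for $G_n$ the uniform support inclusion $\supp G_n(t,\cdot)\subset B_{T+1}$ (using $1/a_n\leqslant 1/a_1\leqslant\sum_k 1/a_k=1$) together with $\int G_n(t,x)\,dx=t$ — is exactly the intended elementary verification. No gaps.
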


Using Lemmas \ref{p1} and \ref{p2}, we can prove the following estimate which is needed for the proof of Lemma \ref{key}.
\begin{Lem}
\label{Lemma estimate}
Let $f,g \in L^1(\mathbb{R}^3)$ be nonnegative functions, $\alpha >0$, and $R\geqslant 2$. 
Suppose that nonnegative functions $\gamma$ and $F$ satisfy 
\begin{align}
    F(x) \lesssim \theta_1 f(x) + \theta_2 \langle x \rangle^{-\alpha}, \label{F cond}\\
    \gamma(x) \lesssim \theta_3 g(x) + \theta_4 \langle x \rangle^{-\alpha}, \label{gamma cond}
\end{align}
where $\theta_i$ is a constant such that $\theta_i \in \{0,1\}$ for all $1 \leqslant i \leqslant 4$. Then, for any $0 \leqslant s \leqslant r \leqslant T$, we have 
\begin{align}
    &\int_{\mathbb{R}^{12}}dydy'dzdz' 
    \varphi_{n,t_1,R}(r,y)\varphi_{n,t_2,R}(s,z)\gamma(y-z)
    \varphi_{n,t_1,R}(r,y')\varphi_{n,t_2,R}(s,z')\gamma(y'-z')F(z-z') \nonumber\\
    &\lesssim 
    \begin{cases}
        \theta_1\theta_3R^3 + \theta_3(\theta_1\theta_4 + \theta_2)R^{6-\alpha} + \theta_4(\theta_1+\theta_2\theta_3)R^{9-2\alpha} + \theta_2\theta_4R^{12-3\alpha}  &(0 < \alpha < 3)\\
        \theta_1\theta_3R^3 + \theta_3(\theta_1\theta_4 + \theta_2)R^3(\log R)+ \theta_4(\theta_1+\theta_2\theta_3)R^3(\log R)^2 + \theta_2\theta_4 R^3(\log R)^3  &(\alpha = 3)\\
        (\theta_1 + \theta_2)(\theta_3 + \theta_3\theta_4 + \theta_4)R^3  &(\alpha > 3)
    \end{cases}. \label{order}
\end{align}
\end{Lem}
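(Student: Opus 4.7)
The plan is to expand the product $\gamma(y-z)\gamma(y'-z')F(z-z')$ using the hypotheses \eqref{F cond} and \eqref{gamma cond}, producing eight terms indexed by a choice of $g$ versus $\langle\cdot\rangle^{-\alpha}$ for each of the two $\gamma$-factors and of $f$ versus $\langle\cdot\rangle^{-\alpha}$ for $F$. Since each $\theta_i\in\{0,1\}$ so that $\theta_3^2=\theta_3$ and $\theta_4^2=\theta_4$, each of the eight terms carries a coefficient of the form $\theta_i\theta_j\theta_k$, and these coefficients line up precisely with the four groupings on the right-hand side of \eqref{order}. It will therefore suffice to bound each of the eight expanded terms by the corresponding single summand and add up.

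The basic uniform input is the pointwise bound
\[
\varphi_{n,t,R}(s,y) \;\leq\; T\,\mathbf{1}_{B_{R+T+1}}(y), \qquad 0\leq s\leq t\leq T,
\]
which follows from $\|G_n(t-s)\|_{L^1(\mathbb{R}^3)}\leq t-s\leq T$ and $\operatorname{supp} G_n(t-s)\subset B_{T+1}$ (Lemma \ref{G_n property}). With this in hand, every factor carrying an $L^1$ kernel is eliminated by a Young-type step, e.g.\ $\int \varphi_{n,t,R}(r,y)\,g(y-z)\,dy\leq T\|g\|_{L^1}$; and every remaining factor $\langle\cdot\rangle^{-\alpha}$ is handled by reducing to an integral of the form $\int_{B_{R+T+1}^k}\prod\langle x_i-x_j\rangle^{-\alpha}dx$, to which Lemma \ref{p1} applies directly and supplies exactly the case distinctions $\alpha<3$, $\alpha=3$, $\alpha>3$ appearing in \eqref{order}.

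As a representative case, the all-$\langle\cdot\rangle^{-\alpha}$ term (coefficient $\theta_2\theta_4^2=\theta_2\theta_4$) is dominated by
\[
T^4\int_{B_{R+T+1}^4}\langle y-z\rangle^{-\alpha}\langle y'-z'\rangle^{-\alpha}\langle z-z'\rangle^{-\alpha}\,dy\,dy'\,dz\,dz',
\]
and integrating first $y$, then $y'$ (via the first part of Lemma \ref{p1}) and then $(z,z')$ (via the second part) yields exactly $R^{12-3\alpha}$, $R^3(\log R)^3$, or $R^3$ according as $\alpha<3$, $\alpha=3$, or $\alpha>3$. The mixed terms are treated identically: each $L^1$ factor absorbs into $\|f\|_{L^1}$ or $\|g\|_{L^1}$ and contributes $R^3$ when paired with a free $\varphi$-integration, while each $\langle\cdot\rangle^{-\alpha}$ factor contributes $R^{3-\alpha}$, $\log R$, or $1$ (respectively $R^{6-\alpha}$, $R^3\log R$, or $R^3$ when it links the two $\varphi_{n,t_j,R}$-factors) by Lemma \ref{p1}.

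No step is substantive; the only real labor is organizational. The main nuisance will be checking that the eight contributions sum to exactly the claimed expression in each of the three regimes for $\alpha$, and that all implicit constants depend only on $T$, $\alpha$, $\|f\|_{L^1}$, and $\|g\|_{L^1}$ (and not on $n$ or $R$), which is clear from the fact that the bound on $\varphi_{n,t,R}$ is uniform in $n$.
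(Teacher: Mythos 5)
Your proposal is correct, and the overall architecture (expanding $F$ and the two $\gamma$-factors to produce eight terms whose coefficients assemble into the four groupings of \eqref{order}, then invoking Lemma \ref{p1} for the case distinction in $\alpha$) coincides with the paper's. Where you genuinely diverge is in how each term is bounded. The paper keeps the convolution structure $\varphi_{n,t_i,R} = \mathbf{1}_{B_R} * G_n(t_i - \cdot)$ intact, integrates over $B_R^4 \times \mathbb{R}^{12}$, and repeatedly applies Peetre's inequality (Lemma \ref{Peetre inequality}) to transfer each weight $\langle \cdot \rangle^{-\alpha}$ from differences of the inner variables $y,z,y',z'$ to differences of the outer variables $x,x',w,w'$, paying factors $\langle x-y\rangle^{\alpha}G_n(t_1-r,x-y)$ etc.\ that are controlled uniformly in $n$ by Lemma \ref{p2}. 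You instead collapse $\varphi_{n,t,R}$ to the crude pointwise bound $T\,\mathbf{1}_{B_{R+T+1}}$ at the outset — which is valid, since $\lVert G_n(t-s)\rVert_{L^1} = t-s \leqslant T$ and $\supp G_n(t-s) \subset B_{T+1}$ — after which only Tonelli, Young's inequality for the $L^1$ kernels, and Lemma \ref{p1} are needed; Peetre's inequality and Lemma \ref{p2} are bypassed entirely. Your route is shorter and more elementary, and it delivers the same uniformity in $n$ (the bound on $\varphi_{n,t,R}$ does not depend on $n$) and the same powers of $R$, since $R+T+1 \lesssim_T R$ for $R \geqslant 2$. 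The trade-off is only that your bound discards the precise convolution structure, which is harmless here because the lemma only ever needs the $L^1$-mass and the support of $G_n$.
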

\begin{proof}
By \eqref{F cond}, we have
\begin{align*}
    &\int_{\mathbb{R}^{12}}dydy'dzdz' 
    \varphi_{n,t_1,R}(r,y)\varphi_{n,t_2,R}(s,z)\gamma(y-z)
    \varphi_{n,t_1,R}(r,y')\varphi_{n,t_2,R}(s,z')\gamma(y'-z')F(z-z')\\
    &\lesssim \theta_1\int_{\mathbb{R}^{12}}dydy'dzdz'\int_{{B^4_R}}dxdx'dwdw'G_n(t_1-r,x-y)G_n(t_2-s,x'-z)\gamma(y-z)\\
    &\qquad \times G_n(t_1-r,w-y')G_n(t_2-s,w'-z')\gamma(y'-z')f(z-z')\\
    &\quad + \theta_2\int_{\mathbb{R}^{12}}dydy'dzdz'\int_{{B^4_R}}dxdx'dwdw'G_n(t_1-r,x-y)G_n(t_2-s,x'-z)\gamma(y-z)\\
    &\quad \qquad \times G_n(t_1-r,w-y')G_n(t_2-s,w'-z')\gamma(y'-z')\langle z-z' \rangle^{-\alpha}\\
    &\eqqcolon \theta_1\mathbf{A_1} + \theta_2\mathbf{A_2}.
\end{align*}
For the term $\mathbf{A_1}$, we have from \eqref{gamma cond} that
\begin{align*}
    \mathbf{A_1} &\lesssim \theta_3^2\int_{\mathbb{R}^{12}}dydy'dzdz'\int_{{B^4_R}}dxdx'dwdw'G_n(t_1-r,x-y)G_n(t_2-s,x'-z)g(y-z)\\
    &\qquad \times G_n(t_1-r,w-y')G_n(t_2-s,w'-z')g(y'-z')f(z-z')\\
    &\quad + \theta_3\theta_4\int_{\mathbb{R}^{12}}dydy'dzdz'\int_{{B^4_R}}dxdx'dwdw'G_n(t_1-r,x-y)G_n(t_2-s,x'-z)g(y-z)\\
    &\quad \qquad \times G_n(t_1-r,w-y')G_n(t_2-s,w'-z')\langle y'-z' \rangle^{-\alpha}f(z-z')\\
    &\quad +\theta_3\theta_4\int_{\mathbb{R}^{12}}dydy'dzdz'\int_{{B^4_R}}dxdx'dwdw'G_n(t_1-r,x-y)G_n(t_2-s,x'-z)\langle y-z \rangle^{-\alpha}\\
    &\quad \qquad \times G_n(t_1-r,w-y')G_n(t_2-s,w'-z')g(y'-z')f(z-z')\\
    &\quad +\theta_4^2\int_{\mathbb{R}^{12}}dydy'dzdz'\int_{{B^4_R}}dxdx'dwdw'G_n(t_1-r,x-y)G_n(t_2-s,x'-z)\langle y-z \rangle^{-\alpha}\\
    &\quad \qquad \times G_n(t_1-r,w-y')G_n(t_2-s,w'-z')\langle y'-z' \rangle^{-\alpha}f(z-z')\\
    &\eqqcolon \theta_3^2\mathbf{A_{11}}+\theta_3\theta_4\mathbf{A_{12}}+\theta_3\theta_4\mathbf{A_{13}}+\theta_4^2\mathbf{A_{14}}.
\end{align*}
Similarly, for the term $\mathbf{A_2}$, we have from \eqref{gamma cond} that
\begin{align*}
    \mathbf{A_2} &\lesssim \theta_3^2\int_{\mathbb{R}^{12}}dydy'dzdz'\int_{{B^4_R}}dxdx'dwdw'G_n(t_1-r,x-y)G_n(t_2-s,x'-z)g(y-z)\\
    &\qquad \times G_n(t_1-r,w-y')G_n(t_2-s,w'-z')g(y'-z')\langle z-z' \rangle^{-\alpha}\\
    &\quad +\theta_3\theta_4\int_{\mathbb{R}^{12}}dydy'dzdz'\int_{{B^4_R}}dxdx'dwdw'G_n(t_1-r,x-y)G_n(t_2-s,x'-z)g(y-z)\\
    &\quad \qquad \times G_n(t_1-r,w-y')G_n(t_2-s,w'-z')\langle y'-z' \rangle^{-\alpha}\langle z-z' \rangle^{-\alpha}\\
    &\quad +\theta_3\theta_4\int_{\mathbb{R}^{12}}dydy'dzdz'\int_{{B^4_R}}dxdx'dwdw'G_n(t_1-r,x-y)G_n(t_2-s,x'-z)\langle y-z \rangle^{-\alpha}\\
    &\quad \qquad \times G_n(t_1-r,w-y')G_n(t_2-s,w'-z')g(y'-z')\langle z-z' \rangle^{-\alpha}\\
    &\quad +\theta_4^2\int_{\mathbb{R}^{12}}dydy'dzdz'\int_{{B^4_R}}dxdx'dwdw'G_n(t_1-r,x-y)G_n(t_2-s,x'-z)\langle y-z \rangle^{-\alpha}\\
    &\quad \qquad \times G_n(t_1-r,w-y')G_n(t_2-s,w'-z')\langle y'-z' \rangle^{-\alpha}\langle z-z' \rangle^{-\alpha}\\
    &\eqqcolon \theta_3^2\mathbf{A_{21}}+\theta_3\theta_4\mathbf{A_{22}}+\theta_3\theta_4\mathbf{A_{23}}+\theta_4^2\mathbf{A_{24}}.
\end{align*}
First we consider $\mathbf{A_{11}}$. Integrating in the order $dw, dw', dy', dz', dx, dy,dz,dx'$, we get
\begin{align*}
    \mathbf{A_{11}} 
    &\lesssim_T
    \int_{\mathbb{R}^{12}}dydy'dzdz'\int_{{B^2_R}}dxdx'G_n(t_1-r,x-y)G_n(t_2-s,x'-z)g(y-z)\\
    &\qquad \times g(y'-z')f(z-z')\\
    &\lesssim
    \lVert g\rVert_{L^1(\mathbb{R}^3)}\lVert f\rVert_{L^1(\mathbb
    R)^3}\int_{\mathbb{R}^{6}}dydz\int_{{B^2_R}}dxdx'G_n(t_1-r,x-y)G_n(t_2-s,x'-z)g(y-z)\\
    &\lesssim_T R^3.
\end{align*}
Next we estimate $\mathbf{A_{12}}$. Integrating in the order $dx,dx',dy,dz$, we obtain
\begin{align*}
    \mathbf{A_{12}} 
    &\lesssim_T 
    \lVert g\rVert_{L^1(\mathbb{R}^3)}\lVert f\rVert_{L^1(\mathbb
    R)^3}\int_{\mathbb{R}^{6}}dy'dz'\int_{{B^2_R}}dwdw'G_n(t_1-r,w-y')G_n(t_2-s,w'-z')\langle y'-z' \rangle^{-\alpha}.
\end{align*}
Furthermore, Using Peetre's inequality and integrating with respect to $dy', dz'$, we get
\begin{align*}
    \mathbf{A_{12}} &\lesssim \int_{\mathbb{R}^{6}}dy'dz'\int_{{B^2_R}}dwdw'G_n(t_1-r,w-y')\langle w-y' \rangle^{\alpha}\\
    &\qquad \qquad \qquad \quad \times G_n(t_2-s,w'-z')\langle w'-z' \rangle^{\alpha}\langle w-w' \rangle^{-\alpha}\\
    &\lesssim_T 
    \int_{{B^2_R}}dwdw'\langle w-w' \rangle^{-\alpha}\\
    &\lesssim 
    \begin{cases}
        R^{6-\alpha}  &(0<\alpha < 3)\\
        R^3\log R  &(\alpha = 3)\\
        R^3  &(\alpha > 3 )
    \end{cases}.
\end{align*}
$\mathbf{A_{13}}$ can be estimated in the same way as $\mathbf{A_{12}}$. 
Now we consider the term $\mathbf{A_{14}}$. 
Applying Peetre's inequality and integrating with respect to $dy,dy'$ lead to 
\begin{align*}
    \mathbf{A_{14}} 
    &\lesssim 
    \int_{\mathbb{R}^{12}}dydy'dzdz'\int_{{B^4_R}}dxdx'dwdw'\\
    &\quad \qquad \times G_n(t_1-r,x-y)\langle x-y \rangle^{\alpha}G_n(t_2-s,x'-z)\langle x'-z \rangle^{\alpha}\\
    &\quad \qquad \times G_n(t_1-r,w-y')\langle w-y' \rangle^{\alpha}G_n(t_2-s,w'-z')\langle w'-z' \rangle^{\alpha}\\
    &\quad \qquad \times \langle x-x' \rangle^{-\alpha}\langle w-w' \rangle^{-\alpha}f(z-z')\\
    &\lesssim_T 
    \int_{\mathbb{R}^{6}}dzdz'\int_{{B^4_R}}dxdx'dwdw'\\
    &\quad \qquad \times G_n(t_2-s,x'-z)\langle x'-z \rangle^{\alpha}G_n(t_2-s,w'-z')\langle w'-z' \rangle^{\alpha}\\
    &\quad \qquad \times \langle x-x' \rangle^{-\alpha}\langle w-w' \rangle^{-\alpha}f(z-z')\\
    &= \int_{\mathbb{R}^{6}}dzdz'\int_{{B^2_R}}dwdw'G_n(t_2-s,w'-z')\langle w'-z' \rangle^{\alpha}\langle w-w' \rangle^{-\alpha}f(z-z')\\
    &\quad \qquad \times \left\{ \int_{B_R}G_n(t_2-s,x'-z)\langle x'-z \rangle^{\alpha}\left(\int_{B_R(-x')}\langle x \rangle^{-\alpha}dx \right)dx' \right\}\\
    &\leqslant 
    \int_{\mathbb{R}^{6}}dzdz'\int_{{B^2_R}}dwdw'G_n(t_2-s,w'-z')\langle w'-z' \rangle^{\alpha}\langle w-w' \rangle^{-\alpha}f(z-z')\\
    &\quad \qquad \times \left( \int_{B_R}G_n(t_2-s,x'-z)\langle x'-z \rangle^{\alpha}dx' \right) \left(\int_{B_{2R}}\langle x \rangle^{-\alpha}dx \right).
\end{align*}
Therefore, integrating in the order $dx',dz,dz'$, we have
\begin{align*}
    \mathbf{A_{14}}
    &\lesssim_T 
    \lVert f\rVert_{L^1(\mathbb{R})}\int_{B_{2R}}\langle x \rangle^{-\alpha}dx\int_{{B^2_R}} \langle w-w' \rangle^{-\alpha}dwdw'\\
    &\lesssim 
    \begin{cases}
        R^{9-2\alpha}  &(0<\alpha < 3)\\
        R^3(\log R)^2  &(\alpha = 3)\\
        R^3  &(\alpha > 3 )
    \end{cases}.
\end{align*}
Next we estimate $\mathbf{A_{21}}$. Integrating in the order $dx,dw,dy,dy'$, we get
\begin{align*}
    \mathbf{A_{21}} 
    &\lesssim_T \lVert g\rVert_{L^1(\mathbb{R}^3)}^2
    \int_{\mathbb{R}^{6}}dzdz'\int_{{B^2_R}}dx'dw'G_n(t_2-s,x'-z) G_n(t_2-s,w'-z')\langle z-z' \rangle^{-\alpha}.
\end{align*}
Thus, using Peetre's inequality and integrating with respect to $dz, dz'$, we obtain
\begin{align*}
    \mathbf{A_{21}}
    &\lesssim 
    \int_{\mathbb{R}^{6}}dzdz'\int_{{B^2_R}}dx'dw'\langle x'-w' \rangle^{-\alpha}\\
    &\qquad \times G_n(t_2-s,x'-z)\langle x'-z \rangle^{\alpha} G_n(t_2-s,w'-z')\langle w'-z' \rangle^{\alpha}\\
    &\lesssim_T 
    \int_{{B^2_R}}dx'dw'\langle x'-w' \rangle^{-\alpha}\\
    &\lesssim 
    \begin{cases}
        R^{6-\alpha}  &(0<\alpha < 3)\\
        R^3\log R  &(\alpha = 3)\\
        R^3  &(\alpha > 3 )
    \end{cases}.
\end{align*}
Next we consider $\mathbf{A_{22}}$. Integrating in the order $dx,dy$, we have 
\begin{align*}
    \mathbf{A_{22}} 
    &\lesssim_T \lVert g\rVert_{L^1(\mathbb{R}^3)}
    \int_{\mathbb{R}^{9}}dy'dzdz'\int_{{B^3_R}}dx'dwdw'\langle y'-z' \rangle^{-\alpha}\langle z-z' \rangle^{-\alpha}\\
    &\quad \qquad \qquad \quad \times G_n(t_2-s,x'-z)G_n(t_1-r,w-y')G_n(t_2-s,w'-z').
\end{align*}
Then, applying Peetre's inequality repeatedly and integrating with respect to $dz,dy',dz'$, we get
\begin{align*}
    \mathbf{A_{22}} 
    &\lesssim 
    \int_{\mathbb{R}^{9}}dy'dzdz'\int_{{B^3_R}}dx'dwdw'\langle w-w' \rangle^{-\alpha}\langle x'-w' \rangle^{-\alpha}\\
    &\qquad \times G_n(t_2-s,x'-z)\langle x'-z \rangle^{\alpha}G_n(t_1-r,w-y')\langle w-y' \rangle^{\alpha}\\
    &\qquad \times 
    G_n(t_2-s,w'-z')\langle w'-z' \rangle^{2\alpha}\\
    &\lesssim_T 
    \int_{{B^3_R}}dx'dwdw'\langle w-w' \rangle^{-\alpha}\langle x'-w' \rangle^{-\alpha}\\
    &\lesssim 
    \begin{cases}
        R^{9-2\alpha}  &(0<\alpha < 3)\\
        R^3(\log R)^2  &(\alpha = 3)\\
        R^3  &(\alpha > 3 )
    \end{cases}.
\end{align*}
$\mathbf{A_{23}}$ can be estimated as well as $\mathbf{A_{22}}$. 
Finally, we estimate the term $\mathbf{A_{24}}$.
Using Peetre's inequality iteratively and integrating with respect to $dy,dz,dy',dz'$, we obtain
\begin{align*}
    \mathbf{A_{24}} 
    &\lesssim 
    \int_{\mathbb{R}^{12}}dydy'dzdz'\int_{{B^4_R}}dxdx'dwdw'\langle x-x' \rangle^{-\alpha}\langle x'-w' \rangle^{-\alpha}\langle w'-w \rangle^{-\alpha}\\
    &\quad \qquad \times 
    G_n(t_1-r,x-y)\langle x-y \rangle^{\alpha}G_n(t_2-s,x'-z)\langle x'-z \rangle^{2\alpha}\\
    &\quad \qquad \times 
    G_n(t_1-r,w-y')\langle w-y' \rangle^{\alpha}G_n(t_2-s,w'-z')\langle w'-z' \rangle^{2\alpha}\\
    &\lesssim_T 
    \int_{{B^4_R}}dxdx'dwdw'\langle x-x' \rangle^{-\alpha}\langle x'-w' \rangle^{-\alpha}\langle w'-w \rangle^{-\alpha}\\
    &\leqslant 
    \int_{B_{2R}}\langle x \rangle^{-\alpha}dx\int_{{B^3_R}}dx'dwdw'\langle x'-w' \rangle^{-\alpha}\langle w'-w \rangle^{-\alpha}\\
    &\lesssim 
    \begin{cases}
        R^{12-3\alpha}  &(0<\alpha < 3)\\
        R^3(\log R)^3  &(\alpha = 3)\\
        R^3  &(\alpha > 3 )
    \end{cases}.
\end{align*}
As a consequence, combining altogether we have \eqref{order}.

\end{proof}

\bibliographystyle{plain}
\bibliography{main}

\end{document}